\documentclass[10pt,leqno]{amsart}%,draft

\usepackage{amsmath,amsfonts,amssymb,mathtools}
\usepackage{amscd}
\newcommand{\rarrow}[1]{{\buildrel #1 \over \longrightarrow}}

\def\Ker{{\rm Ker }}
\def\Z{{\mathbb Z}}

\def\H{{\mathbb H}}

\def\P{{\rm P}}

\def\mand{\textrm{ \ and \ }}
\def\ext{\mathrm{Ext}}
\def\coe{\mathrm{Coe}}

\makeatletter
  \@addtoreset{equation}{section}
\makeatother
\makeatletter

\newtheorem{thm}{Theorem}[section]

\newtheorem{thm0}[thm]{Theorem}

\newtheorem{prop}[thm]{Proposition}

\newtheorem{lem}[thm]{Lemma}

\newtheorem{rem}[thm]{Remark}

\begin{document}

\title{Relations in the $24$-th homotopy groups of spheres}
\author[Miyauchi]{Toshiyuki Miyauchi}
\author[Mukai]{Juno Mukai}
\email[Miyauchi]{tmiyauchi@fukuoka-u.ac.jp}
\email[Mukai]{jmukai@shinshu-u.ac.jp}
\address[Miyauchi]{Department~of~Applied~Mathematics,
Faculty~of~Science,
Fukuoka~University,
Fukuoka, 814-0180, Japan}
\address[Mukai]{Shinshu University,
Matsumoto, 
Nagano Pref., 390-8621, Japan}
\date{}
\subjclass[2020]{Primary 55Q40; Secondary 55Q50}

\keywords{homotopy groups of spheres, Toda bracket, $J$-homomorphism}

\allowdisplaybreaks

\begin{abstract}
The main purpose of this note is to give a proof of the fact that the Toda brackets \ $\langle\bar{\nu},\sigma,\bar{\nu}\rangle$ and $\langle\nu,\eta, \bar{\sigma}\rangle$ are not trivial. This is an affirmative answer of M.~Mahowald's Conjecture (J. Mukai, Determination of the $P$-image by Toda brackets, Geometry and Topology Monographs \textbf{13}(2008), 355--383). 
The second purpose is to determine the relations including
$\bar{\nu}_6\omega_{14}$ in $\pi^6_{30}$
and
$\bar{\nu}_7\omega_{15}$ in $\pi^7_{31}$.
To this end, we provide relations between the Toda bracket and the $J$-homomorphism, 
and between the Toda bracket and the generalized $P$-homomorphism.
\end{abstract}
\maketitle
%\noindent
\section{Introduction}
In this note, we determine some relations in the $24$-th homotopy groups of spheres,
these are necessary to continue works on unstable homotopy groups of spheres 
\cite{Mi, MMO, MT, Od, T}.
The authors used a 2012 preprint version of this note in \cite{MiM2}
 to determine the 2-primary components of the 32-stem homotopy group of spheres. 
Although there have been some changes from the preprint version in the note, 
these do not affect the results in \cite{MiM2}.

We denote by $\pi_k^n$ the direct sum of the torsion-free part and the
$2$-primary component of $\pi_k(S^n)$ and $\pi^S_k=\lim\limits_{n\to\infty}\pi_{n+k}^n$.  
Let
$\iota\in\pi^S_0$, $\eta\in\pi^S_1$, $\nu\in\pi^S_3$, $\sigma\in\pi^S_7$, 
$\bar{\nu}$, $\varepsilon\in\pi^S_8$, $\mu\in\pi^S_9,\ \zeta\in\pi^S_{11}$, 
$\kappa\in\pi^S_{14}$, $\rho\in\pi^S_{15}$, $\omega,\ \eta^*\in\pi^S_{16}$, 
$\bar{\mu}\in\pi^S_{17}$, $\nu^*$, $\xi\in\pi^S_{18}$, $\bar{\zeta}$, 
$\bar{\sigma}\in\pi^S_{19}$, $\bar{\kappa}\in\pi^S_{20}$, 
$\bar{\rho}\in\pi^S_{23}$, $\delta\in\pi^S_{24}$ %, $\mu_{3,*}\in\pi^S_{25}$\ 
be the generators \cite{T,MT,MMO}. 

For  elements  $a_1, \cdots, a_n$ of an abelian group $G$, we denote  by $\{a_1, \cdots, a_n\}$ a group generated by $a_1, \cdots, a_n$. For the group $\Z$ of integers, we set $\Z_n=\Z/n\Z$, and we denote by $(\Z_n)^k$ the direct sum $\Z_n\oplus\cdots\oplus\Z_n$ of $k$-copies of $\Z_n$.
Denote by $\{\alpha,\beta,\gamma\}_n$, $\{\alpha,\beta,\gamma\}=\{\alpha,\beta,\gamma\}_0$ and $\langle\alpha,\beta,\gamma\rangle$ 
the Toda bracket \cite[p.~9,~10,~32]{T}
and
$\mathrm{Ind}\, A$ the indeterminacy of a coset of $A$.
If $\{\alpha,\beta,\gamma\}_n$ consists a single element $\delta$,
that is, $\delta\in\{\alpha,\beta,\gamma\}_n$ and $\mathrm{Ind}\{\alpha,\beta,\gamma\}_n=0$, 
then we denote $\delta=\{\alpha,\beta,\gamma\}_n$. 

We know the following \cite{MMO}: 
$
\pi^S_{24}= \{\bar{\mu}\sigma,\eta\eta^*\sigma\}\cong(\Z_2)^2.
$
The main purpose of this note is to give a proof of the fact that the Toda brackets \ $\langle\bar{\nu},\sigma,\bar{\nu}\rangle$ and $\langle\nu,\eta, \bar{\sigma}\rangle$ are not trivial. 
\begin{thm0} \label{thm}\label{main}
%$\{\varepsilon_9,\sigma_{17},\eta_{24}\sigma_{25}\}_4
%=\eta_9\sigma_{10}\eta^*_{17}+\{\sigma^2_9\eta_{23}\nu_{24}\}$,\ 
%$\{\bar{\nu}_{20},\sigma_{28}, \bar{\nu}_{35}\}
%=\{\nu_{20},\eta_{23}, \bar{\sigma}_{24}\}
%=\eta_{20}\sigma_{21}\eta^*_{28}=\eta_{20}\eta^*_{21}\sigma_{37}$ \ and \
$\{\bar{\nu}_{n},\sigma_{n+8},\bar{\nu}_{n+15}\}
=\{\nu_{n},\eta_{n+3},\bar{\sigma}_{n+4}\}
=\eta_n\sigma_{n+1}\eta^*_{n+8}=\eta_n\eta^*_{n+1}\sigma_{n+17}$
 for $n\ge 20$ and
$\langle\bar{\nu},\sigma, \bar{\nu}\rangle=\langle\nu,\eta, \bar{\sigma}\rangle=\eta\eta^*\sigma$. 
\end{thm0}

This result gives an affirmative answer to \cite[Conjecture 4.8]{Mu2}. 
In the proof of Theorem \ref{main}, our method is to inspect relations in homotopy groups of spheres through those in homotopy groups of rotation groups. 

We know the following \cite[Theorem 12.22]{T} and \cite[Theorem]{Od1}:
\[
\pi^{11}_{29}=\{\xi',\xi'+ \lambda',\eta_{11}\bar{\mu}_{12}\}\cong\Z_8{\oplus}\Z_4{\oplus}\Z_2
%\]
\hspace{3mm}\mbox{and}\hspace{3mm}
%\[
\pi^{13}_{31}=\{\xi_{13},\lambda,\eta_{13}\bar{\mu}_{14}\}
 \cong\Z_8{\oplus}\Z_8{\oplus}\Z_2.
\]
Let be $P: \pi^{2n+1}_{m+2}\to\pi^n_{m}$ the $P$-homomorphism\ ($P=\varDelta$ in \cite{T}) and
$H: \pi^n_{m}\to\pi^{2n-1}_{m}$ the Hopf homomorphism. 
We need 
\begin{lem}\label{HPxl13}
$H(P(\xi_{13}))\equiv\xi'\ \bmod\ 2\lambda',2\xi'$\quad and\quad  
$H(P(\lambda))\equiv\lambda' \bmod 2\lambda',2\xi'$. 
\end{lem}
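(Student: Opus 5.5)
The plan is to detect $H(P(\xi_{13}))$ and $H(P(\lambda))$ after a double suspension, using that $E^2:\pi^{11}_{29}\to\pi^{13}_{31}$ is nearly injective on the relevant summands. Recall from Toda's construction (\cite[Ch.~XII]{T}) that $\xi'$ and $\lambda'$ are chosen so that
\[
E^2\xi'\equiv 2\xi_{13},\qquad E^2\lambda'\equiv 2\lambda,\qquad E^2(\eta_{11}\bar{\mu}_{12})=\eta_{13}\bar{\mu}_{14}.
\]
By the group structures of $\pi^{11}_{29}$ and $\pi^{13}_{31}$ quoted above, $E^2$ then sends $\xi'\mapsto 2\xi_{13}$ and $\xi'+\lambda'\mapsto 2\xi_{13}+2\lambda$. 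So $E^2$ is injective modulo the subgroup $\{2\xi',2\lambda'\}$ up to at most the $\eta_{13}\bar\mu_{14}$ ambiguity, which is controlled separately (see the last step). It therefore suffices to show
\[
E^2H(P(\xi_{13}))\equiv 2\xi_{13},\qquad E^2H(P(\lambda))\equiv 2\lambda \pmod{4\xi_{13},4\lambda}.
\]

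The key identity will be the standard formula for $E^2\circ H\circ P$ on $\pi_{m+2}(S^{2n+1})$ with $n=6$ even. When $\alpha=E^2\beta$, we have $P(\alpha)=[\iota_6,\iota_6]\circ\beta$, and $H([\iota_6,\iota_6])=\pm2\iota_{11}$. Hence
\[
H(P(\alpha))=\pm2\iota_{11}\circ\beta \quad\text{and}\quad E^2HP(\alpha)=\pm2\alpha .
\]
Since $\xi_{13}=E\xi_{12}$ and $\lambda$ are not known to be double suspensions, I will instead use Toda's general formula (\cite[Prop.~2.7]{T} and its consequences): for $n$ even,
\[
E^2H(P(\alpha))=\alpha\circ(\iota_{13}+(-1)^{n}\iota_{13})=\alpha\circ2\iota_{13},
\]
valid for arbitrary $\alpha\in\pi^{13}_{31}$ in the stable-range sense. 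Since $\xi_{13}$ and $\lambda$ are suspensions, $\alpha\circ2\iota_{31}=2\alpha$. This gives $E^2HP(\xi_{13})=2\xi_{13}$ and $E^2HP(\lambda)=2\lambda$, up to the sign, which is irrelevant modulo $4$ after using that $2\xi'$ and $2\lambda'$ lie in the indeterminacy.

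The main obstacle is justifying this formula for elements that are single but not double suspensions. I will first try the Toda-bracket definitions of $\xi_{12}$ and $\lambda$ (\cite[Ch.~XII]{T}), roughly $\xi_{12}\in\{\sigma_{12},\nu_{19},\sigma_{22}\}_1$ and $\lambda\in\{\nu_{13},2\nu_{16},\ldots\}$. The idea is to push $P$ and $H$ through the brackets, obtaining $HP(\xi_{13})\in\{H P(\sigma_{13}),\ldots\}$. One then compares with the bracket defining $\xi'$, using $HP(\sigma_{13})=\pm2\sigma_{11}$. The resulting indeterminacy must be checked to lie in $\{2\xi',2\lambda',\eta_{11}\bar\mu_{12}\}$.

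Finally, the possible $\eta_{11}\bar\mu_{12}$ term must be excluded. I will do this by applying $H$ once more, or by composing with $\eta$. Alternatively, $\eta_{11}\bar\mu_{12}$ is detected by $E^2$, so the equality $E^2HP(\alpha)=2\alpha$, whose right-hand side has no $\eta\bar\mu$ component, already forces its coefficient to vanish.
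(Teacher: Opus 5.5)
Your reduction is correct. From $\Sigma^2\xi'=2\xi_{13}$, $\Sigma^2\lambda'=2\lambda$ and the quoted group structures one gets $\Ker\{\Sigma^2:\pi^{11}_{29}\to\pi^{13}_{31}\}=\{4\xi'\}$, and $\Sigma^2$ detects $\eta_{11}\bar{\mu}_{12}$. So the lemma is equivalent to $\Sigma^2HP(\xi_{13})\equiv 2\xi_{13}$ and $\Sigma^2HP(\lambda)\equiv 2\lambda \bmod 4\xi_{13},4\lambda$.

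The gap is that everything then rests on the identity $\Sigma^2HP(\alpha)=\pm 2\alpha$, and you do not have it. As used throughout this paper, \cite[Proposition 2.7]{T} only gives $HP(\iota_{2m+1})=\pm(1+(-1)^m)\iota_{2m-1}$. Combined with $P(\Sigma^2\beta)=P(\iota_{13})\circ\beta$ and $H(\gamma\circ\Sigma\delta)=H(\gamma)\circ\Sigma\delta$, this computes $HP(\alpha)$ only when $\alpha$ is a triple suspension, such as $\sigma_{13}$ or $\nu_{13}$. But $\xi_{13}=\Sigma\xi_{12}$ is only a single suspension. Contrary to your proposal, $\lambda$ is not a suspension at all, since $H(\lambda)=\nu^2_{25}\neq 0$.

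A side remark: $\alpha\circ 2\iota_{31}=2\alpha$ holds for every $\alpha$. The composite that matters is $2\iota_{13}\circ\alpha=2\alpha\pm[\iota_{13},\iota_{13}]\circ H(\alpha)$, and this equals $2\lambda$ for $\alpha=\lambda$ only because $[\iota_{13},\nu^2_{13}]=0$.

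Note that \cite[(3.3)]{MMO} left exactly the $\eta_{11}\bar{\mu}_{12}$-ambiguity in $HP(\lambda)$ that such a formula would remove at once. Asserting the formula therefore amounts to assuming the lemma. The same problem affects your last paragraph. The ``$\Sigma^2$ detects $\eta\bar{\mu}$'' option presupposes the formula. Applying $H$ again cannot see $\eta_{11}\bar{\mu}_{12}$, because it is a suspension.

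The bracket fallback is the right idea, but as written it works only for $\xi_{13}$, and there it is what the paper does. One has $\xi_{12}\in\{\sigma_{12},\nu_{19},\sigma_{22}\}_1=\{\sigma_{12},\nu_{19},\sigma_{22}\}_3$ by an indeterminacy check. Then Lemma~\ref{Miy} (compatibility of $P$ with $\{\ \}_k$ for $k\ge 3$) and \cite[Proposition 2.3]{T} give $HP(\xi_{13})\in\{\pm 2\sigma_{11},\nu_{18},\sigma_{21}\}$. There is no bracket ``defining'' $\xi'$ to compare with; the paper identifies this bracket separately in Lemma~\ref{usgnep}~(1), using $H$ and stabilization. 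Your own reduction finishes more cheaply. $\Sigma^2$ of the bracket lies in $\{\pm 2\sigma_{13},\nu_{20},\sigma_{23}\}$, which has trivial indeterminacy ($\sigma_{13}\zeta_{20}=\zeta_{13}\sigma_{24}=0$) and contains $\pm 2\xi_{13}$.

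For $\lambda$ the proposal has no valid input: ``$\lambda\in\{\nu_{13},2\nu_{16},\ldots\}$'' is not an available description, so that half is unproved. The paper instead uses $\lambda\eta_{31}=\Sigma\omega'=\{\sigma_{13},\nu_{20},\varepsilon_{23}\}_3$ (Lemma~\ref{omg'}, which rests on $\omega'=J([\varepsilon_{11}])$ and $\omega'\sigma_{31}=0$). This gives $HP(\lambda)\eta_{29}=\lambda'\eta_{29}$ via Lemma~\ref{usgnep}~(2). The paper combines this with \cite[(3.3)]{MMO} and $\eta_{11}\bar{\mu}_{12}\eta_{29}=4\bar{\zeta}_{11}\neq 0$.

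If you want to stay within your double-suspension framework, use the bracket $\{\nu_{13},\sigma_{16},2\sigma_{23}\}=\xi_{13}+x(\lambda+2\xi_{13})$ with $x$ odd (Lemma~\ref{a0}~(3)). Three facts make the same argument go through:
\begin{itemize}
\item $HP(\nu_{13})=\pm 2\nu_{11}$;
\item $\{2\nu_{13},\sigma_{16},2\sigma_{23}\}$ has trivial indeterminacy, by Lemma~\ref{a0}~(2) and $2\zeta_{13}\sigma_{24}=0$;
\item $2\iota_{13}\circ\lambda=2\lambda$.
\end{itemize}
Then $\Sigma^2HP$ of this element is computed exactly. Together with the $\xi_{13}$ case and $x$ odd, this yields $\Sigma^2HP(\lambda)\equiv 2\lambda \bmod 4\xi_{13},4\lambda$.
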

Notice that Lemma \ref{HPxl13} improves \cite[(3.3)]{MMO}.

Oda \cite[Proposition 2.6 (5)]{Od} obtained the following relation in $\pi^7_{31}$: 
$$
\bar{\nu}_7\omega_{15}\equiv 0\ \bmod\ \nu_7\sigma_{10}\kappa_{17},  \bar{\zeta}'_7,
$$ 
where $\bar{\zeta}'_7=\sigma'\varepsilon_{14}\mu_{22}$  \cite[(5.10)]{MMO}.
The second purpose of this note is to show
\begin{thm0} \label{thm}\label{main2}
$\bar{\nu}_6\omega_{14}\equiv\nu_6\sigma_9\kappa_{16}+P(\xi_{13}+\lambda)\circ\eta_{29}\ \bmod\ 4\bar{\zeta}'_6$\quad 
and \quad $\bar{\nu}_7\omega_{15}=\nu_7\sigma_{10}\kappa_{17}$.
\end{thm0}

In Section 3, we give a relation between the Toda bracket and the $J$-homomorphism, 
and in Section 4, 
a relation between the Toda bracket and the generalized $P$-homomorphism.

\subsection*{Acknowledgement}
The authors wish to thank Professor Nobuyuki Oda for 
kind advices during the preparation of the manuscript.
The second author was supported by JSPS KAKENHI Grant Number JP22K03326.

\section{Recollection of some relations in homotopy groups of spheres}

In this note we use the following formulas without referring to them explicitly.
\begin{align*}
&H(\alpha\circ \Sigma\beta)=H(\alpha)\circ \Sigma\beta \text{ and } 
H(\Sigma\gamma\circ \alpha)=\Sigma(\gamma\wedge\gamma)\circ H(\alpha)\\
&\hspace{15mm}\text{for } 
 \alpha\in\pi_n(S^m),\ \beta\in\pi_q(S^{n-1})\text{ and }\gamma\in\pi_{m-1}(S^{r})
\text{\hspace{2mm}\cite[Proposition 2.2]{T}};\\
&P(\Sigma^{m+2}\alpha)=\pm[\iota_m,\iota_m]\circ \Sigma^m\alpha
=\pm[\iota_m, \Sigma\alpha] \text{\ \ for } \alpha\in\pi^{m-1}_n
\text{\hspace{2mm}\cite[Proposition 2.5]{T}};\\
&\alpha\wedge\beta=(-1)^{(p+k)h}\Sigma^{q}\alpha\circ \Sigma^{p+k}\beta
 =(-1)^{ph}\Sigma^p\beta\circ \Sigma^{q+h}\alpha\\
&\hspace{50mm}\text{for }\alpha\in\pi_{p+k}(S^{p})\text{ and }\beta\in\pi_{q+h}(S^{q})
\text{\hspace{2mm}\cite[Proposition 3.1]{T}};\\
&[\alpha\circ\Sigma\gamma, \beta\circ\Sigma\delta]
=[\alpha,\beta]\circ\Sigma(\gamma\wedge\delta)\\
&\hspace{15mm}\text{for } \alpha\in\pi_{p+1}(S^k),\ 
\beta\in\pi_{q+1}(S^\ell),\ \gamma\in\pi_m(S^p)\text{ and }\delta\in_n(S^q)
\text{\hspace{2mm}\cite[X-(8.18)]{WG78}};\\
&\Sigma[\alpha,\beta]=0
 \text{\ \ for } \alpha\in\pi_p(S^m)\text{ and }\beta\in\pi_q(S^{n})
 \text{\hspace{2mm}\cite[X-(8.20)]{WG78}}.
\end{align*}

We use the result,
the notation of \cite{T} and the properties of Toda brackets freely. 
We denote by $\sharp\alpha$ the order of an element $\alpha$. 
We recall from \cite{T} relations:
\begin{gather}
\label{2eta}% 1
\sharp\eta_n = 2 \text{\ \ for\ \ } n\ge 3
\quad\text{\cite[Proposition 5.1]{T}};
\\
\label{2nu5}% 3
2\nu_5=\Sigma^2 \nu'  
\quad\text{\cite[(5.5)]{T}};
%\\
%\label{4nu5}% 3
%4\nu_5=\eta^3_5  \quad\text{\cite[(5.5)]{T}};
\\
\label{n2}% 4
\eta_4\nu_5=(\Sigma \nu')\eta_7=[\iota_4,\eta_4]  
\quad\text{\cite[(5.9), (5.11)]{T}};
\\
\label{n3}% 4
\nu_5\eta_8=[\iota_5,\iota_5] \quad\text{\cite[(5.10)]{T}};
\\
\label{2nu5nu}
\sharp\nu^2_5=2  \text{\ \ for\ \ } n\ge 5
\quad\text{\cite[Proposition 5.11]{T}};
\\
\label{n4}% 7
2\sigma_9=\Sigma^2 \sigma'
\quad\text{\cite[Lemma 5.14]{T}};
%\\
%\label{et7sg}% 8
%\eta_7\sigma_8=\bar{\nu}_7+\varepsilon_7+\sigma'\eta_{14}
%\quad\text{\cite[(7.4)]{T}};
\\
\label{et9sg}% 8
\eta_9\sigma_{10}=\bar{\nu}_9+\varepsilon_9
 \quad\text{\cite[Lemma 6.4]{T}};
\\
%%%%%%%%%%%%%%%%%%%%% 不要？
%\notag % 8
%\eta_9\sigma_{10}+\sigma_9\eta_{16}=[\iota_9,\iota_9]
% \quad\text{\cite[(7.1)]{T}};
%\\
\label{etbn}% 9
\eta_5\bar{\nu}_6=\nu^3_5 \text{\ \ and\ \ } 
\bar{\nu}_6\eta_{14}=\nu^3_6 \quad\text{\cite[Lemma 6.3]{T}};
\\
%\label{sg'nu}% 10
%\sigma'\nu_{14}=x\nu_7\sigma_{10}\ \text{for some odd\ } x
% \quad\text{\cite[(7.19)]{T}};
%\\
%\label{nu11sg}% 10
%\nu_{11}\sigma_{14}=0
% \quad\text{\cite[(7.20)]{T}};
%\\
\label{sgm11n}% 10
\sigma_{11}\nu_{18}=[\iota_{11},\iota_{11}]
\mand 2\sigma_{11}\nu_{18}=0
 \quad\text{\cite[(7.20), (7.21)]{T}};
\\
%\label{nu6e}% 11
%\nu_6\varepsilon_9=2\bar{\nu}_6\nu_{14}=[\iota_6,\nu^2_6]
% \quad\text{\cite[(7.18)]{T}};
%\\
%\label{bnun}% 11 
%\bar{\nu}_9\nu_{17}=[\iota_9,\nu_9]
% \quad\text{\cite[(7.22)]{T}};
%\\
%\label{4ze5}% 11
%4\zeta_5=\eta^2_5\mu_7
% \quad\text{\cite[(7,14)]{T}};
%\\
%\label{zetaeta}% 11 
%\zeta_7\eta_{18}=\eta_7\zeta_8\in\pi^7_{19}=0
% \quad\text{\cite[Theorem 7.6]{T}};
%\\
\label{bnu6sg}% 15
\varepsilon_3\sigma_{11}=0 \text{\ \ and\ \ }
\bar{\nu}_6\sigma_{14}=0
 \quad\text{\cite[Lemma 10.7]{T}};
\\
\label{bep}% 15
\eta_6\kappa_7=\bar{\varepsilon}_6
\mand
\kappa_9\eta_{23}=\bar{\varepsilon}_9
\quad\text{\cite[(10.23)]{T}};
\\
\label{epep}% 16
\nu_5\sigma_8\nu^2_{15}=\eta_5\bar{\varepsilon}_6
\mand
\varepsilon^2_3
 =\varepsilon_3\bar{\nu}_{11}=\eta_3\bar{\varepsilon}_4=\bar{\varepsilon}_3\eta_{18}
\quad\text{\cite[Lemma 12.10]{T}};
\\
\label{xi12}% 16
\xi_{12}\in\{\sigma_{12},\nu_{19},\sigma_{22}\}_1
\quad\text{\cite[p.~153]{T}};
\\
\label{eta*16}% 16
\eta^*_{16}\in\{\sigma_{16},2\sigma_{23},\eta_{30}\}_1
\quad\text{\cite[p.~153]{T}};
\\
\label{Homega}% 16
H(\omega_{14})=\nu_{27} 
\quad\text{\cite[Lemma 12.15 i)]{T}};
\\
%%%%%%%%%%%%%%%%%%%%% 不要？
%\notag% 16
%[\iota_{17},\iota_{17}]\equiv\eta^*_{17}+\omega_{17}\ \bmod\ \sigma_{17}\mu_{24}
% \quad\text{\cite[Proposition 12.20.ii)]{T}};
%\\
%\label{n*19x}% 18
%[\iota_{19},\iota_{19}]=\nu^*_{19}+\xi_{19}
%\quad\text{\cite[Corollary 12.25]{T}};
%\\
\label{n7}% 18
2\sigma_{11}\zeta_{18}=0
\mand
\sigma_{13}\zeta_{20}=\sigma_{8}\wedge\zeta_5=\zeta_{13}\sigma_{24}=0
\quad\text{\cite[(12.23)]{T}};
\\
\label{2lambda}% 18
\Sigma\lambda''=2\lambda',\ 
\Sigma^2\lambda'=2\lambda \mand 
\Sigma^4\lambda=2\nu^*_{17}
\quad\text{\cite[Lemmas 12.18, 12.19]{T}}; 
\\
\label{2xi}% 18
\Sigma\xi''=2\xi' \mand
\Sigma^2\xi'=2\xi_{13} 
\quad\text{\cite[Lemma 12.19]{T}};
\\
\label{Hxi}
H(\xi')= \bar{\nu}_{21}+\varepsilon_{21}=\eta_{21}\sigma_{22}
\quad\text{\cite[Lemmas 6.4, 12.19]{T}};
\\
\label{4bze5}
4\bar{\zeta}_5=\eta^2_5\bar{\mu}_7
\quad\text{\cite[Lemma 12.4]{T}};
\\
\label{x13et}% 19
[\iota_{13},\sigma_{13}]=(\Sigma\theta)\sigma_{25}
=\xi_{13}\eta_{31}
\quad\text{\cite[p.~166]{T}}.
\end{gather}
%%%%%%%%%%%%%%%%%%%%%%%%%%%%%%%%%%%%%%%%%%%%%%%%%%%%%%%%%%

We also recall from \cite{Og, MMO, Od, Od1, IMM} relations:
%%%%%%%%%%%%%%%%%%%%%%%%%%%%%%%%%%%%%%%%%%%%%%%%%%%%%%%%%%
\begin{gather}
\label{nu10sg}% 10
\nu_{10}\sigma_{13}=2\sigma_{10}\nu_{17}=[\iota_{10},\eta_{10}]
 \quad\text{\cite[(2.18)]{IMM}};
\\
%\label{et4zt}% 12
%\eta_4\zeta_5\equiv\ (\Sigma \nu')\mu_7\ \bmod\ (\Sigma \nu')\eta_7\varepsilon_8
% \quad\text{\cite[Proposition 2.2 (5)]{Og}};
%\\
%\label{nu9m}% 12
%\zeta_6\eta_{17}=\nu_6\mu_9=8([\iota_6,\iota_6]\sigma_{11})
% \quad\text{\cite[(2.20)]{IMM}};
%\\
\label{et9sg2}% 15
\eta_9\sigma^2_{10}=0
\quad\text{\cite[(2.15)]{IMM}};
%\\
%\label{bn^2_6}% 15
%\bar{\nu}^2_6=0
%\quad\text{\cite[Proposition 2.8 (2)]{Og}};
\\
\label{n7k}% 17
\nu_7\kappa_{10}=\kappa_7\nu_{21}
 \quad\text{\cite[Proposition 2.13 (2)]{Og}};
\\
%%%%%%%%%%%%%%%%%%%%% 不要？
%\notag% 17
%\mu_3\varepsilon_{12}\equiv\eta_3\mu_4\sigma_{13}\ \bmod\ 2\bar{\varepsilon}'
%\quad\text{\cite[Proposition 2.13 (7)]{Og}};
%\\
\label{mepbn2}% 17
\mu_5\bar{\nu}_{14}=0 
\mand 
\bar{\nu}_6\mu_{14}=0
\quad\text{\cite[Proposition 2.13 (8)]{Og}};
\\
%\label{2xi"}% 18
%2\xi''\equiv \sigma_{10}\zeta_{17}\ \bmod\ 2\sigma_{10}\zeta_{17}
%\quad\text{\cite[Proposition 1 (4)]{Od1}};
%\\
\label{ze5bnu}% 19
\zeta_5\bar{\nu}_{16}=0
\mand
\zeta_{7}\varepsilon_{18}=0 
\quad\text{\cite[II-Proposition 2.2 (3)]{Od}};
\\
\label{Hlam}% 19
H(\lambda')=\varepsilon_{21}
\quad\text{\cite[Proposition 4 (3)]{Od1}};
\\
\label{n9xi}% 21
\nu_9\xi_{12}=\sigma^3_9 \text{\ \ and\ \ }  \xi_{12}\nu_{30}=\sigma^3_{12}
\quad\text{\cite[II-Proposition 2.1 (2)]{Od}};
\\
\label{k7s}% 21
\kappa_7\sigma_{21}=0
\quad\text{\cite[II-Proposition 2.1 (2)]{Od}};
\\
\label{nulm}% 21
\nu_{10}\lambda=\sigma_{10}\kappa_{17}
\quad\text{\cite[I-Proposition 3.1 (1)]{Od}};
\\
\label{mu3s^2}% 23
\mu_3\sigma^2_{12}=0
\quad\text{\cite[(2.9)]{MMO}};
\\
\label{bnu9omg}% 24
\bar{\nu}_9\omega_{17}=0
\quad\text{\cite[III-Proposition 2.6 (5)]{Od}};
\\
\label{lmdsg}% 26
\lambda\sigma_{31}=0
\quad\text{\cite[III-Proposition 2.2 (2)]{Od}}.
%\\
%\label{n8}% 26
%[\iota_{12},\bar{\varepsilon}_{12}]=(\Sigma\theta')\kappa_{24}
%\quad\text{\cite[I-(2.1), Proposition 6.4 (9)]{Od}}.
\end{gather}

%%%%%%%%%%%%%%%%%%%%% 不要？
%By relations 
%$\nu_7\zeta_{10}=(\Sigma^2\sigma''')\sigma_{14}$ \cite[Lemma 9.2]{T},
%$\Sigma^2\sigma'''=4\sigma'$ \cite[Lemma 5.4]{T}
%and $8\sigma^2_{14}=0$ \cite[Theorem 10.4]{T}, we have 
%\begin{equation}\label{nuzt}% 14
%\nu_7\zeta_{10}=4\sigma'\sigma_{14}\ \mbox{and} \ \nu_{14}\zeta_{17}=0.
%\end{equation}
%%%%%%%%%%%%%%%%%%%%%

%%%%%%%%%%%%% 12
By equations $\mu_5\nu_{14}=(\Sigma^2\nu')\eta_{8}\varepsilon_{9}$
\cite[Proposition 2.2 (4)]{Og}
and $\Sigma^2\nu'=2\nu_5$ \eqref{2nu5}, we have
\begin{equation}
\label{mu7nu}% 16 9-25
\mu_5\nu_{14}=0.
\end{equation}
%%%%%%%%%%%%%%%%%%%%%

%%%%%%%%%%%%%% 14
%By the equation $\Sigma \theta =P(\iota_{27})=[\iota_{13},\iota_{13}]$ \cite[(7.30)]{T},
%we have
%\[
%\begin{split}
%\eta_{12}(\Sigma \theta)\eta_{25}
%&=\eta_{12}\circ[\iota_{13},\iota_{13}]\circ\eta_{25}
%=[\eta_{12},\eta_{12}]\circ\eta_{25}\\
%&=[\iota_{12},\iota_{12}]\circ\Sigma(\eta_{11}\wedge\eta_{11})\circ\eta_{25}
%=[\iota_{12},\iota_{12}]\circ\eta^3_{23}
%=P(\eta^3_{25}).
%\end{split}
%\]
%So, by the equation $P(\eta^3_{25})=0$ \cite[(10.10)]{T},
%we obtain
%\begin{equation}\label{et12Ethet}% 14 12-26
%\eta_{12}(\Sigma \theta)\eta_{25}=0.
%\end{equation}
%%%%%%%%%%%%%%%%%%%%%%

%%%%%%%%%%%%% 16
By relations $\eta_9\bar{\varepsilon}_{10}=\nu_{9}\sigma_{12}\nu^2_{19}$ \eqref{epep},
and $\sigma_{12}\nu_{19}=0$  \eqref{sgm11n}, we have
\begin{equation}
\label{et9be}% 16 9-25
\eta_9\bar{\varepsilon}_{10}=0.
\end{equation}
By the fact $\Ker\{\Sigma:\pi^{14}_{30}\to\pi^{15}_{31}\}=\{2\omega_{14}\}$
\cite[Theorem 12.16]{T} and
the relation $2\iota_{15}\circ\Sigma\omega_{14}=2\Sigma\omega_{14}$, 
we have 
\[
2\iota_{14}\circ\omega_{14}\equiv 2\omega_{14}\ \bmod\ 2\omega_{14}.
\]
On the other hand, by the relation $H(\omega_{14})=\nu_{27}$ \eqref{Homega},
we have
\[
H(2\iota_{14}\circ\omega_{14})=\Sigma(2\iota_{13}\wedge 2\iota_{13})\circ H(\omega_{14})
=4\nu_{27}
\mand
H(2\omega_{14})=2\nu_{27}.
\]
Hence, we obtain
\begin{equation}\label{4ome14}% 16 14-30
2\iota_{14}\circ\omega_{14} =4\omega_{14}.
\end{equation}
%%%%%%%%%%%%%

%%%%%%%%%%%%% 18
By the relations $\bar{\varepsilon}_6=\eta_6\kappa_7$ \eqref{bep}, 
$\kappa_7\nu_{21}=\nu_7\kappa_{10}$ \eqref{n7k} 
and $\eta_6\nu_7=0$ \eqref{n2}, we have
\begin{equation}\label{be6n}% 18 6-24
\bar{\varepsilon}_6\nu_{21}=\eta_6\kappa_{7}\nu_{21}=\eta_6\nu_7\kappa_{10}=0.
\end{equation}
%%%%%%%%%%%%%

%%%%%%%%%%%%%%%%%%%%% 不要？
%By  (\ref{n7}), \cite[Theorem 12.8, (12.25), p. 166]{T} and using the EHP-sequence, we obtain 
%%We recall \cite[Lemmas 12.14, 12.19, Theorem 12.22]{T}
%\begin{equation}\label{sgm12zt}
%4\sigma_9\zeta_{16}=[\iota_9,\eta_9\mu_{10}],\ 2\sigma_{10}\zeta_{17}=[\iota_{10},\mu_{10}] \mand\sigma_{12}\zeta_{19}=8[\iota_{12},\sigma_{12}].
%\end{equation}
%
%$4\sigma_9\zeta_{16}=[\iota_9,\eta_9\mu_{10}]$ \cite[(6.7)]{GM}
%
%$2\sigma_{10}\zeta_{17}=[\iota_{10},\mu_{10}]$ \cite[(12.25)]{T}
%
%$\sigma_{12}\zeta_{19}=8[\iota_{12},\sigma_{12}]$ \cite[p.~166]{T}
%%%%%%%%%%%%%%%%%%%%%

%%%%%%%%%%%%%%%%%%%%% 18
By relations $H(\rho_{13})=\eta^3_{25}$ \cite[(10.22)]{T}
and $\nu_{22}\eta_{25}=0$ \eqref{n3},
we have
\[
H(\nu_{10}\rho_{13})=\Sigma(\nu_{9}\wedge\nu_{9})H(\rho_{13})
=\nu^2_{19}\eta^3_{25}=0.
\]
So, by the EHP-sequence, we have
$\nu_{10}\rho_{13}\in\Sigma\pi^9_{27}$.
Moreover, by the fact that
$\Sigma\pi^9_{27}
=\{\sigma_{10}\zeta_{17},\eta_{10}\bar{\mu}_{11}\}
\cong\Z_4\oplus\Z_2$ \cite[p. 164]{T}
and relations 
$\nu\rho=0$ \cite[Lemma 12.24]{T}
and $\eta\bar{\mu}\neq 0$ \cite[Theorem 12.22]{T},
we have $\nu_{10}\rho_{13}\in\{\sigma_{10}\zeta_{17}\}\cong\Z_4$.
Furthermore, by the relation
\[
2(\nu_{10}\rho_{13})=\nu_{10}(2\rho_{13})
=\nu_{10}\Sigma^4\rho'=\Sigma^4(\nu_6\rho')=0
\]
from $2\rho_{13}=\Sigma^4\rho'$ \cite[Lemma 10.9]{T}
and $\nu_6\rho'=0$ \cite[Lemma 6]{Os82}, we have
$\nu_{10}\rho_{13}\in\{2\sigma_{10}\zeta_{17}\}$.
In addition, the equation 
$2\sigma_{11}\zeta_{18}=0$ \eqref{n7} implies 
$\nu_{11}\rho_{14}=0$. Thus, we obtain
\begin{equation}\label{nu11ro}% 18 11-29
\nu_{10}\rho_{13}\in\{2\sigma_{10}\zeta_{17}\}\cong\Z_2
\mand
\nu_{11}\rho_{14}=0. 
\end{equation}
%%%%%%%%%%%%%

%%%%%%%%%%%%% 19
By relations
$\mu^2_3\equiv \eta_3\bar{\mu}_4\bmod 2\mu'\sigma_{14}$
\cite[Proposition 2.17 (9)]{Og},
$2\eta_3=0$ \eqref{2eta} and 
$\eta^2_5\bar{\mu}_7=4\bar{\zeta}_5$ \eqref{4bze5}, we have
\begin{equation}\label{etmumu}% 19 3-22
\eta_3\mu^2_4 =\eta^2_3\bar{\mu}_5
\mand \eta_5\mu^2_6=4\bar{\zeta}_5.
\end{equation}
%%%%%%%%%%%%%

%%%%%%%%%%%%% 19
By the relations 
$\nu_5\sigma_8\nu^2_{15}=\eta_5\bar{\varepsilon}_6$ \eqref{epep} and 
$\bar{\varepsilon}_6\nu_{21}=0$ \eqref{be6n}, we have
$\nu_5\sigma_8\nu^3_{15}=\eta_5\bar{\varepsilon}_6\nu_{21}=0$.
By $4\zeta_5=\eta^2_5\mu_7$ \cite[(7,14)]{T} and 
$2\varepsilon_{16}=0$ \cite[Theorem 7.1]{T}, we have
$\eta^2_5\mu_7\varepsilon_{16}=0$.
Moreover,
by $\nu_5(\Sigma\sigma')=2\nu_5\sigma_8$ \cite[(7.16)]{T}
and $2\mu_{15}=0$ \cite[Lemma 6.5]{T},
we have
$\nu_5(\Sigma\sigma')\mu_{15}=\nu_5(\Sigma\sigma')\eta_{15}\varepsilon_{16}=0$.
Hence, we obtain
\begin{equation}\label{n5sn^3}% 19 5-24
\nu_5\sigma_8\nu^3_{15}=\eta_5\bar{\varepsilon}_6\nu_{21}
=\eta^2_5\mu_7\varepsilon_{16}
=\nu_5(\Sigma\sigma')\mu_{15}=\nu_5(\Sigma\sigma')\eta_{15}\varepsilon_{16}=0.
\end{equation}
%%%%%%%%%%%%%

%%%%%%%%%%%%%%%%%%%%% 不要？
%By the relations 
%$\bar{\zeta}_5\eta_{24}\equiv \zeta_5\mu_{16}\equiv \nu_5\bar{\mu}_8\
%	\bmod\ \nu_5\eta_8\mu_9\sigma_{18}$
%\cite[II-Proposition 2.2(1);(2)]{Od},
%$\nu_6\bar{\mu}_9=16P(\rho_{13})$
%\cite[(16.6)]{MT} and $\nu_6\eta_9=0$ \eqref{n3}, we have
%\begin{equation}\label{nu6bm}% 19 6-25
%\bar{\zeta}_6\eta_{25}=\zeta_6\mu_{17}=\nu_6\bar{\mu}_9=16P(\rho_{13}).
%\end{equation}
%%%%%%%%%%%%%

%%%%%%%%%%%%% 19
By the relations $\zeta_7\bar{\nu}_{18}=\zeta_7\varepsilon_{18}=0$
\eqref{ze5bnu} and
$\bar{\nu}_{18}+\varepsilon_{18}=\eta_{18}\sigma_{19}$
\eqref{et9sg}, we have
\begin{equation}\label{ze7etsg}% 19 7-26
\zeta_7\eta_{18}\sigma_{19}=0.
\end{equation}
%%%%%%%%%%%%%

%%%%%%%%%%%%% 19
%By the relation $\bar{\nu}\zeta=0$ \cite[Theorem 14.1 iv)]{T}
%and the fact 
%$\pi^{7}_{26}\cong\pi^S_{19}$ \cite[Theorems 12.9, 12.23]{T},
%we have
%\begin{equation}\label{bn7z}% 19 7-26
%\bar{\nu}_7\zeta_{15}=0.
%\end{equation}
%%%%%%%%%%%%%

%%%%%%%%%%%%% 19
By relations %$\Sigma\lambda''=2\lambda'$,
%$\Sigma^2\lambda'=2\lambda$,
%$\Sigma^4\lambda=2\nu^*_{17}$
\eqref{2lambda}, 
%$\Sigma\xi''=2\xi'$, 
%$\Sigma^2\xi'=2\xi_{13}$
\eqref{2xi} and $2\eta_{3}=0$ \eqref{2eta},
we obtain
\begin{equation}\label{Elameta}% 19 11-30
\Sigma(\lambda''\eta_{28})=\Sigma(\xi''\eta_{28})=0,\ 
\Sigma^2(\lambda'\eta_{29})=\Sigma^2(\xi'\eta_{29})=0 
\mand
\Sigma^4(\lambda\eta_{31})=0.
\end{equation}

%%%%%%%%%%%%% 19
By the relation$H(\omega_{14})=\nu_{27}$ \eqref{Homega},
we have
\[
H(\nu_{11}\omega_{14})=\Sigma(\nu_{10}\wedge\nu_{10})\circ H(\omega_{14})
=\nu^2_{21}\circ\nu_{27}=\nu^3_{21}.
\]
By 
$H(\lambda')=\varepsilon_{21}$ \eqref{Hlam},
$H(\xi')=\varepsilon_{21}+\bar{\nu}_{21}$ \eqref{Hxi}, 
$2\eta_{29}=0$ \eqref{2eta}
and $\bar{\nu}_{21}\eta_{29}=\nu^3_{21}$ \eqref{etbn},
we have
\[
H((\lambda'+\xi')\eta_{29})
=H(\lambda')\eta_{29}+H(\xi')\eta_{29}
=2\varepsilon_{21}\eta_{29}+\bar{\nu}_{21}\eta_{29}
=\nu^3_{21}.
\]
So, by using the EHP-sequence and \cite[Theorem 12.23, (12.22)]{T}, we have
\[
\nu_{11}\omega_{14}\equiv (\lambda'+\xi')\eta_{29}
\bmod\ \Sigma\pi^{10}_{29}=\{\bar{\sigma}_{11},\bar{\zeta}_{11}\}\cong\Z_2\oplus\Z_8,
\]
and $\pi^S_{19}=\{\bar{\sigma},\bar{\zeta}\}\cong\Z_2\oplus\Z_8$.
Since $\nu\omega=0$ by $P(\nu_{35})=\omega_{17}\nu_{33}$ \cite[p.~170]{T}
and $\Sigma^2((\lambda'+\xi')\eta_{29})=0$
by \eqref{Elameta},
we obtain 
\begin{equation}\label{n11om}% 19 11-30, 13-32
\nu_{11}\omega_{14}=(\lambda'+\xi')\eta_{29}\text{\ \ and\ \ } 
\nu_{13}\omega_{16}=0.
\end{equation}
%%%%%%%%%%%%% 

%%%%%%%%%%%%% 19
By relations 
$(2\nu_{11})\omega_{14}
=\nu_{11}\circ 2\iota_{14}\circ \omega_{14}
=4\nu_{11}\omega_{14}$ \eqref{4ome14},
\eqref{n11om}
and $2\eta_{29}=0$ \eqref{2eta}, we have
\begin{equation}\label{2nu11ome}% 19 11-30
(2\nu_{11})\omega_{14}=0.
\end{equation}
%%%%%%%%%%%%% 

%%%%%%%%%%%%% 19
We recall from \cite[I-Proposition 3.4 (3)]{Od} the relation:
\[
\{\nu_{11},\sigma_{14},\bar{\nu}_{21}\}
\ni \bar{\sigma}_{11}+\alpha
\text{\ \ for some }
\alpha\in\{\xi'\eta_{29}, \lambda'\eta_{29}\}.
\]
By \eqref{Elameta}, we have 
$\bar{\sigma}_{13}\in\{\nu_{13},\sigma_{16},\bar{\nu}_{23}\}$.
For $n\ge 15$, by \cite[Theorems 7.4, 12.16]{T}, we have
\[
\mathrm{Ind}\{\nu_{n},\sigma_{n+3},\bar{\nu}_{n+10}\}
=\nu_{n}\circ\pi^{n+3}_{n+19}+\pi^{n}_{n+11}\circ\bar{\nu}_{n+11}
=\{\nu_{n}\omega_{n+3}, \nu_{n}\sigma_{n+3}\mu_{n+10},
\zeta_{n}\bar{\nu}_{n+11}\}.
\]
So, by relations $\nu_{n}\omega_{n+3}=0$ \eqref{n11om},
$\nu_{n}\sigma_{n+3}=0$ \eqref{nu10sg}
and
$\zeta_{n}\bar{\nu}_{n+11}=0$ \eqref{ze5bnu},
we have
\begin{equation}\label{bs15}% 19 13-32
\bar{\sigma}_{n}=\{\nu_{n},\sigma_{n+3},\bar{\nu}_{n+10}\}
\text{\ \ for\ } n\geq 15.
\end{equation}
%%%%%%%%%%%%%%%%%%%%%

%We recall the element $\omega'\in\pi^{12}_{31}$ \cite[Lemma 12.21, (12.27), p.~166]{T}: 
%\begin{equation}\label{e2omg}
%\Sigma^2\omega'=2\omega_{14}\nu_{30}=[\iota_{14},\nu^2_{14}] \text{\ and\ }
%H(\omega')\equiv\varepsilon_{23}\ 
%\bmod\ \varepsilon_{23}+\bar{\nu}_{23}.
%\end{equation}
%%%%%%%%%%%%%%%%%%%%% 不要？
%By relations 
%$\nu_{13}\eta^*_{16}\equiv E\omega'\bmod\ \xi_{13}\eta_{31}$
%\cite[Proposition 2.20(8)]{Og},
%$\Sigma^2\omega'=[\iota_{14},\nu^2_{14}]$ \eqref{e2omg} and 
%$\xi_{14}\eta_{32}=0$ \eqref{x13et}, 
%we have 
%\begin{equation}
%\label{n14et*}% 19
%\nu_{14}\eta^*_{17}=[\iota_{14},\nu^2_{14}].
%\end{equation}
%%%%%%%%%%%%%%%%%%%%%

%%%%%%%%%%%%%%%%%%%%%%%%%%%%% 21
%By $\nu_6\zeta_{9}=\zeta_6\nu_{17}$ \cite[Proposition 2.4(2)]{Og} and
%$\nu_{17}\sigma_{20}=0$ \eqref{nu10sg},
%\begin{equation}\label{nu6zs}% 21 6-27
%\nu_6\zeta_9\sigma_{20}=0.
%\end{equation}
%%%%%%%%%%%%%%%%%%%%%%%

%%%%%%%%%%%%%%%%%%%%%%%%%% 23
By relations 
$\bar{\varepsilon}_3\eta_{18}=\varepsilon_3\bar{\nu}_{11}$ \eqref{epep}
and $\bar{\nu}_{11}\sigma_{19}=0$ \eqref{bnu6sg}, 
we have
\begin{equation}\label{bep3etas}% 23 3-26
\bar{\varepsilon}_3\eta_{18}\sigma_{19}=0.
\end{equation}
%%%%%%%%%%%%%%%

%%%%%%%%%%%%%%%%%%%%%%%%%% 23
%By relations
%$\bar{\varepsilon}_{14}=\eta_{14}\kappa_{15}$ \eqref{bep},
%$\bar{\nu}_6\eta_{14}=\nu^3_6$ \eqref{etbn},
%$\nu^2_{9}\kappa_{15}=4\bar{\kappa}_{9}$ \cite[Theorem 15.4]{MT},
%we have
%\begin{equation}\label{bn8bep}% 23  6-29
%\bar{\nu}_6\bar{\varepsilon}_{14}
%%=\bar{\nu}_6\eta_{14}\kappa_{15}
%%=\nu^3_6\kappa_{15}=\nu_6\nu^2_9\kappa_{15}
%%=\nu_6(4\bar{\kappa}_{9})
%=4\nu_6\bar{\kappa}_{9}.
%\end{equation}
%%%%%%%%%%%%%%%

%%%%%%%%%%%%%%%%%%%%%%%%%% 23
By relations
$\rho_{13}\eta_{28}=\mu_{13}\sigma_{22}$ \cite[Propsition 12.20 i)]{T}
and  $\mu_{13}\sigma^2_{22}=0$ \eqref{mu3s^2},
we have
\begin{equation}\label{r13etas}% 23  13-36
\rho_{13}\eta_{28}\sigma_{29}=0.
\end{equation}
%%%%%%%%%%%%%%%

%%%%%%%%%%%%%%%%%%%%%%%%% 24
By relations
$\Sigma\bar{\varepsilon}'=(\Sigma\nu')\kappa_7$ \cite[Lemma 12.3]{T}
and
$\kappa_7\sigma_{21}=0$ \eqref{k7s},
we have
$\Sigma(\bar{\varepsilon}'\sigma_{20})=(\Sigma\nu')\kappa_7\sigma_{21}=0$.
Since $\Sigma:\pi^3_{26}\to\pi^4_{28}$ is a monomorphism
\cite[Theorem 1.1 (b)]{MMO},
we have
\begin{equation}\label{bep's}% 24  3-27
\bar{\varepsilon}'\sigma_{20}=0.
\end{equation}
%%%%%%%%%%%%%%%%%%%%

%%%%%%%%%%%%%%%%%%%% 24
By relations 
$\zeta'\varepsilon_{22}=\zeta'\bar{\nu}_{22}=0$
\cite[I-Proposition 3.1 (2)]{Od} and 
$\eta_{22}\sigma_{23}=\bar{\nu}_{22}+\varepsilon_{22}$ \eqref{et9sg}, 
we have
\begin{equation}\label{z'etas}% 24 6-30 
\zeta'\eta_{22}\sigma_{23}=0.
\end{equation}
%%%%%%%%%%%%%%%%%%%%%

%%%%%%%%%%%%%%%%%%%% 24
By relations 
$\eta_{14}\sigma_{15}=\bar{\nu}_{14}+\varepsilon_{14}$ \eqref{et9sg},
$\bar{\nu}_{14}\mu_{22}=0$ \eqref{mepbn2} 
and
$\sigma'\varepsilon_{14}\mu_{22}=\bar{\zeta}'_7$ \cite[(5.10)]{MMO}, 
we have
$\sigma'\eta_{14}\mu_{15}\sigma_{24}
=\sigma'\eta_{14}\sigma_{15}\mu_{22}
=\sigma'(\bar{\nu}_{14}+\varepsilon_{14})\mu_{22}
=\bar{\zeta}'_7$ and
\begin{equation}\label{bz'7}% 24 7-31
\sigma'\eta_{14}\mu_{15}\sigma_{24}=\bar{\zeta}'_7.
\end{equation}
%%%%%%%%%%%%%%%%%%%%%

%%%%%%%%%%%%%%%%%%%% 24
By relations 
$\sigma'\nu_{14}=x\nu_7\sigma_{10}$ for some odd $x$
\cite[(7.19)]{T}, $2\kappa_{19}=0$ \cite[Theorem 10.3]{T}
and $\Sigma^2\sigma'=2\sigma_9$ \eqref{n4}, we have
$\nu_9\sigma_{12}\kappa_{19}
=(\Sigma^2\sigma')\nu_{16}\kappa_{19}=2\sigma_9\nu_{16}\kappa_{19}=0$ and
\begin{equation}\label{nu9sgka}% 24 9-33
\nu_9\sigma_{12}\kappa_{19}=0.
\end{equation}
%%%%%%%%%%%%%%%%%%%%%

%%%%%%%%%%%%%%%%%%%% 24
By relations 
$\phi_9\equiv \sigma_9\eta^*_{16}
\bmod\ \sigma^2_9\mu_{23},\ 4\nu_{9}\bar{\kappa}_{12}$
\cite[I-Propositions 3.4 (7)]{Od},
$\eta_9\sigma^2_{10}\mu_{24}=0$ \eqref{et9sg2}
and $\eta_9\nu_{10}=0$ \eqref{n2},
we have
\begin{equation}\label{eta9phi}% 24 9-33
\eta_9\phi_{10}=\eta_9\sigma_{10}\eta^*_{17}.
\end{equation}
%%%%%%%%%%%%%%%%%%%%%

%%%%%%%%%%%%%%%%%%%%% 24
By $\nu_{10}\lambda=\sigma_{10}\kappa_{17}$ \eqref{nulm} ,
$\kappa_{17}\nu_{31}=\nu_{17}\kappa_{20}$ \eqref{n7k}
and
$\sigma_{11}\nu_{18}=[\iota_{11},\iota_{11}]$ \eqref{sgm11n}, 
we obtain 
\begin{equation}\label{nulmn}% 24  10-34
\nu_{10}\lambda\nu_{31}=\sigma_{10}\nu_{17}\kappa_{20}
\mand
\Sigma(\nu_{10}\lambda\nu_{31})=[\iota_{11},\kappa_{11}].
\end{equation}
%%%%%%%%%%%%%%%%%%%%%

%%%%%%%%%%%%%%%%%%%%%%% 25
By relations 
$2\xi_{12}\sigma_{30}=\sigma_{12}\xi_{19}$ 
\cite[I-Proposition 3.5 (3)]{Od},
$2(\sigma_{12}\xi_{19}+\sigma_{12}\nu^*_{19})=0$
and
$8\xi_{12}\sigma_{30}=0$ 
\cite[Theorem 1 (a)]{Od}, we have
\begin{equation}\label{4xi12sg}% 25  12-37
4\xi_{12}\sigma_{30}=2\sigma_{12}\xi_{19}=2\sigma_{12}\nu^*_{19}.
\end{equation}
%%%%%%%%%%%%%%%%%%%%%

%%%%%%%%%%%%%%%%%%%%%%% 26 不要？
%We recall from \cite[p.~75]{Od} the relation:
%\[
%2\tau^{IV}\equiv \Sigma\tau'''+zP(\rho_{25})
%\bmod\ \sigma_{12}\bar{\sigma}_{19},\ 4\Sigma\tau'''
%\text{\ \ for some integer $z$}.
%\]
%By relations $\Sigma P(\rho_{25})=0$,
%$2\sigma_{13}\bar{\sigma}_{20}=0$
%and $8\tau'''=0$ \cite[I-Theorem 1(b)]{Od},
%we have
%\begin{equation}\label{2E2tau'''}% 26 13-39
%2\Sigma^2\tau'''=4\Sigma\tau^{IV}.
%\end{equation}
%%%%%%%%%%%%%%%%%%%%%%%%%%%

We need the following lemmas.
%%%%%%%%%%%%% 12
\begin{lem}\label{etnusg}% 12 10-22
\begin{enumerate}
\item
$\{\eta_{10},\nu_{11},\sigma_{14}\}=2[\iota_{10},\nu_{10}]$. 
\item
$\{\eta_n,\nu_{n+1},\sigma_{n+4}\}=0$ for $n\ge 11$. 
\end{enumerate}
\end{lem}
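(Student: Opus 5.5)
The plan is to first show that each bracket is a single element, then push part (1) into the image of $P$ and fix it by a Hopf-invariant computation. For (1), the indeterminacy is
\[
\mathrm{Ind}\{\eta_{10},\nu_{11},\sigma_{14}\}=\eta_{10}\circ\pi^{11}_{22}+\pi^{10}_{15}\circ\sigma_{15}.
\]
Here $\pi^{10}_{15}=0$ (the 5-stem vanishes in this range), and $\pi^{11}_{22}=\{\zeta_{11}\}$ with $\eta_{10}\zeta_{11}=0$ by \cite[Theorem 7.6]{T}. So the bracket is a single element. The same argument works for all $n\ge 11$, with $\pi^{n}_{n+5}=0$ and $\eta_n\zeta_{n+1}=0$.

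Part (2) comes first. The stable bracket $\langle\eta,\nu,\sigma\rangle$ lies in $\pi^S_{12}=0$. For $n\ge 11$ I would therefore check that $\{\eta_n,\nu_{n+1},\sigma_{n+4}\}$ suspends injectively into the stable range, or lies in a group that is already zero. From \cite[Theorem 7.6]{T}, $\pi^{n}_{n+12}=0$ for $n\ge 14$. For $n=11,12,13$ I would use the EHP sequence together with the identity $\Sigma\{\eta_n,\nu_{n+1},\sigma_{n+4}\}\subset\{\eta_{n+1},\nu_{n+2},\sigma_{n+5}\}_1$ and the known small groups $\pi^{11}_{23},\pi^{12}_{24},\pi^{13}_{25}$. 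In each case the Hopf invariant of the bracket lands in $\pi^{2n-1}_{n+12}$ and vanishes: one uses \eqref{nu10sg} (so $\nu_{11}\sigma_{14}=0$) and \eqref{n2} ($\eta_n\nu_{n+1}=0$), and the bracket is represented by a map that desuspends through an element annihilated by $H$. The elements $\theta$ and $\theta'$ that are not in the image of the suspension from below are detected by $H$ and are excluded this way.

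For (1), $\Sigma\{\eta_{10},\nu_{11},\sigma_{14}\}\subset\{\eta_{11},\nu_{12},\sigma_{15}\}_1=0$ by (2), or by the same argument at $n=11$. By exactness of the EHP sequence the bracket therefore lies in
\[
P(\pi^{21}_{24})=\{P(\nu_{21})\}=\{[\iota_{10},\nu_{10}]\}\cong\Z_4 .
\]
Since $H([\iota_{10},\nu_{10}])=H([\iota_{10},\iota_{10}])\circ\nu_{19}=2\nu_{19}$ in $\pi^{19}_{22}\cong\Z_8$, it suffices to show $H\{\eta_{10},\nu_{11},\sigma_{14}\}=4\nu_{19}$. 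For this I would apply Toda's formula for the Hopf invariant of a bracket, \cite[Proposition 2.6]{T}. Write $\eta_{10}\nu_{11}=0$ via $\eta_9\nu_{10}=0$; then $H\{\eta_{10},\nu_{11},\sigma_{14}\}_1$ is expressed through $P^{-1}$ of a null-homotopy. Concretely, it should reduce to $\pm\Sigma^{2}\{\ldots\}$ or to a composite $(\text{something in }\pi^{19}_{15+4})\circ\ldots$, which I expect to equal $\nu_{19}\circ(\text{odd multiple of }4\iota)$, using $\nu_{10}\sigma_{13}=2\sigma_{10}\nu_{17}=[\iota_{10},\eta_{10}]$ from \eqref{nu10sg}.

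The Hopf-invariant computation in (1) is the main obstacle. An alternative I would try first is a Jacobi-type manipulation. Using $[\iota_{10},\eta_{10}]=\nu_{10}\sigma_{13}$, one would rewrite $\eta_{10}\circ\{\nu_{11},\sigma_{14},\ldots\}$ or $\{\eta_{10},\nu_{11},\sigma_{14}\}$ as $[\iota_{10},\iota_{10}]\circ\{\eta_{19},\nu_{20},\sigma_{23}\}$-type expressions. This identifies the bracket with $P$ of an element whose class in $\pi^{21}_{24}$ is $2\nu_{21}$. Finally, nontriviality would be confirmed by $H(2[\iota_{10},\nu_{10}])=4\nu_{19}\neq 0$.
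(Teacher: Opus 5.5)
\textbf{There is a genuine gap in (1).} The reduction itself is sound. The bracket has zero indeterminacy and lies in $\pi^{10}_{22}=\{[\iota_{10},\nu_{10}]\}\cong\Z_4$; this already follows from \cite[Theorem 7.6]{T}, so you do not need (2) to place it in the image of $P$. Since $H$ is injective there, everything hinges on showing $H\{\eta_{10},\nu_{11},\sigma_{14}\}=4\nu_{19}$, and that is where the proposal breaks down.

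\cite[Proposition 2.6]{T} computes $H\{\Sigma\alpha,\Sigma\beta,\Sigma\gamma\}_1$, and that bracket requires $\beta\circ\gamma=0$ before suspension. Here $\beta\circ\gamma=\nu_{10}\sigma_{13}=[\iota_{10},\eta_{10}]\neq0$ by \eqref{nu10sg}, the very relation you quote, so $\{\eta_{10},\nu_{11},\sigma_{14}\}_1$ is not defined. If you apply the formula anyway, it gives $-P^{-1}(\eta_9\nu_{10})\circ\sigma_{15}\subset\pi^{19}_{15}\circ\sigma_{15}=0$, which is the wrong answer. The nonzero Hopf invariant comes precisely from the null-homotopy of $\nu_{11}\sigma_{14}$ failing to desuspend: its obstruction is $\eta_{21}$, with $P(\eta_{21})=\nu_{10}\sigma_{13}$, and no subscript-$1$ formula sees this. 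Your ``Jacobi-type'' fallback does not help either. The expression $[\iota_{10},\iota_{10}]\circ\{\eta_{19},\nu_{20},\sigma_{23}\}$ lies in $\pi^{10}_{31}$, not $\pi^{10}_{22}$. Identifying the bracket with $P$ of $2\nu_{21}$ is the conclusion restated, not an argument.

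The missing idea, which is what the paper uses, is to carry the unstable information on the coextension. Represent the bracket as $\ext(\eta_{10})\circ\coe(\sigma_{14})$ on $\Sigma^7\H P^2\simeq S^{11}\cup_{\nu_{11}}e^{15}$ (2-locally), with $\ext(\eta_{10})=\Sigma\ext(\eta_9)$. Then $H(\ext(\eta_{10})\circ\coe(\sigma_{14}))=\Sigma(\ext(\eta_9)\wedge\ext(\eta_9))\circ H(\coe(\sigma_{14}))$. Next, the EHP sequence of $\Sigma^7\H P^2$ shows that $H(\coe(\sigma_{14}))$ is the nonzero class $\Sigma(\Sigma^6i_\H\wedge\Sigma^6i_\H)\circ\eta_{21}$, for three reasons:
\begin{enumerate}
\item the relevant group is $\Z_2$, generated by this class;
\item its $P$-image $(\Sigma^6i_\H)\nu_{10}\sigma_{13}$ vanishes, so $H$ is onto;
\item $\coe(\sigma_{14})$ is an odd multiple of the generator of $\pi_{22}(\Sigma^7\H P^2:2)\cong\Z_{128}$.
\end{enumerate}
This yields $\Sigma(\eta_9\wedge\eta_9)\circ\eta_{21}=\eta^3_{19}=4\nu_{19}$.

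Two smaller points. First, at $n=11$ your indeterminacy count omits $[\iota_{12},\iota_{12}]\in\pi^{12}_{23}$. You still need $\eta_{11}\circ[\iota_{12},\iota_{12}]=[\iota_{11},\iota_{11}]\circ\eta^2_{21}=P(H(\theta))\circ\eta_{22}=0$. Second, (2) needs no EHP case analysis for $n=11,12,13$. With zero indeterminacy, $\{\eta_n,\nu_{n+1},\sigma_{n+4}\}\supset\pm\Sigma^{n-10}\{\eta_{10},\nu_{11},\sigma_{14}\}\subset\Sigma^{n-10}\pi^{10}_{22}=0$, because $\pi^{10}_{22}$ is generated by a Whitehead product.
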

\begin{proof}
By relations $\eta_5\nu_6=0$ \eqref{n2} and 
$\nu_{11}\sigma_{12}=0$ \eqref{nu10sg},
the Toda bracket 
$\{\eta_n,\nu_{n+1},\sigma_{n+4}\}$ 
is well-defined for $n\ge 10$.
From the facts $\pi^{12}_{23}=\{[\iota_{12},\iota_{12}],\zeta_{12}\}$, $\pi^{k}_{k+11}=\{\zeta_{k}\}$ ($k=10$, $11$ and $k\ge 13$) and $\pi^\ell_{\ell+5}=0$ ($\ell\ge 7$) \cite[Proposition 5.9, Theorem 7.4]{T} and
relations $\eta_5\zeta_6=0$ \cite[Proposition 2.2 (5)]{Og} and
\[
\begin{split}
\eta_{11}\circ[\iota_{12},\iota_{12}]&=[\eta_{11},\eta_{11}]
=[\iota_{11},\iota_{11}]\circ\Sigma(\eta_{10}\wedge\eta_{10})
=[\iota_{11},\iota_{11}]\circ\eta^2_{21}\\
&=P(\eta_{23})\circ\eta_{22}=P(H(\theta))\circ\eta_{22}=0
\end{split}
\]
from $H(\theta)=\eta_{23}$ \cite[Lemma 7.5]{T}, 
we have 
\[
\mathrm{Ind}\{\eta_n,\nu_{n+1},\sigma_{n+4}\}
=\eta_n\circ\pi^{n+1}_{n+12}+\pi^n_{n+5}\circ\sigma_{n+5}
=0 
\text{\ \ for $n\ge 10$}.
\]
Hence $\{\eta_n,\nu_{n+1},\sigma_{n+4}\}$ consists of a single element for $n\ge 10$.
%Hence, to show (2), it suffices to show that 
%$0\in\{\eta_{11},\nu_{12},\sigma_{15}\}$. % by using \cite[Proposition 1.3]{T}. 
%From the fact that  $\{\eta_{10},\nu_{11},\sigma_{14}\}\subset\pi^{10}_{22}=\{[\iota_{10},\nu_{10}]\}$ \cite[Theorem 7.6]{T}, we have 
%$\{\eta_{11},\nu_{12},\sigma_{15}\}\supset \Sigma\{\eta_{10},\nu_{11},\sigma_{14}\}=0$. 
%Thus we obtain (2).

Next we show (1). %We use the relation $\nu_{10}\sigma_{13}=[\iota_{10},\eta_{10}]$ (\ref{nu10sg}).  
Let $\H P^2$ be the quaternionic projective plane,
$i_\H: S^4=\H P^1 \hookrightarrow\H P^2$ the inclusion
and $p_\H:\H P^2\to\H P^2/\H P^1=S^{8}$ the collapsing map. 
We know that $\H P^2$ is $2$-local homotopy equivalent to $S^4\cup_{\nu_4}e^8$.
For a CW-pair $(X,Y)$ and a CW-complex $Z$, we denote by 
$\pi_{k}(X:2)$, $\pi_{k}(X,Y:2)$ and $[X,Z:2]$
the direct sum of the torsion-free part and the $2$-primary component of 
$\pi_{k}(X)$, $\pi_{k}(X,Y)$ and $[X,Z]$, respectively. 
By \cite[Proposition 1.7]{T} and $\mathrm{Ind}\{\eta_{10},\nu_{11},\sigma_{14}\}=0$, there exist an extension $\ext({\eta}_{10})\in[\Sigma^7\H P^2,S^{10}:2]$ of $\eta_{10}$ 
and a coextension $\coe({\sigma}_{14})\in\pi_{22}(\Sigma^7\H P^2:2)$ of $\sigma_{14}$ such that
$\{\eta_{10},\nu_{11},\sigma_{14}\}=\ext({\eta}_{10})\coe({\sigma}_{14})$. 
We recall from \cite[Theorem 6.7.4]{YMW} that:
\[\pi_{22}(\Sigma^7\H P^2:2)=\{\overline{\sigma_{15}}\}\cong\Z_{128},
\text{\ where\ \ } (\Sigma^7p_\H)_*(\overline{\sigma_{15}})=\sigma_{15}.
\]
Since $\sharp\sigma_{15}=16$ \cite[Proposition 5.5]{T} and 
$(\Sigma^7p_\H)_*(\coe({\sigma}_{14}))=\Sigma\sigma_{14}=\sigma_{15}$,
we can set $\coe({\sigma}_{14})=(16k+1)\overline{\sigma_{15}}$ for some integer $k$.

We examine the EHP-sequence (see \cite[Theorem 4.9]{T1})
\[
\pi_{22}(\Sigma^7\H P^2:2)\rarrow{H}\pi_{22}(\Sigma(\Sigma^6\H P^2\wedge \Sigma^6\H P^2):2)\rarrow{P}\pi_{20}(\Sigma^6\H P^2:2). 
\]
By using the homotopy exact sequence of a pair 
$(\Sigma(\Sigma^6\H P^2\wedge \Sigma^6\H P^2),S^{21})$ and the facts that
$\H P^2 \wedge \H P^2$ has a cell structure 
$S^8 \cup e^{12}\cup e^{12}\cup e^{16}$,
$\pi^{21}_{22}=\{\eta_{21}\}\cong\Z_2$ \cite[Proposition 5.1]{T}
and
$\pi_{k}(\Sigma(\Sigma^6\H P^2\wedge \Sigma^6\H P^2),S^{21}:2)
\cong\pi_{k}((S^{25}\vee S^{25})\cup e^{29}:2)=0$ for $k=22$, $23$
from Blakers–Massey theorem \cite[VII-(7.12)]{WG78},
we have
\[
\pi_{22}(\Sigma(\Sigma^6\H P^2\wedge \Sigma^6\H P^2):2)
=\{\Sigma(\Sigma^6i_\H\wedge \Sigma^6i_\H)\circ\eta_{21}\}\cong\Z_2.
\]
By the naturally of the $P$-homomorphism:
\[
\begin{CD}
\pi^{21}_{22} @>{P}>> \pi^{10}_{20}\\
@V{(\Sigma(\Sigma^6i_\H\wedge \Sigma^6i_\H))_*}VV @V{(\Sigma^6i_\H)_*}VV \\
\pi_{22}(\Sigma(\Sigma^6\H P^2\wedge \Sigma^6\H P^2):2) @>{P}>> \pi_{20}(\Sigma^6\H P^2:2), 
\end{CD}
\]
and the relation $P(\eta_{21})=[\iota_{10},\eta_{10}]=\nu_{10}\sigma_{13}$ \eqref{nu10sg},
we have
\[
\begin{split}
P(\Sigma(\Sigma^6i_\H\wedge \Sigma^6i_\H)\circ\eta_{21})
=(\Sigma^6 i_\H)\circ P(\eta_{21}) = (\Sigma^6 i_\H)\circ\nu_{10}\sigma_{13}.
\end{split}
\]
Since $S^{13}\rarrow{\nu_{10}}S^{10}\rarrow{\Sigma^6 i_\H}\Sigma^6\H P^2$
is a cofibration in $2$-local,
we have
$P(\Sigma(\Sigma^6i_\H\wedge \Sigma^6i_\H)\circ\eta_{21})=0$
and $H$ is an epimorphism.
So, we obtain $H(\overline{\sigma_{15}})=\Sigma(\Sigma^6i_\H\wedge \Sigma^6i_\H)\circ\eta_{21}$
and
\[
H(\coe({\sigma}_{14}))
=H((16k+1)\overline{\sigma_{15}})=\Sigma(\Sigma^6i_\H\wedge \Sigma^6i_\H)\circ\eta_{21}.
\]
Since $\Sigma:[\Sigma^6\H P^2,S^9:2]\to [\Sigma^7\H P^2,S^{10}:2]$ is an isomorphism,
we can set $\ext({\eta}_{10})=\Sigma\ext({\eta}_{9})$,
where $\ext({\eta}_{9})\in[\Sigma^6\H P^2,S^9:2]$ is an extension of $\eta_9$. 
Thus, by $\eta^3_{19}=4\nu_{19}$ \cite[(5.5)]{T}, we have
\[
\begin{split}
H(\ext({\eta}_{10})\coe({\sigma}_{14}))
&=H(\Sigma\ext({\eta}_{9})\circ\coe({\sigma}_{14}))
=\Sigma(\ext({\eta}_{9})\wedge \ext({\eta}_{9}))\circ H(\coe({\sigma}_{14}))\\
&=\Sigma(\ext({\eta}_{9})\wedge \ext({\eta}_{9}))
\circ \Sigma(\Sigma^6i_\H\wedge \Sigma^6i_\H)
\circ\eta_{21}\\
&=\Sigma(\eta_{9}\wedge\eta_{9})\circ\eta_{21}=\eta^3_{19}=4\nu_{19}.
\end{split}
\]
Hence, by the relation $H(2[\iota_{10},\nu_{10}])=H([\iota_{10},\iota_{10}])\circ 2\nu_{19}=(\pm2\iota_{19})\circ 2\nu_{19}=4\nu_{19}$ from \cite[Proposition 2.7]{T}
and the fact that $H:\pi^{10}_{22}\to\pi^{19}_{22}$ is a monomorphism \cite[p.~79]{T}, we obtain $\ext({\eta}_{10})\coe({\sigma}_{14})=2[\iota_{10},\nu_{10}]$ and (1).

By using \cite[Proposition 1.3]{T} and the relation (1), we obtain
\[
\{\eta_n,\nu_{n+1},\sigma_{n+4}\}
\supset (-1)^{n-10}\Sigma^{n-10}\{\eta_{10},\nu_{11},\sigma_{14}\}
=(-1)^{n-10}\Sigma^{n-10}(2[\iota_{10},\nu_{10}])=0
\]
and (2). This completes the proof. 
\end{proof}
%%%%%%%%%%%%%%%%%%%%%

%%%%%%%%%%%%%%%%%%%%%%%% 16, 18, 21
\begin{lem}\label{a0}
\begin{enumerate}
\item
$\{\eta_{15},2\sigma_{16},\sigma_{23}\}=\omega_{15}+\eta^{*\prime}+\{\sigma_{15}\mu_{22}\}$.
\item
$\nu_{13}\circ\pi^{16}_{31}=0$.
\item
$\{\nu_{13},2\sigma_{16},\sigma_{23}\}=\{\nu_{13},\sigma_{16},2\sigma_{23}\}=\xi_{13}+x(\lambda+2\xi_{13})$
for some odd $x$.
\item
$\{\nu^2_{10},2\sigma_{16},\sigma_{23}\}
=\sigma_{10}\kappa_{17}+\{\sigma^3_{10}\}$.
\end{enumerate}
\end{lem}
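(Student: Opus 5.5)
Each of the four parts asks us to identify a Toda bracket, or to show a composite vanishes, in the $16$–$18$ stems of Toda's tables. The method is the same throughout: check the bracket is defined, compute its indeterminacy from Toda's tables, and pin down a representative with known relations, mainly \eqref{eta*16}, \eqref{xi12}, \eqref{n9xi}, \eqref{nulm} and \eqref{n11om}.

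For (1) I would start from \eqref{eta*16}, $\eta^*_{16}\in\{\sigma_{16},2\sigma_{23},\eta_{30}\}_1$, together with Toda's definition of $\eta^{*\prime}\in\pi^{15}_{31}$ and of $\omega_{14}$ with $H(\omega_{14})=\nu_{27}$. The bracket $\{\eta_{15},2\sigma_{16},\sigma_{23}\}$ is defined because $\eta_{15}\circ 2\sigma_{16}=0$ and $2\sigma^2_{16}=0$. Its indeterminacy is $\eta_{15}\circ\pi^{16}_{31}+\pi^{15}_{23}\circ\sigma_{23}$. The first summand is generated by $\eta_{15}\rho_{16}=\sigma_{15}\mu_{22}$ (using $\eta\rho=\sigma\mu$ in this range) and by $\eta_{15}\sigma_{16}\mu_{23}$-type terms, which reduce to multiples of $\sigma_{15}\mu_{22}$. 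The second summand is generated by $\bar\nu_{15}\sigma_{23}=0$ and $\varepsilon_{15}\sigma_{23}=0$ (\eqref{bnu6sg}). So the indeterminacy is $\{\sigma_{15}\mu_{22}\}$.

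To identify the element itself, I would apply $H$ at $S^{15}$ and compare with $\Sigma\omega_{14}$ and $\eta^{*\prime}$. The $H$-image of a bracket of this shape is $\pm\{\eta,2\sigma,\sigma\}$ evaluated on the stable Hopf-invariant part. Alternatively, I would use the Jacobi identity to relate it to $\{\sigma_{15},2\sigma_{22},\eta_{29}\}$, which contains $\eta^*$-type elements, and then to Toda's characterization $[\iota_{15},\iota_{15}]$-related relations $\eta^*_{16}+\omega_{16}\equiv[\iota_{16},\iota_{16}]\circ\ldots$. The main obstacle is fixing the coefficients of $\omega_{15}$ and $\eta^{*\prime}$ separately. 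I would detect $\omega_{15}$ by $H$ of a desuspension (the $\nu$ Hopf invariant), and detect $\eta^{*\prime}$ by $H(\eta^{*\prime})=\sigma_{29}$ combined with the stable value $\langle\eta,2\sigma,\sigma\rangle=\omega+\eta^*$ (mod $\sigma\mu$).

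For (2), $\pi^{16}_{31}=\{\rho_{16},\bar\varepsilon_{16},\sigma_{16}\eta_{23}\mu_{24}\text{-type},\ldots\}$, the $15$-stem. I would treat the generators in turn: $\nu_{13}\rho_{16}=0$ by \eqref{nu11ro}, $\nu_{13}\bar\varepsilon_{16}=\nu_{13}\eta_{16}\kappa_{17}=0$ by \eqref{bep} and \eqref{n2}, and any term starting with $\sigma$ vanishes by $\nu_{13}\sigma_{16}=0$ \eqref{nu10sg}.

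For (3), the two brackets agree because $\{\nu_{13},2\sigma_{16},\sigma_{23}\}$ and $\{\nu_{13},\sigma_{16},2\sigma_{23}\}$ both contain $\{\nu_{13},\sigma_{16},2\iota,\sigma\}$-type elements. More concretely, both contain the composite $\nu_{13}\circ\mathrm{Coe}$ with the same extension. Their indeterminacies are $\nu_{13}\circ\pi^{16}_{31}+\pi^{13}_{24}\circ\sigma_{24}$, which is $0$ by (2) and $\zeta_{13}\sigma_{24}=0$ \eqref{n7}. I would identify the element by applying $H$ after a desuspension: $\{\nu_{11},2\sigma_{14},\sigma_{21}\}$ has $H$-image computed via $H(\xi')=\eta_{21}\sigma_{22}$ and $H(\lambda')=\varepsilon_{21}$, and I would compare with \eqref{xi12} suspended, using Lemma \ref{HPxl13}-style control of $\xi,\lambda$ modulo $2$. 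Oddness of $x$ comes from the $\lambda$ component being detected by $\varepsilon$.

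For (4), I would use $\{\nu^2_{10},2\sigma_{16},\sigma_{23}\}\supset\nu_{10}\circ\{\nu_{13},2\sigma_{16},\sigma_{23}\}=\nu_{10}(\xi_{13}+x\lambda+2x\xi_{13})$. By \eqref{n9xi} this is $\sigma^3_{10}$ times odd plus $\nu_{10}\lambda=\sigma_{10}\kappa_{17}$ \eqref{nulm}. The indeterminacy $\nu^2_{10}\circ\pi^{16}_{31}+\pi^{10}_{24}\circ\sigma_{24}$ is $\{\sigma^3_{10}\}$ after using (2) and the $14$-stem generators $\sigma_{10}^2$ and $\kappa_{10}$ (with $\kappa_{10}\sigma_{24}=0$ by \eqref{k7s}).
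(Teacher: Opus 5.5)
Your argument for (4) is the paper's. The equality of the two brackets in (3) is also fine via \cite[Proposition 1.2 ii)]{T}, once the indeterminacy vanishes. But the real content of (1) and (3) is not established. That content is the memberships $\omega_{15}+\eta^{*\prime}\in\{\eta_{15},2\sigma_{16},\sigma_{23}\}$ and $\{\nu_{13},\sigma_{16},2\sigma_{23}\}=\xi_{13}+x(\lambda+2\xi_{13})$ with $x$ odd. The paper takes both from Oda \cite[I-Proposition 3.4 (6), (8)]{Od}, and the derivations you sketch would fail.

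In (1), $H$ on $\pi^{15}_{31}$ lands in the $2$-stem $\pi^{29}_{31}$, so ``$H(\eta^{*\prime})=\sigma_{29}$'' is impossible. Also $H(\omega_{15})=0$, since $\omega_{15}$ is a suspension. There is no desuspension to apply $H$ to either: the bracket does not exist on $S^{14}$, because $2\sigma_{15}\circ\sigma_{22}=2\sigma^2_{15}\neq 0$. Stably $\omega\equiv\eta^*\bmod\sigma\mu$ \cite[Proposition 12.20 ii)]{T}. So your claimed value $\langle\eta,2\sigma,\sigma\rangle=\omega+\eta^*$ would contain $0$. In fact $\langle\eta,2\sigma,\sigma\rangle=\omega+\{\sigma\mu\}$, and the paper later uses $\omega\in\langle\eta,2\sigma,\sigma\rangle$. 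In any case stabilization cannot see $\eta^{*\prime}$, because $\Sigma^2\eta^{*\prime}\in\{\sigma_{17}\mu_{24}\}$ lies in the indeterminacy.

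In (3), $\{\nu_{11},2\sigma_{14},\sigma_{21}\}$ is not defined, because $2\sigma_{14}\circ\sigma_{21}=2\sigma^2_{14}\neq 0$ ($\sigma^2_{14}$ has order $8$). On $S^{13}$ the class $\lambda$ is detected by $H(\lambda)=\nu^2_{25}$, not by $\varepsilon$; $\varepsilon_{21}=H(\lambda')$ on $S^{11}$. Since $H(\xi_{13})=0$, oddness of the $\lambda$-coefficient is exactly the claim that $H$ of the bracket is $\nu^2_{25}$, and nothing in your outline computes this. Lemma~\ref{HPxl13} concerns $H\circ P$ and does not locate this bracket.

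Part (2) is also incomplete. By \cite[Theorem 10.10]{T}, $\pi^{16}_{31}=\{\rho_{16},\bar{\varepsilon}_{16},[\iota_{16},\iota_{16}]\}$; classes like $\sigma_{16}\eta_{23}\mu_{24}$ lie in the $17$-stem. You omit the Whitehead square, which is the one generator needing real input. We have $\nu_{13}\circ[\iota_{16},\iota_{16}]=[\nu_{13},\nu_{13}]=[\iota_{13},\iota_{13}]\circ\nu^2_{25}=[\iota_{13},\nu^2_{13}]$, and its vanishing is \cite[(3.6)]{GM}. Part (2) is what kills the indeterminacy in (3) and cuts it down to $\{\sigma^3_{10}\}$ in (4), so this gap propagates.

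The same omission affects your indeterminacy in (1). You need $\eta_{15}\bar{\varepsilon}_{16}=0$ and $\eta_{15}\circ[\iota_{16},\iota_{16}]=[\iota_{15},\iota_{15}]\circ\eta^2_{29}=P(\eta^2_{31})=0$. The second summand is $\pi^{15}_{24}\circ\sigma_{24}=\{\nu^3_{15},\mu_{15},\eta_{15}\varepsilon_{16}\}\circ\sigma_{24}$, not $\pi^{15}_{23}\circ\sigma_{23}$, and $\mu_{15}\sigma_{24}=\sigma_{15}\mu_{22}$ contributes to it. Your final answer $\{\sigma_{15}\mu_{22}\}$ is right, but not for the reasons you give.
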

\begin{proof}
We recall from \cite[I-Proposition 3.4 (6)]{Od} the relation
\[
\{\eta_{15},2\sigma_{16},\sigma_{23}\}\ni\omega_{15}+\eta^{*\prime}.
\]
By \cite[Theorems 7.4, 10.10]{T}, we have
\[
\begin{split}
\mathrm{Ind}\{\eta_{15},2\sigma_{16},\sigma_{23}\}
&=\eta_{15}\circ\pi^{16}_{31}+\pi^{15}_{24}\circ\sigma_{24}\\
&=\eta_{15}\circ\{\rho_{16},\bar{\varepsilon}_{16},[\iota_{16},\iota_{16}]\}
+\{\nu_{15}^3, \mu_{15}, \eta_{15}\varepsilon_{16}\}\circ\sigma_{24}.
\end{split}
\]
By relations
$\eta_{15}\rho_{16}=\mu_{15}\sigma_{24}=\sigma_{15}\mu_{22}$
\cite[Proposition 12.20 i)]{T},
$\eta_{15}\bar{\varepsilon}_{16}=0$
\eqref{et9be},
\[
\eta_{15}\circ[\iota_{16},\iota_{16}]
=[\eta_{15},\eta_{15}]
=[\iota_{15},\iota_{15}]\circ \Sigma(\eta_{14}\wedge\eta_{14})
=[\iota_{15},\iota_{15}]\circ\eta^2_{29}
=P(\eta^2_{31})=0
\]
from \cite[p.~160]{T},
$\nu_{21}\sigma_{24}=0$ \eqref{nu10sg}
and
$\varepsilon_{16}\sigma_{24}=0$ \eqref{bnu6sg}, 
we obtain 
$\mathrm{Ind}\{\eta_{15},2\sigma_{16},\sigma_{23}\}
=\{\sigma_{15}\mu_{22}\}$ 
and (1).

By \cite[Theorem 10.10]{T}, we have
\[
\nu_{13}\circ\pi^{16}_{31}
=\{\nu_{13}\rho_{16}, \nu_{13}\bar{\varepsilon}_{16}, 
\nu_{13}[\iota_{16},\iota_{16}]\}.
\]
So, by  relations
$\nu_{13}\rho_{16}=0$ \eqref{nu11ro}, 
$\nu_{13}\bar{\varepsilon}_{16}=0$ \eqref{bep},
\[
\nu_{13}\circ[\iota_{16},\iota_{16}]
=[\nu_{13},\nu_{13}]=[\iota_{13},\iota_{13}]\circ \Sigma(\nu_{12}\wedge\nu_{12})
=[\iota_{13},\iota_{13}]\circ\nu^2_{25}
=[\iota_{13},\nu^2_{13}]=0
\]
from \cite[(3.6)]{GM}, we obtain (2).

We recall from \cite[I-Proposition 3.4(8)]{Od} the relation
\[
\{\nu_{13},\sigma_{16},2\sigma_{23}\}
=\xi_{13}+x(\lambda+2\xi_{13})
\text{\ \ for some odd } x.
\]
By \cite[Proposition 1.2 ii)]{T},
we have $\{\nu_{13},\sigma_{16},2\sigma_{23}\}\subset\{\nu_{13},2\sigma_{16},\sigma_{23}\}$.
By \cite[Theorem 7.4]{T}, $\zeta_{13}\sigma_{24}=0$ \eqref{n7} and  (2), we have
\[
\mathrm{Ind}\{\nu_{13},2\sigma_{16},\sigma_{23}\}
=\nu_{13}\circ\pi^{16}_{31}+\pi^{13}_{24}\circ\sigma_{24}
=\{\zeta_{13}\sigma_{24}\}=0.
\]
Hence we obtain (3).

By using \cite[Proposition 1.2 iv)]{T}
and relations (3),  
$\nu_{10}\xi_{13}=\sigma^3_{10}$ \eqref{n9xi}, 
$\nu_{10}\lambda=\sigma_{10}\kappa_{17}$ \eqref{nulm}
and
$2\kappa_{17}=2\sigma_{17}^2=0$ \cite[Theorem 10.3]{T}, 
we obtain 
\[
\{\nu^2_{10},2\sigma_{16},\sigma_{23}\}
\supset\nu_{10}\circ\{\nu_{13},2\sigma_{16},\sigma_{23}\}=
\nu_{10}\xi_{13}+x\nu_{10}\lambda+2x\nu_{10}\xi_{13}
=\sigma^3_{10}+\sigma_{10}\kappa_{17}.
\]
By \cite[Theorem 10.3]{T}, (2) and the relation 
$\kappa_{10}\sigma_{24}=0$ \eqref{k7s}, 
we have
\[
\mathrm{Ind}\{\nu^2_{10},2\sigma_{16},\sigma_{23}\}
=\nu^2_{10}\circ\pi_{31}^{16}+\pi_{24}^{10}\circ\sigma_{24}
=\nu_{10}\circ\nu_{13}\circ\pi_{31}^{16}
+\{\sigma^3_{10}, \kappa_{10}\sigma_{24}\}=\{\sigma^3_{10}\}.
\]
Hence we obtain (4). This completes the proof.
\end{proof}
%%%%%%%%%%%%%%%%%%%%%%%% 16, 18, 21

%%%%%%%%%%%%%%%%%%%%%%%% 18, 19
\begin{lem}\label{sgnep}
\begin{enumerate}
\item $\langle\sigma,\nu,\sigma\rangle=\xi \in\pi^S_{18}$.
\item $\langle\sigma,\nu,\eta\sigma\rangle=
\langle\sigma,\nu,\varepsilon\rangle
=\langle2\sigma,\nu,\varepsilon\rangle
=\langle\nu,\sigma,\varepsilon\rangle
=\langle\nu,\varepsilon,\sigma\rangle=0\in\pi^S_{19}$.
\end{enumerate}
\end{lem}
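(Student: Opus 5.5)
Both parts come down to computing a stable Toda bracket from an unstable representative with zero (or controlled) indeterminacy, using the relations collected in Section 2.

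\emph{Part (1).} By \eqref{xi12}, $\xi_{12}\in\{\sigma_{12},\nu_{19},\sigma_{22}\}_1$, so passing to the stable range gives $\xi\in\langle\sigma,\nu,\sigma\rangle$. It remains to see that the stable indeterminacy vanishes. That indeterminacy is $\sigma\circ\pi^S_{11}+\pi^S_{11}\circ\sigma=\{\sigma\zeta,\zeta\sigma\}$, and both terms are zero by \eqref{n7}, since $\sigma_{13}\zeta_{20}=\zeta_{13}\sigma_{24}=0$. This gives $\langle\sigma,\nu,\sigma\rangle=\xi$.

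\emph{Part (2).} First I compute the indeterminacies in $\pi^S_{19}$. For $\langle\sigma,\nu,\varepsilon\rangle$, $\langle\sigma,\nu,\eta\sigma\rangle$ and $\langle 2\sigma,\nu,\varepsilon\rangle$ the indeterminacy is $\sigma\circ\pi^S_{12}+\pi^S_{11}\circ\varepsilon$ (resp.\ $\pi^S_{11}\circ\eta\sigma$). Here $\pi^S_{12}=0$, and $\zeta\varepsilon=0$ and $\zeta\eta\sigma=0$ by \eqref{ze5bnu} and \eqref{ze7etsg}. For $\langle\nu,\sigma,\varepsilon\rangle$ the indeterminacy is $\nu\circ\pi^S_{16}+\pi^S_{11}\circ\varepsilon$. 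This vanishes because $\nu\eta^*$, $\nu\omega$ and $\nu\sigma\mu$ are all zero (using \eqref{n11om}, $\nu\sigma=0$, and the stable $\nu\eta^*$ from \cite{T}), together with $\zeta\varepsilon=0$. For $\langle\nu,\varepsilon,\sigma\rangle$ the indeterminacy is $\nu\circ\pi^S_{16}+\pi^S_{12}\circ\sigma=0$. So each bracket is a single element, and it suffices to exhibit $0$ in each one.

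\emph{Key step for the brackets involving $\nu$ and $\varepsilon$.} Since $\eta\sigma=\bar\nu+\varepsilon$ by \eqref{et9sg}, it is natural to compare $\langle\sigma,\nu,\eta\sigma\rangle$ with $\langle\sigma,\nu,\varepsilon\rangle+\langle\sigma,\nu,\bar\nu\rangle$. For the $\eta\sigma$ bracket I would use juggling: $\langle\sigma,\nu,\eta\sigma\rangle\supset\langle\sigma,\nu,\eta\rangle\circ\sigma$, and $\langle\sigma,\nu,\eta\rangle\subset\pi^S_{12}=0$, so $0$ lies in it. Since the indeterminacy is zero, $\langle\sigma,\nu,\eta\sigma\rangle=0$.

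For $\langle\sigma,\nu,\varepsilon\rangle$ I would write $\varepsilon=\langle\eta,2\iota,\nu^2\rangle$, or use $\varepsilon\in\langle\nu^2,2\iota,\eta\rangle$. Juggling then gives
\[
\langle\sigma,\nu,\varepsilon\rangle=\pm\langle\sigma,\nu,\nu^2\rangle\circ\eta \text{ type terms},
\]
which land in $\pi^S_{18}\circ\eta$, or $\sigma\circ\langle\nu,\varepsilon,\cdots\rangle$. I expect this to be the main obstacle. My first attempt would be to use $\langle\nu,\sigma,\varepsilon\rangle$, relate the others to it by symmetry and juggling ($\langle\sigma,\nu,\varepsilon\rangle=\pm\langle\varepsilon,\nu,\sigma\rangle$, and the Jacobi identity relating it to $\langle\nu,\varepsilon,\sigma\rangle$ and $\langle\varepsilon,\sigma,\nu\rangle$), and then kill them through unstable representatives. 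Concretely: use $\varepsilon_{3}\sigma_{11}=0$ \eqref{bnu6sg} and Lemma \ref{etnusg}(2), via $\varepsilon=\{\eta,\nu,\ldots\}$-type expressions, to show $0\in\{\nu_n,\varepsilon_{n+3},\sigma_{n+11}\}$ for large $n$. Alternatively, compose with $\eta$ and use the fact that the $\mathbb{Z}_2$ summand $\bar\sigma$ is detected via \eqref{bs15}, $\bar\sigma=\langle\nu,\sigma,\bar\nu\rangle$. Since $\langle\nu,\sigma,\bar\nu+\varepsilon\rangle=\langle\nu,\sigma,\eta\sigma\rangle\ni\langle\nu,\sigma,\eta\rangle\sigma\subset\pi^S_{12}\sigma=0$, this yields $\langle\nu,\sigma,\varepsilon\rangle=\langle\nu,\sigma,\bar\nu\rangle=\bar\sigma$ up to the determination. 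I would then have to check carefully that $\langle\nu,\sigma,\eta\sigma\rangle$ is indeed zero.

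Finally, $\langle 2\sigma,\nu,\varepsilon\rangle\supset$-linearity gives $2\langle\sigma,\nu,\varepsilon\rangle$ up to indeterminacy, or it follows via $2\sigma\circ\nu=\sigma\circ2\nu$.
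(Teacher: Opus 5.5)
Your treatment of part (1), of $\langle\sigma,\nu,\eta\sigma\rangle\supset\langle\sigma,\nu,\eta\rangle\circ\sigma=0$, and of the indeterminacies agrees with the paper. However, the two brackets that carry part (2) are not established.

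For $\langle\sigma,\nu,\varepsilon\rangle$ you give no argument. The suggested reduction to a term like $\langle\sigma,\nu,\nu^2\rangle\circ\eta$ is not even defined, since $\nu\circ\nu^2=\nu^3=\eta\bar\nu\neq 0$. The missing idea is the Jacobi identity \cite[(3.7), (3.8)]{T} applied to $\sigma,\nu,\eta,2\iota,\nu^2$:
\[
0\in\langle\langle\sigma,\nu,\eta\rangle,2\iota,\nu^2\rangle-\langle\sigma,\langle\nu,\eta,2\iota\rangle,\nu^2\rangle+\langle\sigma,\nu,\langle\eta,2\iota,\nu^2\rangle\rangle .
\]
Here $\langle\sigma,\nu,\eta\rangle\subset\pi^S_{12}=0$ and $\langle\nu,\eta,2\iota\rangle\subset\pi^S_5=0$, so the first two terms reduce to $\pi^S_{13}\circ\nu^2+\sigma\circ\pi^S_{12}=0$. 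Also $\langle\eta,2\iota,\nu^2\rangle=\varepsilon+\{\eta\sigma\}$. Together with $\langle\sigma,\nu,\eta\sigma\rangle=0$, this gives $0\in\langle\sigma,\nu,\varepsilon\rangle$.

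Your alternative argument for $\langle\nu,\sigma,\varepsilon\rangle$ is wrong, and it produces a value that contradicts the statement. The step $\langle\nu,\sigma,\eta\sigma\rangle\ni\langle\nu,\sigma,\eta\rangle\circ\sigma$ is illegitimate: $\langle\nu,\sigma,\eta\rangle$ is undefined because $\sigma\eta=\bar\nu+\varepsilon\neq0$. Rewriting $\eta\sigma=\sigma\eta$ does not help either, since $\sigma^2\neq 0$. In fact $\langle\nu,\sigma,\eta\sigma\rangle=\pm\langle\eta\sigma,\sigma,\nu\rangle=\bar\sigma$ by \cite[I-Proposition 3.3 (4)]{Od}, so your chain would give $\langle\nu,\sigma,\varepsilon\rangle=\langle\nu,\sigma,\bar\nu\rangle=\bar\sigma$, not $0$. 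The correct use of the same splitting is
\[
\langle\nu,\sigma,\varepsilon\rangle\subset\langle\nu,\sigma,\bar\nu\rangle+\langle\nu,\sigma,\eta\sigma\rangle=\bar\sigma+\bar\sigma=0 .
\]
The nontrivial input you lack is exactly Oda's relation $\langle\eta\sigma,\sigma,\nu\rangle=\bar\sigma$.

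Once these two brackets are known, the remaining ones go as you anticipated.
\begin{itemize}
\item $\langle\nu,\varepsilon,\sigma\rangle=0$ follows from $0\in\langle\nu,\varepsilon,\sigma\rangle-\langle\varepsilon,\sigma,\nu\rangle+\langle\sigma,\nu,\varepsilon\rangle$ together with $\langle\varepsilon,\sigma,\nu\rangle=\pm\langle\nu,\sigma,\varepsilon\rangle$.
\item $\langle 2\sigma,\nu,\varepsilon\rangle=0$ follows either from $\langle2\sigma,\nu,\varepsilon\rangle\supset 2\iota\circ\langle\sigma,\nu,\varepsilon\rangle$, as in the paper, or independently from your remark $2\sigma\circ\nu=\sigma\circ 2\nu$. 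That route reads $\langle2\sigma,\nu,\varepsilon\rangle\subset\langle\sigma,2\nu,\varepsilon\rangle\supset\langle\sigma,\nu,2\varepsilon\rangle\ni 0$, and every bracket involved has zero indeterminacy.
\end{itemize}
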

\begin{proof}
Firstly, by relations $\sigma\nu=\nu\sigma=0$ \eqref{sgm11n},
$\nu\eta=0$ \eqref{n3},
$\nu\varepsilon=0$ \cite[(7.18)]{T}
and
$\sigma\varepsilon=\varepsilon\sigma=0$ \eqref{bnu6sg},
all Toada brackets in this lemma are well-defined.

Secondly, we recall that 
$\pi_{11}^S=\{\zeta\}$, $\pi_{12}^S=0$, $\pi_{16}^S=\{\eta\rho, \omega\}$ 
\cite[Theorems 7.4, 7.6, 12.16]{T} and relations 
%$\eta\nu=0$ \eqref{n2}, 
$\sigma\zeta=0$ \eqref{n7}, 
$\zeta\eta\sigma=0$ \eqref{ze7etsg} 
$\zeta\varepsilon=0$ \eqref{ze5bnu}, 
$\nu\eta=0$ \eqref{n3}
and 
$\nu\omega=0$ \eqref{n11om}.
%\cite[Theorem 14.1]{T}.
The indeterminacy of all Toada brackets in this lemma are $0$, because
\[
\sigma\circ\pi^S_{11}=\pi^S_{11}\circ\sigma=0
\mand
\sigma\circ\pi^S_{12}=\pi^S_{12}\circ\sigma=\pi^S_{11}\circ\eta\sigma=\pi^S_{11}\circ\varepsilon
=\nu\circ\pi^S_{16}=0.
\]
Hence,
the relation (1) follows directly from 
$\xi_{12}\in\{\sigma_{12},\nu_{19},\sigma_{22}\}$ \eqref{xi12}. 

Thirdly, by using \cite[(3.5) i)]{T}
and the fact $\langle\sigma,\nu,\eta\rangle\subset \pi^S_{12}=0$,
we have 
$
\langle\sigma,\nu,\eta\sigma\rangle\supset\langle\sigma,\nu,\eta\rangle\circ\sigma=0
%\ \bmod\ 0.%\sigma\circ\pi^S_{12}+\pi^S_{11}\circ\eta\sigma=0.
$
and $\langle\sigma,\nu,\eta\sigma\rangle=0$.

By $\varepsilon\in\langle\eta,2\iota,\nu^2\rangle$ \cite[(6.1)]{T}
and $\mathrm{Ind}\langle\eta,2\iota,\nu^2\rangle
=\eta\circ\pi^S_{7}+\pi^S_1\circ\nu^2=\{\eta\sigma\}$,
we have
$\langle\eta,2\iota,\nu^2\rangle=\varepsilon+\{\eta\sigma\}$.
Using \cite[(3.7), (3.8)]{T},
by  relations 
$\langle\sigma,\nu,\eta\rangle=0$,
$\langle\nu,\eta,2\iota\rangle\subset\pi_5^S=0$ \cite[Proposition 5.9]{T},
$\langle\sigma,\nu,\eta\sigma\rangle=0$,
$\pi_{12}^S=0$
and $\pi_{13}^S=0$ \cite[Theorem 7.7]{T},
for some $x\in\{0,1\}$,
we have
\[
\begin{split}
0&\in \langle\langle\sigma,\nu,\eta\rangle,2\iota,\nu^2\rangle
- \langle\sigma,\langle\nu,\eta,2\iota\rangle,\nu^2\rangle
+ \langle\sigma,\nu,\varepsilon+x\eta\sigma\rangle\\
&\subset\langle 0,2\iota,\nu^2\rangle - \langle\sigma,0,\nu^2\rangle
+\langle\sigma,\nu,\varepsilon\rangle
+\langle\sigma,\nu,x\eta\sigma\rangle\\
&=\pi^S_{13}\circ\nu^2+ \sigma\circ\pi^S_{12}+\langle\sigma,\nu,\varepsilon\rangle
+x\langle\sigma,\nu,\eta\sigma\rangle
=\langle\sigma,\nu,\varepsilon\rangle
\end{split}
\]
and $\langle\sigma,\nu,\varepsilon\rangle=0$.
We also have $\langle2\sigma,\nu,\varepsilon\rangle=0$ by 
$\langle2\sigma,\nu,\varepsilon\rangle\supset 2\iota\circ\langle\sigma,\nu,\varepsilon\rangle=0$.

By using \cite[(3.8), (3.9) i)]{T} and relations 
$\varepsilon=\bar{\nu}+\eta\sigma$ \eqref{et9sg},
$\langle\nu,\sigma,\bar{\nu}\rangle=\langle\eta\sigma,\sigma,\nu\rangle
 =\bar{\sigma}$
\cite[I-Proposition 3.3 (4)]{Od}
and $2\bar{\sigma}=0$ \cite[(12.17)]{T},
we have 
\[\begin{split}
\langle\nu,\sigma,\varepsilon\rangle=\langle\nu,\sigma,\bar{\nu}+\eta\sigma\rangle
 &\subset \langle\nu,\sigma,\bar{\nu}\rangle+ \langle\nu,\sigma,\eta\sigma\rangle
 =\bar{\sigma}+ \langle\eta\sigma,\sigma,\nu\rangle
 =\bar{\sigma}+\bar{\sigma}=2\bar{\sigma}=0.
\end{split}
\]

By the equation $\langle\sigma,\nu,\varepsilon\rangle
=\langle\nu,\sigma,\varepsilon\rangle=0$ and use of \cite[(3.9).ii), (3.10)]{T}, 
we have
\[
\begin{split}
0&\in
\langle\nu,\varepsilon,\sigma\rangle
-\langle\varepsilon,\sigma,\nu\rangle
+\langle\sigma,\nu,\varepsilon\rangle
=\langle\nu,\varepsilon,\sigma\rangle
-\langle\nu,\sigma,\varepsilon\rangle
=\langle\nu,\varepsilon,\sigma\rangle
\end{split}
\]  
and $\langle\nu,\varepsilon,\sigma\rangle=0$.
This completes the proof.
\end{proof}
%%%%%%%%%%%%%%%%%%%%%%%% 18, 19

%%%%%%%%%%%%% 19
\begin{lem}\label{n15sgep}% 19 13-32
\begin{enumerate}
\item
$\{\nu_{11},\sigma_{14},\varepsilon_{21}\}\subset\{\lambda'\eta_{29}, \xi'\eta_{29}\}$
\ and\ \ 
$\Sigma^2\{\nu_{11},\sigma_{14},\varepsilon_{21}\}=0$.
\item
$\{\nu_{13},\sigma_{16},\varepsilon_{23}\}_2=0$.
\item
$\{\nu_{n},\sigma_{n+3},\varepsilon_{n+10}\}=0$ for $n\ge 15$.
\end{enumerate}
\end{lem}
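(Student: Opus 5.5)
The plan is to handle the three parts in order, each time following the pattern used for \eqref{bs15}: first check that the bracket is defined, then compute its indeterminacy, then get the value by pushing down from a known bracket or up from a lower one.

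\emph{Well-definedness.} Since $\nu_{n}\sigma_{n+3}=0$ for $n\ge 11$ by \eqref{nu10sg} and $\sigma_{14}\varepsilon_{21}=0$ by \eqref{bnu6sg}, the bracket $\{\nu_{11},\sigma_{14},\varepsilon_{21}\}$ and its suspensions are defined. It lies in $\pi^{11}_{30}$.

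\emph{Part (1).} I will split $\varepsilon_{21}=\eta_{21}\sigma_{22}+\bar{\nu}_{21}$ using \eqref{et9sg}, and then use additivity \cite[Proposition 1.2]{T}. This gives
\[
\{\nu_{11},\sigma_{14},\varepsilon_{21}\}\subset\{\nu_{11},\sigma_{14},\bar{\nu}_{21}\}+\{\nu_{11},\sigma_{14},\eta_{21}\sigma_{22}\}.
\]
\begin{itemize}
\item The first bracket contains $\bar{\sigma}_{11}+\alpha$ with $\alpha\in\{\xi'\eta_{29},\lambda'\eta_{29}\}$, by \cite[I-Proposition 3.4 (3)]{Od}.
\item The second bracket contains $\{\nu_{11},\sigma_{14},\eta_{21}\}\circ\sigma_{23}$. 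The inner bracket $\{\nu_{11},\sigma_{14},\eta_{21}\}$ lies in $\pi^{11}_{22}$. I expect it to be $0$ or a multiple of $\zeta_{11}$, and in either case $\zeta_{11}\sigma_{22}$ vanishes by \eqref{n7}. I would also cross-check the value of $\{\nu_{11},\sigma_{14},\eta_{21}\}$ against the known value of $\langle\eta\sigma,\sigma,\nu\rangle$.
\end{itemize}

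The main obstacle is removing the $\bar{\sigma}_{11}$ term. The stable image must equal $\langle\nu,\sigma,\varepsilon\rangle=0$ by Lemma \ref{sgnep}(2), while $\bar{\sigma}_{11}$ survives stably, since $\Sigma\pi^{10}_{29}\to\pi^S_{19}$ is injective on $\{\bar{\sigma}_{11},\bar{\zeta}_{11}\}$. This forces any element of $\{\nu_{11},\sigma_{14},\varepsilon_{21}\}$ to have no $\bar{\sigma}_{11}$ or $\bar{\zeta}_{11}$ component modulo $\{\xi'\eta_{29},\lambda'\eta_{29}\}$. Concretely, I will write $\pi^{11}_{30}=\Sigma\pi^{10}_{29}\oplus\{(\lambda'+\xi')\eta_{29}\}$-type pieces, following the EHP computation before \eqref{n11om}, and show:
\begin{itemize}
\item $H$ of the bracket lies in $\{\nu^3_{21}\}$;
\item the stable image is zero.
\end{itemize}
Together these place the bracket in $\{\lambda'\eta_{29},\xi'\eta_{29}\}$. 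Then \eqref{Elameta} gives $\Sigma^2\{\nu_{11},\sigma_{14},\varepsilon_{21}\}=0$.

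\emph{Part (2).} I will use $\{\nu_{13},\sigma_{16},\varepsilon_{23}\}_2\supset\Sigma^2\{\nu_{11},\sigma_{14},\varepsilon_{21}\}=0$ from \cite[Proposition 1.3]{T}. The indeterminacy is
\[
\nu_{13}\circ\Sigma^2\pi^{14}_{31}+\Sigma^2\pi^{11}_{23}\circ\varepsilon_{25}.
\]
It vanishes for the following reasons:
\begin{itemize}
\item $\pi^{16}_{31}$ is killed by $\nu_{13}$, by Lemma \ref{a0}(2).
\item $\zeta_{13}\varepsilon_{24}=0$ by \eqref{ze5bnu}.
\end{itemize}

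\emph{Part (3).} For $n\ge15$ the bracket contains $\Sigma^{n-13}$ of part (2). Its indeterminacy is
\[
\{\nu_n\omega_{n+3},\ \nu_n\sigma_{n+3}\mu_{n+10},\ \nu_n\eta^*_{n+3}\text{-type terms},\ \zeta_n\varepsilon_{n+11}\}.
\]
These vanish as follows:
\begin{itemize}
\item $\nu_n\omega_{n+3}=0$ by \eqref{n11om};
\item $\nu_n\sigma_{n+3}\mu_{n+10}=0$ by \eqref{nu10sg};
\item $\zeta_n\varepsilon_{n+11}=0$ by \eqref{ze5bnu};
\item the $\nu_n\eta^*_{n+3}$-type terms vanish, either by \eqref{n11om} or from the structure of $\pi^{n+3}_{n+19}$ in \cite[Theorem 12.16]{T}.
\end{itemize}
This is the same argument as for \eqref{bs15}.
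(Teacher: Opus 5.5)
The argument that actually proves (1) is the one in your ``main obstacle'' paragraph, and it is exactly the paper's proof. By \cite[Theorem 12.23]{T}, $\pi^{11}_{30}=\{\lambda'\eta_{29},\xi'\eta_{29},\bar{\sigma}_{11},\bar{\zeta}_{11}\}$. The stabilization of the bracket lies in $\langle\nu,\sigma,\varepsilon\rangle=0$ (Lemma \ref{sgnep} (2)). Finally, $\Sigma^\infty$ kills $\lambda'\eta_{29}$ and $\xi'\eta_{29}$ by \eqref{Elameta} and is injective on $\{\bar{\sigma}_{11},\bar{\zeta}_{11}\}\cong\pi^S_{19}$. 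That alone gives (1). Parts (2) and (3) then follow by suspension once the indeterminacies vanish, as you say. The problem is the extra material you build around this: parts of it are false and should be deleted or corrected.

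\begin{enumerate}
\item The planned step ``$H$ of the bracket lies in $\{\nu^3_{21}\}$'' would fail. Apply \cite[Proposition 2.6]{T} with $P(\eta_{21})=\nu_{10}\sigma_{13}$ \eqref{nu10sg}, exactly as the paper does for $\{2\sigma_{11},\nu_{18},\varepsilon_{21}\}_1$ in Lemma \ref{usgnep} (2). This gives $H\{\nu_{11},\sigma_{14},\varepsilon_{21}\}_1=\eta_{21}\varepsilon_{22}=H(\lambda'\eta_{29})$, which is not in $\{\nu^3_{21}\}$. Also, a complement of $\Sigma\pi^{10}_{29}$ in $\pi^{11}_{30}$ is $\{\lambda'\eta_{29},\xi'\eta_{29}\}\cong(\Z_2)^2$, not $\{(\lambda'+\xi')\eta_{29}\}$. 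The stable-image argument needs no Hopf-invariant information at all.

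\item The splitting $\varepsilon_{21}=\bar{\nu}_{21}+\eta_{21}\sigma_{22}$ is unnecessary, and your treatment of the second summand has a degree error. The bracket $\{\nu_{11},\sigma_{14},\eta_{21}\}$ lies in the $12$-stem $\pi^{11}_{23}$, not in $\pi^{11}_{22}$, so $\zeta_{11}$ and \eqref{n7} play no role. Nor does that summand vanish: the proof of Proposition \ref{hbsgm''} computes $H\{\nu_{11},\sigma_{14},\eta_{21}\sigma_{22}\}_1=\eta^2_{21}\sigma_{23}=\nu^3_{21}+\eta_{21}\varepsilon_{22}\neq 0$.

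\item In (2) the indeterminacy is $\nu_{13}\circ\Sigma^2\pi^{14}_{30}+\pi^{13}_{24}\circ\varepsilon_{24}$, not the expression you wrote.
Here $\Sigma^2\pi^{14}_{30}=\{\omega_{16},\sigma_{16}\mu_{23}\}$ is in the $16$-stem. So Lemma \ref{a0} (2), which is about $\pi^{16}_{31}$, is the wrong tool. What kills the first term is $\nu_{13}\omega_{16}=0$ \eqref{n11om} and $\nu_{13}\sigma_{16}=0$ \eqref{nu10sg}. For the second term, $\pi^{13}_{24}\circ\varepsilon_{24}=\{\zeta_{13}\varepsilon_{24}\}=0$ by \eqref{ze5bnu}.
\end{enumerate}

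With these repairs your proof coincides with the paper's; part (3) is already fine as written.
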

\begin{proof}
By relations $\nu_{11}\sigma_{14}=0$ \eqref{nu10sg} and
$\sigma_{14}\varepsilon_{21}=0$ \eqref{bnu6sg},
all Toada brackets in this lemma are well-defined.

We recall 
$
\pi^{11}_{30}=\{\lambda'\eta_{29}, \xi'\eta_{29}, 
\bar{\sigma}_{11}, \bar{\zeta}_{11}\}\cong(\Z_2)^3\oplus\Z_8$ 
and 
$\pi^S_{19}\cong\{\bar{\sigma},\bar{\zeta}\}\cong\Z_2\oplus\Z_8$ 
\cite[Theorem 12.23]{T}.
Hence, by equations $\langle\nu,\sigma,\varepsilon\rangle=0$ (Lemma \ref{sgnep} (2))
 and
$\Sigma^2(\lambda'\eta_{29})=\Sigma^2(\xi'\eta_{29})=0$ \eqref{Elameta},
we obtain
$\{\nu_{11},\sigma_{14},\varepsilon_{21}\}\subset\{\lambda'\eta_{29}, \xi'\eta_{29}\}$
and
$\Sigma^2\{\nu_{11},\sigma_{14},\varepsilon_{21}\}=0$.

By the facts 
$\pi^{\ell}_{\ell+11}=\{\zeta_{\ell}\}$ for $\ell\ge 13$,
$\pi^{14}_{30}=\{\omega_{14},\sigma_{14}\mu_{21}\}$ and
$\pi^{k}_{k+16}=\{\omega_{k},\sigma_{k}\mu_{k+7}\}$ for $k\ge 18$
 \cite[Theorems 7.4, 12.16]{T} and relations
$\nu_{13}\omega_{16}=0$ \eqref{n11om}, 
$\nu_{13}\sigma_{16}=0$ \eqref{nu10sg} and 
$\zeta_{13}\varepsilon_{24}=0$ \eqref{ze5bnu}, we have 
\begin{align*}
\mathrm{Ind}\{\nu_{13},\sigma_{16},\varepsilon_{23}\}_2
&=\nu_{13}\circ \Sigma^2\pi^{14}_{30}+\pi^{13}_{24}\circ\varepsilon_{24}=0
\shortintertext{and}
\mathrm{Ind}\{\nu_{n},\sigma_{n+3},\varepsilon_{n+10}\}
& =\nu_{n}\circ\pi^{n+3}_{n+19}+\pi^{n}_{n+11}\circ\varepsilon_{n+11}=0
\end{align*}
for $n\ge 15$.
By using \cite[Proposition 1.3]{T} and the second relation of (1), we have 
$
0=\Sigma^2\{\nu_{11}, \sigma_{14}, \varepsilon_{21}\}
 \subset \{\nu_{13},\sigma_{16},\varepsilon_{23}\}_2.
$
This leads to (2). Moreover, we have
\[
0=\Sigma^{n-13}\{\nu_{13}, \sigma_{16}, \varepsilon_{23}\}_2
 \subset \Sigma^{n-15}\{\nu_{15},\sigma_{18},\varepsilon_{25}\}
 \subset (-1)^{n-15}\{\nu_{n},\sigma_{n+3},\varepsilon_{n+10}\}
\]
for $n\ge 15$. This leads to (3) and completes the proof. 
\end{proof}
%%%%%%%%%%%%%%%%%%%%%

%%%%%%%%%%%%%%%%%%%%%%%%%%% 26
\begin{lem}\label{nu^2bs}% 26
\begin{enumerate}
%\item
%$\nu^2\bar{\sigma}=\eta\bar{\sigma}'$.
%\item
%$\nu^2_5\bar{\sigma}_{11}=\eta_5\bar{\sigma}'_6$.
\item
$\nu^2_9\bar{\sigma}_{15}\equiv 2(\sigma_9\nu^*_{16})\ 
\bmod\ 4\sigma_9\xi_{16}$
\ \ and\ \ \ 
$\nu^2_{10}\bar{\sigma}_{16} = 2\sigma_{10}\nu^*_{17}$.
%=2\sigma_{10}\xi_{17}
\item 
$\nu^2_{12}\bar{\sigma}_{18}=4\xi_{12}\sigma_{30}$
\ \ and\ \ \
$\nu^2_{17}\bar{\sigma}_{23}=4\xi_{17}\sigma_{35}
=[\iota_{17},\eta^2_{17}\sigma_{19}]\ne 0$.
\end{enumerate}
\end{lem}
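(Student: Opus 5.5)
We write $\nu^2_9\bar{\sigma}_{15}$ as a composition with a Toda bracket. We then move the bracket across the composition and use the relations for $\xi_{12}$, $\nu^*$ and $\lambda$ listed earlier.

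\emph{Step 1 (the identity for $\nu^2\bar\sigma$).} By \eqref{bs15}, $\bar\sigma_n=\{\nu_n,\sigma_{n+3},\bar\nu_{n+10}\}$ for $n\ge 15$. Hence
\[\nu^2_9\bar\sigma_{15}\in\nu^2_9\circ\{\nu_{15},\sigma_{18},\bar\nu_{25}\}.\]
Since $\nu^2_9\nu_{15}=\nu^3_9$, we use the bracket relation \cite[Proposition 1.4]{T}:
\[\nu^2_9\circ\{\nu_{15},\sigma_{18},\bar\nu_{25}\}=-\{\nu^2_9,\nu_{15},\sigma_{18}\}\circ\bar\nu_{26},\]
or, equivalently, we regroup as $\nu_9\circ\{\nu^2_{12},\sigma_{18},\bar\nu_{25}\}$. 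The plan is to identify $\{\nu_{12},\nu_{15},\sigma_{18}\}$-type brackets, or better to regroup into $\{\nu_9,\nu_{12},\sigma_{15}\}$-type and $\{\nu,\sigma,\nu\}$-type brackets. Stably $\langle\sigma,\nu,\sigma\rangle=\xi$ (Lemma~\ref{sgnep}(1)), and unstably $\xi_{12}\in\{\sigma_{12},\nu_{19},\sigma_{22}\}_1$ \eqref{xi12}, $\nu_9\xi_{12}=\sigma^3_9$ \eqref{n9xi} and $\nu_{10}\lambda=\sigma_{10}\kappa_{17}$ \eqref{nulm}.

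I would actually organize the computation on the other side, using the Jacobi identity \cite[(3.10)]{T}. Since $\nu^2_9\nu_{15}=\nu_9\circ\nu^2_{12}$ and $\nu_{12}\sigma_{15}=0$, we get
\[\nu^2\bar\sigma\in\nu\circ\langle\nu^2,\sigma,\bar\nu\rangle.\]
Then $\bar\nu=\varepsilon+\eta\sigma$ together with Lemma~\ref{sgnep}(2) and Lemma~\ref{n15sgep} removes the $\varepsilon$-part. The remaining piece $\nu^2\circ\langle\nu,\sigma,\eta\sigma\rangle$ is rewritten as $\langle\nu^2,\nu,\sigma\rangle\circ\eta\sigma$-like terms and $\sigma\nu^*$ terms. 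Unstably, the target is that the composite lands in $\sigma_9\circ\pi^{16}_{34}$, where $\nu^*_{16}\in\{\sigma_{16},2\sigma_{23},\nu_{30}\}$ \cite{T}. The key relation I expect to need is a Toda-bracket formula of the shape
\[\{\nu_9,\nu_{12},\sigma_{15}\}\circ\bar\nu\ \text{ or }\ \sigma_9\circ\{\nu_{16},\sigma_{19},\nu_{26}\}\ni \pm 2\sigma_9\nu^*_{16}\ \bmod\ 4\sigma_9\xi_{16},\]
obtained from the relations $\langle\sigma,\nu,\sigma\rangle=\xi$ and $2\nu^*\equiv\ldots$ in \cite{T}.

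\emph{Step 2 (determining the indeterminacy via $H$).} Given the difficulty of tracking signs through the brackets, I would instead pin down $\nu^2_9\bar\sigma_{15}$ by the EHP-sequence. We have
\[H(\nu^2_9\bar\sigma_{15})=\Sigma(\nu_8\wedge\nu_8)\circ H(\nu_9\bar\sigma_{12})\]
and $H(\nu_9\circ\Sigma\alpha)=0$, which forces $\nu^2_9\bar\sigma_{15}\in\Sigma\pi^8_{33}$. We then compare with $\Sigma\pi^8_{33}$ and $\pi^9_{34}$ from \cite{MMO}, in which $2\sigma_9\nu^*_{16}$ and $4\sigma_9\xi_{16}$ appear. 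The bracket computation fixes the class modulo $4\sigma_9\xi_{16}$. Suspending once, $4\sigma_{10}\xi_{17}=2\sigma_{10}(\xi_{17}+\nu^*_{17})+2\sigma_{10}\nu^*_{17}$, and by \eqref{4xi12sg} $4\sigma_{10}\xi_{17}=0$ in $\pi^{10}_{35}$, which gives the second equality of (1).

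\emph{Step 3 (statement (2)).} Suspending to $n=12$ gives $\nu^2_{12}\bar\sigma_{18}=2\sigma_{12}\nu^*_{19}$, and \eqref{4xi12sg} turns this into $4\xi_{12}\sigma_{30}$. Suspending further yields $\nu^2_{17}\bar\sigma_{23}=4\xi_{17}\sigma_{35}$. For the Whitehead product, use $P(\eta^2_{35}\sigma_{37})=[\iota_{17},\eta^2_{17}\sigma_{19}]$. Its nontriviality follows since $4\xi_{17}\sigma_{35}$ lies in $\Ker\Sigma$ but is nonzero by the known structure of $\pi^{17}_{42}$ \cite{MMO,MiM2}. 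To identify the two elements, I would show $H(4\xi_{17}\sigma_{35})$ vanishes appropriately and that $\Ker\Sigma$ in that degree is $\Z_2$, generated by the Whitehead product. The main obstacle is Step 1/2: controlling the indeterminacy $4\sigma_9\xi_{16}$ and signs in the unstable bracket manipulation. For this I would rely on Lemma~\ref{a0}(2)–(4) to kill the auxiliary terms.
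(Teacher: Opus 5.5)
\textbf{There is a genuine gap.} Your step from (1) and \eqref{4xi12sg} to $\nu^2_{12}\bar\sigma_{18}=4\xi_{12}\sigma_{30}$ is what the paper does. However, the congruence it rests on, $\nu^2_9\bar\sigma_{15}\equiv 2\sigma_9\nu^*_{16}\bmod 4\sigma_9\xi_{16}$, is never proved.

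\emph{Step 1 does not deliver (1).} It lists several possible regroupings, then names a bracket formula you ``expect to need'' without deriving it. The stable tools you invoke cannot decide the question: $\langle\sigma,\nu,\sigma\rangle=\xi$, Lemma~\ref{sgnep}, and the Jacobi identity. Stably $2\sigma\nu^*=0$ and $4\sigma\xi=0$, because $\nu^*\sigma=0$. So the stable shadow of (1) carries none of its content, and a stable bracket argument cannot single out $2\sigma_9\nu^*_{16}$.

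\emph{Step 2 adds nothing.} $\nu^2_9\bar\sigma_{15}=\Sigma(\nu_8\nu_{11}\bar\sigma_{14})$ is visibly a suspension, so computing its Hopf invariant gives no information. Also, the composition formula involves $H(\nu_{12}\bar\sigma_{15})$, not $H(\nu_9\bar\sigma_{12})$. Finally, \cite{MMO} treats the 23- and 24-stems, not the 25-stem group $\pi^9_{34}$.

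What is missing is a genuinely unstable input. The paper takes it from Oda's element $\phi'''$: $2(\sigma_9\nu^*_{16})=2\Sigma^2\phi'''$ \cite[I-Proposition 5.1 (2)]{Od}, and $2\Sigma^2\phi'''\equiv\nu^2_9\bar\sigma_{15}\bmod 4\sigma_9\xi_{16}$ \cite[II-Proposition 2.3 (2)]{Od}.

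\emph{The second half of (1).} You need $4\sigma_{10}\xi_{17}=0$ in $\pi^{10}_{35}$. But \eqref{4xi12sg} is a relation on $S^{12}$ and says nothing on $S^{10}$. Your displayed identity also fails: its right-hand side equals $2\sigma_{10}\xi_{17}+4\sigma_{10}\nu^*_{17}$, not $4\sigma_{10}\xi_{17}$. The paper quotes this vanishing from \cite[I-Proposition 6.3 (6)]{Od}.

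\emph{The end of (2).} You correctly note $[\iota_{17},\eta^2_{17}\sigma_{19}]=\pm P(\eta^2_{35}\sigma_{37})$. The missing link is the relation $P(\eta^2_{35}\sigma_{37})=4\xi_{17}\sigma_{35}$, which the paper takes from \cite[I-Proposition 6.3 (11)]{Od}. Your substitute, that $\Ker\{\Sigma:\pi^{17}_{42}\to\pi^{18}_{43}\}\cong\Z_2$, is unsupported. This kernel is $P(\pi^{35}_{44})$, the image of $\pi^S_9\cong(\Z_2)^3$, and nothing you cite bounds it. Your nontriviality claim leans on \cite{MMO} and \cite{MiM2}, but neither computes the 25-stem $\pi^{17}_{42}$.
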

\begin{proof}
By relations
$2(\sigma_9\nu^*_{16})=2E^2\phi''' $ 
\cite[I-Proposition 5.1 (2)]{Od}
and
$2E^2\phi'''\equiv\nu^2_9\bar{\sigma}_{15}
\bmod\ 4\sigma_9\xi_{16}$
\cite[II-Proposition 2.3 (2)]{Od},
we have the first of (1).
Moreover, by relations $4\sigma_{10}\xi_{17}=0$ from \cite[I-Proposition 6.3 (6)]{Od},
we have the second of (1).

By relations (1) and
$2\sigma_{12}\nu^*_{19}=4\xi_{12}\sigma_{30}$
\eqref{4xi12sg}, we have the first of (2).
Furthermore, by the relation
$P(\eta^2_{35}\sigma_{37})=4\xi_{17}\sigma_{35}$
\cite[I-Proposition 6.3 (11)]{Od},
we have  the second of (2).
This  completes the proof. 
\end{proof}
%%%%%%%%%%%%%%%%%%%%%

%
Finally, we show the following relations in the $24$-th homotopy groups of spheres.
\begin{prop}
\begin{enumerate}
\item
$\eta^*_{16}\bar{\nu}_{32}\equiv \nu^*_{16}\nu^2_{34}\ \bmod\ [\iota_{16},\nu^3_{16}]$.
\item
$\eta^*_{19}\bar{\nu}_{35}=\nu^*_{19}\nu^2_{37}
=\omega_{19}\bar{\nu}_{35}=[\iota_{19},\nu^2_{19}]$.
\end{enumerate}
\end{prop}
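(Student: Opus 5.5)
We compare the elements through the Hopf invariant $H:\pi^{16}_{40}\to\pi^{31}_{40}$ and the EHP-sequence, using Toda bracket representations of $\eta^*_{16}$ and $\nu^*_{16}$.

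For (1), start from $\eta^*_{16}\in\{\sigma_{16},2\sigma_{23},\eta_{30}\}_1$ \eqref{eta*16}. Then
\[
\eta^*_{16}\bar{\nu}_{32}\in\{\sigma_{16},2\sigma_{23},\eta_{30}\}_1\circ\bar{\nu}_{32}
=-\sigma_{16}\circ\Sigma\{2\sigma_{22},\eta_{29},\bar{\nu}_{30}\}
\]
up to indeterminacy. Also $\eta\bar{\nu}=\nu^3$ \eqref{etbn}, and $\nu^*_{16}\in\{\sigma_{16},2\sigma_{23},\nu_{30}\}_1$ \cite[p.~153]{T}, so
\[
\nu^*_{16}\nu^2_{34}\in\{\sigma_{16},2\sigma_{23},\nu_{30}\}_1\circ\nu^2_{34}.
\]
Both are then compared with $\{\sigma_{16},2\sigma_{23},\nu^3_{30}\}_1$.

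Concretely, apply $H$ to both sides. By $H(\eta^*_{16})=\eta_{31}$ and $H(\nu^*_{16})=\nu_{31}$ \cite[Lemma 12.14]{T}, we get
\[
H(\eta^*_{16}\bar{\nu}_{32})=\eta_{31}\bar{\nu}_{32}=\nu^3_{31}=H(\nu^*_{16}\nu^2_{34}).
\]
Hence the difference lies in $\Sigma\pi^{15}_{39}$. The remaining task is to show that this difference lies in $\{[\iota_{16},\nu^3_{16}]\}$ modulo $\Sigma\pi^{15}_{39}$ elements that vanish. For this, stabilize: in $\pi^S_{24}$ we have $\eta^*\bar{\nu}=\nu^*\nu^2=0$, since $\nu^*\nu\in\pi^S_{21}$ and $\eta^*\bar{\nu}=\langle\sigma,2\sigma,\eta\rangle\bar{\nu}=\sigma\langle2\sigma,\eta,\bar{\nu}\rangle$, which is $0$ because $\sigma\circ\pi^S_{17}$ contains only products known to vanish in that stem from \cite{MMO}. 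Then use knowledge of $\pi^{16}_{40}$ and its kernel of suspension (from \cite{MMO}), where the unstable part in this range is generated by Whitehead products $[\iota_{16},\cdot]$. The only candidate consistent with the $H$-computation is $[\iota_{16},\nu^3_{16}]=P(\nu^3_{33})$, which gives (1).

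For (2), suspend (1) to $S^{19}$. By \cite[Proposition 2.5]{T}, $P(\nu^3_{33})$ suspends to $0$ in $\pi^{17}_{41}$, so $\eta^*_{19}\bar{\nu}_{35}=\nu^*_{19}\nu^2_{37}$. Next, $\omega_{19}\bar{\nu}_{35}$ has $\omega_{19}$ in the stable range, and we compare it via $[\iota_{17},\iota_{17}]\equiv\eta^*_{17}+\omega_{17}\bmod\sigma_{17}\mu_{24}$ \cite[Proposition 12.20 ii)]{T}. This gives
\[
(\eta^*_{17}+\omega_{17})\bar{\nu}_{33}=[\iota_{17},\bar{\nu}_{17}]+\sigma_{17}\mu_{24}\bar{\nu}_{33},
\]
and $\mu\bar{\nu}=0$ \eqref{mepbn2}, so after suspension $\eta^*_{19}\bar{\nu}_{35}=\omega_{19}\bar{\nu}_{35}$. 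Finally, to identify the common value with $[\iota_{19},\nu^2_{19}]=P(\nu^2_{39})$, use $[\iota_{19},\iota_{19}]=\nu^*_{19}+\xi_{19}$ \cite[Corollary 12.25]{T}:
\[
[\iota_{19},\nu^2_{19}]=\nu^*_{19}\nu^2_{37}+\xi_{19}\nu^2_{37}.
\]
It then suffices to check $\xi_{19}\nu^2_{37}=0$. Since $\xi_{12}\nu_{30}=\sigma^3_{12}$ \eqref{n9xi}, we get $\xi_{19}\nu^2_{37}=\sigma^3_{19}\nu_{40}=0$ by $\sigma\nu=0$ \eqref{sgm11n}.

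The main obstacle is (1): controlling the ambiguity in $\Sigma\pi^{15}_{39}$ precisely enough to get the congruence modulo $[\iota_{16},\nu^3_{16}]$ alone. This requires detailed knowledge of $\pi^{15}_{39}$ and $\pi^{16}_{40}$ from \cite{MMO}. I would first try to handle it via the Toda bracket manipulation, using Lemma \ref{sgnep} and $\langle2\sigma,\eta,\bar{\nu}\rangle$-type vanishing, rather than via a full group computation.
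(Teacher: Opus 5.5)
Your treatment of (2) is fine once (1) is available. Your derivation of $\eta^*_{19}\bar\nu_{35}=\omega_{19}\bar\nu_{35}$ from $[\iota_{17},\iota_{17}]\equiv\eta^*_{17}+\omega_{17}\bmod\sigma_{17}\mu_{24}$ and $\mu\bar\nu=0$ is a valid substitute for the paper's citation of \cite[(5.34)]{MMO}.

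Part (1), however, has a genuine gap. Your Hopf invariant computation only shows that $d=\eta^*_{16}\bar\nu_{32}-\nu^*_{16}\nu^2_{34}$ lies in $\Sigma\pi^{15}_{39}=\Ker H$, and it gives nothing more. It cannot distinguish $[\iota_{16},\nu^3_{16}]$ from $[\iota_{16},\mu_{16}]$, $[\iota_{16},\eta_{16}\varepsilon_{17}]$, or any other element of $\Sigma\pi^{15}_{39}$, so ``the only candidate consistent with the $H$-computation'' is not an argument. Stabilizing does not close this: even granting $\Sigma^\infty d=0$, you have not excluded an element of $\Sigma\pi^{15}_{39}$ that survives to $S^{19}$ and dies later, and such a $d$ would falsify (2).

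The bracket manipulations you propose are also not legitimate. The brackets $\{2\sigma_{22},\eta_{29},\bar\nu_{30}\}$ and $\langle2\sigma,\eta,\bar\nu\rangle$ are undefined, because $\eta\bar\nu=\nu^3\neq0$, a relation you quote yourself. The set $\sigma\circ\pi^S_{17}$ is not trivial either, since $\sigma\eta\eta^*$ and $\sigma\bar\mu$ generate $\pi^S_{24}$. For the same reason the indeterminacy of $\{\sigma_{16},2\sigma_{23},\nu^3_{30}\}_1$ contains $\sigma_{16}\eta_{23}\eta^*_{24}$ and $\sigma_{16}\bar\mu_{23}$, which makes that comparison useless.

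The missing idea is to trap $\eta^*_{16}\bar\nu_{32}$ in a group of order at most $4$ before applying $H$, by juggling against $\nu$ rather than $\bar\nu$.

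First show $\eta^*_{16}\nu_{32}=0$. Since $\eta_{30}\nu_{31}=0$, the juggling is legitimate:
\[
\eta^*_{16}\nu_{32}\in-\sigma_{16}\circ\{2\sigma_{23},\eta_{30},\nu_{31}\}\supset-\sigma^2_{16}\circ\{2\iota_{30},\eta_{30},\nu_{31}\}=0,
\]
with indeterminacy $\sigma_{16}\circ\pi^{23}_{32}\circ\nu_{32}=0$.

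Then write $\bar\nu_{32}=\{\nu_{32},\eta_{35},\nu_{36}\}$, which gives
\[
\eta^*_{16}\bar\nu_{32}\in\eta^*_{16}\circ\{\nu_{32},\eta_{35},\nu_{36}\}=\{\eta^*_{16},\nu_{32},\eta_{35}\}\circ\nu_{37}\subset\pi^{16}_{37}\circ\nu_{37}=\{[\iota_{16},\nu^3_{16}],\ \nu^*_{16}\nu^2_{34}\}.
\]
The last equality uses $\pi^{16}_{37}=\{\eta_{16}\bar\kappa_{17},\sigma^3_{16},(\Sigma^3\lambda)\nu_{34},\nu^*_{16}\nu_{34}\}$ from \cite[Theorem A]{Mi}, together with $\eta_{16}\bar\kappa_{17}\nu_{37}=\sigma^3_{16}\nu_{37}=0$ and $(\Sigma^3\lambda)\nu_{34}=[\iota_{16},\nu^2_{16}]$.

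Only at this point does your $H$-computation finish the proof. $H$ kills $[\iota_{16},\nu^3_{16}]$ and sends both $\eta^*_{16}\bar\nu_{32}$ and $\nu^*_{16}\nu^2_{34}$ to $\nu^3_{31}\neq0$. This forces the coefficient of $\nu^*_{16}\nu^2_{34}$ to be $1$.
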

\begin{proof}
By using \cite[Propositions 1.4, 1.2 iv)]{T},
and relations
$\eta^*_{16}\in\{\sigma_{16},2\sigma_{23},\eta_{30}\}_1$ \eqref{eta*16},
$\eta_{30}\nu_{31}=0$ \eqref{n2}
and 
$\{2\iota_{30},\eta_{30},\nu_{31}\}\subset\pi^{30}_{35}=0$
\cite[Proposition 5.9]{T},
we have
\[\begin{split}
\eta^*_{16}\nu_{32}&\in\{\sigma_{16},2\sigma_{23},\eta_{30}\}\circ\nu_{32}
=-(\sigma_{16}\circ\{2\sigma_{23},\eta_{30},\nu_{31}\})
\\
&\supset -(\sigma^2_{16}\circ\{2\iota_{30},\eta_{30},\nu_{31}\})=0
\end{split}\]
with the indeterminacy
$\mathrm{Ind}(\sigma_{16}\circ\{2\sigma_{23},\eta_{30},\nu_{31}\})
=\sigma_{16}\circ\pi^{23}_{32}\circ\nu_{32}$.
By the fact $\pi^{23}_{32}=\{\nu^3_{23},\nu_{23},\eta_{23}\varepsilon_{24}\}$
\cite[Theorem 7.2]{T}
and the relation
$\sigma_{16}\nu_{23}=0$ \eqref{sgm11n}, we obtain
\[
\sigma_{16}\circ\pi^{23}_{32}\circ\nu_{32}
=\{\sigma_{16}\nu^4_{23}, \sigma_{16}\mu_{23}\nu_{32}, 
\sigma_{16}\eta_{23}\varepsilon_{24}\nu_{32}\}=0.
\]
and $\eta^*_{16}\nu_{32}=0$.
Hence, by $\nu_{32}\eta_{35}=0$ \eqref{n3},
the Toda bracket $\{\eta^*_{16},\nu_{32},\eta_{35}\}$ is well-defined.
Using \cite[Proposition 1.4]{T} and \cite[Theorem A]{Mi},
by relations
$\bar{\nu}_{32}=\{\nu_{32},\eta_{35},\nu_{36}\}$ \cite[Lemma 6.2]{T}, 
$\eta_{16}\bar{\kappa}_{17}\nu_{37}=\bar{\kappa}_{16}\eta_{36}\nu_{37}=0$,
$\sigma_{30}\nu_{37}=0$ and
$(\Sigma^3\lambda)\nu_{34}=[\iota_{16},\nu^2_{16}]$ \cite[(7.10)]{Mi},
we obtain
\[\begin{split}
\eta^*_{16}\bar{\nu}_{32}
&\in\eta^*_{16}\circ\{\nu_{32},\eta_{35},\nu_{36}\}
=\{\eta^*_{16},\nu_{32},\eta_{35}\}\circ\nu_{37}\\
&\subset\pi^{16}_{37}\circ\nu_{37}
=\{\eta_{16}\bar{\kappa}_{17},\sigma^3_{16},
(\Sigma^3\lambda)\nu_{34},\nu^*_{16}\nu_{34}\}\circ\nu_{37}
=\{[\iota_{16},\nu^3_{16}], \nu^*_{16}\nu^2_{34}\}.
\end{split}\]
On the other hand, by relations
$H(\eta^*_{16})=\eta_{31}$,
$H(\nu^*_{16})\equiv \nu_{31}\bmod 2\nu_{31}$ \cite[Lemma 12.14]{T},
$\eta_{31}\bar{\nu}_{32}=\nu^3_{31}$ \eqref{etbn},
$H([\iota_{16},\iota_{16}])=\pm 2\iota_{31}$ \cite[p.~24]{T}
and $2\nu^2_{31}=0$ \eqref{2nu5nu}, we have
\[
H(\eta^*_{16}\bar{\nu}_{32})=\eta_{31}\bar{\nu}_{32}
=\nu^3_{31}=H(\nu^*_{16}\nu^2_{34})
\mand H([\iota_{16},\nu^3_{16}])=\pm 2\iota_{31}\circ\nu^2_{31}
=\pm 2\nu^2_{31}=0.\]
This leads to (1).

By (1), we have $\eta^*_{19}\bar{\nu}_{35}=\nu^*_{19}\nu^2_{37}$.
By relations
$[\iota_{19},\iota_{19}]=\nu^*_{19}+\xi_{19}$ \cite[Corollary 12.25]{T}, 
$\xi_{19}\nu_{37}=\sigma^3_{19}$ \eqref{n9xi}
and $\sigma_{33}\nu_{40}=0$, we have
\[
[\iota_{19},\nu^2_{19}]
=[\iota_{19},\iota_{19}]\nu^2_{37}
=(\nu^*_{19}+\xi_{19})\nu^2_{37}
=\nu^*_{19}\nu^2_{37}+\sigma^3_{19}\nu_{40}
=\nu^*_{19}\nu^2_{37}.
\]
The last equality of (2) is just \cite[(5.34)]{MMO}.
This leads to (2) and completes the proof.
\end{proof}

%%%%%%%%%%%%%%%%%%%%%%%%%%%
\section{Some relations in homotopy groups of spheres and rotation groups}
%%%%%%%%%%%%%%%%%%%%%%%%%%%

Let be $SO(n)$ the $n$-th rotation group, 
$i_{m,n}: SO(m)\hookrightarrow SO(n)\ (m< n)$ the inclusion and
$i_n=i_{n-1,n}$.
% and $p_n: SO(n)\to SO(n)/SO(n-1)=S^{n-1}$ the projection. 
Let $\varDelta: \pi_k(S^n)\to\pi_{k-1}(SO(n))$ be the connecting map associated with the bundle $p_{n+1}: SO(n+1)\to S^n$.
Suppose that there exist elements $\alpha\in\pi_k(S^n)$ and $\beta\in\pi_k(SO(n+1))$ satisfying the relation 
$
{p_{n+1}}_*(\beta)=\alpha.
$
Then, $\beta$ is called a lift of $\alpha$ and is written $[\alpha]$. 
For a lift $[\alpha]\in\pi_k(SO(m))$, we write $[\alpha]_n={i_{m,n}}_*[\alpha]\in\pi_k(SO(n))$ for $m\leq n$. 

Let $J: \pi_k(SO(n))\to\pi_{k+n}(S^n)$ be the $J$-homomorphism
(\cite[p.~214]{WG}, \cite[(11.2)]{T}). 
We recall \cite[(11.2)]{T} that the $J$-homomorphism 
is defined by the composition
\[
J={G_n}_*\circ \Sigma^n : \pi_{k}(SO(n)) \to \pi_{k+n}(\Sigma^nSO(n)) \to \pi_{k+n}(S^n).
\]
Here $G_n:\Sigma^nSO(n)\to S^n$ is the Hopf construction obtained from the action of $SO(n)$
 as the rotations of $S^{n-1}$. 
For $\alpha\in\pi_k(SO(n))$ and $\beta\in\pi_\ell(S^k)$, we know 
\begin{equation}\label{Js}
J(\alpha\circ\beta)
={G_n}_*(\Sigma^n(\alpha\circ\beta))
={G_n}\circ\Sigma^n(\alpha\circ\beta)
={G_n}\circ\Sigma^n\alpha\circ \Sigma^n\beta
 =J\alpha \circ \Sigma^n\beta.
\end{equation}

We use well-known relations, for $\alpha\in\pi_k(SO(n))$ and $\beta\in\pi_k(S^n)$,
\begin{gather}\label{Ji}
J({i_{m,n}}_*(\alpha))=(-1)^{n-m}\Sigma^{n-m}J(\alpha) 
\text{\ \ \cite[(2.1)]{WJ}}, 
\\
\label{Jd}
%J(\varDelta(\beta))=[\beta,\iota_n]=(-1)^{kn}[\iota_n,\beta]
%\text{\ \ \cite[(3.6)]{WJ} and \cite[(7.5)]{WJ}},
J(\varDelta(\beta))=[\beta,\iota_n]
=(-1)^{(k+1)(n+1)}[\iota_n,\beta]
\text{\ \ \cite[(3.6)]{WJ} and \cite[X-(7.5)]{WG78}},
\\
\label{HJ}
HJ(\alpha) =(-1)^n \Sigma^n({p_n}_*(\alpha)) 
\text{\ \ \cite[p.~215]{WG}}.
\end{gather}

To obtain some relations between elements of homotopy groups of rotation groups and spheres,
we need the following theorem and lemma:
\begin{thm}\label{Ji0}
Assume that elements $\alpha \in\pi_{h+n}(SO(m))$,
$\beta\in \pi_{k}(S^h)$ and $\gamma\in \pi_{\ell}(S^{k})$ satisfy the conditions
$\alpha\circ\Sigma^n\beta=0$ and $\beta\circ\gamma=0$ for $n\geq 0$. 
Then the Toda bracket $\{J(\alpha),\Sigma^{m+n}\beta,\Sigma^{m+n}\gamma\}_{m+n}$ is well-defined and 
\[
J\{\alpha,\Sigma^n\beta,\Sigma^n\gamma\}_n 
\subset (-1)^m\{J(\alpha),\Sigma^{m+n}\beta,\Sigma^{m+n}\gamma\}_{m+n}.
\]
\end{thm}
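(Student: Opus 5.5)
We will work directly with Toda's construction of the bracket $\{\,\cdot,\cdot,\cdot\,\}_n$ by extensions and coextensions (\cite[Proposition 1.7]{T} and its $n$-fold suspended version). The underlying principle is that $J={G_m}_*\circ\Sigma^m$ is a composite of an $m$-fold suspension with post-composition by the Hopf construction $G_m:\Sigma^mSO(m)\to S^m$. Suspension carries $\{\ ,\ ,\ \}_n$ into $\{\ ,\ ,\ \}_{m+n}$ up to the sign $(-1)^m$ (\cite[Proposition 1.3]{T}), and composing on the left with a map carries a bracket into a bracket (\cite[Proposition 1.2]{T}). We must check that this works with $SO(m)$ as the target space of the first entry, which is not a sphere.

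\emph{Well-definedness.} By \eqref{Js},
\[
J(\alpha)\circ\Sigma^{m+n}\beta=J(\alpha\circ\Sigma^n\beta)=J(0)=0,
\]
and $\Sigma^{m+n}\beta\circ\Sigma^{m+n}\gamma=\Sigma^{m+n}(\beta\circ\gamma)=0$. Both compositions needed to form $\{J(\alpha),\Sigma^{m+n}\beta,\Sigma^{m+n}\gamma\}_{m+n}$ therefore vanish. For the $(m+n)$-fold version we also need $\Sigma^{m+n}\beta,\Sigma^{m+n}\gamma$ to be $(m+n)$-fold suspensions, which they are.

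\emph{Inclusion.} Take an element $\delta\in\{\alpha,\Sigma^n\beta,\Sigma^n\gamma\}_n$. Toda's definition works for any target space, here $SO(m)$. It gives
\[
\delta=\bar\alpha\circ\widetilde{\Sigma^n\gamma},
\]
where $\bar\alpha:\Sigma^n C_\beta\to SO(m)$ extends $\alpha$ over the mapping cone of $\Sigma^n\beta$, with the cone taken as the $n$-fold suspension of $C_\beta=S^h\cup_\beta CS^k$. The map $\widetilde{\Sigma^n\gamma}:S^{\ell+n+1}\to\Sigma^nC_\beta$ is the suspension coextension $\Sigma^n\tilde\gamma$ of $\gamma$ determined by a nullhomotopy of $\beta\circ\gamma$. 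Then
\[
J(\delta)=G_m\circ\Sigma^m\bar\alpha\circ\Sigma^{m+n}\tilde\gamma .
\]
Here $G_m\circ\Sigma^m\bar\alpha:\Sigma^{m+n}C_\beta\to S^m$ restricts on $S^{h+m+n}$ to $G_m\circ\Sigma^m\alpha=J(\alpha)$, so it is an extension of $J(\alpha)$. Moreover, $\Sigma^{m+n}\tilde\gamma$ is an $(m+n)$-fold suspended coextension of $\gamma$. So $J(\delta)$ is one of the composites defining $\{J(\alpha),\Sigma^{m+n}\beta,\Sigma^{m+n}\gamma\}_{m+n}$, up to the orientation identification $\Sigma^m\Sigma^nC_\beta\cong\Sigma^{m+n}C_\beta$.

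\emph{Sign bookkeeping.} Toda's convention places the new suspension coordinates on a specific side. Reconciling $\Sigma^m(\Sigma^n X)$ with Toda's $\Sigma^{m+n}X$ in the identification of cones introduces exactly the sign $(-1)^m$. This is the same sign that appears in $\Sigma^m\{a,b,c\}_n\subset(-1)^m\{\Sigma^m a,\Sigma^m b,\Sigma^m c\}_{m+n}$ in \cite[Proposition 1.3]{T}. We will import it by factoring the map as $\Sigma^m$ followed by ${G_m}_*$: first apply the proof of \cite[Proposition 1.3]{T} verbatim with target $SO(m)$ (it only uses the suspension of extensions and coextensions), then apply left composition with $G_m$ as in \cite[Proposition 1.2 i)]{T}.

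The main obstacle is this sign and convention check, particularly confirming that Toda's suspension sign is unchanged when the target $SO(m)$ is not a sphere and that $G_m$ respects the chosen suspension coordinates. Everything else is formal.
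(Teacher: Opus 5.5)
Your proposal is correct and is essentially the paper's proof: the paper also writes $J={G_m}_*\circ\Sigma^m$, gets well-definedness from \eqref{Js}, and then combines \cite[Proposition 1.3]{T} (which supplies the sign $(-1)^m$ and holds for an arbitrary target space such as $SO(m)$, so your worry about the target is unfounded) with left composition by $G_m$. Two small corrections: left composition is \cite[Proposition 1.2 iv)]{T}, not 1.2 i), which concerns composition on the right; and your explicit extension/coextension paragraph is not needed for this route and also drops the sign $(-1)^n$ that \cite[Proposition 1.7]{T} attaches to $\bar\alpha\circ\Sigma^n\tilde\gamma$.
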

\begin{proof}
By the relation (\ref{Js}),
we have $J(\alpha)\circ \Sigma^{m+n}\beta=J(\alpha\circ\Sigma^{n}\beta)=0$.
Hence the Toda bracket 
$\{J(\alpha),\Sigma^{m+n}\beta,\Sigma^{m+n}\gamma\}_{m+n}$ 
is well-defined.
By the definition of the $J$-homomorphism and \cite[Propositions 1.3, 1.2 iv)]{T}, we have
\[\begin{split}
J\{\alpha,\Sigma^n\beta,\Sigma^n\gamma\}_n &
= {G_m}_*(\Sigma^m\{\alpha,\Sigma^n\beta,\Sigma^n\gamma\}_n)
=G_m\circ \Sigma^m\{\alpha,\Sigma^n\beta,\Sigma^n\gamma\}_n\\
&\subset {G_m}\circ (-1)^m
\{\Sigma^m\alpha,\Sigma^{m+n}\beta,\Sigma^{m+n}\gamma\}_{m+n}\\
&\subset (-1)^m\{{G_m}\circ \Sigma^m\alpha,
\Sigma^{m+n}\beta,\Sigma^{m+n}\gamma\}_{m+n}\\
&= (-1)^m\{J(\alpha),\Sigma^{m+n}\beta,\Sigma^{m+n}\gamma\}_{m+n}.
\end{split}\]
This completes the proof. 
\end{proof}
\begin{lem}\label{id}
If $n\neq 1$, $3$ or $7$, then
$\{p_{n+1},i_{n+1},\varDelta(\iota_n)\}\ni (2k+1)\iota_n$ for some integer $k$.
\end{lem}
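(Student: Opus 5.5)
The plan is to exhibit one element of the bracket explicitly, using the fibration $SO(n)\rarrow{i_{n+1}}SO(n+1)\rarrow{p_{n+1}}S^n$, and then to control the indeterminacy so that every element is an odd multiple of $\iota_n$. First I check that the bracket is defined. Since $p_{n+1}\circ i_{n+1}$ is the constant map, $p_{n+1}\circ i_{n+1}=0$. Exactness of
\[
\pi_n(S^n)\rarrow{\varDelta}\pi_{n-1}(SO(n))\rarrow{{i_{n+1}}_*}\pi_{n-1}(SO(n+1))
\]
gives $i_{n+1}\circ\varDelta(\iota_n)=0$. So $\{p_{n+1},i_{n+1},\varDelta(\iota_n)\}\subset\pi_n(S^n)\cong\Z$ is well-defined.

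Next I build a representative from the two null-homotopies in Toda's construction \cite[p.~9]{T}. For $p_{n+1}\circ i_{n+1}$ I take the constant null-homotopy $A\colon C SO(n)\to S^n$ at the base point. For $i_{n+1}\circ\varDelta(\iota_n)$ I use the definition of $\varDelta$. Lift the characteristic map of $\iota_n$ to a map of pairs $\chi\colon(D^n,S^{n-1})\to(SO(n+1),SO(n))$. Its restriction $\chi|_{S^{n-1}}$ represents $\varDelta(\iota_n)$, and $p_{n+1}\circ\chi\colon D^n/S^{n-1}\to S^n$ has degree one. Regarding $D^n$ as $CS^{n-1}$, the map $\chi$ is a null-homotopy $B$ of $i_{n+1}\circ\varDelta(\iota_n)$.

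The resulting element of the bracket is represented by the map $S^n=CS^{n-1}\cup CS^{n-1}\to S^n$ given by $p_{n+1}\circ B$ on one cone and $A\circ C\varDelta(\iota_n)$ on the other. The second piece is constant, so this map is $p_{n+1}\circ\chi$ with the boundary collapsed, which has degree $\pm1$. Hence $\pm\iota_n$ lies in the bracket; the sign depends only on Toda's orientation conventions. This already gives the statement with $2k+1=\pm1$.

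To make the assertion robust, namely that every element is odd, I would also examine
\[
\mathrm{Ind}=p_{n+1}\circ\pi_n(SO(n+1))+[\Sigma SO(n),S^n]\circ\Sigma\varDelta(\iota_n).
\]
This is where the hypothesis $n\neq1,3,7$ enters. By exactness, ${p_{n+1}}_*\pi_n(SO(n+1))=\Ker\varDelta$. This kernel is $2\Z$ for odd $n\neq1,3,7$, by the Hopf invariant one theorem: $HJ$ and \eqref{HJ} show that an element of degree one would yield an element of Hopf invariant one. It is $0$ for even $n$, because then $\varDelta(\iota_n)$ has infinite order, as detected by the Euler class. The second summand needs the degree of $f\circ\Sigma\varDelta(\iota_n)$ to be even for every $f$. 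I would check this in homology, where it is governed by the Hurewicz image of $\varDelta(\iota_n)$ in $H_{n-1}(SO(n))$. That image is $0$ or $2$ times a generator, because the transgression of the fundamental class is divisible by $2$ (it equals $\chi(S^n)$ up to sign when $n$ is even, and the $\mathbb{Z}_2$-class vanishes when $n$ is odd).

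The main obstacle I expect is this last homological divisibility check. However, it is only needed for the stronger reading of the statement; the explicit element $\pm\iota_n$ from the construction above suffices for the lemma as stated.
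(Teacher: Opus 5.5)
Your argument is correct, and it takes a genuinely different route from the paper's.

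\textbf{Your route.} You work directly from the definition of $\varDelta$ as the connecting map. You take the constant null-homotopy of $p_{n+1}\circ i_{n+1}$, and you let the relative lift $\chi\colon(D^n,S^{n-1})\to(SO(n+1),SO(n))$ of $\iota_n$ serve as the null-homotopy of $i_{n+1}\circ\varDelta(\iota_n)$. The bracket representative is then $p_{n+1}\circ\chi$ with the boundary collapsed, so $\pm\iota_n$ lies in the bracket. This is the general fibration fact $\pm\alpha\in\{p,i,\varDelta(\alpha)\}$. It does not use the hypothesis $n\neq 1,3,7$. An odd element is also all that the later application needs: in choosing $[\sigma^2_9]$, the paper computes the indeterminacy separately.

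\textbf{The paper's route.} The paper instead factors $\varDelta(\iota_n)=(-1)^{n+1}g_{n-1}\gamma_{n-1}$ through $P^{n-1}$. It then uses $i_{n+1}g_{n-1}=g_ni'_n$ and $p_{n+1}g_n=p'_n$ to compare with Spanier's cofibration bracket $\{p'_n,i'_n,\gamma_{n-1}\}\ni\iota_n$. That chain of inclusions runs $\subset$ and then $\supset$. So it only places the target bracket and $(-1)^{n+1}\iota_n$ in the common larger coset $(-1)^{n+1}\{p_{n+1},i_{n+1}g_{n-1},\gamma_{n-1}\}$. The paper therefore has to compute that coset's indeterminacy, $p_{n+1}\circ\pi_n(SO(n+1))+[\Sigma P^{n-1},S^n]\circ\Sigma\gamma_{n-1}=\{2\iota_n\}$. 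This is where $n\neq 1,3,7$ enters, and the payoff is the stronger statement that every element of the bracket is odd.

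\textbf{Your optional indeterminacy check.} The same factorization removes the obstacle you flagged. Since $\Sigma\varDelta(\iota_n)=\pm\Sigma g_{n-1}\circ\Sigma\gamma_{n-1}$, the summand $[\Sigma SO(n),S^n]\circ\Sigma\varDelta(\iota_n)$ lies in $[\Sigma P^{n-1},S^n]\circ\Sigma\gamma_{n-1}=\{(1+(-1)^n)\iota_n\}$. Your homological sketch there is incomplete as written. The remark about $\chi(S^n)$ does not by itself give divisibility by $2$ in $H_{n-1}(SO(n);\Z)$. It can be completed using the injectivity of $H_{n-1}(SO(n);\Z_2)\to H_{n-1}(SO(n+1);\Z_2)$.
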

\begin{proof}
Let $P^n$ be the real $n$ dimensional projective space, $\gamma_n: S^n\to P^n$ be the projection
and $g_{n}: \P^{n}\to SO(n+1)$ be the canonical inclusion \cite[p.~202]{WG78}.  
We know 
\[
g_{n-1}\gamma_{n-1}=(-1)^{n+1}\varDelta(\iota_n)
\text{ \cite[IV-(10.5)]{WG78}},
\ i_{n+1} g_{n-1}=g_{n} i'_n,\ p_{n+1} g_{n}=p'_{n}
\text{ \cite[IV-(10.11)]{WG78}},
\]
where $i'_n: \P^{n-1}\to\P^n$ is the inclusion and $p'_n: \P^n\to S^n$ the collapsing map. 
By using \cite[Proposition 1.2 i)]{T} and the relation
$\varDelta(\iota_n)=(-1)^{n+1}g_{n-1}\gamma_{n-1}$, we have
\[\begin{split}
\{p_{n+1},i_{n+1},\varDelta(\iota_n)\}
&=\{p_{n+1},i_{n+1},(-1)^{n+1}g_{n-1}\gamma_{n-1}\}\\
&\supset  \{p_{n+1},i_{n+1},g_{n-1}\gamma_{n-1}\}\circ (-1)^{n+1}\iota_{n}
=  (-1)^{n+1}\{p_{n+1},i_{n+1},g_{n-1}\gamma_{n-1}\}.
\end{split}\]
By the equation
$\mathrm{Ind}\{p_{n+1},i_{n+1},(-1)^{n+1}g_{n-1}\gamma_{n-1}\}
=\mathrm{Ind}(-1)^{n+1}\{p_{n+1},i_{n+1},g_{n-1}\gamma_{n-1}\}$,
we have
\[
\{p_{n+1},i_{n+1},\varDelta(\iota_n)\}=
(-1)^{n+1}\{p_{n+1},i_{n+1},g_{n-1}\gamma_{n-1}\}.
\]
Moreover, by \cite[Proposition 1.2 ii), iii)]{T},
$i_{n+1} g_{n-1}=g_{n} i'_n$ and
$\{p'_n,i'_n,\gamma_{n-1}\}\ni \iota_n$ from \cite[(3.4)]{Sp62},
we have
\[\begin{split}
\{p_{n+1},i_{n+1},\varDelta(\iota_n)\}
&=(-1)^{n+1}\{p_{n+1},i_{n+1},g_{n-1}\gamma_{n-1}\}
\subset (-1)^{n+1}\{p_{n+1},i_{n+1} g_{n-1},\gamma_{n-1}\}\\
&=(-1)^{n+1}\{p_{n+1},g_{n} i'_n,\gamma_{n-1}\}
\supset (-1)^{n+1}\{p_{n+1} g_{n},i'_n,\gamma_{n-1}\}\\
&= (-1)^{n+1}\{p'_n,i'_n,\gamma_{n-1}\}\ni (-1)^{n+1}\iota_n.
\end{split}\]
So, we have the relation
\[
(-1)^{n+1}\iota_n+\alpha \in \{p_{n+1},i_{n+1},\varDelta(\iota_n)\}
\]
for some $\alpha\in \mathrm{Ind}\{p_{n+1},i_{n+1}g_{n-1},\gamma_{n-1}\}
= p_{n+1}\circ\pi_n(SO(n+1))+[\Sigma\P^{n-1},S^n]\circ \Sigma\gamma_{n-1}$.
By the facts ${p_{n+1}} \circ \pi_n(SO(n+1))=\Ker\{\varDelta:\pi_n(S^n)\to \pi_{n-1}(SO(n))\}$
and
\cite[IV-(10.6), (10.8)]{WG78}, we have
\[
{p_{n+1}} \circ \pi_n(SO(n+1))=
\{{(1+(-1)^{n-1})}\iota_n\}  \text{\ \ for $n\neq 1$, $3$ or $7$}.
\]
By exact sequences
\[
\pi_{k+1}(S^n)\rarrow{{\Sigma p'_k}^*}
[\Sigma\P^{k},S^n]\rarrow{{\Sigma i'_k}^*}
[\Sigma\P^{k-1},S^n]\rarrow{{\Sigma\gamma_{k-1}}^*}
\pi_{k}(S^n)
\]
for $k=2,\dots,\ n-1$,
we have $[\Sigma\P^{n-1},S^n]=\{\Sigma p'_{n-1}\}$.
By the equation $p'_{n-1}\circ\gamma_{n-1}=(1+(-1)^n)\iota_{n-1}$
from the cell structure of $P^{n-1}$, we have
\[
[\Sigma\P^{n-1},S^n]
\circ \Sigma\gamma_{n-1}=\{\Sigma(p'_{n-1}\circ\gamma_{n-1})\}
=\{(1+(-1)^n)\iota_{n}\}.
\]
Hence $\alpha\in\{2\iota_n\}$ for $n\neq 1$, $3$ or $7$.
This completes the proof. 
\end{proof}

%%%%%%%%%%%%%%%%%%%%%%%%%%%%%%%%%%%%%%%%%%%%%%%%%%%%%%%%%%%%%%
We denote by $R^n_k$ the direct sum of the torsion-free part and 
the $2$-primary component of $\pi_k(SO(n))$. 
We use the exact sequence induced from the fibration $p_n$: 
\[
({\mathcal{R}}^n_k) \hspace{0.5cm}
\cdots\rarrow{}\pi^{n}_{k+1}\rarrow{\varDelta}R^n_k\rarrow{{i_{n+1}}_*} R^{n+1}_k\rarrow{{p_{n+1}}_*}\pi^n_k\rarrow{\varDelta}R^n_{k-1} \to \cdots. 
\]
Denote by 
$M^n=S^{n-1}\cup_{2\iota_{n-1}}e^n$ the $\Z_2$-Moore space,
$\iota''_n$ the identity class of $M^n$, 
$i''_n: S^{n-1}\to M^n$ the inclusion map and
$p''_n:M^n\to S^n$ the collapsing map. 
Note that $M^n$ is a $2$-local space.
Let $\ext(\eta_{n})\in [M^{n+2},S^n]$ and $\coe(\eta_{n})\in\pi_{n+2}(M^{n+1})$ 
for $n\ge 3$ be an extension and a coextension of $\eta_n$, respectively.
%Let $\ext(\eta_{3})\in [M^{5},S^3]$ and $\coe(\eta_{3})\in\pi_{5}(M^{4})$ 
%be an extension and a coextension of $\eta_3$, respectively.
%By the definition of extension and coextension \cite[p.~13]{T},
%we put $\ext(\eta_{n})=\Sigma^{n-3}\ext(\eta_{3})$
%and $\coe(\eta_{n})=\Sigma^{n-3}\coe(\eta_{3})$ for $n\ge 3$.
We recall from \cite[p.~412]{GM} that:
\begin{gather}\label{2iota_M}
2\iota''_n =i''_n\eta_{n-1}p''_n \quad\text{for \ } n\geq 3,
%\\
%\label{2coe}
%2\coe(\eta_{n})=i''_{n+1}\eta^2_{n}
%\quad\text{for \ } n\geq 3,
\\
\notag%
\ext(\eta_{n})\coe(\eta_{n+1})=\pm \Sigma^{n-3}\nu'
\quad\text{for \ } n\geq 3,
\\
\label{ep_GM}
\varepsilon_n=\{\eta_n \ext(\eta_{n+1}),\coe(\eta_{n+2}),\nu_{n+4}\}_{n-5}
\quad (\coe(\eta_{7})\circ\nu_{9}=0)
\quad\text{for \ } n\geq 5.
\end{gather}
By the relation $2\nu_{5}=\Sigma^{3}\nu'$ \eqref{2nu5},
we have
\begin{equation}\label{2nu_n}
\ext(\eta_{n})\coe(\eta_{n+1})=\pm 2\nu_{n}
\quad\text{for \ } n\geq 5.
\end{equation}

Recall from \cite[Table 2]{K} a generator $[\eta_{11}]\in R^{11}_{12}$
such that $\sharp[\eta_{11}]=2$.
We consider the Toda bracket 
$\{[\eta_{11}]\ext(\eta_{12}),\coe(\eta_{13}),\nu_{15}\}_{6}\subset R^{12}_{19}$.
By the relation \eqref{2nu_n},
we have 
\[
[\eta_{11}]\ext(\eta_{12})\circ\coe(\eta_{13})
=[\eta_{11}]\circ (\pm 2\nu_{12}) = \pm 2[\eta_{11}]\circ \nu_{12}= 0.
\]
So, by the relation $\coe(\eta_{7})\circ \nu_{9}=0$ \eqref{ep_GM},
the Toda bracket $\{[\eta_{11}]\ext(\eta_{12}),\coe(\eta_{13}),\nu_{15}\}_6$
is well-defined.
By  \cite[Proposition 1.2 iv)]{T} and
%$\varepsilon_{11}\in\{\eta_{11}\ext(\eta_{12}),\coe(\eta_{13}),\nu_{15}\}_6$
\eqref{ep_GM}, we have
\[
\begin{split}
{p_{12}}_*\{[\eta_{11}]\ext(\eta_{12}),\coe(\eta_{13}),\nu_{15}\}_6
&\subset\{{p_{12}}_*([\eta_{11}])\ext(\eta_{12}),\coe(\eta_{13}),\nu_{15}\}_6\\
&=\{\eta_{11}\ext(\eta_{12}),\coe(\eta_{13}),\nu_{15}\}_6=\varepsilon_{11}.
\end{split}
\]
Hence, we can take a lift of $\varepsilon_{11}$ as
\begin{equation}\label{[ep11]}
[\varepsilon_{11}]
\in\{[\eta_{11}]\ext(\eta_{12}),\coe(\eta_{13}),\nu_{15}\}_6
\subset R^{12}_{19}.
\end{equation}
By using \cite[Proposition 1.2 iv)]{T} and relations 
$[\eta_{11}]_{13}=\varDelta(\iota_{13})$ \cite[Table 3]{K}
and \eqref{2nu_n}, we have
\[
\begin{split}
[\varepsilon_{11}]_{13}
&\in i_{13}\circ\{[\eta_{11}]\ext(\eta_{12}),\coe(\eta_{13}),\nu_{15}\}_6
\subset\{i_{13}\circ[\eta_{11}]\ext(\eta_{12}),\coe(\eta_{13}),\nu_{15}\}_6\\
&=\{[\eta_{11}]_{13}\ext(\eta_{12}),\coe(\eta_{13}),\nu_{15}\}_6
\subset\{\varDelta(\iota_{13}),\pm 2\nu_{12},\nu_{15}\}_6.
\end{split}
\]
Thus, by \cite[Theorem 2.1]{MT2} and the relation
$\iota_{13}\circ \Sigma(\pm 2\nu_{12})=\pm 2\nu_{13}$, 
there exists a lift $[\pm2\nu_{13}]\in R^{14}_{16}$ 
of $\pm 2\nu_{13}$ such that
$[\varepsilon_{11}]_{14}=[\pm2\nu_{13}]\nu_{16}$.
%We have ${p_{14}}_*([\pm2\nu_{13}])= \pm 2\nu_{13}$.
By \cite[IV-(10.4),(10.5)]{WG78}, we have
\[
{p_{14}}_*(\varDelta(\nu_{14}))
 ={p_{14}}_*(\varDelta(\iota_{14}))\circ\nu_{13}
 = 2\iota_{13}\circ\nu_{13}=2\nu_{13}.
\]
So, by using the exact sequence (${\mathcal{R}}^{13}_{16}$)
and $R^{13}_{16}=\{[\iota_7]_{13}\mu_7\}$ \cite[Proposition 4.1]{K},
we have
\[
\varDelta(\nu_{14})\equiv \mp[\pm2\nu_{13}]
\bmod {i_{14}}_*(R^{13}_{16})=\{[\iota_7]_{14}\mu_7\},
\]
where the signs are taken in the same order.
From this relation, by equations $\sharp\nu^2_{14}=2$ \eqref{2nu5nu} and 
$\mu_7\nu_{16}=0$ \eqref{mu7nu}, we have
$\varDelta(\nu^2_{14})= [\pm2\nu_{13}]\nu_{16}$.
Hence, the relation $[\varepsilon_{11}]_{14}=\varDelta(\nu^2_{14})$ holds.
By  using \eqref{Ji} and \eqref{Jd}
and the relations $[\varepsilon_{11}]_{14}=\varDelta(\nu^2_{14})$ and
$P(\nu_{29})=[\iota_{14},\nu_{14}]=\pm2\omega_{14}$
\cite[p.~159]{T},
we have
\[
\begin{split}
\Sigma^2 J([\varepsilon_{11}])=J([\varepsilon_{11}]_{14})
=J(\varDelta(\nu^2_{14}))=[\iota_{14},\nu^2_{14}]
=[\iota_{14},\nu_{14}]\nu_{30}
=2\omega_{14}\nu_{30}.
\end{split}
\]
By Theorem \ref{Ji0}, \cite[Proposition 2.3]{T}, \eqref{HJ}, 
%and relations 
%$\varepsilon_{23}=\{\eta_{23}\ext(\eta_{24}),\coe(\eta_{25}),\nu_{27}\}_{18}$
\eqref{ep_GM} and 
$\varepsilon_{23}
\in\{\eta_{23},\nu^2_{24},2\iota_{30}\}_{2}$
 \cite[(6.1)]{T}, we have
\[
\begin{split}
HJ([\varepsilon_{11}])
&\in HJ\{[\eta_{11}]\ext(\eta_{12}),\coe(\eta_{13}),\nu_{15}\}_6
\subset 
\{HJ([\eta_{11}])\circ\ext(\eta_{24}),\coe(\eta_{25}),\nu_{27}\}_{18}\\
&=\{\eta_{23}\ext(\eta_{24}),\coe(\eta_{25}),\nu_{27}\}_{18}
=\varepsilon_{23}
\in\{\eta_{23},\nu^2_{24},2\iota_{30}\}_{2}.
\end{split}
\]
Thus the element $J([\varepsilon_{11}])\in\pi^{12}_{31}$
satisfies the following:
\[
\Sigma^2 J([\varepsilon_{11}])=[\iota_{14},\nu^2_{14}]=2\omega_{14}\nu_{30}.
\mand
HJ([\varepsilon_{11}])
=\varepsilon_{23}\in\{\eta_{23},\nu^2_{24},2\iota_{30}\}_{2}.
\]
Now, we recall the element $\omega'\in\pi^{12}_{31}$ \cite[Proposition 11.14, Lemma 12.21]{T}
which satisfies the following:
\[
\Sigma^2\omega'=2\omega_{14}\nu_{30}
\mand
H(\omega')\in\{\eta_{23},\nu^2_{24},2\iota_{30}\}_{2}.
\]
Note that the relation
$H(\omega')\equiv \varepsilon_{23}
\bmod\ \bar{\nu}_{23}+\varepsilon_{23}$   is derived from the fact 
$\{\eta_{23},\nu^2_{24},2\iota_{30}\}_{2}
=\varepsilon_{23}+\{\bar{\nu}_{23}+\varepsilon_{23}\}$.
So, we can define $\omega'$ as
\begin{equation}\label{omega'=Jep11}
\omega'= J([\varepsilon_{11}])
\end{equation}
and it satisfies
\begin{equation}\label{rel-omega'}
\Sigma^2\omega'=2\omega_{14}\nu_{30}=[\iota_{14},\nu^2_{14}]
\mand
H(\omega')=\varepsilon_{23}.
\end{equation}
%%%%%%%%%%%%%%%%%%%%%%%%%%%%%%%%%%%%%%%%%%%%%%%%%%%%%%%%%%%%%%

We show 
\begin{lem}\label{omg'sg}% 26
$\omega'\sigma_{31}=0$.
\end{lem}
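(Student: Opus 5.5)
We exploit the definition $\omega'=J([\varepsilon_{11}])$ from \eqref{omega'=Jep11} together with \eqref{Js}, which gives
\[
\omega'\sigma_{31}=J([\varepsilon_{11}])\circ\Sigma^{12}\sigma_{19}=J([\varepsilon_{11}]\circ\sigma_{19}).
\]
So it suffices to show $[\varepsilon_{11}]\sigma_{19}=0$ in $R^{12}_{26}$, or at least that it lies in a subgroup which $J$ annihilates.

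The computation at the level of $R^{12}$ uses the Toda bracket description \eqref{[ep11]} of $[\varepsilon_{11}]$. By \cite[Proposition 1.4]{T},
\[
[\varepsilon_{11}]\sigma_{19}\in\{[\eta_{11}]\ext(\eta_{12}),\coe(\eta_{13}),\nu_{15}\}_6\circ\sigma_{19}
=\pm[\eta_{11}]\ext(\eta_{12})\circ\{\coe(\eta_{13}),\nu_{15},\sigma_{18}\}.
\]
Here $\nu_{15}\sigma_{18}=0$ by \eqref{nu10sg}. The bracket $\{\coe(\eta_{13}),\nu_{15},\sigma_{18}\}$ lies in $\pi_{26}(M^{14})$. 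Composing with $\ext(\eta_{12})$ and then with $[\eta_{11}]$ relates it, via ${p_{12}}_*$, to $\{\eta_{11}\ext(\eta_{12}),\coe(\eta_{13}),\nu_{15}\}\circ\sigma_{19}=\varepsilon_{11}\sigma_{19}=0$ by \eqref{bnu6sg}. This shows that ${p_{12}}_*([\varepsilon_{11}]\sigma_{19})=0$. Hence
\[
[\varepsilon_{11}]\sigma_{19}\in {i_{12}}_*R^{11}_{26}.
\]

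It then remains to control $J({i_{12}}_*R^{11}_{26})=-\Sigma J(R^{11}_{26})$ inside the image of suspension. Note that $\Sigma^2\omega'\sigma_{33}=2\omega_{14}\nu_{30}\sigma_{33}=0$ by \eqref{rel-omega'} and \eqref{nu10sg}, and $H(\omega'\sigma_{31})=\varepsilon_{23}\sigma_{31}=0$ by \eqref{bnu6sg}. So in any case $\omega'\sigma_{31}\in\Sigma\pi^{11}_{37}$ and lies in $\Ker\Sigma^2$. The alternative route therefore works entirely in the EHP sequence: identify the subgroup of $\Sigma\pi^{11}_{37}$ killed by $\Sigma^2$, which is spanned by images $P(\pi^{25}_{39})$ and $\Sigma P(\pi^{23}_{39})$, and then eliminate the candidates. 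The elimination uses the Toda bracket: $\omega'\sigma_{31}$ is then a composite $\omega'\circ\sigma_{31}$ where $\omega'\in\{\ldots\}$, and we would write $\omega'\sigma_{31}\in J\{\ldots\}$ and apply Theorem \ref{Ji0} to land in $\{\eta_{11}$-type bracket$,\ldots,\sigma\}$. We then use $\varepsilon\sigma=0$ and Lemma \ref{sgnep} ($\langle\nu,\varepsilon,\sigma\rangle=0$, $\langle\sigma,\nu,\varepsilon\rangle=0$) to conclude the bracket contains $0$ with indeterminacy computed from known groups.

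The main obstacle is the indeterminacy: showing that the residual ambiguity, namely elements of $[\eta_{11}]\ext(\eta_{12})\circ\pi_{26}(M^{14})$, or equivalently candidates in $\Sigma\pi^{11}_{37}\cap\Ker\Sigma^2$, vanishes after applying $J$. To handle it, we would first try to push everything to $R^{14}$ and $R^{13}$, where $[\varepsilon_{11}]_{14}=\varDelta(\nu^2_{14})$. This gives $[\varepsilon_{11}]_{14}\sigma_{19}=\varDelta(\nu^2_{14}\sigma_{20})=0$ since $\nu_{17}\sigma_{20}=0$, so $[\varepsilon_{11}]\sigma_{19}\in\Ker\{{i_{12,14}}_*\}$, which is generated by $\varDelta(\pi^{12}_{27})$ and ${i_{13}}_*$-preimages of $\varDelta(\pi^{13}_{28})$. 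Applying $J$ and \eqref{Jd} turns these into Whitehead products $[\iota_{12},\alpha]$ with $\alpha\in\pi^{12}_{27}$, plus suspensions of $[\iota_{13},\beta]$. These would finally be checked to vanish, or to cancel, using the known structure of $\pi^{12}_{27}$, $\pi^{13}_{28}$ from \cite{T} and the relations \eqref{x13et}, \eqref{n7}.
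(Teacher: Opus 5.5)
Your opening reduction matches the paper's. You write $\omega'\sigma_{31}=J([\varepsilon_{11}]\sigma_{19})$, then use Proposition~1.4 of \cite{T} to express $[\varepsilon_{11}]\sigma_{19}$ as $[\eta_{11}]\ext(\eta_{12})$ composed with a bracket in $\pi_{26}(M^{14})$. After that, none of your routes actually concludes.

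\begin{itemize}
\item The fact ${p_{12}}_*([\varepsilon_{11}]\sigma_{19})=\varepsilon_{11}\sigma_{19}=0$ is immediate. It only places the element in ${i_{12}}_*R^{11}_{26}$, and you know nothing about $J$ there.
\item The EHP route needs the unstable $26$-stem groups $\pi^{11}_{37}$ and $\pi^{12}_{38}$, plus an ``elimination of candidates'' that you never carry out.
\item The last route cannot work as stated. Locating $[\varepsilon_{11}]\sigma_{19}$ in $\Ker\{{i_{12,14}}_*\}$ does not identify it, and $J$ does not vanish on that kernel. The kernel contains $\varDelta(\Sigma^3\rho')$, and $J\varDelta(\Sigma^3\rho')=\pm[\iota_{12},\Sigma^3\rho']=\pm P(2\rho_{25})$ has Hopf invariant $\pm 2\iota_{23}\circ 2\rho_{23}=\pm4\rho_{23}\neq 0$.
\item Within that route, the other part of the kernel should come from $\varDelta(\pi^{13}_{27})$, not $\varDelta(\pi^{13}_{28})$. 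Also, $J$ of an ${i_{13}}_*$-preimage is only a desuspension of $\pm[\iota_{13},\beta]$, so it is not determined by $[\iota_{13},\beta]$.
\end{itemize}

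The missing idea is that you never need to control all of $[\eta_{11}]\ext(\eta_{12})\circ\pi_{26}(M^{14})$. Nor do you need $J$ after the first line. Let $\iota'$ be the identity of $SO(12)$. Since $SO(12)$ is a group and $\sharp[\eta_{11}]=2$, for every $y\in\pi_{26}(M^{14})$ we get $[\eta_{11}]\ext(\eta_{12})\circ 2y=2\iota'\circ[\eta_{11}]\circ\ext(\eta_{12})y=(2[\eta_{11}])\circ\ext(\eta_{12})y=0$. So it suffices to show that the bracket lies in $2\pi_{26}(M^{14})$.

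The paper proves this in the following steps.
\begin{itemize}
\item It uses the $\{\ \}_1$ form of Proposition~1.4, so that the bracket is a suspension $\Sigma\{\coe(\eta_{12}),\nu_{14},\sigma_{17}\}$. Your $\{\coe(\eta_{13}),\nu_{15},\sigma_{18}\}$ is not known to consist of suspensions.
\item On a suspension, multiplication by $2$ is composition with $2\iota''_{14}=i''_{14}\eta_{13}p''_{14}$ \eqref{2iota_M}.
\item Since $p''\circ\coe(\eta)=\eta$, this gives $2\Sigma\{\coe(\eta_{12}),\nu_{14},\sigma_{17}\}\subset i''_{14}\eta_{13}\circ\Sigma\{\eta_{13},\nu_{14},\sigma_{17}\}=0$, by Lemma~\ref{etnusg}~(2).
\item It computes $\pi_{26}(M^{14})=\{\lambda_7\}\cong\Z_8$, using James's relative EHP sequence and an element $\lambda_7$ of order $8$.
\item Together these put the bracket in $\{4\lambda_7\}$, so $[\varepsilon_{11}]\sigma_{19}=0$ already in $R^{12}_{26}$.
\end{itemize}
Your proposal never uses Lemma~\ref{etnusg}~(2), the relation $2\iota''_{14}=i''_{14}\eta_{13}p''_{14}$, the structure of $\pi_{26}(M^{14})$, or the order of $[\eta_{11}]$ together with the group structure of $SO(12)$. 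These are exactly the ingredients that make the proof work.
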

\begin{proof}By the equation
$\omega'= J([\varepsilon_{11}])$ \eqref{omega'=Jep11},
we have 
\[
\omega'\sigma_{31}=J([\varepsilon_{11}])\sigma_{31}
=J([\varepsilon_{11}]\sigma_{19}).
\]
So, it is sufficient to show the equation $[\varepsilon_{11}]\sigma_{19}=0$.
By using \cite[Proposition 1.4]{T} and
the relation
$[\varepsilon_{11}]
\in\{[\eta_{11}]\ext(\eta_{12}),\coe(\eta_{13}),\nu_{15}\}_6
\subset R^{12}_{19}$
\eqref{[ep11]},
we have
\[
\begin{split}
[\varepsilon_{11}]\sigma_{19}
&\in \{[\eta_{11}]\ext(\eta_{12}),\coe(\eta_{13}),\nu_{15}\}_6\circ\sigma_{19}\\
&\subset  \{[\eta_{11}]\ext(\eta_{12}),\coe(\eta_{13}),\nu_{15}\}_1\circ\sigma_{19}
=[\eta_{11}]\ext(\eta_{12})
\circ\Sigma\{\coe(\eta_{12}),\nu_{14},\sigma_{17}\}.
\end{split}
\]
We will examine $\Sigma\{\coe(\eta_{12}),\nu_{14},\sigma_{17}\}\subset\pi_{26}(M^{14})$ 
by using the exact sequence:
\[
\pi_{27}(M^{14},S^{13})
\rarrow{\partial}\pi^{13}_{26}
\rarrow{{i''_{14}}_*}\pi_{26}(M^{14})
\rarrow{j_*}\pi_{26}(M^{14},S^{13})
\rarrow{\partial}\pi^{13}_{25}.
\]
We consider the exact sequence \cite[Theorem 2.1]{Ja1}:
\[
\pi^{14}_{27}\rarrow{H_{\alpha}}
\pi^{13}_{13}\rarrow{Q}\pi_{26}(M^{14},S^{13})
\rarrow{\chi}
\pi^{14}_{26},
\]
where $\alpha=2\iota_{13}$.
By the fact $\pi^{14}_{27}=\{P(\iota_{29})\}\cong\Z$
\cite[Theorem 7.7]{T}
and the relation $HP(\iota_{29})=\pm 2\iota_{27}$ \cite[Proposition 2.7]{T},
we have
\[
\begin{split}
H_{2\iota_{13}}(P(\iota_{29}))
&={2\iota_{13}}_*(\Sigma^{-14}(HP(\iota_{29})))
={2\iota_{13}}_*(\Sigma^{-14}(\pm 2\iota_{27}))
={2\iota_{13}}_*(\pm 2\iota_{13})=\pm 4\iota_{13}.
\end{split}
\]
So, by the fact $\pi^{14}_{26}=0$ \cite[Theorem 7.6]{T},
we have $\pi_{26}(M^{14},S^{13})\cong\Z_4$.
Since there exists an element $\lambda_7$ of $\pi_{26}(M^{14})$ which has
order $8$ 
%\cite[P.~26]{GM2014},
\cite[Lemma 2.7]{MoM},
by the fact $\pi^{13}_{26}=\{(\Sigma\theta)\eta_{25}\}\cong\Z_2$
\cite[Theorem 7.7]{T}, we obtain
\[
\pi_{26}(M^{14})=\{\lambda_7\}\cong \Z_8
\mand
4\lambda_7= i''_{14}(\Sigma\theta)\eta_{25}.
\]
By using \cite[Propositions 1.2 iv), 1.3]{T} and relations
$2\iota''_{14} = i''_{14}\eta_{13} p''_{14}$ \eqref{2iota_M} and
$\{\eta_{13},\nu_{14},\sigma_{17}\}=0$ (Lemma \ref{etnusg} (2)),
we have
\[
\begin{split}
2(\Sigma\{\coe(\eta_{12}),\nu_{14},\sigma_{17}\})
&=2\iota''_{14}\circ \Sigma\{\coe(\eta_{12}),\nu_{14},\sigma_{17}\}
=i''_{14}\eta_{13} \circ\Sigma(p''_8 \circ \{\coe(\eta_{12}),\nu_{14},\sigma_{17}\})\\
&\subset i''_{14}\eta_{13} \circ \Sigma\{p''_8 \circ\coe(\eta_{12}),\nu_{14},\sigma_{17}\}
=i''_{14}\eta_{13} \circ  \Sigma\{\eta_{13},\nu_{14},\sigma_{17}\}=0.
\end{split}
\]
Hence, we have $\Sigma\{\coe(\eta_{12}),\nu_{14},\sigma_{17}\}\subset\{4\lambda_7\}$.
Since $SO(12)$ is a Lie group, by the fact $\sharp[\eta_{11}]=2$,
we obtain
\[
\begin{split}
[\varepsilon_{11}]\sigma_{19}
&\subset [\eta_{11}]\ext(\eta_{12})
\circ\Sigma\{\coe(\eta_{12}),\nu_{14},\sigma_{17}\}\subset\{[\eta_{11}]\ext(\eta_{12})\circ 4\lambda_7\}\\
&=\{4([\eta_{11}]\ext(\eta_{12})\lambda_7)\}
=\{4\iota'\circ[\eta_{11}]\circ\ext(\eta_{12})\lambda_7\}
=\{4[\eta_{11}]\circ\ext(\eta_{12})\lambda_7\}=0,
\end{split}
\]
where $\iota'$ is the identity class of $SO(12)$.
This completes the proof.
\end{proof}

%%%%%%%%%%%%%%%%%%%%%%%%%%%%%%%%%%%%%%%%%%%%%%%%%%%%%%%%%%%%%%
We recall  from \cite[(4.1)]{HKM} the relation:
\begin{equation}\label{Dsm9}
\varDelta(\sigma_9)=[\iota_7]_9(\bar{\nu}_7+\varepsilon_7).
\end{equation}
By $\bar{\nu}_7\sigma_{15}=\varepsilon_7\sigma_{15}=0$ \eqref{bnu6sg}, 
we obtain $\varDelta(\sigma^2_9)=[\iota_7]_9(\bar{\nu}_7+\varepsilon_7)\sigma_{15}=0$ 
and so, there exists a lift $[\sigma^2_9]\in R^{10}_{23}$\ of $\sigma^2_9$. 
We take 
\begin{equation}\label{[ss9]}
[\sigma^2_9]\in\{[\iota_7]_{10},\bar{\nu}_7+\varepsilon_7, \sigma_{15}\}
\end{equation}
 as follows.
By the relation
\[
[\iota_7]_{10}(\bar{\nu}_7+\varepsilon_7)=i_{10}\circ\varDelta(\sigma_9)
={i_{10}}_*\circ\Delta(\sigma_9)=0
\] from \eqref{Dsm9},
we can define the Toda bracket
$
\{[\iota_7]_{10},\bar{\nu}_7+\varepsilon_7, \sigma_{15}\}\subset R^{10}_{23}.
$
By \cite[Proposition 1.4]{T}, 
\eqref{Dsm9} and Lemma \ref{id},
we have 
\[
\begin{split}
{p_{10}}_*\{[\iota_7]_{10},\bar{\nu}_7+\varepsilon_7, \sigma_{15}\}
&\subset {p_{10}}_*\{i_{10},[\iota_7]_9(\bar{\nu}_7+\varepsilon_7), \sigma_{15}\}\\
&={p_{10}}_*\{i_{10},\varDelta(\sigma_9), \sigma_{15}\}
\subset{p_{10}}_*\{i_{10},\varDelta(\iota_9), \sigma^2_{8}\}\\
&=p_{10}\circ\{i_{10},\varDelta(\iota_9), \sigma^2_{8}\}
=-\{p_{10}, i_{10}, \varDelta(\iota_9)\}\circ\sigma^2_9
\ni(2k+1)\sigma^2_9\\
%%\ \bmod\ {p_{10}}_*R^{10}_{16}\circ\sigma_{16}=\{2\sigma^2_9\}.
\end{split}
\]
for some integer $k$.
By the facts ${p_{10}}_*(R^{10}_{16})=\{2\sigma_9\}$ \cite[Proposition 4.1]{K}
and
${p_{10}}_*(R^{10}_{9})=2\iota_9$ \cite[Table 2]{K},
we have
\begin{align*}
\mathrm{Ind}({p_{10}}_*\{[\iota_7]_{10},\bar{\nu}_7+\varepsilon_7, \sigma_{15}\})
&={p_{10}}_*([\iota_7]_{10}\circ \pi^7_{23}+R^{10}_{16}\circ\sigma_{16})\\
&={p_{10}}_*\circ{i_{10}}_*([\iota_7]_{9}\circ \pi^7_{23})
+{p_{10}}_*(R^{10}_{16})\circ\sigma_{16}=\{2\sigma^2_9\}\\
\shortintertext{and}
\mathrm{Ind}({p_{10}}_*\{i_{10},\varDelta(\iota_9), \sigma^2_{8}\})
&={p_{10}}_*(i_{10}\circ R^{9}_{23}+R^{10}_{9} \circ \sigma^2_9)\\
&={p_{10}}_*\circ{i_{10}}_*(R^{9}_{23})+ {p_{10}}_*(R^{10}_{9}) \circ \sigma^2_9 =\{2\sigma^2_9\}
\end{align*}
and hence, we obtain
\[
{p_{10}}_*\{[\iota_7]_{10},\bar{\nu}_7+\varepsilon_7, \sigma_{15}\}
=\sigma^2_9+\{2\sigma^2_9\}.
\]
Therefore, we can take 
$[\sigma^2_9]\in\{[\iota_7]_{10},\bar{\nu}_7+\varepsilon_7, \sigma_{15}\}$.

Let us recall from \cite[the proof of (4.27)]{MMO} the definition of $\psi_{10}\in\pi^{10}_{33}$:
\[
\psi_{10}\in\{\sigma_{10},\bar{\nu}_{17}+\varepsilon_{17},\sigma_{25}\}_{4}.
\] 
By Theorem \ref{Ji0} and relations \eqref{[ss9]}, 
$J([\iota_7])=\sigma_8$ \cite[(2.2)]{HKM}
 and \eqref{Ji},
we have
\[
\begin{split}
J([\sigma^2_9])&\in J\{[\iota_7]_{10},\bar{\nu}_7+\varepsilon_7, \sigma_{15}\}
\subset \{J([\iota_7]_{10}),\bar{\nu}_{17}+\varepsilon_{17}, \sigma_{25}\}_{10}
\subset \{\sigma_{10},\bar{\nu}_{17}+\varepsilon_{17}, \sigma_{25}\}_{4}.
\end{split}
\]
Therefore, 
we can define $\psi_{10}$ as
\begin{equation}\label{Jim}
\psi_{10}=J([\sigma^2_9]). 
\end{equation}
%%%%%%%%%%%%%%%%%%%%%%%%%%%%%%%%%%%%%%%%%%%%%%%%%%%%%%%%%%%%%%

We recall from \cite[Theorem 3]{Ke} the fact:
\[
\varDelta(\nu_{8n+5})\ne 0\ \text{\ for\ $n\ge 1$}. 
\]
In particular, we show 
\begin{lem}\label{wnu21}
$\varDelta(\nu_{21})=[\sigma^2_9]_{21}$ \ and \  $[\iota_{21},\nu_{21}]=\psi_{21}$.
\end{lem}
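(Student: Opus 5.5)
Both claims will come from a computation of the group $R^{21}_{23}$ together with the $J$-homomorphism. The second equality then follows from the first.

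\emph{Reduction of the second equality to the first.} Assume $\varDelta(\nu_{21})=[\sigma^2_9]_{21}$. By \eqref{Jd}, $J(\varDelta(\nu_{21}))=[\nu_{21},\iota_{21}]=\pm[\iota_{21},\nu_{21}]$. By \eqref{Ji} and \eqref{Jim}, $J([\sigma^2_9]_{21})=\pm\Sigma^{11}J([\sigma^2_9])=\pm\psi_{21}$. Since $\sharp\psi_{21}\le 2$ (I expect $2\psi_{10}$ to vanish after suspension, by the bracket definition of $\psi_{10}$ and $2\sigma_{25}\circ$ arguments), the signs are irrelevant and $[\iota_{21},\nu_{21}]=\psi_{21}$. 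If the order of $\psi_{21}$ is not obviously $2$, I would instead note that $[\iota_{21},\nu_{21}]$ has order at most $2$, since $2[\iota_{21},\nu_{21}]=[\iota_{21},2\nu_{21}]$ is a Whitehead product of a suspension in the relevant range. Either way it suffices to compare up to sign.

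\emph{Proof of the first equality.}
\begin{enumerate}
\item Locate $\varDelta(\nu_{21})$ in $R^{21}_{23}$, the stable range (since $23<21\cdot 2-1$). There $R^{21}_{23}\cong\pi_{23}(SO)$, which by Bott periodicity is $\pi_7(SO)\cong\Z$ in dimension $23\equiv 7 \pmod 8$.
\item Observe that $\varDelta(\nu_{21})$ is a torsion element, because $\nu_{21}$ has finite order. But $\pi_{23}(SO)\cong\Z$ is torsion-free, so this looks contradictory with Kervaire's theorem $\varDelta(\nu_{8n+5})\neq 0$. The resolution is that $R^{21}_{23}$ is not yet stable: the stable range for $\pi_k(SO(n))$ requires $k\le n-2$, and $23>19$. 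So $R^{21}_{23}$ must be computed from the sequences $(\mathcal R^n_k)$.
\item Run the exact sequence $(\mathcal R^{21}_{23})$,
\[
\pi^{21}_{24}\xrightarrow{\varDelta}R^{21}_{23}\to R^{22}_{23}\to\pi^{21}_{23}\xrightarrow{\varDelta}R^{21}_{22},
\]
using the known values $R^{22}_{23}\cong\Z$, generated by a lift $[\iota_{22}]$-type element (or a similar known group). Conclude that $\varDelta(\pi^{21}_{24})=\{\varDelta(\nu_{21})\}\cong\Z_2$ is the torsion of $R^{21}_{23}$.
\item Show that $[\sigma^2_9]_{21}$ is nonzero torsion in $R^{21}_{23}$:
\begin{itemize}
\item It is torsion, since $2[\sigma^2_9]$ should lie in the image of lower terms, or because $J([\sigma^2_9]_{21})=\psi_{21}$ and $J$ is injective on the relevant summand.
\item It is nonzero, because $J([\sigma^2_9]_{21})=\pm\psi_{21}\neq 0$. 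Here $\psi\neq 0$ stably, by \cite{MMO}, where $\psi$ is a nonzero element of $\pi^S_{23}$ of $J$-image type.
\end{itemize}
Being nonzero torsion in a group whose torsion is $\Z_2$, it equals $\varDelta(\nu_{21})$. Kervaire's nonvanishing confirms that the torsion part is exactly $\Z_2$.
\end{enumerate}

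\emph{Main obstacle.} The hard step is making $R^{21}_{23}$ precise and confirming that its torsion subgroup is exactly $\{\varDelta(\nu_{21})\}$. This requires Kervaire's tables \cite{K} or the unstable computations in \cite{Ke}. If those tables are unavailable, I would instead try to show directly that $[\sigma^2_9]_{21}\in\varDelta(\pi^{21}_{24})$. The route is to push the bracket $\{[\iota_7]_{10},\bar\nu_7+\varepsilon_7,\sigma_{15}\}$ up to $SO(21)$, where $[\iota_7]_{21}$ becomes part of a $\varDelta$-image, and then recognize the bracket as $\varDelta$ of a bracket equal to $\nu_{21}$.
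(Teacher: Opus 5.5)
Your overall architecture is the paper's. You use the sequence $(\mathcal{R}^{21}_{23})$ with Kervaire's values $R^{21}_{23}\cong\Z\oplus\Z_2$ and $R^{22}_{23}\cong\Z$ to identify $\varDelta(\pi^{21}_{24})$ with the torsion of $R^{21}_{23}$. You then show $[\sigma^2_9]_{21}$ is nonzero and of finite order, and transport through $J$ via \eqref{Jd}, \eqref{Ji}, \eqref{Jim}. Your use of $R^{22}_{23}\cong\Z$ is enough here: torsion-freeness alone forces the kernel of ${i_{21}}_*$, a finite group, to be the whole torsion subgroup. This spares you the paper's computation $\varDelta(\eta^2_{21})\neq 0$.

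\textbf{Gap: finite order of $[\sigma^2_9]_{21}$.} Neither of your reasons establishes that $[\sigma^2_9]_{21}$ is torsion, and this step is essential: otherwise it could be a generator of the $\Z$-summand plus torsion. Injectivity of $J$ on a summand is irrelevant. $J$ maps $R^{21}_{23}$ into the finite group $\pi^{21}_{44}$, since $S^{21}$ is an odd sphere. So infinite-order elements also have $J$-image $\psi_{21}$, for instance $Ng+\varDelta(\nu_{21})$ with $g$ a generator of the $\Z$-summand and $N=\sharp J(g)$. Saying ``$2[\sigma^2_9]$ lies in the image of lower terms'' is also not an argument until you know those terms are finite.

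The missing idea is that $[\sigma^2_9]$ lives in $R^{10}_{23}$, which is finite. Start from $R^2_{23}=\pi_{23}(S^1)=0$ and use $R^k_{23}\to R^{k+1}_{23}\to\pi^k_{23}$ from $(\mathcal{R}^k_{23})$. Since $\pi^k_{23}$ is finite for $k\le 9$, every $R^k_{23}$ with $k\le 10$ is finite. Hence $\sharp[\sigma^2_9]_{21}<\infty$, which is how the paper argues.

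\textbf{Other incorrect justifications.} $\psi_{21}\neq 0$ cannot come from ``$\psi\neq 0$ stably''. In fact $\psi$ is stably trivial: the statement you are proving gives $\psi_{21}=[\iota_{21},\nu_{21}]$, so $\psi_{22}=\Sigma[\iota_{21},\nu_{21}]=0$. What you need is the unstable fact $\sharp\psi_{21}=2$ from \cite[Theorem 1.1(b)]{MMO}. That fact also disposes of the sign $(-1)^{11}$ from \eqref{Ji}. Your fallback claim that $[\iota_{21},2\nu_{21}]$ vanishes is unjustified.

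Kervaire's theorem $\varDelta(\nu_{21})\neq 0$ only shows that the torsion of $R^{21}_{23}$ is nontrivial. That torsion is cyclic of order dividing $8$. To conclude that a nonzero torsion element equals $\varDelta(\nu_{21})$, you need the torsion to be exactly $\Z_2$. That is the table value $R^{21}_{23}\cong\Z\oplus\Z_2$ from \cite[Table, p.~161]{Ke}, which the paper cites.

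Your Step~1 appeal to the stable range and Bott periodicity should simply be dropped, since $R^{21}_{23}$ is not stable.
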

\begin{proof}
Using the exact sequence $({\mathcal{R}}^{n}_{23})$ for $n\leq 11$,
since $\pi^k_{23}$ and $\pi^k_{24}$ are finite groups for $k\leq 10$
\cite[Theorem 1.1]{MMO}, we have $\sharp [\sigma^2_9]<\infty$.
Moreover, by
the relation $\eqref{Jim}$ and 
the fact $\sharp\psi_{21}=2$ \cite[Theorem 1.1(b)]{MMO}, 
we observe that
\[
J([\sigma^2_9]_{21})=(-1)^{11}\Sigma^{11}J([\sigma^2_9])
 =\psi_{21}\neq 0.
\]
So, we obtain $2 \leq \sharp [\sigma^2_9]_{21}<\infty$.

We consider the exact sequence $({\mathcal{R}}^{21}_{23})$: 
\[
\pi^{21}_{24}\rarrow{\varDelta}R^{21}_{23}
\rarrow{{i_{21}}_*} R^{22}_{23}
\rarrow{{p_{22}}_*}\pi^{21}_{23}
\rarrow{\varDelta}R^{21}_{22}. 
\]
By \cite[Propositions 5.3, 5.6]{T} and \cite[Table, p. 161]{Ke}, % and \cite{Bo}, 
we have
\[
\pi^{21}_{24}=\{\nu_{21}\}\cong\Z_8,\ 
R^{21}_{23}\cong\Z\oplus\Z_2,\  
R^{22}_{23}\cong\Z 
\mand
\pi^{21}_{23}=\{\eta^2_{21}\}\cong\Z_2
\]
Since
$J(\varDelta(\eta^2_{21}))=[\iota_{21},\eta^2_{21}]=4\sigma^*_{21}\ne 0$
from \eqref{Jd} and  \cite[Lemma 8.3]{Mi}, 
we have $\varDelta(\eta^2_{21})\neq 0$.
So, 
${p_{22}}_*$ is trivial and ${i_{21}}_*$ is a split epimorphism. 
Hence, by the fact $2 \leq \sharp [\sigma^2_9]_{21}<\infty$, 
the direct summand $\Z_2$ in $R^{21}_{23}$ is generated by $[\sigma^2_9]_{21}$
and $\varDelta(\nu_{21})=[\sigma^2_9]_{21}$.  

By relations $J(\varDelta(\nu_{21}))=[\iota_{21},\nu_{21}]$ from \eqref{Jd} and 
$J([\sigma^2_9]_{21})=\psi_{21}$, we have
\[
[\iota_{21},\nu_{21}]
=J(\varDelta(\nu_{21}))=J([\sigma^2_9]_{21})=\psi_{21}.
\] 
This completes the proof.
\end{proof}

%The second equation in Lemma \ref{wnu21}
%is an excluded case in \cite[Theorem 3.6 (9)]{Th}. This theorem ensures the following.
%\begin{conj}\label{conj1}
%There exists a lift $[\sigma^2_{16k-7}]\in R^{16k-6}_{16k+7}$ of $\sigma^2_{16k-7}$ such that $\varDelta(\nu_{16k+5})=[\sigma^2_{16k-7}]_{16k+5}$ for $k\ge 2$.
%\end{conj}

From Lemma \ref{wnu21}, we obtain the following relations.
%%%%%%%%%%%%%%%%  23
\begin{prop}\label{etsg*}% 23 16-39
\begin{enumerate}
\item
$\sigma_{14}\omega_{21}+\omega_{14}\sigma_{30} = \psi_{14}$.
\item
$(\Sigma{\eta^*}')\sigma_{32}=[\iota_{16},\eta_{16}\sigma_{17}]$.
\item
$\eta_{16}\sigma^*_{17}
%=\sigma_{16}\omega_{23}+\omega_{16}\sigma_{32}+[\iota_{16},\eta_{16}\sigma_{17}]
=\psi_{17}+[\iota_{16},\eta_{16}\sigma_{17}]$.
\end{enumerate}
\end{prop}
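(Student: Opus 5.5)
The natural tool for part (1) is Lemma \ref{wnu21}, applied through the $J$-homomorphism and the EHP-sequence. For (2) and (3) I would use the known expressions for $[\iota_{16},\eta_{16}]$ and $[\iota_{17},\iota_{17}]$, composing them with $\sigma$ and feeding in (1).

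\emph{Part (1).} The left-hand side $\sigma_{14}\omega_{21}+\omega_{14}\sigma_{30}$ suspends to $\sigma\omega+\omega\sigma=2\sigma\omega=0$ in $\pi^S_{23}$, so it is an unstable element that dies stably. I would first identify it after seven suspensions. The plan is to use the commutator formula of Barratt--Toda type for $\alpha\circ\Sigma\beta\mp\beta\circ\alpha$, where the error term is a Whitehead product governed by Hopf invariants. Here $H(\omega_{14})=\nu_{27}$ by \eqref{Homega}, and $\sigma_8$ has Hopf invariant one. This should give
\[
\sigma_{21}\omega_{28}+\omega_{21}\sigma_{37}=[\iota_{21},\nu_{21}]=\psi_{21},
\]
the last equality being Lemma \ref{wnu21}. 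Since $\psi_{21}=\Sigma^7\psi_{14}$ (because $\psi_{10}=J([\sigma^2_9])$ by \eqref{Jim}, and \eqref{Ji} applies), the difference
\[
\delta=\sigma_{14}\omega_{21}+\omega_{14}\sigma_{30}-\psi_{14}
\]
lies in $\Ker\Sigma^7$. Next I would compute $H$ of both sides on $S^{14}$. We have $H(\sigma_{14}\omega_{21})=0$, and $H(\omega_{14}\sigma_{30})=\nu_{27}\sigma_{30}=0$ by \eqref{nu10sg}. The value $H(\psi_{14})$ should be computable from \eqref{HJ} and the lift $[\sigma^2_9]$, and I expect it to vanish. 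Then $\delta\in\Sigma\pi^{13}_{36}$. The last step is to show $\delta=0$ by checking that $\Sigma^7$ is injective on the relevant part of $\pi^{14}_{37}$, using the tables of \cite{MMO}.

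I expect this descent from $S^{21}$ to $S^{14}$ to be the main obstacle. If injectivity fails, my fallback is to realise both sides in the image of $J$. Concretely, I would write $\omega_{14}$, $\sigma_{14}$ and $\psi_{14}$ as $J$-images of elements of $R^{14}_{*}$ (with $\psi_{14}$ coming from $[\sigma^2_9]_{14}$), and compare them there using the exact sequences $(\mathcal{R}^n_k)$ together with $\varDelta(\nu_{21})=[\sigma^2_9]_{21}$.

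\emph{Part (2).} Here I would start from
\[
[\iota_{16},\eta_{16}\sigma_{17}]=[\iota_{16},\iota_{16}]\circ\eta_{31}\sigma_{32}=P(\eta_{33})\circ\sigma_{32}.
\]
Then I would use Toda's expression of $P(\eta_{33})=[\iota_{16},\eta_{16}]$ in terms of $\Sigma\eta^{*\prime}$, $\omega_{16}\eta_{31}$ and $\sigma_{16}\mu_{23}$, obtained by desuspending the relation $[\iota_{17},\iota_{17}]\equiv\eta^*_{17}+\omega_{17}\bmod\sigma_{17}\mu_{24}$ and using Lemma \ref{a0}(1). Composing with $\sigma_{32}$ should kill the extra terms: $\mu\sigma^2$ vanishes by \eqref{mu3s^2}, and $\omega_{16}\eta_{31}\sigma_{32}$ should be killed by \eqref{et9sg2}.

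\emph{Part (3).} I would compose $\eta_{16}$ with a desuspension of $\sigma^*_{17}$, or with a Toda bracket defining it, using $\eta_{16}\circ[\iota_{17},\iota_{17}]=[\eta_{16},\eta_{16}]$. This reduces to $\sigma_{16}\omega_{23}+\omega_{16}\sigma_{32}$ plus $[\iota_{16},\eta_{16}\sigma_{17}]$. Applying $\Sigma^2$ to (1) and using $\psi_{16}=\Sigma^2\psi_{14}$, then suspending once more, gives $\psi_{17}$ and hence (3).
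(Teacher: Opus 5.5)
Your part (1) is sound and is essentially the paper's argument. The paper gets $\sigma_{21}\omega_{28}+\omega_{21}\sigma_{37}=[\iota_{21},\nu_{21}]=\psi_{21}$ from Oda's relation $P(\nu_{43})=\omega_{21}\sigma_{37}-\sigma_{21}\omega_{28}$ \cite[III-Proposition 2.1 (3)]{Od}, not from a Barratt-type commutator formula. Either route works, and signs are irrelevant because all terms have order at most $2$. The descent you single out as the main obstacle is immediate: $\Sigma^7:\pi^{14}_{37}\to\pi^{21}_{44}$ is an isomorphism \cite[Theorem 1.1 (a)]{MMO}. So neither your Hopf-invariant step nor the $J$-fallback is needed.

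Part (2), however, rests on an input you do not have. Since $\pi^{33}_{34}=\{\eta_{33}\}$, the element $[\iota_{16},\eta_{16}]=P(\eta_{33})$ generates $\Ker\{\Sigma:\pi^{16}_{32}\to\pi^{17}_{33}\}$. Desuspension is blind to exactly this element, so no relation obtained by desuspending $[\iota_{17},\iota_{17}]\equiv\eta^*_{17}+\omega_{17}$, with or without Lemma \ref{a0} (1), can identify it. Moreover ``$\omega_{16}\eta_{31}$'' is not a composable pair. Any genuine $\omega$-term would leave $\omega_{16}\sigma_{32}$ after composing with $\sigma_{32}$, and \eqref{et9sg2}, a statement about $\eta\sigma^2$, does not address it. What you need is {\^O}guchi's congruence $\Sigma\eta^{*\prime}\equiv[\iota_{16},\eta_{16}]\bmod\sigma_{16}\mu_{23}$ \cite[Lemma 2.10]{Og}, which has no $\omega$-term. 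Composing it with $\sigma_{32}$ and using $\sigma_{16}\mu_{23}\sigma_{32}=\mu_{16}\sigma^2_{25}=0$ \eqref{mu3s^2} gives (2) in one line.

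Part (3) has a real gap. The paper's decisive input is Oda's three-term relation
\[
\eta_{15}\sigma^*_{16}=\eta^{*\prime}\sigma_{31}+\sigma_{15}\omega_{22}+\omega_{15}\sigma_{31}
\]
from \cite[III-Proposition 2.5 (5)]{Od}. After one suspension, (2) turns $(\Sigma\eta^{*\prime})\sigma_{32}$ into $[\iota_{16},\eta_{16}\sigma_{17}]$. Then $\Sigma^2$ of (1) turns $\sigma_{16}\omega_{23}+\omega_{16}\sigma_{32}$ into $\Sigma^2\psi_{14}$.

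Your sketch gives no route to this formula. Invoking $\eta_{16}\circ[\iota_{17},\iota_{17}]=[\eta_{16},\eta_{16}]$ does not explain where $\sigma_{16}\omega_{23}$, $\omega_{16}\sigma_{32}$ and $(\Sigma\eta^{*\prime})\sigma_{32}$ come from. The obvious bracket manipulation $\eta_{15}\circ\{\sigma_{16},2\sigma_{23},\sigma_{30}\}\subset\{\eta_{15}\sigma_{16},2\sigma_{23},\sigma_{30}\}$ has indeterminacy containing $\pi^{15}_{31}\circ\sigma_{31}$. That subgroup contains exactly $\omega_{15}\sigma_{31}$ and $\eta^{*\prime}\sigma_{31}$, the terms whose presence is the whole point.

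Finally, $\eta_{16}\sigma^*_{17}$, $[\iota_{16},\eta_{16}\sigma_{17}]$ and $\sigma_{16}\omega_{23}+\omega_{16}\sigma_{32}$ all lie in $\pi^{16}_{39}$. So the $\psi$-term is necessarily $\Sigma^2\psi_{14}=\psi_{16}$, and the subscript $17$ in the statement is an index slip. ``Suspending once more'' a single summand of an identity in $\pi^{16}_{39}$ is not a legitimate step.
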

\begin{proof}
%By the fact $\pi^S_{23}=\{\bar{\rho},\nu\bar{\kappa},\phi\}
%\cong\Z_{16}\oplus\Z_8\oplus(\Z_2)^2$ \cite[Theorem 1.1 (a)]{MMO}
%and the relation $\sigma\omega\equiv \phi\bmod\ 4\nu\bar{\kappa}, \bar{\rho}$
%\cite[(6.3)]{MMO}, we have $\sharp \sigma\omega=2$.
%
%\noindent\hrulefill
%
%Moreover, $\Ker\{\Sigma^\infty:\pi^{14}_{37}\to\pi^S_{23}\}\cong \Z_2$
%\cite[Theorem 1.1 (a)]{MMO}, we obtain
%$\sharp\sigma_{n}\omega_{n+7}=\sharp\omega_n\sigma_{n+14}=2$
%for $n\ge 14$.???
%
%\noindent\hrulefill
By relations $P(\nu_{43})=\omega_{21}\sigma_{37}-\sigma_{21}\omega_{28}$
\cite[III-Proposition 2.1 (3)]{Od}, $\sharp\omega_{21}=2$ \cite[Theorem 12.16]{T} 
and $[\iota_{21},\nu_{21}]=\psi_{21}$ (Lemma \ref{wnu21}),
we have
$\omega_{21}\sigma_{37}+\sigma_{21}\omega_{28}
 = [\iota_{21},\nu_{21}] = \psi_{21}$.
%By using \cite[Proposition]{Ba} and
%relations 
%$H(\omega_{14})=\nu_{27}$ \eqref{Homega},
%$H(\sigma_8)=\iota_{15}$ \cite[Lemma 5.14]{T},
%$\eta_{22}\sigma^*_{22}=[\iota_{21},\nu_{21}]$ 
%\cite[Example 2.3 (3)]{MM} 
%and
%$[\iota_{21},\nu_{21}]=\psi_{21}$ (Lemma \ref{wnu21}),
%we have
%\[
%\begin{split}
%\sigma_{21}\omega_{28}+\omega_{21}\sigma_{37}
%&=[\iota_{21},\iota_{21}]\circ\Sigma^{14}H(\omega_{14})
%\circ\Sigma^{29}H(\sigma_{8})\\
%&=[\iota_{21},\iota_{21}]\circ \nu_{41}
%=[\iota_{21},\nu_{21}]
%=\eta_{21}\sigma^*_{22}
%=\psi_{21}.
%\end{split}
%\]
Since $\Sigma^7:\pi^{14}_{37}\to\pi^{21}_{44}$ 
is an isomorphism \cite[Theorem 1.1 (a)]{MMO},
we have (1).

By relations 
$\Sigma{\eta^*}'\equiv[\iota_{16},\eta_{16}]\bmod\ \sigma_{16}\mu_{23}$
\cite[Lemma 2.10]{Og} and  
$\sigma_{16}\mu_{23}\sigma_{32}=\mu_{16}\sigma^2_{25}=0$ \eqref{mu3s^2},
we have  (2).

Recall from  
\cite[III-Proposition 2.5 (5)]{Od} the relation
$\eta_{15}\sigma^*_{16}
 ={\eta^*}'\sigma_{31}+\sigma_{15}\omega_{22}+\omega_{15}\sigma_{31}$,
by (1) and (2), we have 
\[
\eta_{16}\sigma^*_{17}
=\sigma_{16}\omega_{23}+\omega_{16}\sigma_{32}+(\Sigma{\eta^*}')\sigma_{32}
=\psi_{17}+[\iota_{16},\eta_{16}\sigma_{17}].
\]
This completes the proof.
\end{proof}

%%%%%%%%%%%%%%%%%%%%%%%%%%%
\section{Proof of Lemma \ref{HPxl13}}
%%%%%%%%%%%%%%%%%%%%%%%%%%%

We show the following lemma overlapping with \cite[Lemma 2.18]{Og}:
\begin{lem}\label{usgnep}% 18
\begin{enumerate}
\item
$\{2\sigma_{11},\nu_{18},\sigma_{21}\}=\xi'+ 2x\lambda'+4y\xi'$
for some integers $x$ and $y$.
\item
$\{2\sigma_{11},\nu_{18},\varepsilon_{21}\}=\lambda'\eta_{29}$.
\item
$\xi_{12}\eta_{30}\equiv\theta\sigma_{24}\ \bmod\ [\iota_{12},\eta_{12}\sigma_{13}]$. 
\item
$\{\sigma_{12},\nu_{19},\varepsilon_{22}\}_1=\{\sigma_{12},\nu_{19},\varepsilon_{22}\}\ni\omega'%+a\xi_{12}\eta_{30}
+\alpha$\ \ for some $\alpha\in\{\Sigma(\lambda'\eta_{29}), \Sigma(\xi'\eta_{29}) \}$.
\end{enumerate}%description
\end{lem}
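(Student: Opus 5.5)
For each of the four brackets I will first compute the indeterminacy from Toda's tables and the relations of Section 2, and then pin down one representative by $H$, by suspension, or by juxtaposition with a bracket already known.

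\emph{Step (1).} The bracket $\{2\sigma_{11},\nu_{18},\sigma_{21}\}$ is defined because $\sigma_{11}\nu_{18}$ has order $2$ by \eqref{sgm11n} and $\nu\sigma=0$ by \eqref{nu10sg}. Its indeterminacy is
$2\sigma_{11}\circ\pi^{18}_{29}+\pi^{11}_{22}\circ\sigma_{22}$. By Toda's tables this is generated by $2\sigma_{11}\zeta_{18}=0$ (from \eqref{n7}) and by elements of the form $\pi^{11}_{22}\circ\sigma_{22}$, and it lies in $\{2\lambda',4\xi'\}$.

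To identify the bracket itself I apply $H$. Since $2\sigma_{11}=\Sigma^2\sigma'$ and $H(\sigma')=\eta_{13}$ up to odd multiples (while $H$ on a double suspension vanishes), I will use Toda's formula $H\{\Sigma\alpha,\Sigma\beta,\Sigma\gamma\}_1=-P^{-1}(\alpha\circ\beta)\circ\Sigma^2\gamma$. Rewriting $2\sigma_{11}=\Sigma(\Sigma\sigma')$, I expect to obtain
\[
H\{2\sigma_{11},\nu_{18},\sigma_{21}\}\ni \eta_{21}\sigma_{22}=H(\xi')
\]
by \eqref{Hxi}. The EHP sequence then gives the bracket $\equiv\xi'$ modulo $\Sigma\pi^{10}_{28}$. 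I will use Toda's generators to express $\Sigma\pi^{10}_{28}$ in terms of $2\lambda'$ and $4\xi'$. An alternative check is to suspend twice and compare with $\{2\sigma_{13},\nu_{20},\sigma_{23}\}\supset 2\{\sigma_{13},\nu_{20},\sigma_{23}\}\ni2\xi_{13}=\Sigma^2\xi'$, using \eqref{xi12} and \eqref{2xi}.

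\emph{Step (2).} The bracket is defined because $\nu\varepsilon=0$. To identify it, I will juxtapose it with (1) using $\varepsilon_{21}=\eta_{21}\sigma_{22}+\bar\nu_{21}$. The relevant pieces are
\[
\{2\sigma_{11},\nu_{18},\eta_{21}\}\circ\sigma_{23}\subset\pi^{11}_{22}\circ\sigma=0
\]
and $\{2\sigma_{11},\nu_{18},\bar\nu_{21}\}$. Cleaner is to apply $H$ directly: $H=\eta_{21}\circ\varepsilon_{22}$-type terms give $\varepsilon_{21}\eta_{29}$, whereas $H(\lambda'\eta_{29})=\varepsilon_{21}\eta_{29}$ by \eqref{Hlam}. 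The indeterminacy $2\sigma_{11}\circ\pi^{18}_{30}+\pi^{11}_{22}\circ\varepsilon_{22}$ must then be checked to vanish, using $\sigma\varepsilon=0$ and Lemma~\ref{sgnep}. Finally, the ambiguity $\Sigma\pi^{10}_{29}=\{\bar\sigma_{11},\bar\zeta_{11}\}$ is excluded by stabilizing: $\langle2\sigma,\nu,\varepsilon\rangle=0$ by Lemma~\ref{sgnep}(2).

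\emph{Step (3).} Here I will use $\xi_{12}\in\{\sigma_{12},\nu_{19},\sigma_{22}\}_1$ from \eqref{xi12} and move $\eta_{30}$ inside:
\[
\xi_{12}\eta_{30}\in\sigma_{12}\circ\{\nu_{19},\sigma_{22},\eta_{29}\}\ \text{or}\ \{\sigma_{12},\nu_{19},\sigma_{22}\eta_{29}\}.
\]
Alternatively, $H(\xi_{12})=\sigma_{23}$ gives $H(\xi_{12}\eta_{30})=\sigma_{23}\eta_{30}=H(\theta\sigma_{24})$, since $H(\theta)=\eta_{23}$. So the two elements agree modulo $\Sigma\pi^{11}_{30}$. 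The suspension of \eqref{Elameta} kills $\Sigma(\lambda'\eta),\Sigma(\xi'\eta)$ up to the relevant terms. It then remains to show that the residue $\Sigma\bar\sigma_{11},\Sigma\bar\zeta_{11}$ is absent, by stabilizing: stably $\xi\eta$ versus $\theta\sigma=0$, with $\eta\xi=0$ stably. I expect $[\iota_{12},\eta_{12}\sigma_{13}]$ to be the image of $P$ that must be allowed.

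\emph{Step (4).} The indeterminacy equality $\{\ \}_1=\{\ \}$ comes from comparing $\sigma_{12}\circ\Sigma\pi^{18}_{30}$ with $\sigma_{12}\circ\pi^{19}_{31}$, which agree by Toda's stable-range tables. For membership I will compute $H$ of the bracket as $\eta\circ$-type terms giving $\varepsilon_{23}=H(\omega')$ by \eqref{rel-omega'}, and argue modulo $\Sigma\pi^{11}_{30}$. The $\Sigma\bar\sigma,\Sigma\bar\zeta$ components are ruled out by $\langle\sigma,\nu,\varepsilon\rangle=0$ together with $\Sigma^\infty\omega'=0$.

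\textbf{Main obstacle.} I expect the hardest part to be the precise $H$-computation of brackets whose first entry is $2\sigma_{11}$ or $\sigma_{12}$, which is a non-suspension at the relevant level. Controlling it via Toda's Proposition~2.6 requires the correct desuspension data, namely the use of $\sigma'$ and of $\theta$.
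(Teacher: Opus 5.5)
Your strategy for (1), (2) and (4) is the paper's: Toda's Proposition~2.6 for $H$, then the EHP sequence, then suspension. However, the step you single out as the main obstacle is routine once you use the right input, and that input is not $\sigma'$ or $\theta$. For (1) and (2), $2\sigma_{10}\nu_{17}=P(\eta_{21})$ by \eqref{nu10sg}, so $H$ of the $\{\ \}_1$-bracket is $\eta_{21}\sigma_{22}$, resp.\ $\eta_{21}\varepsilon_{22}$. For (4), $\sigma_{11}\nu_{18}=P(\iota_{23})$ with $2\sigma_{11}\nu_{18}=0$ and $2\varepsilon_{23}=0$, so $H=\varepsilon_{23}$.

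In (1) your plan contains an error. $\Sigma\pi^{10}_{28}=\{2\lambda',2\xi',\eta_{11}\bar{\mu}_{12}\}$, because $\Sigma\xi''=2\xi'$ and $\eta_{10}\bar{\mu}_{11}$ suspends. This group is not contained in $\{2\lambda',4\xi'\}$, so EHP only gives $\xi'+2x\lambda'+2y\xi'+z\eta_{11}\bar{\mu}_{12}$. The suspension comparison you call an ``alternative check'' is therefore essential: it is what fixes the parities of $y$ and $z$. The paper does it stably, using $\langle 2\sigma,\nu,\sigma\rangle=2\xi$, $\Sigma^\infty\xi'=2\xi$, $\Sigma^\infty\lambda'=4\nu^*$, $\xi=-\nu^*$ and $\pi^S_{18}\cong\Z_8\oplus\Z_2$. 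Your version at $S^{13}$ would serve equally well, since $\{2\sigma_{13},\nu_{20},\sigma_{23}\}$ has zero indeterminacy by \eqref{n7} and $\Sigma^2$ has kernel $\{4\xi'\}$ on $\pi^{11}_{29}$. You must, however, track the signs from \cite[Proposition 1.3]{T}, because the coefficient of $2\xi'$ is exactly what is being decided. Also, the statement asserts a single element. That requires $2\sigma_{11}\zeta_{18}=0$ and $\zeta_{11}\sigma_{22}=0$ (Oda), not merely that the indeterminacy lies in $\{2\lambda',4\xi'\}$.

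The genuine gap is in (3). Your $H$-argument only gives $\xi_{12}\eta_{30}-\theta\sigma_{24}\in\Sigma\pi^{11}_{30}$. Stabilizing removes only the $\bar{\sigma}_{12},\bar{\zeta}_{12}$ components, which leaves the ambiguity $\{\Sigma(\lambda'\eta_{29}),\Sigma(\xi'\eta_{29})\}$.

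Relation \eqref{Elameta} kills these only in $\pi^{13}_{32}$. In $\pi^{12}_{31}$ they are independent, since $\Sigma:\pi^{11}_{30}\to\pi^{12}_{31}$ is injective ($P=0$ on $\pi^{23}_{32}$). They span $\Ker\{\Sigma:\pi^{12}_{31}\to\pi^{13}_{32}\}=P(\pi^{25}_{33})$, a group on which both $H$ and $\Sigma^\infty$ vanish. So no argument by $H$ and suspension can cut the ambiguity down to the single element $[\iota_{12},\eta_{12}\sigma_{13}]=P(\eta_{25}\sigma_{26})$. Compare (4), which is stated only modulo $\{\Sigma(\lambda'\eta_{29}),\Sigma(\xi'\eta_{29})\}$ precisely because it is proved that way.

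What (3) needs is your second option carried through, using Toda's $\theta\in\{\sigma_{12},\nu_{19},\eta_{22}\}_1$ \cite[Lemma 7.5]{T}:
\[
\xi_{12}\eta_{30}\in\{\sigma_{12},\nu_{19},\sigma_{22}\}_1\circ\eta_{30}\subset\{\sigma_{12},\nu_{19},\eta_{22}\sigma_{23}\}_1\supset\{\sigma_{12},\nu_{19},\eta_{22}\}_1\circ\sigma_{24}\ni\theta\sigma_{24}.
\]
The indeterminacy is $\sigma_{12}\circ\Sigma\pi^{18}_{30}+\pi^{12}_{23}\circ\eta_{23}\sigma_{24}$. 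It equals $\{[\iota_{12},\iota_{12}]\circ\eta_{23}\sigma_{24}\}$ by $\Sigma\pi^{18}_{30}=0$, $\pi^{12}_{23}=\{\zeta_{12},[\iota_{12},\iota_{12}]\}$ and $\zeta_{12}\eta_{23}\sigma_{24}=0$ \eqref{ze7etsg}. Your first option, $\sigma_{12}\circ\{\nu_{19},\sigma_{22},\eta_{29}\}$, is not defined, since $\sigma_{22}\eta_{29}\neq0$.
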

\begin{proof}
By the fact that $\pi^{18}_{30}=0$, 
$\pi^{11}_{22}=\{\zeta_{11}\}$ \cite[Theorem 7.4, 7.6]{T},
$\zeta_{11}\sigma_{22}=0$ \cite[Proposition 3.5(7)]{Od} and 
$\zeta_{11}\varepsilon_{22}=0$ \eqref{ze5bnu}, we have 
\begin{gather*}
\mathrm{Ind}\{2\sigma_{11},\nu_{18},\sigma_{21}\}
=2\sigma_{11}\circ\pi^{18}_{30}+\pi^{11}_{22}\circ\sigma_{22}=0\\
\shortintertext{and}
\mathrm{Ind}\{2\sigma_{11},\nu_{18},\varepsilon_{21}\}
 =2\sigma_{11}\circ \Sigma\pi^{17}_{29}+\pi^{11}_{22}\circ\varepsilon_{22}=0.
\end{gather*}
Thus
$\{2\sigma_{11},\nu_{18},\sigma_{21}\}=\{2\sigma_{11},\nu_{18},\sigma_{21}\}_1$
and 
$\{2\sigma_{11},\nu_{18},\varepsilon_{21}\}=\{2\sigma_{11},\nu_{18},\varepsilon_{21}\}_1$
consist of a single element respectively.

By using \cite[Proposition 2.6]{T} and relations
$P(\eta_{21})=2\sigma_{10}\nu_{17}$ \eqref{nu10sg} and
$\pi^{21}_{22}=\{\eta_{21}\}\cong\Z_2$ \cite[Proposition 5.1]{T}, we obtain
\[
H\{2\sigma_{11},\nu_{18},\sigma_{21}\}_1
 =-P^{-1}(2\sigma_{10}\nu_{17})\circ\sigma_{22}=\eta_{21}\sigma_{22}.
\]
This means $H\{2\sigma_{11},\nu_{18},\sigma_{21}\}_1=H\xi'$ from \eqref{Hxi}
and hence, by the EHP-sequence, we have
\[
\{2\sigma_{11},\nu_{18},\sigma_{21}\}_1\equiv\xi'\ \bmod\ \Sigma\pi^{10}_{28}.
\]
By the fact $\pi^{10}_{28}=\{\lambda'',\xi'',\eta_{10}\bar{\mu}_{11}\}$
\cite[Theorem 12.22]{T} and relations $\Sigma\xi''=2\xi'$ and $\Sigma\lambda''=2\lambda'$
\cite[Lemma 12.19]{T}, we have
$\Sigma\pi^{10}_{28}=\{2\lambda',2\xi',\eta_{11}\bar{\mu}_{12}\}$ and
\[
\{2\sigma_{11},\nu_{18},\sigma_{21}\}_1
= \xi'+ 2x\lambda'+2y\xi'+z\eta_{11}\bar{\mu}_{12}
\]
for some integers $x$, $y$ and $z$.
Applying $\Sigma^\infty$, by
$\langle 2\sigma,\nu,\sigma\rangle=2\langle \sigma,\nu,\sigma\rangle=2\xi$
(Lemma \ref{sgnep} (1)),
$\Sigma^\infty\xi'=2\xi$ \eqref{2xi} and
$\Sigma^\infty\lambda'=4\nu^*$ \eqref{2lambda}, we have
\[
2\xi=2\xi+8x\nu^*+4y\xi+z\eta\bar{\mu}\in\pi^S_{18}.
\]
By the relation $\xi=-\nu^*$ \cite[Lemma 12.24]{T}
and the fact $\pi^S_{18}=\{\nu^*,\eta\bar{\mu}\}\cong\Z_8\oplus\Z_2$
\cite[Theorem 12.22]{T}, we have
$0=-4y\nu^*+z\eta\bar{\mu}$. Hence $y$ and $z$ are even.
This leads to (1). 

By using \cite[Proposition 2.6]{T} and 
the facts 
$P(\eta_{21})=2\sigma_{10}\nu_{17}$ \eqref{nu10sg}
and $\pi^{21}_{22}=\{\eta_{21}\}\cong\Z_2$ \cite[Proposition 5.1]{T}, 
we obtain
\[
H\{2\sigma_{11},\nu_{18},\varepsilon_{21}\}_1=-P^{-1}(2\sigma_{10}\nu_{17})\circ\varepsilon_{22}
=\eta_{21}\varepsilon_{22}.
\]
Since $H(\lambda'\eta_{29})=\varepsilon_{21}\eta_{29}=\eta_{21}\varepsilon_{22}$
from \eqref{Hlam}, 
by using the EHP-sequence,
we have 
\[
\{2\sigma_{11},\nu_{18},\varepsilon_{21}\}_1
\equiv\lambda'\eta_{29}\ \bmod\ \Sigma\pi^{10}_{29}.
\]
Applying $\Sigma^\infty$, by $\Sigma\pi^{10}_{29}=\{\bar{\sigma}_{11},\bar{\zeta}_{11}\}\cong\pi^S_{19}
=\{\bar{\sigma},\bar{\zeta}\}$ \cite[Lemma 12.23]{T},
$\langle 2\sigma_,\nu,\varepsilon\rangle=0$
(Lemma \ref{sgnep} (2)) and
$\Sigma^\infty(\lambda'\eta_{29})=0$ \eqref{Elameta}, we obtain (2).

By relations
$\xi_{12}\in\{\sigma_{12},\nu_{19},\sigma_{22}\}_1$ \eqref{xi12}
and 
$\theta\in\{\sigma_{12},\nu_{19},\eta_{22}\}_1$ \cite[Lemma 7.5]{T}, we have
\[\begin{split}
\xi_{12}\eta_{30}
&\in\{\sigma_{12},\nu_{19},\sigma_{22}\}_1\circ\eta_{30}
\subset\{\sigma_{12},\nu_{19},\sigma_{22}\eta_{29}\}_1
=\{\sigma_{12},\nu_{19},\eta_{22}\sigma_{23}\}_1\\
&\supset\{\sigma_{12},\nu_{19},\eta_{22}\}_1\circ\sigma_{24}
\ni\theta\sigma_{24}
\end{split}\]
with the indeterminacy 
$\sigma_{12}\circ \Sigma\pi^{18}_{30}+\pi^{12}_{23}\circ\eta_{23}\sigma_{24}$.
By $\Sigma\pi^{18}_{30}=0$ \cite[Lemma 7.6]{T}, 
$\pi^{12}_{23}=\{\zeta_{12}, [\iota_{12},\iota_{12}]\}$
\cite[Theorem 7.4]{T}
and 
$\zeta_{12}\eta_{23}\sigma_{24}=0$ \eqref{ze7etsg},
 we get that 
\[
\sigma_{12}\circ \Sigma\pi^{18}_{30}+\pi^{12}_{23}\circ\eta_{23}\sigma_{24}
=\{\zeta_{12}, [\iota_{12},\iota_{12}]\}\circ\eta_{23}\sigma_{24}
=\{[\iota_{12},\eta_{12}\sigma_{12}]\}.
\]
This leads to (3).

By using \cite[Proposition 2.6]{T} and relations
$P(\iota_{23})=\sigma_{11}\nu_{18}$, 
$2\sigma_{11}\nu_{18}=0$ \eqref{sgm11n}
and $2\varepsilon_{23}=0$ \cite[Theorem 7.1]{T}, we obtain
\begin{equation}\label{Hsg12}
H\{\sigma_{12},\nu_{19},\varepsilon_{22}\}_1
=-P^{-1}(\sigma_{11}\nu_{18})\circ\varepsilon_{23}
=(\iota_{23}+\{2\iota_{23}\})\circ\varepsilon_{23}
=\varepsilon_{23}.
\end{equation}
%By the relations $H(\xi_{12}\eta_{30})=H(\xi_{12})\eta_{30}=\sigma_{23}\eta_{30}$
%\cite[Lemma 12.14]{T}, 
%$\sigma_{23}\eta_{30}=\bar{\nu}_{23}+\varepsilon_{23}$ \eqref{et9sg}
%and 
%$H(\omega')\equiv\varepsilon_{23}\ 
%\bmod\ \varepsilon_{23}+\bar{\nu}_{23}$ \eqref{e2omg}, 
%we have
%$H(\omega'+a\xi_{12}\eta_{30})=\varepsilon_{23}$
%for some $a\in\{0,1\}$.
Hence, by using the EHP-sequence, the relation 
$H(\omega')=\varepsilon_{23}$ \eqref{rel-omega'}
and the fact $\Sigma\pi^{11}_{30}
=\{\Sigma(\lambda'\eta_{29}),  \Sigma(\xi'\eta_{29}), \bar{\zeta}_{12}, \bar{\sigma}_{12}\}$
\cite[Theorem 12.23]{T},
for any representative $\beta$ in $\{\sigma_{12},\nu_{19},\varepsilon_{22}\}_1$,
we have
\[
\beta \equiv \omega'
\ \bmod\ \Sigma\pi^{11}_{33}
=\{\Sigma(\lambda'\eta_{29}),  \Sigma(\xi'\eta_{29}), \bar{\zeta}_{12}, \bar{\sigma}_{12}\}.
\]
By relations $\Sigma^\infty\beta\in \langle\sigma, \nu, \varepsilon\rangle=0$ (Lemma \ref{sgnep}),
$\Sigma^\infty\omega'=0$ \eqref{rel-omega'},
%$\xi\eta=0$ \eqref{x13et},
$\Sigma^\infty(\lambda'\eta_{29})=\Sigma^\infty(\xi'\eta_{29})=0$ \eqref{Elameta},
and $\{\bar{\zeta}_{12}, \bar{\sigma}_{12}\}\cong\pi^S_{19}$
\cite[Theorem 12.23]{T}, we obtain
\[
\beta \equiv \omega' \ \bmod\  \{\Sigma(\lambda'\eta_{29}), \Sigma(\xi'\eta_{29})\}.
\]
Hence, there exists $\alpha\in\{\Sigma(\lambda'\eta_{29}), \Sigma(\xi'\eta_{29})\}$ such that
\[
\omega' + \alpha = \beta \in \{\sigma_{12},\nu_{19},\varepsilon_{22}\}_1.
\]
Moreover, by the fact 
$\Sigma\pi^{18}_{30}=\pi^{19}_{31}=0$ \cite[Theorem 7.6]{T}, 
we obtain
$\mathrm{Ind}\{\sigma_{12},\nu_{19},\varepsilon_{22}\}_1
=\mathrm{Ind}\{\sigma_{12},\nu_{19},\varepsilon_{22}\}$
and (4).
This completes the proof.
\end{proof}

By $\xi_{12}\eta_{30}\equiv\theta\sigma_{24}\ \bmod\ [\iota_{12},\eta_{12}\sigma_{13}]$
(Lemma \ref{usgnep} (3)), 
$\eta_{12}\sigma^2_{13}=0$ \eqref{et9sg2},
$\theta\sigma^2_{24}=\sigma_{12}\bar{\sigma}_{19}$ \cite[I-Proposition 3.5 (6)]{Od},
$[\iota_{13},\sigma_{13}]=(\Sigma\theta)\sigma_{25}$ \eqref{x13et} and 
$\sigma_{13}\bar{\sigma}_{20}\ne 0$ \cite[Theorem 1(b)]{Od},
we have
\begin{equation}\label{smbsm}% 26
\xi_{12}\eta_{30}\sigma_{31}=\theta\sigma^2_{24}=\sigma_{12}\bar{\sigma}_{19}
\text{\ \ and\ \ }
[\iota_{13},\sigma^2_{13}]=\sigma_{13}\bar{\sigma}_{20}\ne 0.
\end{equation}

%%%%%%%%%%%%%%%%%%%%% 19
In \cite{Og}, {\^O}guchi defined a generator 
$\lambda$ of $\pi^{13}_{31}$ \cite[Lemma 2.18 (2)]{Og}
which is different from Toda \cite[Lemma 12.18]{T}.
So, we denote his generator by $\lambda^o$.
$\lambda^o$ is defined as follows.

Recall from \cite[p.16, 17]{T} notations
$(S^{m})_\infty$, $(S^{m})_t$, $i:(S^{m})_\infty \to \Omega S^{m+1}$,
$\Omega_1:\pi^{k+1}_{m+1}\to \pi_{k}((S^{m})_\infty)$,
$h'_{m}: ((S^{m})_2, S^{m}) \to (S^{2m}, e_0)$
and
$h_{m}: ((S^{m})_\infty, S^{m}) \to ((S^{m})_\infty, e_0)$.
We denote by $j^m_{k,\ell}:(S^{m})_k \to (S^{m})_\ell$
and $j^m_{k}:(S^{m})_k \to (S^{m})_\infty$ the inclusions.
We consider the Toda bracket 
$\{j^{12}_{1,2}, [\iota_{12},\iota_{12}], \nu^2_{23}\}
\subset \pi_{30}((S^{12})_2:2)$.
By the fact %$(S^{12})_1 =S^{12}$ and
$(S^{12})_2 =  S^{12}\cup_{[\iota_{12},\iota_{12}]} e^{24}$, 
%and the sequence 
%$S^{23} \xrightarrow{[\iota_{12},\iota_{12}]} S^{12}
%\xrightarrow{j_{1,2}} (S^{12})_2$ is a cofibration,
we have 
$j^{12}_{1,2} \circ [\iota_{12},\iota_{12}]=0$.
So, by the equation
$[\iota_{12},\iota_{12}]\circ \nu^2_{23}
= P(\nu^2_{25})=P(H(\lambda))=0$
from $H(\lambda)=\nu^2_{25}$ \cite[Lemma 12.18]{T},
the Toda bracket $\{j^{12}_{1,2}, [\iota_{12},\iota_{12}], \nu^2_{23}\}$
is well-defined.
By the definition 
$H=\Omega_1^{-1}\circ {h_{12}}_*\circ \Omega_1
:\pi^{13}_{31}\to\pi^{25}_{31}$
\cite[(2.7)]{T} and
the equation 
${h_{12}}\circ {j^{12}_2} ={j^{24}_1}\circ{h'_{12}}$,
we have
\[
\begin{split}
H(\Omega_1^{-1}(j^{12}_{2}\circ\{j^{12}_{1,2}, [\iota_{12},\iota_{12}], \nu^2_{23}\}))
&=(\Omega_1^{-1}\circ {h_{12}}_*)
(j^{12}_{2}\circ\{j^{12}_{1,2}, [\iota_{12},\iota_{12}], \nu^2_{23}\})\\
&=(\Omega_1^{-1}\circ {j^{24}_1}_*)
(h'_{12}\circ\{j^{12}_{1,2}, [\iota_{12},\iota_{12}], \nu^2_{23}\}).
\end{split}
\]
Since $h'_{12}:(S^{12})_2 \to (S^{12})_2/(S^{12})_1 =S^{24}$ is
the collapsing map, 
the sequence 
$S^{23} \xrightarrow{[\iota_{12},\iota_{12}]} 
S^{12} \xrightarrow{j^{12}_{1,2}} 
(S^{12})_2 \xrightarrow{h'_{12}} S^{24}$ is a cofibration.
Hence, by using \cite[Proposition 1.4]{T}, \cite[(3.4)]{Sp62} and 
$\sharp\nu^2_{24}=2$ \eqref{2nu5nu}, we have
\[
h'_{12}\circ\{j^{12}_{1,2}, [\iota_{12},\iota_{12}], \nu^2_{23}\}
= \{h'_{12}, j^{12}_{1,2}, [\iota_{12},\iota_{12}]\}\circ \nu^2_{24}
\ni \nu^2_{24}.
\]
Thus, by the fact 
$\Omega_1^{-1}\circ {j^{24}_1}_*=\Sigma:\pi^{24}_{30}\to\pi^{25}_{31}$
\cite[(2.3)]{T}, we have
\[
H(\Omega_1^{-1}(j^{12}_{2}\circ\{j^{12}_{1,2}, [\iota_{12},\iota_{12}], \nu^2_{23}\}))
\ni \Sigma \nu^2_{24} =\nu^2_{25}
\]
and there exists an element
\begin{equation*}\label{lmbda^o}
\lambda^o
\in 
\Omega_1^{-1}(j^{12}_{2}\circ\{j^{12}_{1,2}, [\iota_{12},\iota_{12}], \nu^2_{23}\})
\subset \pi^{13}_{31}
\end{equation*}
such that $H(\lambda^o)=\nu^2_{25}$.

%
%\[
%\begin{split}
%\lambda^o\eta_{31}
%&\in \Omega_1^{-1}(j^{12}_{2}\circ\{j^{12}_{1,2},
%[\iota_{12},\iota_{12}], \nu^2_{23}\})\circ \eta_{31}
%=\Omega_1^{-1}(j^{12}_{2}\circ\{j^{12}_{1,2}, 
%[\iota_{12},\iota_{12}], \nu^2_{23}\}\circ \eta_{30})\\
%&=\Omega_1^{-1}(j^{12}_{2}\circ j^{12}_{1,2}
%\circ \{[\iota_{12},\iota_{12}], \nu^2_{23},\eta_{29}\}
%\ni \Omega_1^{-1}({j^{12}_{1}}_*(\omega'))
%=\Sigma\omega'
%\end{split}
%\]

Then, the following relation holds:
\begin{equation}\label{lam^oeta}
\lambda^o\eta_{31}\equiv \Sigma\omega'\bmod\ \xi_{13}\eta_{31}
\quad\text{\cite[Proposition 2.20 (2)]{Og}}.
\end{equation}

By using the EHP-sequence and relations 
$H(\lambda)=\nu^2_{25}$
and
$ H(\lambda^o)=\nu^2_{25}$,
we have
$\lambda\equiv \lambda^o \bmod\ \Sigma\pi^{12}_{30}$.
By the fact $\Sigma\pi^{12}_{30}
=\{\eta_{13}\bar{\mu}_{14},\xi_{13},\Sigma^2\lambda'\}$
\cite[p.~167]{T}
and relations $\Sigma^2\lambda'=2\lambda$ \eqref{2lambda},
$\eta^2_{13}\bar{\mu}_{15}=4\bar{\zeta}_{13}$ \eqref{4bze5}
and $2\iota_{31}\circ\eta_{31}=2\eta_{31}=0$ \eqref{2eta},
we have
\[
\lambda\eta_{31}\equiv \lambda^o\eta_{31}
\bmod\ 4\bar{\zeta}_{13},\ \xi_{13}\eta_{31}.
\]
Moreover, by relations $\Sigma^4(\lambda\eta_{31})=2\nu^*_{17}\eta_{35}=0$
\eqref{2lambda},
$4\bar{\zeta}\neq 0$ \cite[Theorem 12.23]{T},
$\xi_{14}\eta_{32}=\Sigma P(\sigma_{27})=0$ \cite[p.~166]{T}
and
$\Sigma^2(\lambda^o\eta_{31})
\equiv \Sigma^3\omega'=\Sigma [\iota_{14},\nu^2_{14}]=0 \bmod \xi_{15}\eta_{33}=0$
from 
\eqref{lam^oeta} and \eqref{rel-omega'}, we have
$\lambda\eta_{31}\equiv \lambda^o\eta_{31}\bmod\ \xi_{13}\eta_{31}$.
In addition, by the relation \eqref{lam^oeta}, we obtain
\begin{equation}\label{lameta}% 19 13-32
\lambda\eta_{31}\equiv \Sigma\omega'\bmod\ \xi_{13}\eta_{31}.
\end{equation}
%%%%%%%%%%%%%

We show 
\begin{lem}\label{omg'}
\begin{enumerate}
\item
$\lambda\eta_{31}=\Sigma\omega'=\{\sigma_{13},\nu_{20},\varepsilon_{23}\}_n$ 
\ for \ $n\le 13$.
\item
$\nu_{13}\eta^*_{16}=\Sigma\omega'+\xi_{13}\eta_{31}$.
\end{enumerate}
\end{lem}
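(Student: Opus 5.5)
Part (1) comes first, organized around the Toda bracket $\{\sigma_{13},\nu_{20},\varepsilon_{23}\}_n$. By Lemma \ref{usgnep} (4) we have $\omega'+\alpha\in\{\sigma_{12},\nu_{19},\varepsilon_{22}\}_1$ for some $\alpha\in\{\Sigma(\lambda'\eta_{29}),\Sigma(\xi'\eta_{29})\}$. Suspending once and using \cite[Proposition 1.3]{T},
\[
-\Sigma\omega'-\Sigma\alpha\in-\Sigma\{\sigma_{12},\nu_{19},\varepsilon_{22}\}_1\subset\{\sigma_{13},\nu_{20},\varepsilon_{23}\}_2\subset\{\sigma_{13},\nu_{20},\varepsilon_{23}\}_n
\]
for $n\le 2$. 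By \eqref{Elameta}, $\Sigma\alpha=0$. Since $\Sigma\omega'$ has order $2$ (its $H$-image is $\Sigma$ of $\varepsilon_{23}$, and $2\omega'$ lies in a $2$-torsion context), we get $\Sigma\omega'\in\{\sigma_{13},\nu_{20},\varepsilon_{23}\}_n$ for small $n$.

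The next step is to show that the indeterminacy vanishes for all $n\le13$, the largest case being $n=0$:
\[
\mathrm{Ind}=\sigma_{13}\circ\pi^{20}_{32}+\pi^{13}_{24}\circ\varepsilon_{24}.
\]
Here $\pi^{20}_{32}=0$ by \cite[Theorem 7.6]{T}, and $\pi^{13}_{24}=\{\zeta_{13}\}$ with $\zeta_{13}\varepsilon_{24}=0$ by \eqref{ze5bnu}. Hence the bracket is a single element, and the brackets $\{\ \}_n$ coincide for $n\le 13$ (each indeterminacy is contained in this one). This gives $\Sigma\omega'=\{\sigma_{13},\nu_{20},\varepsilon_{23}\}_n$.

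It remains to upgrade \eqref{lameta}, $\lambda\eta_{31}\equiv\Sigma\omega'\bmod\ \xi_{13}\eta_{31}$, to an equality. This is the main obstacle, since $\xi_{13}\eta_{31}=[\iota_{13},\sigma_{13}]$ \eqref{x13et} dies under $\Sigma$ and cannot be detected stably or by $H$ (we have $H(\xi_{13}\eta_{31})=\sigma_{25}\eta_{32}$, which must be compared with $H(\lambda\eta_{31})=\nu^2_{25}\eta_{31}=0$ and $H(\Sigma\omega')=0$). I would try composing with $\sigma_{32}$. By Lemma \ref{omg'sg}, $(\Sigma\omega')\sigma_{32}=0$. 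By \eqref{lmdsg}, $\lambda\sigma_{31}=0$, so $\lambda\eta_{31}\sigma_{32}=\lambda\sigma_{31}\eta_{38}=0$. By \eqref{smbsm}, $\xi_{13}\eta_{31}\sigma_{32}=\sigma_{13}\bar{\sigma}_{20}\neq0$. So if $\lambda\eta_{31}=\Sigma\omega'+\xi_{13}\eta_{31}$, composing with $\sigma_{32}$ would give $0=\sigma_{13}\bar\sigma_{20}$, a contradiction. Hence $\lambda\eta_{31}=\Sigma\omega'$, which finishes (1).

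For (2), I would use $\eta^*_{16}\in\{\sigma_{16},2\sigma_{23},\eta_{30}\}_1$ \eqref{eta*16} and \cite[Proposition 1.4]{T} to write
\[
\nu_{13}\eta^*_{16}\in\nu_{13}\circ\{\sigma_{16},2\sigma_{23},\eta_{30}\}_1=-\{\nu_{13},\sigma_{16},2\sigma_{23}\}\circ\eta_{31}.
\]
By Lemma \ref{a0} (3), the right side equals $\xi_{13}\eta_{31}+x\lambda\eta_{31}$ with $x$ odd (the term $2x\xi_{13}\eta_{31}$ vanishes). The indeterminacy is $\nu_{13}\circ\Sigma\pi^{15}_{31}$, which is $0$ by Lemma \ref{a0} (2) (or is contained in it). Substituting $\lambda\eta_{31}=\Sigma\omega'$ from (1) then gives (2). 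The signs are irrelevant since all the elements involved have order $2$.
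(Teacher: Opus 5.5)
Your proof is correct. Part (1) is essentially the paper's argument. You use the same vanishing of $\mathrm{Ind}\{\sigma_{13},\nu_{20},\varepsilon_{23}\}_n$ and the same use of Lemma~\ref{usgnep}~(4) together with \eqref{Elameta}. You also remove the ambiguity $\xi_{13}\eta_{31}$ in \eqref{lameta} the same way, by composing with $\sigma_{32}$, using $\lambda\sigma_{31}=0$, Lemma~\ref{omg'sg} and $\xi_{13}\eta_{31}\sigma_{32}=\sigma_{13}\bar{\sigma}_{20}\neq 0$.

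Part (2) is where you genuinely differ from the paper. The paper starts from {\^O}guchi's congruence $\nu_{13}\eta^*_{16}\equiv\Sigma\omega'\bmod\xi_{13}\eta_{31}$ \cite[Proposition 2.20 (8)]{Og}. It then decides the coefficient by computing $\nu_{13}\eta^*_{16}\sigma_{32}$. For this it first rewrites $\eta^*_{16}\sigma_{32}$ as $\sigma^*_{16}\eta_{38}$ modulo $\sigma_{16}\eta^*_{23}$ and $[\iota_{16},\eta_{16}\sigma_{17}]$, which needs $\sigma^*_{16}\in\{\sigma_{16},2\sigma_{23},\sigma_{30}\}$ and an indeterminacy computation. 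Only then does it juggle $\nu_{13}\circ\{\sigma_{16},2\sigma_{23},\sigma_{30}\}$ into $\{\nu_{13},\sigma_{16},2\sigma_{23}\}\circ\sigma_{31}$.

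You apply the same juggling directly to the bracket containing $\eta^*_{16}$:
\[
\nu_{13}\eta^*_{16}\in\nu_{13}\circ\{\sigma_{16},2\sigma_{23},\eta_{30}\}=-\{\nu_{13},\sigma_{16},2\sigma_{23}\}\circ\eta_{31}.
\]
By Lemma~\ref{a0}~(3) the right-hand side is a single element, so $\nu_{13}\eta^*_{16}=\xi_{13}\eta_{31}+\lambda\eta_{31}$ outright, and (1) finishes the proof. This route dispenses with {\^O}guchi's relation, with $\sigma^*_{16}$, and with the second detection argument. The paper's detour produces nothing that is used later.

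It does not matter whether \cite[Proposition 1.4]{T} gives equality or only inclusion here, since the right side is a singleton. For the same reason your stated indeterminacy is irrelevant, though ``$\nu_{13}\circ\Sigma\pi^{15}_{31}$'' is not the right group; it should be $\nu_{13}\circ\pi^{16}_{31}\circ\eta_{31}$.

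There are two slips, neither of which affects the argument.
\begin{itemize}
\item Your reason why $\Sigma\omega'$ has order $2$ is wrong as written, since $H(\Sigma\omega')=0$. Use \eqref{lameta} together with $2\eta_{31}=0$ instead, or prove the first equality of (1) before the bracket statement.
\item In your aside, $H(\xi_{13}\eta_{31})=H(\xi_{13})\eta_{31}=0$ because $\xi_{13}=\Sigma\xi_{12}$; it is not $\sigma_{25}\eta_{32}$. This is exactly why $H$ cannot separate the two candidates and composition with $\sigma_{32}$ is needed.
\end{itemize}
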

\begin{proof}
By relations \eqref{lameta}, $\lambda\sigma_{31}=0$ \eqref{lmdsg}, 
Lemma \ref{omg'sg} and 
$\xi_{13}\eta_{31}\sigma_{31} \ne 0$ \eqref{smbsm}, 
we have
\[
0=\lambda\sigma_{31}\eta_{38}=\lambda\eta_{31}\sigma_{32}
\equiv (\Sigma\omega')\sigma_{32}
=\Sigma(\omega'\sigma_{31})=0\ \bmod\ \xi_{13}\eta_{31}\sigma_{31}\ne 0.
\]
This leads to the first equality of (1). 
By the facts $\pi^{13}_{24}=\{\zeta_{13}\}$ and $\pi^{20}_{32}=0$ \cite[Theorems 7.4, 7.6]{T} and 
the equation $\zeta_{13}\varepsilon_{24}=0$ \eqref{ze5bnu}, for $n\le 13$, 
we have
\[
\mathrm{Ind}\{\sigma_{13},\nu_{20},\varepsilon_{23}\}_n
=\sigma_{13}\circ\Sigma^n\pi^{20-n}_{32-n}+\pi^{13}_{24}\circ\varepsilon_{24}=0.
\]
Hence the second equality of (1) follows from Lemma \ref{usgnep} (4)
and $\Sigma^2(\lambda'\eta_{29})=\Sigma^2(\xi'\eta_{29})=0$ \eqref{Elameta}. 
 
By using \cite[Proposition 1.2 i)]{T} and relations 
$\eta^*_{16}\in\{\sigma_{16},2\sigma_{23},\eta_{30}\}$ \eqref{eta*16}
and
$\sigma^*_{16}\in\{\sigma_{16},2\sigma_{23},\sigma_{30}\}$  \cite[p.~187]{Mi}, 
we obtain 
\[
\eta^*_{16}\sigma_{32}
\in\{\sigma_{16},2\sigma_{23},\eta_{30}\}\circ\sigma_{32}
\subset\{\sigma_{16},2\sigma_{23},\eta_{30}\sigma_{31}\}
\supset\{\sigma_{16},2\sigma_{23},\sigma_{30}\}\circ\eta_{38}
\ni\sigma^*_{16}\eta_{38}
\]
with the indeterminacy $\sigma_{16}\circ\pi^{23}_{39}+\pi^{16}_{31}\circ\eta_{31}\sigma_{32}$.
By the facts
$\pi^{23}_{39}=\{\omega_{23},\sigma_{23}\mu_{30}\}$ \cite[Theorem 12.16]{T}
and
$\pi^{16}_{31}=\{\rho_{16},\bar{\varepsilon}_{16},[\iota_{16},\iota_{16}]\}$ \cite[Theorem 10.10]{T},
we have
\[
\sigma_{16}\circ\pi^{23}_{39}+\pi^{16}_{31}\circ\eta_{31}\sigma_{32}
=\{\sigma_{16}\omega_{23}, \sigma^2_{16}\mu_{30}, 
\rho_{16}\eta_{31}\sigma_{32},\bar{\varepsilon}_{16}\eta_{31}\sigma_{32},
[\iota_{16},\eta_{16}\sigma_{17}]\}.
\]
By relations
$\omega_{23}\equiv \eta^*_{23} \bmod\ \sigma_{23}\mu_{30}$ \cite[Proposition 12.20 ii)]{T},
$\sigma^2_{16}\mu_{30}=0$ \eqref{mu3s^2},
$\rho_{16}\eta_{31}\sigma_{32}=0$ \eqref{r13etas}
and
$\bar{\varepsilon}_{16}\eta_{31}\sigma_{32}=0$ \eqref{et9be},
we have
\[
\sigma_{16}\circ\pi^{23}_{39}+\pi^{16}_{31}\circ\eta_{31}\sigma_{32}
=\{\sigma_{16}\eta^*_{23},[\iota_{16},\eta_{16}\sigma_{17}]\}
\]
and there exists integers $a$ and $b$ such that
\[
\eta^*_{16}\sigma_{32}= \sigma^*_{16}\eta_{38}+ a\sigma_{16}\eta^*_{23}+ b [\iota_{16},\eta_{16}\sigma_{17}].
\]
%Applying $E^\infty$, by relations $\sigma^*=0$ from \cite[Lemma 8.3]{Mi}
%and $\eta^*\sigma=\sigma\eta^*$, we obtain $a=1$.
By the relations 
$\nu_{13}\sigma_{16}=0$ \eqref{nu10sg} and
$\nu_{13}\eta_{16}=0$ \eqref{n3}, we have
\[
\begin{split}
\nu_{13}\eta^*_{16}\sigma_{32}
&=\nu_{13}(\sigma^*_{16}\eta_{38}+a\sigma_{16}\eta^*_{23}
+ b [\iota_{16},\eta_{16}\sigma_{17}])\\
&=\nu_{13}\sigma^*_{16}\eta_{38}+ b [\nu_{13},\nu_{13}\eta_{16}\sigma_{17}]
=\nu_{13}\sigma^*_{16}\eta_{38}.
\end{split}
\]
Moreover, by using \cite[Proposition 1.4]{T} and relations 
$\{\nu_{13},\sigma_{16},2\sigma_{23}\}=\xi_{13}+x(\lambda+2\xi_{13})$
for some odd $x$ (Lemma \ref{a0} (3)),
$\lambda\sigma_{31}=0$ \eqref{lmdsg} and 
$\xi_{13}\eta_{31}\sigma_{32}\neq 0$ \eqref{smbsm}, we see that
\[
\begin{split}
\nu_{13}\eta^*_{16}\sigma_{32}
&=\nu_{13}\sigma^*_{16}\eta_{38}
\in\nu_{13}\circ\{\sigma_{16},2\sigma_{23},\sigma_{30}\}\circ\eta_{38}\\
&=\{\nu_{13},\sigma_{16},2\sigma_{23}\}\circ\eta_{31}\sigma_{32}
=\xi_{13}\eta_{31}\sigma_{32}\neq 0.
\end{split}
\]
Thus, by  relations
$\nu_{13}\eta^*_{16}\equiv \Sigma\omega'\ \bmod\ \xi_{13}\eta_{31}$
\cite[Proposition 2.20 (8)]{Og}
and 
$\omega'\sigma_{31}=0$ (Lemma \ref{omg'sg}), we have the equation (2) and completes the proof.
\end{proof}

Here, we need the following property of the generalized $P$-homomorphism \cite{T1, IM}.
\begin{lem}\label{Miy}
Let $k\ge 3$ and $X$, $Y$, $Z$ and $W$ be CW-complexes and 
$\alpha \in [\Sigma^{k}Y, \Sigma X\wedge X]$,
$\beta \in [Z, Y]$ and $\gamma \in [W,Z]$.
Suppose that the generalized P-homomorphisms 
$P:[\Sigma^{k}A, \Sigma X\wedge X] \to [\Sigma^{k-2}A, X]$ for $A=Y,$ $\Sigma W$ and $Y\cup_{\beta} CZ$
are well-defined and  $\alpha\circ \Sigma^{k}\beta=0$ and $\beta\circ\gamma=0$.
Then the Toda bracket $\{P(\alpha), \Sigma^{k-2}\beta, \Sigma^{k-2}\gamma\}_{k-2}$ 
is well-defined and
\[
P \{\alpha, \Sigma^k\beta, \Sigma^k\gamma\}_{k}
\subset \{P(\alpha), \Sigma^{k-2}\beta, \Sigma^{k-2}\gamma\}_{k-2}.
\]
\end{lem}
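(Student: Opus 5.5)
The plan is to imitate the proof of Theorem \ref{Ji0}, with the $J$-homomorphism replaced by the generalized $P$-homomorphism. The one thing that must be checked beyond formal Toda-bracket properties is naturality of $P$ with respect to precomposition with suspended maps, and this is exactly why the hypotheses require $P$ to be defined on $[\Sigma^kA,\Sigma X\wedge X]$ for $A=Y$, $\Sigma W$ and $Y\cup_\beta CZ$.

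\emph{Well-definedness.} First we show that $P(\alpha)\circ\Sigma^{k-2}\beta=0$. Since the generalized $P$ is defined through the EHP-type construction (the connecting map of the James/Ganea fibration, or equivalently composition with a fixed map $\Sigma^{-2}$ of the Whitehead product type as in \cite{T1, IM}), it satisfies
\[
P(\alpha\circ\Sigma^k\delta)=P(\alpha)\circ\Sigma^{k-2}\delta
\]
for any $\delta\in[B,A]$, whenever both sides are defined. Applying this with $\delta=\beta$ and using $\alpha\circ\Sigma^k\beta=0$ gives $P(\alpha)\circ\Sigma^{k-2}\beta=0$. Together with $\beta\circ\gamma=0$, this shows that $\{P(\alpha),\Sigma^{k-2}\beta,\Sigma^{k-2}\gamma\}_{k-2}$ is defined.

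\emph{Inclusion.} We use the description of the bracket via extensions and coextensions (\cite[Proposition 1.7]{T}). An element of $\{\alpha,\Sigma^k\beta,\Sigma^k\gamma\}_k$ has the form $\bar\alpha\circ\tilde\gamma$. Here $\bar\alpha\in[\Sigma^k(Y\cup_\beta CZ),\Sigma X\wedge X]$ is an extension of $\alpha$, and $\tilde\gamma\in[\Sigma^{k+1}W,\Sigma^k(Y\cup_\beta CZ)]$ is a coextension of $\Sigma^k\gamma$ of the form $\Sigma^k\tilde\gamma_0$. The key point is that, because the bracket is the $k$-fold one, the coextension may be taken to be a $k$-fold suspension of a coextension $\tilde\gamma_0\in[\Sigma W,Y\cup_\beta CZ]$ of $\gamma$.

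Then $P(\bar\alpha)\in[\Sigma^{k-2}(Y\cup_\beta CZ),X]$ is defined by hypothesis, and restricting to $\Sigma^{k-2}Y$ it equals $P(\alpha)$ by naturality (composing with the inclusion). So $P(\bar\alpha)$ is an extension of $P(\alpha)$. Likewise $\Sigma^{k-2}\tilde\gamma_0$ is a coextension of $\Sigma^{k-2}\gamma$. Naturality once more gives
\[
P(\bar\alpha\circ\Sigma^k\tilde\gamma_0)=P(\bar\alpha)\circ\Sigma^{k-2}\tilde\gamma_0\in\{P(\alpha),\Sigma^{k-2}\beta,\Sigma^{k-2}\gamma\}_{k-2},
\]
where the final membership uses \cite[Proposition 1.7]{T} again. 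Since every element of the original bracket has this form, the inclusion follows.

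\emph{Expected obstacle.} The main difficulty is the naturality formula itself for the generalized $P$. It relies on $P$ being induced by a fixed map of spaces, so that it commutes with precomposition by $\Sigma^k\delta$ for $\delta$ a map of the underlying complexes. We also need to know that the sign and orientation conventions agree with those in the bracket. I would justify this from the definition of $P$ in \cite{T1} as $\Omega^{-2}$ applied to a connecting homomorphism of the James filtration, for which naturality in the source is immediate. A secondary check is that the $k$-fold bracket admits coextensions that are $k$-fold suspensions; this holds by the very definition of $\{\ ,\ ,\ \}_k$ in \cite{T}.
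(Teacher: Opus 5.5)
Your proposal is correct and follows the paper's proof essentially step for step: naturality of the generalized $P$ under precomposition with $k$-fold suspensions (which the paper takes from \cite[Proposition 2.5]{IM} rather than deriving from \cite{T1}) gives well-definedness, and writing elements of the $k$-fold bracket via \cite[Proposition 1.7]{T} as $\bar{\alpha}\circ\Sigma^k\tilde{\gamma}$, with $P(\bar{\alpha})$ an extension of $P(\alpha)$, gives the inclusion. The sign $(-1)^k$ that you suppressed in \cite[Proposition 1.7]{T} is harmless, because it has the same parity for the $k$-fold and the $(k-2)$-fold brackets, exactly as in the paper.
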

\begin{proof}
By \cite[Proposition 2.5]{IM}, we have 
$P(\alpha)\circ \Sigma^{k-2}\beta= P(\alpha\circ \Sigma^{k}\beta)=0$.
Hence the Toda bracket $\{P(\alpha), \Sigma^{k-2}\beta, \Sigma^{k-2}\gamma\}_{k-2}$ is well-defined.
We denote $Y\cup_{\beta} CZ$ by $C_{\beta}$ and the inclusion map $Y\to C_{\beta}$
 by $i_{\beta}$.
It is well-known that
$C_{\Sigma^{k}\beta} = \Sigma^{k}C_{\beta}$.
By \cite[Proposition 1.7]{T},
any element of $\{\alpha, \Sigma^{k}\beta, \Sigma^{k}\gamma \}_{k}$ is represented as
%denote ? represented as ? represented by ? 
$(-1)^{k}\bar{\alpha}\circ \Sigma^{k}\tilde{\gamma}$, where
$\bar{\alpha}\in[\Sigma^{k}C_{\beta}, \Sigma X\wedge X]$ is an extension of $\alpha$ and
$\tilde{\gamma}\in[\Sigma W, \Sigma^{k}C_{\beta}]$ is a coextension of $\gamma$.
By \cite[Proposition 2.5]{IM}, we have 
$P((-1)^{k}\bar{\alpha}\circ\Sigma^{k}\tilde{\gamma})=(-1)^{k}P(\bar{\alpha})\circ \Sigma^{k-2}\tilde{\gamma}$.
Since $P(\bar{\alpha})\circ \Sigma^{k-2}i_\beta
 = P(\bar{\alpha}\circ\Sigma^{k}i_\beta)=P(\alpha)$, the element
$P(\bar{\alpha})\in [\Sigma^{k-2}C_{\beta}, X]$ is an extension of $P(\alpha)$.
Hence we obtain 
\[
P((-1)^{k}\bar{\alpha}\circ\Sigma^{k}\tilde{\gamma})
 =(-1)^{k-2}P(\bar{\alpha})\circ\Sigma^{k-2}\tilde{\gamma}
 \in \{P(\alpha), \Sigma^{k-2}\beta, \Sigma^{k-2}\gamma\}_{k-2}
\]
 and
$
P \{\alpha, \Sigma^k\beta, \Sigma^k\gamma\}_{k}
\subset \{P(\alpha), \Sigma^{k-2}\beta, \Sigma^{k-2}\gamma\}_{k-2}.
$
This completes the proof.
\end{proof}

Now we show Lemma \ref{HPxl13}.

\begin{lem}\label{HPx13}
\begin{enumerate}
\item
$H(P(\xi_{13}))\equiv\xi'\ \bmod\ 2\lambda', 2\xi'$\ \ and\ \   
$H(P(\lambda))\equiv\lambda' \bmod 2\lambda',2\xi'$. 
\item
$H(P(\xi_{13}\eta_{31}))=\xi'\eta_{29} $\ \ and\ \   
$H(P(\lambda\eta_{31}))=\lambda'\eta_{29}$.
\item
$H(P(\lambda+\xi_{13})\eta_{29})=\nu_{11}\omega_{14}$.
\end{enumerate}
\end{lem}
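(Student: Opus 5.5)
The plan is to compute $H\circ P$ on suitable Toda brackets. I would push each bracket through the generalized $P$-homomorphism using Lemma \ref{Miy} with $X=S^6$. Then I would push it through $H$ using \cite[Proposition 2.3]{T}, i.e.\ $H\{\alpha,\Sigma^n\beta,\Sigma^n\gamma\}_n\subset\{H\alpha,\Sigma^n\beta,\Sigma^n\gamma\}_n$. The key inputs are $P(\sigma_{13})=\pm[\iota_6,\iota_6]\circ\sigma_{11}$ and $H([\iota_6,\iota_6])=\pm2\iota_{11}$, which together give $H(P(\sigma_{13}))=\pm 2\sigma_{11}$.

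\textbf{The $\xi_{13}$ part of (1).} From $\xi_{12}\in\{\sigma_{12},\nu_{19},\sigma_{22}\}_1$ \eqref{xi12} and \cite[Proposition 1.3]{T}, I would place $\pm\xi_{13}$ in a bracket $\{\sigma_{13},\nu_{20},\sigma_{23}\}_k$. I would choose $k\ge3$, which is possible after desuspending $\nu$ and $\sigma$, and check that its indeterminacy vanishes in the same way as for $\{\sigma_{13},\nu_{20},\varepsilon_{23}\}_n$ in Lemma \ref{omg'}. Lemma \ref{Miy} then gives
\[
P(\xi_{13})\in\pm\{P(\sigma_{13}),\nu_{18},\sigma_{21}\}_{k-2}.
\]
Applying $H$ lands in $\pm\{2\sigma_{11},\nu_{18},\sigma_{21}\}$. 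By Lemma \ref{usgnep} (1) this equals $\xi'+2x\lambda'+4y\xi'$, so $H(P(\xi_{13}))\equiv\xi'\bmod 2\lambda',2\xi'$; the sign is absorbed since $-\xi'\equiv\xi'\bmod 2\xi'$.

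\textbf{The $\lambda$ part of (1).} Here I would not work with a bracket for $\lambda$ itself, since $\{\nu_{13},\sigma_{16},2\sigma_{23}\}$ leads to awkward computations. Instead I would start from Lemma \ref{omg'} (1), which gives $\lambda\eta_{31}=\{\sigma_{13},\nu_{20},\varepsilon_{23}\}_n$ with zero indeterminacy for $n\le13$. The same route gives
\[
H(P(\lambda\eta_{31}))\in\{2\sigma_{11},\nu_{18},\varepsilon_{21}\}=\lambda'\eta_{29}
\]
by Lemma \ref{usgnep} (2). Since $P(\lambda\eta_{31})=P(\lambda)\eta_{29}$ and $H(P(\lambda)\eta_{29})=H(P(\lambda))\eta_{29}$, this proves the second half of (2). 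To recover (1), I would write $H(P(\lambda))=a\xi'+b\lambda'+c\eta_{11}\bar{\mu}_{12}$ in $\pi^{11}_{29}$ and compose with $\eta_{29}$. This gives
\[
a\xi'\eta_{29}+b\lambda'\eta_{29}+4c\bar{\zeta}_{11}=\lambda'\eta_{29},
\]
using $\eta^2\bar{\mu}=4\bar{\zeta}$ \eqref{4bze5}. The group $\pi^{11}_{30}\cong(\Z_2)^3\oplus\Z_8$ has $\xi'\eta_{29},\lambda'\eta_{29},\bar{\zeta}_{11}$ among independent generators \cite[Theorem 12.23]{T}. Hence $a$ is even, $b$ is odd and $c$ is even, which gives $H(P(\lambda))\equiv\lambda'\bmod 2\lambda',2\xi'$.

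\textbf{Parts (2) and (3).} The first half of (2) follows from the $\xi_{13}$ part of (1) by composing with $\eta_{29}$, because $2\eta_{29}=0$ kills the ambiguity. For (3), add the two halves of (2) and use $P(\alpha\eta_{31})=P(\alpha)\eta_{29}$ to get $H(P(\lambda+\xi_{13})\eta_{29})=(\lambda'+\xi')\eta_{29}$. By \eqref{n11om} this equals $\nu_{11}\omega_{14}$.

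The main obstacle is making the bracket manipulations legitimate. Lemma \ref{Miy} needs the generalized $P$ to be defined on $S^{k+?}$, on the mapping cone, and on $\Sigma W$ with $k\ge3$, which requires choosing the desuspension index carefully. The vanishing of the indeterminacies at that index, and the identification $H(P(\sigma_{13}))=\pm2\sigma_{11}$ with the correct Toda-bracket suspension index, also need checking. I would first verify these for the $\varepsilon$-bracket, since that one drives the $\lambda$ statement.
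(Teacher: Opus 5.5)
Your proposal is correct and follows essentially the paper's proof: push $\{\sigma_{13},\nu_{20},\sigma_{23}\}_3$ and $\lambda\eta_{31}=\{\sigma_{13},\nu_{20},\varepsilon_{23}\}_3$ through Lemma \ref{Miy} and \cite[Proposition 2.3]{T} using $H(P(\sigma_{13}))=\pm2\sigma_{11}$, then invoke Lemma \ref{usgnep} (1), (2), compose with $\eta_{29}$, and finish with \eqref{n11om}. The one difference is in the $\lambda$ half of (1): the paper starts from the a priori congruence $H(P(\lambda))\equiv\pm\lambda'\bmod 2\lambda',2\xi',\eta_{11}\bar{\mu}_{12}$ of \cite[(3.3)]{MMO} and uses $\eta_{29}$-composition only to exclude the $\eta_{11}\bar{\mu}_{12}$ term, whereas you read off all three coefficients from $H(P(\lambda))\eta_{29}=\lambda'\eta_{29}$ via the direct-sum decomposition of $\pi^{11}_{30}$; this is valid and makes that step independent of \cite{MMO}.
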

\begin{proof}
By the relation
$\xi_{12}\in\{\sigma_{12},\nu_{19},\sigma_{22}\}_1$ \eqref{xi12} 
and the fact that 
$\sigma_{12}\circ \Sigma\pi^{18}_{29}=\sigma_{12}\circ \Sigma^3\pi^{16}_{27}$
\cite[Theorem 7.4]{T}, we have
\[
\xi_{12}\in\{\sigma_{12},\nu_{19},\sigma_{22}\}_1=
\{\sigma_{12},\nu_{19},\sigma_{22}\}_3.
\]
Using Lemma \ref{Miy} and \cite[Proposition 2.3]{T}, by the relation
$H(P(\sigma_{13}))=H(P(\iota_{13}))\circ\sigma_{11}=\pm 2\sigma_{11}$
from \cite[Proposition 2.7]{T}, we see that 
%$
%P\xi_{13}\in P\{\sigma_{13},\nu_{20},\sigma_{23}\}_3\subset
%\{P\sigma_{13}, \nu_{18},\sigma_{21}\}_1
%$
%and 
\[
\begin{split}
H(P(\xi_{13}))&\in H(P\{\sigma_{13},\nu_{20},\sigma_{23}\}_3)\subset
H\{P\sigma_{13}, \nu_{18},\sigma_{21}\}_1
\subset
\{\pm 2\sigma_{11}, \nu_{18},\sigma_{21}\}.%\equiv\xi'\ \bmod\ 2\lambda', 2\xi'.
\end{split}
\]
By the relation 
\[
\{\pm 2\sigma_{11}, \nu_{18},\sigma_{21}\}
\subset \{2\sigma_{11}, \pm \nu_{18},\sigma_{21}\}
\supset \{2\sigma_{11}, \nu_{18},\pm \sigma_{21}\}
\supset \pm\{2\sigma_{11}, \nu_{18},\sigma_{21}\}
\]
from \cite[Proposition 1.2]{T} and the above Toda brackets have the same indeterminacy,
we have $\{\pm 2\sigma_{11}, \nu_{18},\sigma_{21}\}=\pm\{2\sigma_{11}, \nu_{18},\sigma_{21}\}$.
Hence, by Lemma \ref{usgnep} (1), for some integers $x$ and $y$, we obtain
\begin{align*}
H(P(\xi_{13}))&= \xi'+ 2x\lambda'+4y\xi'
\shortintertext{or}
H(P(\xi_{13}))&= -(\xi'+ 2x\lambda'+4y\xi')=\xi'+2(-x)\lambda'+2(-2y-1)\xi'.
\end{align*}
This leads to the first half of (1). 

By Lemmas \ref{omg'} (1) and \ref{Miy} and 
$H(P(\sigma_{13}))=\pm 2\sigma_{11}$, we obtain 
\[
\begin{split}
H(P(\lambda))\eta_{29}&=
H(P(\lambda\eta_{31}))=H(P(\Sigma\omega'))
\in H(P\{\sigma_{13},\nu_{20},\varepsilon_{23}\}_3) \\
&\subset H\{P(\sigma_{13}),\nu_{18},\varepsilon_{21}\}_1
\subset \{\pm 2\sigma_{11},\nu_{18},\varepsilon_{21}\}.
\end{split}
\]
Similarly to  the proof of $\{\pm 2\sigma_{11}, \nu_{18},\sigma_{21}\}=\pm\{2\sigma_{11}, \nu_{18},\sigma_{21}\}$,
by $2\varepsilon_{21}=0$ \cite[Theorem 7.1]{T},
we obtain the equation 
$\{\pm 2\sigma_{11},\nu_{18},\varepsilon_{21}\}
=\{2\sigma_{11},\nu_{18},\varepsilon_{21}\}$.
So, by Lemma \ref{usgnep} (2), we have
$H(P(\lambda))\eta_{29}=\lambda'\eta_{29}$.
Hence by relations
$
H(P\lambda)\equiv\pm\lambda'\ \bmod\ 2\lambda',2\xi',\eta_{11}\bar{\mu}_{12}
$
\cite[(3.3)]{MMO} and $\eta_{11}\bar{\mu}_{12}\eta_{29}=\eta^2_{12}\bar{\mu}_{13}=4\bar{\zeta}_{12}\ne 0$ \eqref{4bze5},
we obtain the second half of (1).

By relations (1) and $2\eta_{29}=0$ \eqref{2eta}, we obtain (2).
Moreover, by the equation 
$\nu_{11}\omega_{14}=(\lambda'+\xi')\eta_{29}$ \eqref{n11om},
we obtain
\[
H(P(\lambda+\xi_{13})\eta_{29})
=H(P(\lambda\eta_{31})+H(P(\xi_{13}\eta_{31})=(\lambda'+\xi')\eta_{29}
=\nu_{11}\omega_{14}.
\]
This completes the proof. 
\end{proof}

%%%%%%%%%%%%%%%%%%%%%%%%%%%%%%%%%%%%%%%%%%%%%%%%%%
\section{Proof of Theorem \ref{main}}
%%%%%%%%%%%%%%%%%%%%%%%%%%%%%%%%%%%%%%%%%%%%%%%%%

By relations $\bar{\nu}_6\sigma_{14}=0$ 
and
$\sigma_{11}\varepsilon_{18}=0$ \eqref{bnu6sg},
$\{\bar{\nu}_{n},\sigma_{n+8},\varepsilon_{n+15}\}$
is well-defined for $n\ge 6$.
We show 
\begin{lem}\label{bnusgep}
\begin{enumerate}
\item
$\{\bar{\nu}_{11},\sigma_{19},\varepsilon_{26}\}= x[\iota_{11},\kappa_{11}]$
for some integer $x$.
%%%%%%%%%%%%%%%%%%%%%% (2) は不要？
\item
$\Sigma\{\bar{\nu}_{11},\sigma_{19},\varepsilon_{26}\}=\{\bar{\nu}_{12},\sigma_{20},\varepsilon_{27}\}=0$.
\end{enumerate}
\end{lem}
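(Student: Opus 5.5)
We first compute the indeterminacy of $\{\bar{\nu}_{11},\sigma_{19},\varepsilon_{26}\}$ and then locate the bracket by its Hopf invariant and its stable image. The bracket lies in $\pi^{11}_{35}$. Its indeterminacy is
\[
\bar{\nu}_{11}\circ\pi^{19}_{35}+\pi^{11}_{27}\circ\varepsilon_{27}
=\bar{\nu}_{11}\circ\{\omega_{19},\sigma_{19}\mu_{26}\}
+\pi^{11}_{27}\circ\varepsilon_{27}.
\]
For the first term we use $\bar{\nu}_9\omega_{17}=0$ \eqref{bnu9omg} and $\bar{\nu}_6\sigma_{14}=0$ \eqref{bnu6sg}. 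For the second term we use Toda's description of $\pi^{11}_{27}$, which is generated by $\sigma_{11}^2$, $\kappa_{11}$-type elements and $\zeta$-composites up to the $16$-stem. We would kill each composite with $\varepsilon_{27}$ using $\sigma\varepsilon=0$ \eqref{bnu6sg}, $\kappa\varepsilon=\bar\kappa$-type relations, and so on. The hope is that the indeterminacy lies in $\{[\iota_{11},\kappa_{11}]\}$, or vanishes.

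To pin down the bracket we use $H$ and $\Sigma^\infty$.

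\emph{Stable image.} Stably $\langle\bar{\nu},\sigma,\varepsilon\rangle$ should be computable from Lemma~\ref{sgnep}-type juggling. Write $\bar{\nu}=\{\nu,\eta,\nu\}$ and use the Jacobi identity \cite[(3.7)]{T}. This gives
\[
\langle\bar{\nu},\sigma,\varepsilon\rangle\subset\langle\nu,\eta,\nu\rangle\circ\ldots,
\]
which reduces to $\nu\circ\langle\eta,\nu,\sigma\varepsilon\rangle$ or to $\langle\nu,\sigma,\varepsilon\rangle\circ(\cdot)$. These vanish by Lemma~\ref{sgnep}(2) together with $\nu\circ\pi^S_{20}$ considerations. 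Alternatively, use $\varepsilon=\bar\nu+\eta\sigma$ and $\langle\bar\nu,\sigma,\eta\sigma\rangle\supset\langle\bar\nu,\sigma,\eta\rangle\sigma$, where $\pi^S_{17}\circ\sigma$ is small.

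\emph{Hopf invariant.} Apply \cite[Proposition 2.3]{T}:
\[
H\{\bar\nu_{11},\sigma_{19},\varepsilon_{26}\}\subset\{H\bar\nu_{11},\sigma_{19},\varepsilon_{26}\}=0,
\]
since $\bar\nu_{11}$ is a suspension from $\bar\nu_6$, so its Hopf invariant vanishes. Hence the bracket lies in $\Sigma\pi^{10}_{34}$. Better still, desuspend it as $\Sigma^5\{\bar\nu_6,\sigma_{14},\varepsilon_{21}\}_5$ where possible.

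Combining these, the bracket is an element in the kernel of $\Sigma^\infty$ on $\pi^{11}_{35}$. Using \cite{MMO}'s computation of the 24-stem at $S^{11}$, this kernel is generated by Whitehead products. The relevant one is $[\iota_{11},\kappa_{11}]=\Sigma(\nu_{10}\lambda\nu_{31})$ \eqref{nulmn}, possibly together with elements we must exclude via the zero Hopf invariant. That gives (1). This step, identifying the stable-kernel of $\pi^{11}_{35}$ and excluding the other Whitehead products such as $[\iota_{11},\bar\varepsilon_{11}]$-type terms, is the main obstacle. We would first try the indeterminacy computation plus $H=0$ to reduce the possible candidates.

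For (2), $\Sigma[\iota_{11},\kappa_{11}]=0$, so $\Sigma\{\bar\nu_{11},\sigma_{19},\varepsilon_{26}\}=0$. The inclusion $-\Sigma\{\cdots\}\subset\{\bar\nu_{12},\sigma_{20},\varepsilon_{27}\}$ \cite[Proposition 1.3]{T} then gives $0\in\{\bar\nu_{12},\sigma_{20},\varepsilon_{27}\}$. It remains to check that the indeterminacy $\bar\nu_{12}\circ\pi^{20}_{36}+\pi^{12}_{28}\circ\varepsilon_{28}$ is zero. This follows from the same relations, namely \eqref{bnu9omg} and \eqref{bnu6sg}, together with the vanishing of $[\iota_{12},\iota_{12}]$-type composites with $\varepsilon$, checked via $H$.
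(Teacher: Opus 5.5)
\textbf{There is a genuine gap in part (1).} Your plan is to show that the element has $H=0$ and $\Sigma^\infty=0$, and then read it off from the stable kernel. This fails for two independent reasons.

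\emph{The stable bracket cannot fix the stable image.} The indeterminacy of $\langle\bar\nu,\sigma,\varepsilon\rangle$ contains $\pi^S_{16}\circ\varepsilon\ni\omega\varepsilon=\varepsilon\omega$. By the relations used in Lemma~\ref{dl''}, $\varepsilon\omega=\eta\phi=\eta\eta^*\sigma\neq0$. So even a correct stable Jacobi-identity argument giving $0\in\langle\bar\nu,\sigma,\varepsilon\rangle$ only shows that your unstable element stabilizes to $0$ or to $\eta\eta^*\sigma$.

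\emph{The candidate set is too large even with stable image $0$.} The set $\Sigma\pi^{10}_{34}\cap\Ker\Sigma^\infty$ is not $\{x[\iota_{11},\kappa_{11}]\}$. We have $\pi^{19}_{43}=\{\delta_{19},\bar\mu_{19}\sigma_{36},\bar\sigma'_{19}\}\cong(\Z_2)^3$, while $\pi^{20}_{44}\cong\pi^S_{24}\cong(\Z_2)^2$. Hence some nonzero $y=a\delta_{19}+b\bar\mu_{19}\sigma_{36}+c\bar\sigma'_{19}$ dies under $\Sigma$. Set $y'=a\delta_{11}+b\bar\mu_{11}\sigma_{28}+c\bar\sigma'_{11}$. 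It is a suspension, so $H(y')=0$. It is stably trivial. But $\Sigma^8y'=y\neq0$, whereas $\Sigma[\iota_{11},\kappa_{11}]=0$. So $y'$ and $y'+[\iota_{11},\kappa_{11}]$ pass both of your tests. The step you flagged as ``the main obstacle'' cannot be closed with the tools you list. The $H$-step itself is correct but weak, since it only recovers that the bracket contains $-\Sigma\{\bar\nu_{10},\sigma_{18},\varepsilon_{25}\}$.

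\textbf{The missing idea is to run the Jacobi identity unstably, on $S^{11}$,} where $\omega$ does not yet exist. Apply \cite[Proposition 1.5]{T} with the inputs
\begin{itemize}
\item $\bar\nu_{11}=\{\nu_{11},\eta_{14},\nu_{15}\}$,
\item $\{\eta_{14},\nu_{15},\sigma_{18}\}=0$ (Lemma~\ref{etnusg}~(2)),
\item $\{\nu_{15},\sigma_{18},\varepsilon_{25}\}=0$ (Lemma~\ref{n15sgep}~(3)).
\end{itemize}
This gives $0\in\{\bar\nu_{11},\sigma_{19},\varepsilon_{26}\}+\nu_{11}\circ\pi^{14}_{35}+\pi^{11}_{27}\circ\varepsilon_{27}$.

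The last term vanishes. Note that $\pi^{11}_{27}=\{\sigma_{11}\mu_{18}\}$, not the $14$-stem elements $\sigma^2_{11}$, $\kappa_{11}$ you list, and $\sigma_{11}\mu_{18}\varepsilon_{27}=\sigma_{11}\varepsilon_{18}\mu_{26}=0$.

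For the middle term, use Mimura's $\pi^{14}_{35}=\{\eta_{14}\bar\kappa_{15},\sigma^3_{14},\Sigma(\lambda\nu_{31})\}$, together with $\nu_{11}\eta_{14}=\nu_{11}\sigma_{14}=0$ and $\Sigma(\nu_{10}\lambda\nu_{31})=[\iota_{11},\kappa_{11}]$ \eqref{nulmn}. These give $\nu_{11}\circ\pi^{14}_{35}=\{[\iota_{11},\kappa_{11}]\}$, which is exactly (1), with no Hopf-invariant or stable-image computation needed.

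Your derivation of (2) from (1) is fine once the two indeterminacies are checked with \eqref{bnu9omg}, \eqref{bnu6sg} and the correct $\pi^{n}_{n+16}$. There are no $[\iota_{12},\iota_{12}]$-type terms to worry about there.
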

\begin{proof}
By \cite[Theorem 12.16]{T}, we have 
\[
\mathrm{Ind}\{\bar{\nu}_{11},\sigma_{19},\varepsilon_{26}\}=
\bar{\nu}_{11}\circ \pi^{19}_{35}+\pi^{11}_{27}\circ\varepsilon_{27}=
\bar{\nu}_{11}\circ\{\omega_{19},\sigma_{19}\mu_{26}\}+\{\sigma_{11}\mu_{18}\}\circ\varepsilon_{27}.  
\]
By relations
$\bar{\nu}_{11}\omega_{19}=0$ \eqref{bnu9omg},
$\bar{\nu}_{11}\sigma_{19}=0$ \eqref{bnu6sg}
and
$\sigma_{11}\mu_{18}\varepsilon_{27}=\sigma_{11}\varepsilon_{18}\mu_{26}=0$
from \eqref{bnu6sg}, the indeterminacy is trivial.  
Similarly, we obtain
$
\mathrm{Ind}\{\bar{\nu}_{12},\sigma_{20},\varepsilon_{27}\}=0.
$ 
Therefore, (2) follows directly from (1). 

By using \cite[Propositions 1.2 0), 1.5]{T} and relations
$\{\nu_{11},\eta_{14},\nu_{15}\}=\bar{\nu}_{11}$ \cite[Lemma 6.2]{T},
$\{\eta_{14},\nu_{15},\sigma_{18}\}=0$ (Lemma \ref{etnusg} (2))
and
$\{\nu_{15},\sigma_{18},\varepsilon_{25}\}=0$ (Lemma \ref{n15sgep} (2)),
we have
\[
\begin{split}
0&\in\{\bar{\nu}_{11},\sigma_{19},\varepsilon_{26}\}
+\{\nu_{11},0,\varepsilon_{26}\}
+\{\nu_{11},\eta_{14},0\}\\
&=\{\bar{\nu}_{11},\sigma_{19},\varepsilon_{26}\}
+\nu_{11}\circ\pi^{14}_{35}+\pi^{11}_{27}\circ\varepsilon_{27}
=\{\bar{\nu}_{11},\sigma_{19},\varepsilon_{26}\}+\nu_{11}\circ\pi^{14}_{35}.
\end{split}
\]
By the fact $\pi^{14}_{35}
=\{\eta_{14}\bar{\kappa}_{15},\sigma^3_{14},\Sigma(\lambda\nu_{31})\}$
\cite[Theorem A]{Mi} and
relations $\nu_{11}\eta_{14}=0$ \eqref{n3},
$\nu_{11}\sigma_{14}=0$ \eqref{nu10sg}
and
$\Sigma(\nu_{10}\lambda\nu_{31})=[\iota_{11},\kappa_{11}]$
\eqref{nulmn},
we have 
$\nu_{11}\circ\pi^{14}_{35}
%=\{\Sigma(\nu_{10}\lambda\nu_{31})\}
%=\{\sigma_{11}\nu_{18}\kappa_{21}\}
%=\{[\iota_{11},\iota_{11}]\circ\kappa_{21}\}
=\{[\iota_{11},\kappa_{11}]\}$.
Thus, we obtain (1) and this completes the proof.
\end{proof}

Here we recall the definitions of $\delta_3\in\pi^3_{27}$
and
$\bar{\sigma}'_6$, $\bar{\bar{\sigma}}'_6\in\pi^6_{30}$ \cite[p. 13, 15]{MMO}: 
\begin{equation}\label{def-del}
\delta_3\in\{\varepsilon_3,\varepsilon_{11}+\bar{\nu}_{11},\sigma_{19}\}_1, 
\ %
\bar{\sigma}'_6\in\{\bar{\nu}_6,\varepsilon_{14}+\bar{\nu}_{14},\sigma_{22}\}_1
\mand
\bar{\bar{\sigma}}'_6\in\{\nu_6,\eta_9,\bar{\sigma}_{10}\}_3.
\end{equation}

We show
\begin{lem}\label{3Ind}
\begin{enumerate}
\item
$\mathrm{Ind}\{\varepsilon_3,\varepsilon_{11}+\bar{\nu}_{11},\sigma_{19}\}_1=\{\bar{\mu}_3\sigma_{20}\}$.
\item
$\mathrm{Ind}\{\bar{\nu}_6,\varepsilon_{14}+\bar{\nu}_{14},\sigma_{22}\}_1
=\{P(\xi_{13}\eta_{31}), \bar{\mu}_6\sigma_{23}\}$.
\item
$\bar{\bar{\sigma}}'_6=\{\nu_6,\eta_9,\bar{\sigma}_{10}\}_3$
\ \ and\ \ \ 
$\bar{\bar{\sigma}}'
=\langle\nu,\eta,\bar{\sigma}\rangle$.
\end{enumerate}
\end{lem}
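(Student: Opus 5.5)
All three parts are indeterminacy computations. In each case I would write out the standard formula
\[
\mathrm{Ind}\{\alpha,\beta,\gamma\}_n=\alpha\circ\Sigma^n\pi+\pi'\circ\Sigma^{n+1}\gamma ,
\]
take the relevant groups from \cite{T}, \cite{MT} and \cite{MMO}, and then kill or identify each composite using the relations collected in Section 2.

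For (1), the indeterminacy is
\[
\varepsilon_3\circ\Sigma\pi^{18}_{34}+\pi^3_{20}\circ\sigma_{20}.
\]
\begin{itemize}
\item By \cite[Theorem 7.6]{T}, $\Sigma\pi^{18}_{34}$ is generated by suspensions of $\sigma\mu$ and $\omega$ type elements together with $\eta\rho$-type elements. After moving $\varepsilon_3$ past these, every composite should vanish by $\varepsilon_3\sigma_{11}=0$ \eqref{bnu6sg}, \eqref{mepbn2}, \eqref{et9be} and the analogous relations.
\item The group $\pi^3_{20}$ is generated by $\bar{\varepsilon}'$, $\bar{\mu}_3$ and $\eta_3\bar{\varepsilon}_4$ (\cite[Theorem 12.6]{T}). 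Composing with $\sigma_{20}$: $\bar{\varepsilon}'\sigma_{20}=0$ by \eqref{bep's}, and $\eta_3\bar{\varepsilon}_4\sigma_{19}=\bar{\varepsilon}_3\eta_{18}\sigma_{19}=0$ by \eqref{bep3etas}. Only $\bar{\mu}_3\sigma_{20}$ survives.
\end{itemize}
So the indeterminacy is $\{\bar{\mu}_3\sigma_{20}\}$.

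For (2), the indeterminacy is
\[
\bar{\nu}_6\circ\Sigma\pi^{14}_{30}+\pi^6_{23}\circ\sigma_{23}.
\]
\begin{itemize}
\item First term. Here $\Sigma\pi^{14}_{30}=\{\omega_{15}\text{-type},\sigma_{15}\mu_{22}\}$ together with $\eta^{*\prime}$-type classes. We have $\bar{\nu}_6\sigma_{14}=0$, and the $\mu$-terms vanish by \eqref{mepbn2}. The remaining term is $\bar{\nu}_6\omega_{14}$-like, and I expect it either to vanish or to reduce to something already listed.
\item Second term. $\pi^6_{23}$ contains $P(\rho_{13})$-type elements, $\bar{\mu}_6$, $\eta_6\bar{\varepsilon}_7$ and $\zeta'$-type elements. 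Composing with $\sigma_{23}$ gives $\bar{\mu}_6\sigma_{23}$, while $\eta\bar{\varepsilon}\sigma=0$ and $\zeta'\eta\sigma=0$ by \eqref{z'etas}.
\item The key nonzero contribution is $P(\xi_{13}\eta_{31})$. I would obtain it by writing the relevant generator of $\pi^6_{23}$ as $P(\theta')$-like, or $P(\xi_{13})$-related, and using $P(\alpha)\circ\sigma=P(\alpha\circ\sigma_{\ast})$ together with \eqref{smbsm}: $\xi_{12}\eta_{30}\sigma_{31}=\theta\sigma^2_{24}$. That is, $\theta\sigma_{24}\equiv\xi_{12}\eta_{30}$ by Lemma \ref{usgnep} (3), so $P(\Sigma\theta)\sigma_{\ast}=P(\xi_{13}\eta_{31})$.
\end{itemize}

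For (3), the indeterminacy of $\{\nu_6,\eta_9,\bar{\sigma}_{10}\}_3$ is
\[
\nu_6\circ\Sigma^3\pi^{6}_{27}+\pi^6_{10}\circ\bar{\sigma}_{10}.
\]
\begin{itemize}
\item Second term. $\pi^6_{10}=\{\nu_6\eta_9\}$ up to 2-torsion, which vanishes by \eqref{n3}. So this term is zero, or at most $\nu^2$-type composites with $\bar{\sigma}$; these vanish since $\nu\bar{\sigma}=0$ there.
\item First term. $\Sigma^3\pi^6_{27}$ is generated by suspended 21-stem elements ($\sigma^3$, $\eta\bar{\kappa}$, $\nu\bar{\sigma}$-type, and so on), and composing with $\nu_6$ I expect each to vanish using \eqref{nu10sg}, \eqref{n9xi}, \eqref{nu9sgka} and the earlier $\nu\sigma$-relations.
\item The stable statement follows in the same way, since stably $\nu\circ\pi^S_{21}+\pi^S_{4}\circ\bar{\sigma}=0$.
\end{itemize}

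The main obstacle is (2): identifying precisely that the $\pi^6_{23}\circ\sigma_{23}$ term contributes exactly $P(\xi_{13}\eta_{31})$ and nothing more, which requires careful use of the \cite{MMO} generators of $\pi^6_{23}$.
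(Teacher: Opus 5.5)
Your strategy is the paper's: write Toda's indeterminacy formula for each bracket and kill the composites with the relations of Section~2. The gap is in the execution. The groups you feed into the formula are wrong in all three parts, and in (2) and (3) this hides exactly what has to be proved.

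\textbf{Parts (1) and (2).} The correct indeterminacies are $\varepsilon_3\circ\Sigma\pi^{10}_{26}+\pi^3_{20}\circ\sigma_{20}$, $\bar\nu_6\circ\Sigma\pi^{13}_{29}+\pi^6_{23}\circ\sigma_{23}$ and $\nu_6\circ\Sigma^3\pi^6_{27}+\pi^6_{11}\circ\bar\sigma_{11}$. Your $\Sigma\pi^{18}_{34}$, $\Sigma\pi^{14}_{30}$ and $\pi^6_{10}\circ\bar\sigma_{10}$ either do not compose with $\varepsilon_3$ and $\bar\nu_6$, or land in the wrong stem.

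In (2) this is not cosmetic. It produces the term $\bar\nu_6\omega_{14}$, which you hope vanishes or reduces to something already listed. It does neither. By the proof of Lemma~\ref{bnuom7} and \eqref{n11om}, $H(\bar\nu_6\omega_{14})\equiv(\lambda'+\xi')\eta_{29}\bmod 4\bar\zeta_{11}$, while $H$ maps $\{P(\xi_{13}\eta_{31}),\bar\mu_6\sigma_{23}\}$ into $\{\xi'\eta_{29}\}$. So this term would contradict the statement. It is absent only because the bracket carries subscript $1$ and $\omega_{14}\notin\Sigma\pi^{13}_{29}$, since $H(\omega_{14})=\nu_{27}\neq0$. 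The true first terms are $\{\varepsilon_3\sigma_{11}\mu_{18}\}=0$ and $\{\bar\nu_6\sigma_{14}\mu_{21}\}=0$.

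Your generator lists also come from the wrong stems. $\eta_3\bar\varepsilon_4\in\pi^3_{19}$ and $\eta_6\bar\varepsilon_7,\zeta'\in\pi^6_{22}$ belong to the brackets $\{\varepsilon_3,\sigma_{11},\eta_{18}\sigma_{19}\}_1$ and $\{\bar\nu_6,\sigma_{14},\eta_{21}\sigma_{22}\}_1$, and $P(\rho_{13})\in\pi^6_{26}$. In fact $\pi^3_{20}=\{\bar\varepsilon',\bar\mu_3,\eta_3\mu_4\sigma_{13}\}$ and $\pi^6_{23}=\{P(\Sigma\theta),\nu_6\kappa_9,\bar\mu_6,\eta_6\mu_7\sigma_{16}\}$. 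You therefore also need:
\begin{itemize}
\item $\mu_4\sigma^2_{13}=\mu_7\sigma^2_{16}=0$ by \eqref{mu3s^2};
\item $\kappa_9\sigma_{23}=0$ by \eqref{k7s};
\item $P(\Sigma\theta)\circ\sigma_{23}=P((\Sigma\theta)\sigma_{25})=P(\xi_{13}\eta_{31})$ by \eqref{x13et}, which is the one new surviving term. Suspending Lemma~\ref{usgnep}~(3), as you suggest, gives the same identity.
\end{itemize}

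\textbf{Part (3).} Here you miss the one nontrivial input. $\pi^6_{11}\cong\Z$ is generated by $[\iota_6,\iota_6]=P(\iota_{13})$, so the second term is $\{P(\bar\sigma_{13})\}$. Its vanishing is the fact $P(\bar\sigma_{13})=0$ from \cite[(5.7)]{MMO}. Because you used $\pi^6_{10}=0$ instead of $\pi^6_{11}$, this term silently disappeared.

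For the first term, the relations you cite (\eqref{nu10sg}, \eqref{n9xi}, \eqref{nu9sgka}) concern $\nu_9$ and $\nu_{10}$. They say nothing about composites with $\nu_6$; for example, $\nu_6\sigma_9\neq0$. What is needed is $\nu_6\circ\Sigma^3\pi^6_{27}=\{\nu_6\eta_9\bar\kappa_{10}\}$ from \cite[Theorem A]{Mi}, which is $0$ by $\nu_6\eta_9=0$ \eqref{n3}.

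Stably the indeterminacy is $\nu\circ\pi^S_{21}+\pi^S_5\circ\bar\sigma=\{\nu\eta\bar\kappa,\nu\sigma^3\}=0$, using $\nu\eta=\nu\sigma=0$. Your aside ``$\nu\bar\sigma=0$'' is false, since $\nu\bar\sigma$ spans a $\Z_2$ summand of $\pi^S_{22}$, though it is harmless because no such term occurs. With these corrections all three parts go through as in the paper.
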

\begin{proof}
By \cite[Theorems 12.16, 12.17]{T},
we have
\[
\mathrm{Ind}\{\varepsilon_3,\varepsilon_{11}+\bar{\nu}_{11},\sigma_{19}\}_1
=\varepsilon_3\circ \Sigma\pi^{10}_{26}+\pi^3_{20}\circ\sigma_{20}
=\{\varepsilon_3\sigma_{11}\mu_{18},
\bar{\varepsilon}'\sigma_{20}, \bar{\mu}_3\sigma_{20},
\eta_3\mu_4\sigma^2_{13}\}.
\]
By relations
$\varepsilon_3\sigma_{11}=0$ \eqref{bnu6sg},
$\bar{\varepsilon}'\sigma_{20}=0$ \eqref{bep's}
and
$\mu_4\sigma^2_{13}=0$ \eqref{mu3s^2},
we have  (1).

By \cite[Theorems 12.7, 12.16]{T},
we have
\[
\begin{split}
\mathrm{Ind}\{\bar{\nu}_6,\varepsilon_{14}+\bar{\nu}_{14},\sigma_{22}\}_1
&= \bar{\nu}_6\circ \Sigma\pi^{13}_{29}+\pi^6_{23}\circ\sigma_{23}\\
&=\{\bar{\nu}_6\sigma_{14}\mu_{21},
P((\Sigma\theta)\sigma_{25}), \nu_6\kappa_9\sigma_{23},
\bar{\mu}_6\sigma_{23}, \eta_6\mu_7\sigma^2_{16}\}.
\end{split}
\]
By relations
$\bar{\nu}_6\sigma_{14}=0$ \eqref{bnu6sg},
$(\Sigma\theta)\sigma_{25}=\xi_{13}\eta_{31}$ \eqref{x13et},
$\kappa_9\sigma_{23}=0$ \eqref{k7s}
and
$\mu_7\sigma^2_{16}=0$ \eqref{mu3s^2},
we have (2).

By \cite[Theorem A]{Mi} and
\cite[Proposition 5.9]{T},
we have 
\begin{gather*}
\mathrm{Ind}\{\nu_6,\eta_9,\bar{\sigma}_{10}\}_3 
= \nu_6\circ \Sigma^3\pi^6_{27}+\pi^6_{11}\circ\bar{\sigma}_{11}
=\{\nu_6\eta_9\bar{\kappa}_{10}, P(\bar{\sigma}_{13})\}
\shortintertext{and}
\mathrm{Ind}\langle\nu,\eta,\bar{\sigma}\rangle
=\{\nu\eta\bar{\kappa},\nu\sigma^3\}.
\end{gather*}
By relations
$\nu_6\eta_9=0$ \eqref{n3},
$P(\bar{\sigma}_{13})=0$ \cite[(5.7)]{MMO}
and $\nu\sigma=0$ \eqref{nu10sg},
we have 
$\mathrm{Ind}\{\nu_6,\eta_9,\bar{\sigma}_{10}\}_3 =0$
and $\mathrm{Ind}\langle\nu,\eta,\bar{\sigma}\rangle=0$.
Hence we obtain (3).
This completes the proof.
\end{proof}

Since $\varepsilon_n+\bar{\nu}_n=\sigma_n\eta_{n+7}$ for $n\ge 10$ \eqref{et9sg}, 
$\varepsilon_3\sigma_{11}=0$ and $\bar{\nu}_6\sigma_{14}=0$ \eqref{bnu6sg}, 
we have 
\[
\{\varepsilon_3,\sigma_{11},\eta_{18}\sigma_{19}\}_1\subset\{\varepsilon_3,\varepsilon_{11}+\bar{\nu}_{11},\sigma_{19}\}_1
\quad\text{and}\quad
\{\bar{\nu}_6,\sigma_{14},\eta_{21}\sigma_{22}\}_1\subset\{\bar{\nu}_6,\varepsilon_{14}+\bar{\nu}_{14},\sigma_{22}\}_1.
\]
By \cite[Theorem 12.6 and 12.16]{T}, we have
\begin{align*}
\mathrm{Ind}\{\varepsilon_3,\sigma_{11},\eta_{18}\sigma_{19}\}_1
&= \varepsilon_3\circ\pi^{11}_{27}+\pi^3_{19}\circ\eta_{19}\sigma_{20}
=\{\varepsilon_3\sigma_{11}\mu_{18},\mu_3\sigma_{12}\eta_{19}\sigma_{20},
\eta_3\bar{\varepsilon}_4\eta_{19}\sigma_{20}\}
\shortintertext{and}
\mathrm{Ind}\{\bar{\nu}_6,\sigma_{14},\eta_{21}\sigma_{22}\}_1
&= \bar{\nu}_6\circ \Sigma\pi^{13}_{29}+\pi^6_{22}\circ\eta_{22}\sigma_{23}
=\{\bar{\nu}_6\sigma_{14}\mu_{21},\zeta'\eta_{22}\sigma_{23}\}.
\end{align*}
By relations 
$\varepsilon_3\sigma_{11}\mu_{18}=0$ \eqref{bnu6sg},
$\mu_3\sigma_{12}\eta_{19}\sigma_{20}=\mu_3\sigma^2_{12}\eta_{26}=0$ \eqref{mu3s^2},
$\eta_3\bar{\varepsilon}_4\eta_{19}\sigma_{20}=0$ \eqref{bep3etas},
$\bar{\nu}_6\sigma_{14}\nu_{21}=0$ \eqref{bnu6sg}
and
$\zeta'\eta_{22}\sigma_{23}=0$ \eqref{z'etas},
we have 
\[
\mathrm{Ind}\{\varepsilon_3,\sigma_{11},\eta_{18}\sigma_{19}\}_1=0
\mand
\mathrm{Ind}\{\bar{\nu}_6,\sigma_{14},\eta_{21}\sigma_{22}\}_1=0.
\]
%We change the definitions of $\delta_3$ and $\bar{\sigma}'_6$ as follows:
We define $\delta'_3\in\pi^3_{27}$ and $\bar{\sigma}''_6\in\pi^6_{30}$:
\begin{gather}\label{chd}
\delta'_3=\{\varepsilon_3,\sigma_{11},\eta_{18}\sigma_{19}\}_1
\subset\{\varepsilon_3,\varepsilon_{11}+\bar{\nu}_{11},\sigma_{19}\}_1, \\
\label{ch}
\bar{\sigma}''_6=\{\bar{\nu}_6,\sigma_{14},\eta_{21}\sigma_{22}\}_1
\subset\{\bar{\nu}_6,\varepsilon_{14}+\bar{\nu}_{14},\sigma_{22}\}_1.
\end{gather}
By the facts
$\delta'_3\in\{\varepsilon_3,\varepsilon_{11}+\bar{\nu}_{11},\sigma_{19}\}_1$
and
$\bar{\sigma}''_6\in\{\bar{\nu}_6,\varepsilon_{14}+\bar{\nu}_{14},\sigma_{22}\}_1$, the following relations, 
analogous to those for $\delta_3$ and $\bar{\sigma}'_6$
\cite[Theorem 1.1 (b), (3.5), (3.8)]{MMO}, hold:
\begin{equation}\label{d'bs''}
H(\delta'_3)\equiv \nu_5\bar{\sigma}_8 \bmod\ \nu_5\bar{\zeta}_8, \
\sharp\delta'_3=2,\ 
H(\bar{\sigma}''_6) \equiv \bar{\sigma}_{11} \bmod\ \xi'\eta_{29}
\mand
\sharp\bar{\sigma}''_6=2.
\end{equation}
The following relations, which are refinements of the above relations, hold.
%It is easy to check that this changing gives no influences  
%to the computations in \cite{MMO}. 
\begin{prop}\label{hbsgm''}
\begin{enumerate}
\item
$H(\delta'_3)=\nu_5\bar{\sigma}_8$.
\item
$H(\bar{\sigma}''_6)=\bar{\sigma}_{11}+\xi'\eta_{29}$.
\end{enumerate}
\end{prop}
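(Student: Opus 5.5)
We compute $H$ of each bracket directly with \cite[Proposition 2.6]{T}, the same way Lemma \ref{usgnep} computes $H\{2\sigma_{11},\nu_{18},\sigma_{21}\}_1$. The point is that the defining brackets \eqref{chd} and \eqref{ch} have trivial indeterminacy. This removes the ambiguities $\nu_5\bar{\zeta}_8$ and $\xi'\eta_{29}$ left in \eqref{d'bs''}.

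For (1), write $\delta'_3=\{\varepsilon_3,\sigma_{11},\eta_{18}\sigma_{19}\}_1$. Proposition 2.6 of \cite{T} requires $\varepsilon_3\circ\sigma_{10}=P(\gamma)$ for some $\gamma\in\pi^{5}_{19}$. Then
\[
H\{\varepsilon_3,\sigma_{11},\eta_{18}\sigma_{19}\}_1=-P^{-1}(\varepsilon_2\sigma_{10})\circ\eta_{19}\sigma_{20}.
\]
Since $\varepsilon_3$ is not a double suspension, I would instead use the $\pi^3$ version of the formula, with $H:\pi^3_{27}\to\pi^5_{27}$. The bracket sits over $\Sigma$ of $\varepsilon_2$-type data only formally, so an alternative is needed. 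I would pass to the alternative bracket $\{\varepsilon_3,\varepsilon_{11}+\bar{\nu}_{11},\sigma_{19}\}_1$, which contains $\delta'_3$. From \cite{MMO} one already knows that $H$ of it lies in $\nu_5\bar{\sigma}_8+\{\nu_5\bar{\zeta}_8\}$, so it only remains to decide the coefficient of $\nu_5\bar{\zeta}_8$.

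The cleanest way to decide it is by order. We have $\sharp\delta'_3=2$ by \eqref{d'bs''}, and $H$ is a homomorphism. The element $\nu_5\bar{\zeta}_8$ has order $8$ (or at least $\nu_5\bar{\zeta}_8\ne 0$ with $2\nu_5\bar\zeta_8\neq0$; to be checked in \cite{MMO}), while $2\nu_5\bar{\sigma}_8=0$. Hence $H(\delta'_3)=\nu_5\bar{\sigma}_8+c\,\nu_5\bar{\zeta}_8$ must satisfy $2c\,\nu_5\bar{\zeta}_8=0$, which forces $c\equiv 0 \bmod 4$. To eliminate the remaining $4\nu_5\bar{\zeta}_8$, I would use $\eta$-linearity: $4\bar{\zeta}_5=\eta^2_5\bar{\mu}_7$ \eqref{4bze5}, and $\nu_5\eta_8=0$ \eqref{n3}, so $4\nu_5\bar{\zeta}_8=\nu_5\eta^2_8\bar{\mu}_{10}=0$. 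This gives (1).

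For (2), the same two steps apply to $\bar{\sigma}''_6$. By \eqref{d'bs''}, $H(\bar{\sigma}''_6)=\bar{\sigma}_{11}+c\,\xi'\eta_{29}$ with $c\in\{0,1\}$. Order does not help here, so I would determine $c$ by suspension or by composing with $\sigma$. First, $\Sigma^2(\xi'\eta_{29})=0$ by \eqref{Elameta}, so stable detection is useless. Instead, I would compose with $\sigma_{30}$ and use $\xi'\eta_{29}\sigma_{30}\ne0$, which comes from \eqref{smbsm}: $\Sigma(\xi'\eta_{29}\sigma_{30})$ relates to $2\xi_{12}\eta\sigma=0$. This is delicate, so the preferred route is the following. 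Compute $H$ of the bracket via \cite[Proposition 2.6]{T}:
\[
H\{\bar{\nu}_6,\sigma_{14},\eta_{21}\sigma_{22}\}_1=-H(\bar\nu_6\circ{\rm coext})\ldots,
\]
which reduces to $\{H\text{-data}\}=\{\nu_{11},\sigma_{14},\eta_{21}\sigma_{22}\}$, using $H(\bar{\nu}_6)=\nu_{11}$. Then $\{\nu_{11},\sigma_{14},\eta_{21}\sigma_{22}\}=\{\nu_{11},\sigma_{14},\bar{\nu}_{21}\}+\{\nu_{11},\sigma_{14},\varepsilon_{21}\}$. By \cite[I-Proposition 3.4 (3)]{Od} and Lemma \ref{n15sgep} (1), this is $\bar{\sigma}_{11}$ plus explicit multiples of $\xi'\eta_{29}$ and $\lambda'\eta_{29}$.

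The main obstacle is pinning down these $\eta$-multiples precisely. I would do it by applying $P$ and using Lemma \ref{HPx13} (2), namely $H(P(\xi_{13}\eta_{31}))=\xi'\eta_{29}$. The indeterminacy computed in Lemma \ref{3Ind} (2) shows that $P(\xi_{13}\eta_{31})$ is exactly the ambiguity between $\bar{\sigma}'_6$ and $\bar{\sigma}''_6$, which ties the correct choice to the coefficient $1$.
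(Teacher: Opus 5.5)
\textbf{Part (1).} The order argument fails on its own terms. You show $4\nu_5\bar{\zeta}_8=\nu_5\eta^2_8\bar{\mu}_{10}=0$, so $\nu_5\bar{\zeta}_8$ has order at most $4$, not $8$. From $\sharp\delta'_3=2$ you can therefore deduce that $c$ is even (if $\sharp\nu_5\bar{\zeta}_8=4$), or nothing at all (if it is $2$). The candidate $\nu_5\bar{\sigma}_8+2\nu_5\bar{\zeta}_8$ (respectively $\nu_5\bar{\sigma}_8+\nu_5\bar{\zeta}_8$) is never excluded, and killing $4\nu_5\bar{\zeta}_8$ is vacuous. Moreover, trivial indeterminacy of \eqref{chd} is not what removes ambiguity; what matters is the bracket you land in after applying $H$.

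You were right that \cite[Proposition 2.6]{T} is unavailable because $\varepsilon_3$ is not a suspension. But the naturality \cite[Proposition 2.3]{T}, $H\{\alpha,\Sigma^n\beta,\Sigma^n\gamma\}_n\subset\{H\alpha,\Sigma^n\beta,\Sigma^n\gamma\}_n$, does apply, and this is the paper's route. It gives $H(\delta'_3)\in\{\nu^2_5,\sigma_{11},\eta_{18}\sigma_{19}\}_1\supset\nu_5\circ\{\nu_8,\sigma_{11},\eta_{18}\sigma_{19}\}_1\ni\nu_5\bar{\sigma}_8$. The indeterminacy $\{\nu^2_5\sigma_{11}\mu_{18},\nu_5\zeta_8\eta_{19}\sigma_{20},\nu_5\bar{\nu}_8\nu_{16}\eta_{19}\sigma_{20}\}$ vanishes by $\nu_8\sigma_{11}\mu_{18}=0$, \eqref{ze7etsg} and \eqref{n3}. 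This pins $H(\delta'_3)$ down exactly, with no use of \eqref{d'bs''}.

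\textbf{Part (2).} Splitting into $\{\nu_{11},\sigma_{14},\bar{\nu}_{21}\}+\{\nu_{11},\sigma_{14},\varepsilon_{21}\}$ only produces unidentified elements of $\{\xi'\eta_{29},\lambda'\eta_{29}\}$. Neither \cite[I-Proposition 3.4 (3)]{Od} nor Lemma \ref{n15sgep} (1) says which elements they are. Without the subscript $1$, the indeterminacy also contains $\nu_{11}\omega_{14}=(\lambda'+\xi')\eta_{29}$ \eqref{n11om}. Your closing appeal to $P(\xi_{13}\eta_{31})$ and Lemma \ref{3Ind} (2) shows only that $H(\bar{\sigma}''_6)-H(\bar{\sigma}'_6)\in\{0,\xi'\eta_{29}\}$. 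Since $H(\bar{\sigma}'_6)$ is itself known only modulo $\xi'\eta_{29}$, this decides nothing.

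The missing idea is to apply the Hopf invariant a second time. On $\pi^{11}_{30}$, $H$ kills $\Sigma\pi^{10}_{29}=\{\bar{\sigma}_{11},\bar{\zeta}_{11}\}$ but sends $\xi'\eta_{29}$ to $\eta^2_{21}\sigma_{23}\neq 0$. Since $\nu_{11}=\Sigma\nu_{10}$ and $\nu_{10}\sigma_{13}=P(\eta_{21})$, \cite[Proposition 2.6]{T} applies to the subscripted bracket. It gives $H\{\nu_{11},\sigma_{14},\eta_{21}\sigma_{22}\}_1=-P^{-1}(\nu_{10}\sigma_{13})\circ\eta_{22}\sigma_{23}=\eta^2_{21}\sigma_{23}$. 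Together with $H(\bar{\nu}_6)=(2k+1)\nu_{11}$, this yields $H(H(\bar{\sigma}''_6))=\eta^2_{21}\sigma_{23}=H(\xi'\eta_{29})$. By the EHP sequence, $H(\bar{\sigma}''_6)\equiv\xi'\eta_{29}\bmod\{\bar{\sigma}_{11},\bar{\zeta}_{11}\}$. Finally, $\Sigma^\infty(\xi'\eta_{29})=0$ \eqref{Elameta} and $\Sigma^\infty H(\bar{\sigma}''_6)=\bar{\sigma}$ (from \eqref{d'bs''}) force the remaining term to be exactly $\bar{\sigma}_{11}$.
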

\begin{proof}
Using \cite[Propositions 2.3, 1.2 iv)]{T},
by relations \eqref{chd}, 
$H(\varepsilon_3)=\nu^2_5$ \cite[Lemma 6.1]{T} 
and \cite[Theorems 12.26, 10.3]{T},
we have
\begin{gather*}
\begin{split}
H(\delta'_3)&=H\{\varepsilon_3,\sigma_{11},\eta_{18}\sigma_{19}\}_1
\subset\{H(\varepsilon_3),\sigma_{11},\eta_{18}\sigma_{19}\}_1
=\{\nu^2_5,\sigma_{11},\eta_{18}\sigma_{19}\}_1\\
&\supset \nu_5\circ\{\nu_8,\sigma_{11},\eta_{18}\sigma_{19}\}_1
\ni \nu_5\bar{\sigma}_8
\end{split}
\shortintertext{with the indeterminacy}
\mathrm{Ind}\{\nu^2_5,\sigma_{11},\eta_{18}\sigma_{19}\}_1
=\nu^2_5\circ\Sigma\pi^{10}_{26}+\pi^5_{19}\circ\eta_{19}\sigma_{20}
=\{\nu^2_5\sigma_{11}\mu_{18},
\nu_5\zeta_8\eta_{19}\sigma_{20},\nu_5\bar{\nu}_8\nu_{16}\eta_{19}\sigma_{20}\}.
\end{gather*}
By relations $\nu_8\sigma_{11}\mu_{18}=0$ \cite[(2.4)]{MMO},
$\zeta_8\eta_{19}\sigma_{20}=0$ \eqref{ze7etsg}
and $\nu_{16}\eta_{19}=0$ \eqref{n3},
this indeterminacy is trivial.
So, we obtain (1).

We recall  the relation
$H(\bar{\sigma}''_6) \equiv \bar{\sigma}_{11} 
\bmod\ \xi'\eta_{29}$ \eqref{d'bs''}.
Applying $\Sigma^\infty$, 
by the relation  $\Sigma^\infty(\zeta'\eta_{29})=0$ \eqref{Elameta},
we have 
\[
\Sigma^\infty(H(\bar{\sigma}''_6))=\bar{\sigma}.
\]
On the other hand, by relations  \eqref{ch} and 
$H(\bar{\nu}_6)=(2k+1)\nu_{11}$ for some integer $k$
\cite[Lemma 6.2]{T},
we have
\begin{gather*}
\begin{split}
H(\bar{\sigma}''_6)&=H\{\bar{\nu}_6,\sigma_{14},\eta_{21}\sigma_{22}\}_1
\subset\{H(\bar{\nu}_6),\sigma_{14},\eta_{21}\sigma_{22}\}_1
=\{(2k+1)\nu_{11},\sigma_{14},\eta_{21}\sigma_{22}\}_1\\
&\subset (2k+1)\iota_{11}\circ\{\nu_{11},\sigma_{14},\eta_{21}\sigma_{22}\}_1
\end{split}
\end{gather*}
By using \cite[Proposition 2.6]{T} and the facts
$P(\eta_{21})=\nu_{10}\sigma_{13}$ \eqref{nu10sg} and
$\pi^{21}_{22}=\{\eta_{21}\}\cong\Z_2$ \cite[Proposition 5.1]{T}, we have
\[H\{\nu_{11},\sigma_{14},\eta_{21}\sigma_{22}\}_1
 =-P^{-1}(\nu_{10}\sigma_{13})\circ\eta_{22}\sigma_{23}=\eta^2_{21}\sigma_{23}.
\]
Hence, we have
\[
H(H(\bar{\sigma}''_6))=\eta^2_{21}\sigma_{23}.
\]
By using the EHP-sequence and the facts
$H(\xi')=\eta_{21}\sigma_{22}$ \cite[Lemma  12.19]{T} 
and $\Sigma\pi^{10}_{29}=\{\bar{\sigma}_{11},\bar{\zeta}_{11}\}\cong\Z_2\oplus\Z_8$ 
\cite[Theorem 12.23]{T}, we can put
\[
H(\bar{\sigma}''_6)=\xi'\eta_{29}\ +
a\bar{\sigma}_{11}+ b\bar{\zeta}_{11}\quad\text{for some}\ a\in\{0,1\}
\mand b\in\{0,1,2,\dots,7\}.
\]
Applying $\Sigma^\infty$, we have
\[
\Sigma^\infty(H(\bar{\sigma}''_6))= a \bar{\sigma}+ b\bar{\zeta}.
\]
Hence, by the relation $\Sigma^\infty(H(\bar{\sigma}''_6))=\bar{\sigma}$ and 
the fact $\pi^S_{19}=\{\bar{\sigma},\zeta\}\cong\Z_2\oplus\Z_8$ \cite[Theorem 12.23]{T}, 
we have $a=1$ and $b=0$.
This leads to (2) and completes the proof. 
\end{proof}

We set 
\[
\delta'_n=\Sigma^{n-3}\delta'_3\ (n\ge 3),\ \delta'=\Sigma^\infty\delta'_3
\qquad
\text{and}
\qquad
\bar{\sigma}''_n=\Sigma^{n-6}\bar{\sigma}''_6\ (n\ge 6),\ \bar{\sigma}''=\Sigma^\infty\bar{\sigma}''_6.
\]

%So we have 
%$$ \bar{\bar{\sigma}}'_6\equiv\bar{\sigma}'_6\ \bmod\ E\pi^5_{29}=\{\delta_6,\bar{\mu}_6\sigma_{23},\nu_6\sigma_9\kappa_{16}\}$$and hence, by the relation $\nu_{11}\sigma_{14}=0$ (\ref{nu10sg}) and %$\pi^S_{24}=\{\delta,\bar{\mu}\sigma\}\cong(\Z_2)^2$, we see that 
%\begin{equation}\label{sgm"} \bar{\bar{\sigma}}'_{11}=\bar{\sigma}'_{11}. \end{equation}

Now we show the following lemmas.
\begin{lem}\label{bnu12}
$\bar{\sigma}''_{12}=\{\bar{\nu}_{12},\sigma_{20},\bar{\nu}_{27}\}=\{\nu_{12},\eta_{15},\bar{\sigma}_{16}\}=\bar{\bar{\sigma}}'_{12}$.
\end{lem}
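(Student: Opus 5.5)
We show that both brackets $\{\bar{\nu}_{12},\sigma_{20},\bar{\nu}_{27}\}$ and $\{\nu_{12},\eta_{15},\bar{\sigma}_{16}\}$ have trivial indeterminacy and contain $\bar{\sigma}''_{12}$, respectively $\bar{\bar{\sigma}}'_{12}$.

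\emph{The first equality.} Since $\bar{\nu}_{n}=\eta_n\sigma_{n+1}+\varepsilon_n$ for $n\ge 10$ \eqref{et9sg}, \cite[Proposition 1.2]{T} gives
\[
\{\bar{\nu}_{12},\sigma_{20},\eta_{27}\sigma_{28}\}
\subset\{\bar{\nu}_{12},\sigma_{20},\bar{\nu}_{27}\}+\{\bar{\nu}_{12},\sigma_{20},\varepsilon_{27}\}.
\]
By Lemma \ref{bnusgep} (2), $\{\bar{\nu}_{12},\sigma_{20},\varepsilon_{27}\}=0$. The left-hand side contains $\Sigma^{6}\{\bar{\nu}_6,\sigma_{14},\eta_{21}\sigma_{22}\}_1=\bar{\sigma}''_{12}$ by \cite[Proposition 1.3]{T}. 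It remains to show that
\[
\mathrm{Ind}\{\bar{\nu}_{12},\sigma_{20},\bar{\nu}_{27}\}=\bar{\nu}_{12}\circ\pi^{20}_{36}+\pi^{12}_{28}\circ\bar{\nu}_{28}
\]
vanishes. We have $\pi^{20}_{36}=\{\omega_{20},\sigma_{20}\mu_{27}\}$, which is killed by $\bar{\nu}_{12}\omega_{20}=0$ \eqref{bnu9omg} and $\bar{\nu}_{12}\sigma_{20}=0$ \eqref{bnu6sg}. We have $\pi^{12}_{28}=\{\sigma_{12}\mu_{19},\eta_{12}\rho_{13},\ldots\}$ \cite[Theorem 12.16]{T} (the extra unstable element $\omega_{12}$-type terms should be checked). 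The products are killed by $\sigma\bar{\nu}=0$ \eqref{bnu6sg}, $\mu\bar{\nu}=0$ \eqref{mepbn2}, and, for an $\eta_{12}\rho_{13}$ term, $\rho\eta=\mu\sigma$ together with $\sigma\bar{\nu}=0$. The same argument with $\eta_{27}\sigma_{28}$ in place of $\bar{\nu}_{27}$ shows that $\{\bar{\nu}_{12},\sigma_{20},\eta_{27}\sigma_{28}\}$ also has zero indeterminacy. The displayed inclusion then forces $\{\bar{\nu}_{12},\sigma_{20},\bar{\nu}_{27}\}=\bar{\sigma}''_{12}$.

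\emph{The last equality.} By Lemma \ref{3Ind} (3), $\bar{\bar{\sigma}}'_{12}\in\pm\{\nu_{12},\eta_{15},\bar{\sigma}_{16}\}$. Its indeterminacy is
\[
\nu_{12}\circ\pi^{15}_{36}+\pi^{12}_{16}\circ\bar{\sigma}_{16}.
\]
The second summand is zero because $\pi^{12}_{16}=0$ \cite[Proposition 5.9]{T}. For the first, $\pi^{15}_{36}$ is generated by $\eta\bar{\kappa}$, $\sigma^3$ and a suspension of $\lambda\nu$-type \cite[Theorem A]{Mi}. The first two are killed by $\nu\eta=0$ \eqref{n3} and $\nu\sigma=0$ \eqref{nu10sg}. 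The third is killed by $\Sigma(\nu_{10}\lambda\nu_{31})=[\iota_{11},\kappa_{11}]$ \eqref{nulmn}, since this class dies after one further suspension. Hence $\{\nu_{12},\eta_{15},\bar{\sigma}_{16}\}=\bar{\bar{\sigma}}'_{12}$, and the sign is irrelevant because the element has order $2$.

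\emph{The middle equality} is the main obstacle. I would use the Jacobi identity \cite[Proposition 1.5]{T} on $\nu_{12},\eta_{15},\nu_{16},\sigma_{19},\bar{\nu}_{26}$, with $\bar{\nu}_{12}=\{\nu_{12},\eta_{15},\nu_{16}\}$ and $\bar{\sigma}_{16}=\{\nu_{16},\sigma_{19},\bar{\nu}_{26}\}$ \eqref{bs15}. This needs the auxiliary vanishing $\{\eta_{15},\nu_{16},\sigma_{19}\}=0$ (Lemma \ref{etnusg} (2)) and $\nu_{16}\sigma_{19}=\sigma\bar{\nu}=0$. The resulting relation is
\[
\{\bar{\nu}_{12},\sigma_{20},\bar{\nu}_{27}\}\equiv\pm\{\nu_{12},\eta_{15},\bar{\sigma}_{16}\}
\]
modulo the sum of the indeterminacies computed above, which are zero, so the two brackets agree.

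If the unstable Jacobi identity does not apply cleanly at level $12$, I would instead compare images under $H$, using Proposition \ref{hbsgm''} (2) and an $H$-computation for $\bar{\bar{\sigma}}'_6$. I would then reconcile the two brackets modulo $\Sigma\pi^{11}_{35}$ via their stable images $\bar{\sigma}''$ and $\bar{\bar{\sigma}}'$.
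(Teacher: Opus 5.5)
Your proposal is correct and is essentially the paper's proof: the same indeterminacy computations, the decomposition $\eta\sigma=\bar{\nu}+\varepsilon$ combined with Lemma \ref{bnusgep}, and the Jacobi identity \cite[Proposition 1.5]{T} with $\bar{\nu}_{12}=\{\nu_{12},\eta_{15},\nu_{16}\}$, $\{\eta_{15},\nu_{16},\sigma_{19}\}=0$ and \eqref{bs15}. The one variation is that you use Lemma \ref{bnusgep} (2) directly at level $12$, whereas the paper decomposes $\{\bar{\nu}_{11},\sigma_{19},\bar{\nu}_{26}\}$ at level $11$ via Lemma \ref{bnusgep} (1) and then suspends; this is a harmless shortcut.

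Two small slips are worth fixing. The second indeterminacy summand of $\{\nu_{12},\eta_{15},\bar{\sigma}_{16}\}$ is $\pi^{12}_{17}\circ\bar{\sigma}_{17}$, which is still zero. Also, no $\omega$-type terms can occur in $\pi^{12}_{28}$, since $\omega$ first appears on $S^{14}$, so $\pi^{12}_{28}\circ\bar{\nu}_{28}=\{\sigma_{12}\mu_{19}\bar{\nu}_{28}\}=0$, as read off from \cite[Theorem 12.16]{T}.
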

\begin{proof}
%Notice that the indeterminacies of the brackets
We will show $\bar{\sigma}''_{12} = \Sigma\{\bar{\nu}_{11},\sigma_{19},\bar{\nu}_{26}\}
=\{\bar{\nu}_{12},\sigma_{20},\bar{\nu}_{27}\}$.
By $\bar{\nu}_{26}=\varepsilon_{26}+\eta_{26}\sigma_{27}$ \eqref{et9sg}
and \cite[Proposition 1.6]{T}, we have
\[
\{\bar{\nu}_{11},\sigma_{19},\bar{\nu}_{26}\}
\subset\{\bar{\nu}_{11},\sigma_{19},\varepsilon_{26}\}
 + \{\bar{\nu}_{11},\sigma_{19},\eta_{26}\sigma_{27}\}.
\]
For $n=11$ and $12$, by \cite[Theorem 12.16]{T}, we have
\[
\mathrm{Ind}\{\bar{\nu}_{n},\sigma_{n+8},\bar{\nu}_{n+15}\}
=\bar{\nu}_{n}\circ\pi^{n+8}_{n+24}+\pi^{n}_{n+16}\circ\bar{\nu}_{n+16}
=\{\bar{\nu}_{n}\omega_{n+8},
\bar{\nu}_{n}\sigma_{n+8}\mu_{n+15},
\sigma_{n}\mu_{n+7}\bar{\nu}_{n+16}\}.
\]
By relations $\bar{\nu}_{n}\omega_{n+8}=0$ \eqref{bnu9omg},
$\bar{\nu}_{n}\sigma_{n+8}=0$ \eqref{bnu6sg}
and
$\mu_{n+7}\bar{\nu}_{n+16}=0$ \eqref{mepbn2},
we have 
$\mathrm{Ind}\{\bar{\nu}_{n},\sigma_{n+8},\bar{\nu}_{n+15}\}=0$.
By $\sigma_{11}\mu_{18}\eta_{27}\sigma_{28}
=\eta_{11}\mu_{12}\sigma^2_{21}=0$ \eqref{mu3s^2},
we also have 
\[
\mathrm{Ind}\{\bar{\nu}_{11},\sigma_{19},\eta_{26}\sigma_{27}\}
=\bar{\nu}_{11}\circ\pi^{19}_{35}+\pi^{11}_{27}\circ\eta_{27}\sigma_{28}
=\{\sigma_{11}\mu_{18}\eta_{27}\sigma_{28}\}=0.
\]
Using \cite[Proposition 1.3]{T}, by
$\bar{\sigma}''_{6}
\in\{\bar{\nu}_{6},\sigma_{14},\eta_{21}\sigma_{22}\}$
\eqref{ch}
and $\sharp\bar{\sigma}''_{6}=2$ \eqref{d'bs''}, we have
$\bar{\sigma}''_{11}
=\{\bar{\nu}_{11},\sigma_{19},\eta_{26}\sigma_{27}\}$.
So, by Lemma \ref{bnusgep} (1), for some $x\in\{0,1\}$, we have
\[
\{\bar{\nu}_{11},\sigma_{19},\bar{\nu}_{26}\}
=x[\iota_{11},\kappa_{11}]+\bar{\sigma}''_{11}
\]
and
$\bar{\sigma}''_{12} 
=\Sigma\{\bar{\nu}_{11},\sigma_{19},\bar{\nu}_{26}\}
=\{\bar{\nu}_{12},\sigma_{20},\bar{\nu}_{27}\}$.

By 
$\bar{\bar{\sigma}}'_6=\{\nu_6,\eta_9,\bar{\sigma}_{10}\}_3$
(Lemma \ref{3Ind} (3)) and 
$2\bar{\bar{\sigma}}'_6=0$ \cite[(3.8)]{MMO},
we have
$\bar{\bar{\sigma}}'_{12}\in\{\nu_{12},\eta_{15},\bar{\sigma}_{16}\}$.
By \cite[Proposition 5.9]{T} and \cite[Theorem A]{Mi}, we have
\[
\mathrm{Ind}\{\nu_{12},\eta_{15},\bar{\sigma}_{16}\}
=\nu_{12}\circ\pi^{15}_{36}+\pi^{12}_{17}\circ\bar{\sigma}_{17}
=\{\nu_{12}\eta_{15}\bar{\kappa}_{16},
\nu_{12}\sigma^3_{15},\Sigma^2(\nu_{10}\lambda\nu_{31})\}.
\]
So, by relations $\nu_{12}\eta_{15}=0$ \eqref{n3},
$\nu_{12}\sigma_{15}=0$ \eqref{nu10sg}
and 
$\Sigma^2(\nu_{10}\lambda\nu_{31})=0$ \eqref{nulmn},we have
$\mathrm{Ind}\{\nu_{12},\eta_{15},\bar{\sigma}_{16}\}=0$
and
$\bar{\bar{\sigma}}'_{12}=\{\nu_{12},\eta_{15},\bar{\sigma}_{16}\}$.

By using \cite[Proposition 1.5]{T} and the facts
$\{\nu_{12},\eta_{15},\nu_{16}\}=\bar{\nu}_{12}$ \cite[Lemma 6.2]{T},
$\{\eta_{15},\nu_{16},\sigma_{19}\}=0$ (Lemma \ref{etnusg} (2))
and
$\{\nu_{16},\sigma_{19},\bar{\nu}_{26}\}=\bar{\sigma}_{16}$ \eqref{bs15},
we have
\[
0\in\{\bar{\nu}_{12},\sigma_{20},\bar{\nu}_{27}\}
+\{\nu_{12},0,\bar{\nu}_{27}\}
+\{\nu_{12},\eta_{15},\bar{\sigma}_{16}\}
=\{\bar{\nu}_{12},\sigma_{20},\bar{\nu}_{27}\}
+\{\nu_{12},\eta_{15},\bar{\sigma}_{16}\}.
\]
Hence, by equations
$\bar{\sigma}''_{12}
=\{\bar{\nu}_{12},\sigma_{20},\bar{\nu}_{27}\}$
and
$\bar{\bar{\sigma}}'_{12}
=\{\nu_{12},\eta_{15},\bar{\sigma}_{16}\}$, 
we obtain $0=\bar{\sigma}''_{12}+\bar{\bar{\sigma}}'_{12}$
and $\bar{\sigma}''_{12}=\bar{\bar{\sigma}}'_{12}$.
This completes the proof.
\end{proof}
\begin{lem}\label{dl'dl}
\begin{enumerate}
\item
$\delta'_3\equiv\delta_3\ \bmod\ \bar{\mu}_3\sigma_{20}$.
\item 
$\bar{\sigma}''_6\equiv\bar{\sigma}'_6\ \bmod\ 
P(\xi_{13})\eta_{29},\ \bar{\mu}_6\sigma_{23}$.
\item  
$\bar{\sigma}''_6\equiv\bar{\bar{\sigma}}'_6+P(\xi_{13})\eta_{29}\ \bmod\ \nu_6\sigma_9\kappa_{16}$. 
\end{enumerate}
\end{lem}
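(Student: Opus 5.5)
Parts (1) and (2) follow directly from the definitions. By \eqref{chd}, $\delta'_3$ lies in the bracket $\{\varepsilon_3,\varepsilon_{11}+\bar{\nu}_{11},\sigma_{19}\}_1$. By \eqref{def-del}, $\delta_3$ is also an element of that same bracket. Two elements of one Toda bracket differ by an element of its indeterminacy, which Lemma \ref{3Ind} (1) computes as $\{\bar{\mu}_3\sigma_{20}\}$; this gives (1).

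Similarly, $\bar{\sigma}''_6$ and $\bar{\sigma}'_6$ both lie in $\{\bar{\nu}_6,\varepsilon_{14}+\bar{\nu}_{14},\sigma_{22}\}_1$, by \eqref{ch} and \eqref{def-del}. Lemma \ref{3Ind} (2) gives the indeterminacy $\{P(\xi_{13}\eta_{31}),\bar{\mu}_6\sigma_{23}\}$. Since $P(\xi_{13}\eta_{31})=P(\xi_{13})\eta_{29}$ by naturality of $P$, we get (2).

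Part (3) needs more work; I will use the EHP-sequence. First compare Hopf invariants. Proposition \ref{hbsgm''} (2) gives $H(\bar{\sigma}''_6)=\bar{\sigma}_{11}+\xi'\eta_{29}$. Lemma \ref{HPx13} (2) gives $H(P(\xi_{13})\eta_{29})=\xi'\eta_{29}$. For $\bar{\bar{\sigma}}'_6=\{\nu_6,\eta_9,\bar{\sigma}_{10}\}_3$ I apply \cite[Proposition 2.3]{T} together with $H(\nu_6)=0$, or rather, since $\nu_6$ is a suspension, the corresponding bracket formula. I expect this to give $H(\bar{\bar{\sigma}}'_6)=\bar{\sigma}_{11}$, which should also be recorded in \cite[(3.8)]{MMO}. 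Then $H(\bar{\sigma}''_6-\bar{\bar{\sigma}}'_6-P(\xi_{13})\eta_{29})=0$, so the difference lies in $\Sigma\pi^5_{29}$. By \cite{MMO}, $\Sigma\pi^5_{29}$ is generated by $\delta_6$, $\bar{\mu}_6\sigma_{23}$ and $\nu_6\sigma_9\kappa_{16}$ (possibly also $\delta'_6$, which agrees with $\delta_6$ modulo $\bar{\mu}_6\sigma_{23}$ by (1)).

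The remaining task is to eliminate the $\delta_6$ and $\bar{\mu}_6\sigma_{23}$ components by passing further up. Lemma \ref{bnu12} gives $\bar{\sigma}''_{12}=\bar{\bar{\sigma}}'_{12}$. Also $\Sigma^6(P(\xi_{13})\eta_{29})=0$, because $P$-images die under suspension. Hence the difference suspends to $0$ in $\pi^{12}_{36}$. Write the difference as $a\delta_6+b\bar{\mu}_6\sigma_{23}+c\,\nu_6\sigma_9\kappa_{16}$. Note that $\nu_{12}\sigma_{15}=0$ by \eqref{nu10sg}, so the third term already dies in $\pi^{12}_{36}$. Suspending, we need $a\delta_{12}+b\bar{\mu}_{12}\sigma_{29}=0$. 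I will deduce $a=b=0$ from the stable fact that $\delta$ and $\bar{\mu}\sigma$ (equivalently $\eta\eta^*\sigma$ and $\bar{\mu}\sigma$) are linearly independent in $\pi^S_{24}\cong(\Z_2)^2$ \cite{MMO}, since $\Sigma^\infty$ factors through $\pi^{12}_{36}$.

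The main obstacle is pinning down $H(\bar{\bar{\sigma}}'_6)$ exactly rather than modulo something like $\xi'\eta_{29}$ or $\bar{\zeta}_{11}$. If only a congruence is available, I would fix the ambiguity by applying $\Sigma^\infty$ to the Hopf invariant, as in the proof of Proposition \ref{hbsgm''}. Terms such as $\lambda'\eta_{29}$ or $\xi'\eta_{29}$ that survive this test would be absorbed by comparing with Lemma \ref{bnu12}, using that $\Sigma^6$ kills them by \eqref{Elameta}.
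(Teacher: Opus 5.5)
Your proposal is correct and follows the paper's proof. Parts (1) and (2) come from the shared Toda brackets and the indeterminacies computed in Lemma \ref{3Ind}. Part (3) compares Hopf invariants via Proposition \ref{hbsgm''} (2), Lemma \ref{HPx13} (2) and $H(\bar{\bar{\sigma}}'_6)=\bar{\sigma}_{11}$ from \cite[(3.8)]{MMO}, and then removes the $\delta_6$ and $\bar{\mu}_6\sigma_{23}$ components of $\Sigma\pi^5_{29}$ using Lemma \ref{bnu12}, $\nu\sigma=0$ and the independence of $\delta$ and $\bar{\mu}\sigma$ in $\pi^S_{24}$.

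The paper relies on that exact value from \cite[(3.8)]{MMO}, so you do not need your fallback. It would in fact fail: a $\xi'\eta_{29}$ ambiguity in $H(\bar{\bar{\sigma}}'_6)$ corresponds to the $P$-image $P(\xi_{13})\eta_{29}$. That class is killed by $\Sigma^\infty\circ H$ (by \eqref{Elameta}) and by a single suspension, so neither test can detect it.
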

\begin{proof}
By the definitions of $\delta_3$, $\bar{\sigma}'_6$ \eqref{def-del},
$\delta'_3$ \eqref{chd} and $\bar{\sigma}''_6$ \eqref{ch}, 
and Lemma \ref{3Ind} (1), (2), 
we obtain (1) and (2). 

By the relations  
$H(\bar{\sigma}''_6)=\bar{\sigma}_{11}+\xi'\eta_{29}$
(Proposition \ref{hbsgm''} (2)),
$H(\bar{\bar{\sigma}}'_6)=\bar{\sigma}_{11}$ \cite[(3.8)]{MMO}
and 
$H(P(\xi_{13}\eta_{31}))=\xi'\eta_{29} $
(Lemma \ref{HPx13} (2)), we have
\[
H(\bar{\sigma}''_6)
=H(\bar{\bar{\sigma}}'_6)+H(P(\xi_{13}\eta_{31}))
=H(\bar{\bar{\sigma}}'_6+P(\xi_{13})\eta_{29}).
\]
So, by the EHP-sequence, we have
\[
\bar{\sigma}''_6\equiv\bar{\bar{\sigma}}'_6+P(\xi_{13})\eta_{29}\ \bmod\ \Sigma\pi^5_{29}.
\]

Hence, by the fact that
$\Sigma\pi^5_{29}=\{\nu_6\sigma_9\kappa_{16},
\bar{\mu}_6\sigma_{23},\delta_6\}$
and $\pi^S_{24}=\{\delta,\bar{\mu}\sigma\}$ \cite[p.~31, Theorem 1.1(b)]{MMO},
and relations
$\bar{\sigma}''=\bar{\bar{\sigma}}'$ (Lemma \ref{bnu12})
and $\nu\sigma=0$ \eqref{nu10sg}, we obtain (3).
This completes the proof. 
\end{proof}

\begin{rem}
By Lemma \ref{dl'dl} (2), we have $\bar{\sigma}''_7\equiv\bar{\sigma}'_7\ \bmod\ 
\bar{\mu}_6\sigma_{23}$.
By Lemma \ref{dl'dl} (3)
and the equation
$P(\nu_{17}\kappa_{20})=\nu_8\sigma_{11}\kappa_{18}$ \cite[(5.11)]{MMO},
we also have $\bar{\sigma}''_9=\bar{\bar{\sigma}}'_9$.
Hence, the following relation holds:
\[
\bar{\sigma}'_9 \equiv \bar{\bar{\sigma}}'_9
\bmod\  \bar{\mu}_9\sigma_{26}.
\]
\end{rem}

We set $\pi^S_k(M^2)=\lim\limits_{n\to\infty}\pi_{n+k}(M^{n+2})$,
$i''=\Sigma^\infty i''_2$, $p''=\Sigma^\infty p''_2$.
$\ext({\eta})=\Sigma^\infty\ext({\eta}_3)\in\lim\limits_{n\to\infty}[M^{n+2},S^n]$ 
and $\coe({\eta})=\Sigma^\infty\coe({\eta}_3)\in\pi^S_3(M^2)$.
%By the facts from \cite[Theorems 3.1, 3.2]{Mu66} that:
%\[
%\lim\limits_{n\to\infty}[M^{n+2},S^n]=\{\ext({\eta})\}\cong\Z_4
%\mand \pi^S_3(M^2)=\{\coe({\eta})\}\cong\Z_4,
%\]
%the  extension of $\eta$ are only $\pm\ext({\eta})$ and 
%the coextension of $\eta$ are only $\pm\coe({\eta})$.
%So, by \cite[Proposition 1.7]{T}, 
%$\Sigma^\infty\nu'\in \langle\eta,2\iota,\eta\rangle$ \cite[(5.4)]{T} 
%and 
%$2\nu_5=\Sigma^2\nu'$ \cite[(5.5)]{T},
%we have
% \begin{equation}\label{2nu}
%2\nu=\pm\ext({\eta})\coe({\eta}).
%\end{equation}

Let $\coe({\sigma^2})\in\pi^S_{16}(M^2)$ be a coextension of $\sigma^2\in\pi^S_{14}$.
By using the exact sequence  induced from  the cofibration 
$S^1 \rarrow{2\iota_1} S^1 \rarrow{i''_2} M^2$:
\[
\pi^S_{22}\rarrow{2\iota_*}\pi^S_{22}
\rarrow{i''_*}\pi^S_{23}(M^2)\rarrow{p''_*}
\pi^S_{21}\rarrow{2\iota_*}\pi^S_{21},
\]
the facts
$\pi^S_{21}=\{\eta\bar{\kappa},\sigma^3\}=(\Z_2)^2$ and
$\pi^S_{22}=\{\varepsilon\kappa,\nu\bar{\sigma}\}\cong(\Z_2)^2$
\cite[Theorems A, B]{Mi}
and relations $2\coe(\eta)=i''\eta^2$, 
$\sharp\coe({\sigma^2})=2$ \cite[Theorem 3.2]{Mu66}
and $\varepsilon\kappa=\eta^2\bar{\kappa}$ \cite[I-Proposition 3.1 (2)]{T},
we have
\begin{equation}\label{piS23M}
\pi^S_{23}(M^2)=\{\coe({\eta})\bar{\kappa},
\coe({\sigma^2})\sigma, i''\nu\bar{\sigma}\}
 \cong\Z_4\oplus(\Z_2)^2
\mand 2\coe({\eta})\bar{\kappa}=i''\varepsilon\kappa.
\end{equation}

We show 
\begin{lem}\label{dl''}
\begin{enumerate}
\item
$\delta\equiv\eta\eta^*\sigma\ \bmod\ \bar{\mu}\sigma$.
\item
$\delta'=\eta\eta^*\sigma$.  
\end{enumerate}
\end{lem}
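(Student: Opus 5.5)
Since $\pi^S_{24}=\{\bar{\mu}\sigma,\eta\eta^*\sigma\}\cong(\Z_2)^2$, I will work entirely in the stable range and use the definitions \eqref{def-del} and \eqref{chd}. Lemma~\ref{dl'dl}~(1) gives $\delta'\equiv\delta \bmod \bar{\mu}\sigma$ after applying $\Sigma^\infty$. So (1) follows from (2), and the real task is to show $\delta'=\eta\eta^*\sigma$ exactly.

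Stably, $\delta'\in\langle\varepsilon,\sigma,\eta\sigma\rangle$. The indeterminacy is $\varepsilon\circ\pi^S_{16}+\pi^S_{16}\circ\eta\sigma=\{\varepsilon\eta\rho,\varepsilon\omega,\eta^2\rho\sigma,\eta\omega\sigma\}$. Using $\eta\rho=\sigma\mu$, $\varepsilon\sigma=0$, $\mu\sigma^2=0$ \eqref{mu3s^2} and $\omega\equiv\eta^*$, this reduces to $\{\varepsilon\omega,\eta\eta^*\sigma\}$. That is not small enough, so I will compute $\delta'$ directly rather than argue only through indeterminacy.

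The direct computation uses Toda's juggling. By \eqref{eta*16}, $\eta^*\in\langle\sigma,2\sigma,\eta\rangle$, so $\eta\eta^*\sigma=\eta\circ\langle\sigma,2\sigma,\eta\rangle\circ\sigma$. I will write $\varepsilon=\bar{\nu}+\eta\sigma$ \eqref{et9sg} and split
\[
\langle\varepsilon,\sigma,\eta\sigma\rangle\subset\langle\bar{\nu},\sigma,\eta\sigma\rangle+\langle\eta\sigma,\sigma,\eta\sigma\rangle .
\]
For the second term, I will use $\langle\eta\sigma,\sigma,\eta\sigma\rangle\supset\eta\circ\langle\sigma,\sigma,\eta\rangle\circ\sigma$. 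I will try to identify $\langle\sigma,\sigma,\eta\rangle$ with $\langle\sigma,2\sigma,\eta\rangle$ modulo terms killed after composing with $\eta$ on the left and $\sigma$ on the right; the relation $\sigma^2\equiv$ (odd multiple) is not directly usable, so this needs care. A cleaner route is $\langle\eta\sigma,\sigma,\eta\sigma\rangle=\eta\langle\sigma,\eta\sigma,\sigma\rangle$-type juggling combined with $\eta^*\sigma=\sigma^*\eta$, as in the proof of Lemma~\ref{omg'}~(2). For the first term, $\langle\bar{\nu},\sigma,\eta\sigma\rangle\supset\langle\bar{\nu},\sigma,\eta\rangle\sigma$, and $\langle\bar{\nu},\sigma,\eta\rangle\subset\pi^S_{17}$ consists of elements whose product with $\sigma$ is controlled by $\eta\rho\sigma=0$ and $\mu\sigma^2=0$. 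I expect this term to be $0$ modulo the indeterminacy.

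The alternative, if the juggling becomes messy, is to use the $M^2$ machinery just set up (this is why $\pi^S_{23}(M^2)$ in \eqref{piS23M} was computed). Write $\varepsilon=\langle\eta\,\ext(\eta),\coe(\eta),\nu\rangle$ by \eqref{ep_GM}, and express $\delta'$ as $\eta\,\ext(\eta)\circ\beta$ with $\beta\in\langle\coe(\eta),\nu,\sigma\rangle\circ\eta\sigma$-type elements of $\pi^S_{23}(M^2)$. From the basis $\coe(\eta)\bar{\kappa}$, $\coe(\sigma^2)\sigma$, $i''\nu\bar{\sigma}$, composing with $\eta\,\ext(\eta)$ gives candidates $\eta\nu'\bar{\kappa}=0$, $\eta\,\ext(\eta)\coe(\sigma^2)\sigma$ and $\eta^2\nu\bar{\sigma}=0$. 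I would then identify $\ext(\eta)\coe(\sigma^2)\in\langle\eta,2\iota,\sigma^2\rangle\ni\eta^*$ or $\omega$, so $\delta'\in\{0,\eta\eta^*\sigma\}$ modulo $\bar{\mu}\sigma$. To remove the $\bar{\mu}\sigma$ ambiguity, note $\delta'$ is divisible by $\eta$ in this form while $\bar{\mu}\sigma$ is not; alternatively, use $\eta\bar{\mu}\sigma=4\bar{\zeta}\sigma\neq$ vs.\ $\eta^2\eta^*\sigma$ to separate them.

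The main obstacle is nontriviality: showing $\delta'\neq0$, i.e.\ that the coefficient of $\eta\eta^*\sigma$ is $1$. For this I would use the unstable information. By Proposition~\ref{hbsgm''}~(1), $H(\delta'_3)=\nu_5\bar{\sigma}_8\neq0$, which distinguishes $\delta'$ from elements coming from $\bar{\mu}_3\sigma_{20}$, whose Hopf invariant is zero. I would trace this through the stable $e$-/Adams-filtration: $\nu\bar{\sigma}$ detected unstably forces $\delta'$ to be nonzero of the expected filtration, matching $\eta\eta^*\sigma$ rather than $\bar{\mu}\sigma$. This filtration comparison is the step I am least sure of.
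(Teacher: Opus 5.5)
There is a genuine gap, and it is the step you flag yourself: showing that the $\eta\eta^*\sigma$-coordinate of $\delta'$ is $1$. Deducing (1) from (2) is logically valid, but it discards the only independent source of this nontriviality, and nothing in your plan replaces it.

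\emph{Stable juggling cannot supply it.} Since $\varepsilon\omega=\eta\phi=\eta\eta^*\sigma$ (\cite[I-Proposition 3.3 (5)]{Od} and \eqref{eta9phi}), the element $\eta\eta^*\sigma$ lies in $\varepsilon\circ\pi^S_{16}\subset\mathrm{Ind}\langle\varepsilon,\sigma,\eta\sigma\rangle$. So any argument that uses only $\delta'\in\langle\varepsilon,\sigma,\eta\sigma\rangle$ can at best give $\delta'\in\{0,\eta\eta^*\sigma\}$.

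\emph{The Hopf invariant cannot supply it either.} $H(\delta'_3)=\nu_5\bar{\sigma}_8\neq 0$ only says that $\delta'_3$ is not a suspension; it gives no control over $\Sigma^\infty\delta'_3$. For example, $P(\xi_{13})\eta_{29}\in\pi^6_{30}$ has Hopf invariant $\xi'\eta_{29}\neq 0$ by Lemma~\ref{HPx13}~(2), yet it dies after one suspension. No comparison between unstable Hopf invariants and stable Adams filtration of the kind you describe is available.

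Separately, your first route uses $\langle\sigma,\sigma,\eta\rangle$ and $\langle\bar{\nu},\sigma,\eta\rangle$. These are undefined, because $\sigma^2\neq 0$ and $\eta\sigma\neq 0$.

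The paper runs the logic in the opposite direction. It proves (1) first and independently, from Oda's relations $\phi_9\equiv\sigma_9\eta^*_{16}$ and $\phi_5\eta_{28}\equiv\delta_5$. These give $\eta_9\sigma_{10}\eta^*_{17}\equiv\delta_9\bmod\bar{\mu}_9\sigma_{26}$; equivalently, $\delta$ and $\bar{\mu}\sigma$ generate $\pi^S_{24}$, as in \cite{MMO}.

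The unstable information enters through Lemma~\ref{dl'dl}~(1). Both $\delta_3$ and $\delta'_3$ lie in the unstable bracket $\{\varepsilon_3,\varepsilon_{11}+\bar{\nu}_{11},\sigma_{19}\}_1$, whose indeterminacy is only $\{\bar{\mu}_3\sigma_{20}\}$ (Lemma~\ref{3Ind}~(1)). Hence $\delta'\equiv\delta\equiv\eta\eta^*\sigma\bmod\bar{\mu}\sigma$.

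The $M^2$ computation supplies only the complementary bound. Set up correctly, it gives $\delta'\in\{0,\eta\eta^*\sigma\}$ exactly, with no $\bar{\mu}\sigma$-ambiguity:
\begin{enumerate}
\item Split the third entry, $\eta\sigma=\bar{\nu}+\varepsilon$, and use $\langle\varepsilon,\sigma,\bar{\nu}\rangle=\{\varepsilon\omega\}$ to get $\langle\varepsilon,\sigma,\varepsilon\rangle=\delta'+\{\varepsilon\omega\}$.
\item Apply the Jacobi identity to $\varepsilon=\langle\eta\ext(\eta),\coe(\eta),\nu\rangle$, $\sigma$, $\varepsilon$.
\item Use $\langle\coe(\eta),\nu,\sigma\rangle\subset\pi^S_{14}(M^2)=0$, $\langle\nu,\sigma,\varepsilon\rangle=0$ (Lemma~\ref{sgnep}~(2)), and $\eta\ext(\eta)\circ\pi^S_{23}(M^2)=\{\varepsilon\omega\}$, computed from \eqref{piS23M}.
\end{enumerate}
Intersecting the two constraints gives $\delta'=\eta\eta^*\sigma$.

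Your proposed ways of removing a $\bar{\mu}\sigma$-component are therefore unnecessary. The $\eta$-divisibility test would fail anyway, since stably $\bar{\mu}\sigma=\eta\bar{\rho}$ (the analogue of Toda's $\sigma\mu=\eta\rho$).
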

\begin{proof}
By relations
\begin{gather*}
\phi_9\equiv \sigma_9\eta^*_{16}
\bmod\ \sigma^2_9\mu_{23},\ 4\nu_{9}\bar{\kappa}_{12}
\quad\text{\cite[I-Propositions 3.4 (7)]{Od}}
\shortintertext{and}
\phi_5\eta_{28}\equiv \delta_5
\bmod\  \bar{\mu}_5\sigma_{22},\ \nu_5\eta_8\bar{\kappa}_9
\quad\text{\cite[I-Propositions 3.5 (9)]{Od}},
\end{gather*}
we have 
\[
\eta_8\sigma_9\eta^*_{16}\equiv \delta_8
\bmod\ \eta_8\sigma^2_9\mu_{23},\ 
\bar{\mu}_8\sigma_{25},\ \nu_8\eta_{11}\bar{\kappa}_{12}.
\]
Hence, by relations 
$\eta_9\sigma^2_{10}\mu_{24}=0$ \eqref{et9sg2}
and 
$\nu_9\eta_{12}=0$ \eqref{n3}, we have
\begin{equation}\label{etsget*}
\eta_9\sigma_{10}\eta^*_{17}\equiv\delta_9\ \bmod\ \bar{\mu}_9\sigma_{26}
\end{equation}
and (1).

By using \cite[(3.8)]{T} and relations 
$\delta'\in\langle\varepsilon,\sigma,\eta\sigma\rangle$ \eqref{chd},
$\eta\sigma=\bar{\nu}+\varepsilon$ \eqref{et9sg}
and
$\langle\varepsilon,\sigma,\bar{\nu}\rangle
=\{\varepsilon\omega\}$
\cite[(6.1)]{MMO}, we have 
\[
\delta'\in\langle\varepsilon,\sigma,\eta\sigma\rangle
\subset\langle\varepsilon,\sigma,\bar{\nu}\rangle
+\langle\varepsilon,\sigma,\varepsilon\rangle
=\{\varepsilon\omega\}+\langle\varepsilon,\sigma,\varepsilon\rangle.
\]
Since $\mathrm{Ind}\langle\varepsilon,\sigma,\varepsilon\rangle
=\varepsilon\circ\pi^S_{16}=\{\varepsilon\omega\}$
 by \cite[Theorem 12.16]{T} and 
$\varepsilon\sigma=0$ \eqref{bnu6sg},
we have
\[
\langle\varepsilon,\sigma,\varepsilon\rangle
=\delta'+\{\varepsilon\omega\}.
\]

By using \cite[(3.7)]{T} and the facts 
$\langle\eta\ext({\eta}),\coe({\eta}),\nu\rangle=\varepsilon$ \eqref{ep_GM},
$\langle\coe({\eta}),\nu,\sigma\rangle\subset\pi^S_{14}(M^2)=0$
\cite[Theorem 3.2]{Mu66},
$\langle\nu,\sigma,\varepsilon\rangle=0$ (Lemma \ref{sgnep} (2))
and \eqref{piS23M},
we have
\[\begin{split}
0&\in\langle\varepsilon,\sigma,\varepsilon\rangle
+\langle\eta\ext({\eta}),0,\varepsilon\rangle
+\langle\eta\ext({\eta}),\coe({\eta}),0\rangle
=\langle\varepsilon,\sigma,\varepsilon\rangle
+\eta\ext({\eta})\circ\pi^S_{23}(M^2)\\
&=\delta'+\{\varepsilon\omega\}
+\eta\ext({\eta})\circ
\{\coe({\eta})\bar{\kappa},
\coe({\sigma^2})\sigma, i''\nu\bar{\sigma}\}.
\end{split}
\]
By relations $2\nu=\pm\ext({\eta})\coe({\eta})$ \eqref{2nu_n}
and $\eta\nu=0$ \eqref{n2}, we have
\[
\eta\ext({\eta})\circ\coe({\eta})\bar{\kappa}
=\eta(\pm2\nu)\bar{\kappa}=0
\mand
\eta\ext({\eta})\circ i''\nu\bar{\sigma}
=\eta^2\nu\bar{\sigma}=0.
\]
By \cite[Proposition 1.7, (3.5) ii)]{T},
$\omega\in\langle\eta,2\sigma,\sigma\rangle$
\cite[I-Proposition 3.3 (6)]{Od}
and
$\eta\sigma\omega=\varepsilon\omega$
\cite[I-Proposition 3.3 (5)]{Od}, we have
\[
\eta\ext({\eta})\circ\coe({\sigma^2})\sigma
\in\eta\circ\langle\eta,2\iota,\sigma^2\rangle\circ\sigma
\subset\eta\circ\langle\eta,2\sigma,\sigma\rangle\circ\sigma
\ni\eta\omega\sigma=\varepsilon\omega
\]
with the indeterminacy
\[
\mathrm{Ind}(\eta\circ\langle\eta,2\sigma,\sigma\rangle\circ\sigma)
=\eta^2\circ\pi^S_{15}\circ\sigma+\eta\circ\pi^S_{9}\circ\sigma^2
=\eta^2\sigma\circ\pi^S_{15}+\eta\sigma^2\circ\pi^S_{9}.
\]
By relations $\sigma\zeta=0$ \eqref{n7} and 
$\eta^2\sigma=0$ \eqref{et9sg2},
this indeterminacy is trivial and 
$\eta\ext({\eta})\circ
\{\coe({\eta})\bar{\kappa},
\coe({\sigma^2})\sigma, i''\nu\bar{\sigma}\}=\{\varepsilon\omega\}$.
Hence, we have $0\in \delta'+\{\varepsilon\omega\}$.
So, by relations
$\varepsilon\omega=\eta\phi$ \cite[I-Proposition 3.3 (5)]{Od}
and 
$\eta\phi= \eta\eta^*\sigma$ \eqref{eta9phi},
we have
\[
\delta'\equiv 0\ \bmod\ \eta\eta^*\sigma.
\]
Therefore, by the fact 
$\pi^S_{26}=\{\bar{\mu}\sigma,\eta\eta^*\sigma\}$
\cite[Theorem 1.2]{MMO},
(2) follows from (1) and  
$\delta'\equiv \delta \bmod\ \bar{\mu}\sigma$
(Lemma \ref{dl'dl} (1)). This completes the proof. 
\end{proof}

Now we show 
\begin{thm}\label{bsg'dt}
$\delta'_9=\eta_9\sigma_{10}\eta^*_{17}$ \ and \ 
$\bar{\sigma}''_{19}+\delta'_{19}=[\iota_{19},\nu^2_{19}]$.
\end{thm}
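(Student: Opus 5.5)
The first equality $\delta'_9=\eta_9\sigma_{10}\eta^*_{17}$ comes from combining \eqref{etsget*} with Lemma \ref{dl'dl} (1). Relation \eqref{etsget*} gives $\eta_9\sigma_{10}\eta^*_{17}\equiv\delta_9 \bmod \bar{\mu}_9\sigma_{26}$, and Lemma \ref{dl'dl} (1), suspended six times, gives $\delta'_9\equiv\delta_9 \bmod \bar{\mu}_9\sigma_{26}$. Hence
\[
\delta'_9\equiv\eta_9\sigma_{10}\eta^*_{17}\ \bmod\ \bar{\mu}_9\sigma_{26}.
\]
To kill the remaining ambiguity, I would apply $\Sigma^\infty$. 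Lemma \ref{dl''} (2) gives $\delta'=\eta\eta^*\sigma=\Sigma^\infty(\eta_9\sigma_{10}\eta^*_{17})$, using $\eta^*\sigma=\sigma\eta^*$ stably. Since $\bar{\mu}\sigma\neq 0$ in $\pi^S_{24}$, the coefficient of $\bar{\mu}_9\sigma_{26}$ must vanish, which yields the first equality. This step needs $\bar{\mu}_9\sigma_{26}$ to survive to the stable range. That holds because $\bar{\mu}\sigma$ is a stable generator of $\pi^S_{24}\cong(\Z_2)^2$.

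For the second relation, I would first show that $\bar{\sigma}''_{19}+\delta'_{19}$ lies in the kernel of $\Sigma^\infty$. By Lemma \ref{bnu12}, $\bar{\sigma}''=\bar{\bar{\sigma}}'=\langle\nu,\eta,\bar{\sigma}\rangle$. I would then identify $\bar{\sigma}''$ stably with $\eta\eta^*\sigma$, so that $\bar{\sigma}''+\delta'=2\eta\eta^*\sigma=0$. This identification is exactly the stable content of Theorem \ref{main}, so it must be proved here and not assumed.

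To get it, I would compute $\bar{\sigma}''\in\langle\bar{\nu},\sigma,\eta\sigma\rangle$ directly, in parallel with the proof of Lemma \ref{dl''}. The first tool is the splitting $\langle\bar{\nu},\sigma,\eta\sigma\rangle\subset\langle\bar{\nu},\sigma,\bar{\nu}\rangle+\langle\bar{\nu},\sigma,\varepsilon\rangle$. The second is the vanishing $\langle\bar{\nu},\sigma,\varepsilon\rangle=0$, which follows stably from Lemma \ref{bnusgep} because $[\iota_{11},\kappa_{11}]$ suspends to zero. Together these give $\bar{\sigma}''=\langle\bar{\nu},\sigma,\bar{\nu}\rangle$. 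I would then compare this with $\delta'=\langle\varepsilon,\sigma,\eta\sigma\rangle$, writing $\varepsilon=\bar{\nu}+\eta\sigma$, which gives
\[
\delta'\in\langle\bar{\nu},\sigma,\eta\sigma\rangle+\langle\eta\sigma,\sigma,\eta\sigma\rangle.
\]
The last bracket contains $\eta\langle\sigma,\sigma,\eta\sigma\rangle$. I would show this term vanishes modulo the indeterminacy, using $\eta^2\sigma=0$ \eqref{et9sg2} and $\eta\sigma^2=0$. That would give $\bar{\sigma}''=\delta'$ stably.

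The unstable identification then runs as follows. Since $\bar{\sigma}''_{19}+\delta'_{19}$ is in $\Ker\Sigma^\infty$, I would use the EHP sequence in stem $24$ near $n=19$. There the kernel of $\Sigma:\pi^{19}_{43}\to\pi^{20}_{44}$ is generated by $P$ of $\pi^{39}_{45}=\{\nu^2_{39}\}$, namely $[\iota_{19},\nu^2_{19}]$. I would also check that $\Sigma^\infty$ is injective on $\pi^{n}_{n+24}$ for $n\ge 20$, citing \cite{MMO}. Together these force $\bar{\sigma}''_{19}+\delta'_{19}\in\{[\iota_{19},\nu^2_{19}]\}$.

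It remains to show the sum is nonzero. For this I would use the Proposition at the end of Section 2, which gives $[\iota_{19},\nu^2_{19}]=\eta^*_{19}\bar{\nu}_{35}=\omega_{19}\bar{\nu}_{35}$. I would look for a detection via the Hopf invariant at $S^{19}$ or via a nonzero image in $\pi^{19}_{43}$ from \cite{MMO}. One route is to express $\bar{\sigma}''_{19}+\delta'_{19}$ through $\eta_{19}\sigma_{20}\eta^*_{27}$-type elements and compare with Lemma \ref{nu^2bs} or Proposition \ref{etsg*}. I expect this nontriviality step at $n=19$ to be the main obstacle. The stable arguments only determine the sum modulo $[\iota_{19},\nu^2_{19}]$, so an unstable computation is unavoidable.
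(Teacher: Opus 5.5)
Your proof of $\delta'_9=\eta_9\sigma_{10}\eta^*_{17}$ is exactly the paper's argument. The gap is in the second part, at the step where you try to prove $\bar{\sigma}''=\delta'$ stably by manipulating stable Toda brackets. In the stable range, every bracket in your chain has indeterminacy containing the class you need to detect. For $\omega\in\pi^S_{16}$ one has $\omega\circ\eta\sigma=\eta\sigma\omega=\varepsilon\omega=\eta\phi=\eta\eta^*\sigma\neq0$, by \cite[I-Proposition 3.3 (5)]{Od} and \eqref{eta9phi}. Hence $\mathrm{Ind}\langle\bar{\nu},\sigma,\eta\sigma\rangle$, $\mathrm{Ind}\langle\bar{\nu},\sigma,\varepsilon\rangle$ and $\mathrm{Ind}\langle\varepsilon,\sigma,\eta\sigma\rangle$ all contain $\eta\eta^*\sigma$.

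In particular, Lemma \ref{bnusgep} only gives $\langle\bar{\nu},\sigma,\varepsilon\rangle=\{0,\eta\eta^*\sigma\}$ stably. Its indeterminacy vanishes on $S^{11}$ and $S^{12}$ only because $\omega$ is not yet born there. So at best your inclusions give $\bar{\sigma}''\equiv\delta'\bmod\eta\eta^*\sigma$, which leaves precisely the ambiguity to be resolved. This is the same obstruction met in Lemma \ref{dl''}: there, stable brackets give only $\delta'\equiv0\bmod\eta\eta^*\sigma$, and the unstable relation \eqref{etsget*} is needed to finish. Moreover, the bracket $\langle\sigma,\sigma,\eta\sigma\rangle$ you use to dispose of $\langle\eta\sigma,\sigma,\eta\sigma\rangle$ is not defined, since $\sigma^2\neq0$ in $\pi^S_{14}$.

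The paper supplies the missing input unstably, in three steps.
\begin{enumerate}
\item Oda's relation $\eta_9\psi_{10}\equiv\bar{\sigma}'_9+\delta_9\bmod\bar{\mu}_9\sigma_{26},\sigma^2_9\eta_{23}\mu_{24}$ \cite[II-Proposition 2.1 (10)]{Od} is rewritten, using Lemma \ref{dl'dl} (1), (2), with $\bar{\sigma}''_9+\delta'_9$ on the right-hand side.
\item Since $\psi_{21}=[\iota_{21},\nu_{21}]$ (Lemma \ref{wnu21}, where the $J$-homomorphism of Section 3 is used), one gets $\eta_{20}\psi_{21}=0$. Together with $\eta_{20}\sigma^2_{21}=0$, this yields $\bar{\sigma}''_{20}=\delta'_{20}+a\bar{\mu}_{20}\sigma_{37}$.
\item The coefficient $a$ vanishes because $0=\nu^2\bar{\sigma}=\langle\eta,\nu,\eta\rangle\circ\bar{\sigma}=\eta\circ\langle\nu,\eta,\bar{\sigma}\rangle=\eta\bar{\sigma}''$, while $\eta^2\eta^*\sigma=4\nu^*\sigma=0$ and $\eta\bar{\mu}\sigma\neq0$.
\end{enumerate}
Finally, the nontriviality at $S^{19}$, which you leave open and expect to be the main obstacle, is immediate. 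Lemma \ref{dl'dl} (1), (2) give $\bar{\sigma}''_{19}+\delta'_{19}\equiv\bar{\sigma}'_{19}+\delta_{19}\bmod\bar{\mu}_{19}\sigma_{36}$. By \cite[Theorem 1.1 (b)]{MMO}, $\delta_{19}$, $\bar{\mu}_{19}\sigma_{36}$ and $\bar{\sigma}'_{19}$ form a basis of $\pi^{19}_{43}\cong(\Z_2)^3$, so the sum is nonzero.
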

\begin{proof}
By 
$\delta'_9\equiv\delta_9\ \bmod\ \bar{\mu}_9\sigma_{26}$
(Lemma \ref{dl'dl} (1)) 
and 
$\eta_9\sigma_{10}\eta^*_{17}\equiv\delta_9\ 
\bmod\ \bar{\mu}_9\sigma_{26}$ \eqref{etsget*}, 
we have
\[
\delta'_9\equiv\eta_9\sigma_{10}\eta^*_{17}\ \bmod\ \bar{\mu}_9\sigma_{16}.
\]
So, relations
$\delta'=\eta\sigma\eta^*$
(Lemma \ref{dl''}) 
and $\bar{\mu}\sigma\neq 0$ \cite[Theorem 1.1 (b)]{MMO}
lead to $\delta'_9=\eta_9\sigma_{10}\eta^*_{17}$.

Recall from \cite[II-Proposition 2.1 (10)]{Od} the relation:
\[
\eta_9\psi_{10}\equiv\ \bar{\sigma}'_9+\delta_9\ \bmod\ \bar{\mu}_9\sigma_{26}, \sigma^2_9\eta_{23}\mu_{24}.
\]
By Lemma \ref{dl'dl} (1) and (2), we obtain 
\[
\eta_9\psi_{10}\equiv\ \bar{\sigma}''_9+\delta'_9\ \bmod\ \bar{\mu}_9\sigma_{26}, \sigma^2_9\eta_{23}\mu_{24}.
\]
By relations $\psi_{21}=[\iota_{21},\nu_{20}]$ (Lemma \ref{wnu21}) 
and $\eta_{20}\nu_{21}=0$ \eqref{n2},
we have
$\eta_{20}\psi_{21}=[\eta_{20},\eta_{20}\nu_{21}]=0$.
So, by $\sigma^2_{20}\eta_{34}=\eta_{20}\sigma^2_{21}=0$ 
\eqref{et9sg2},
we obtain $\bar{\sigma}''_{20}\equiv \delta'_{20}\ \bmod\ \bar{\mu}_{20}\sigma_{37}$
and
\begin{equation*}%\label{bsg'20} 
\bar{\sigma}''_{20}= \delta'_{20}\ + a \bar{\mu}_{20}\sigma_{37}
\text{\ \ for some\ } a\in\{0,1\}.
\end{equation*}
By the relation
$\delta'_{20}=\eta_{20}\eta^*_{21}\sigma_{37}$
from
$\delta'=\eta\eta^*\sigma$ (Lemma \ref{dl''})
and 
$\pi^{20}_{44}\cong\pi^S_{24}$ \cite[Theorem 1.1 (b)]{MMO},
we have 
%\begin{equation}\label{det20}
%\delta'_{20}=\eta_{20}\eta^*_{21}\sigma_{37}
%\mand
%\eta_{20}\eta^*_{21}\sigma_{37}=\eta_{20}\sigma_{21}\eta^*_{28}.
%\end{equation}
%From (\ref{bsg'20}) and (\ref{det20}), we have
\[
\bar{\sigma}''_{20}=\eta_{20}\eta^*_{21}\sigma_{37}+a\bar{\mu}_{20}\sigma_{37}
\mand
\bar{\sigma}''=\eta\eta^*\sigma+a\bar{\mu}\sigma.
\]
By using \cite[(3.6)]{T} and relations 
$\nu^2\bar{\sigma}=0$ (Lemma \ref{nu^2bs} (3)),
$\nu^2=\langle\nu,\eta,\nu\rangle$ \cite[Lemma 5.12]{T},
$\langle\nu,\eta,\bar{\sigma}\rangle=\bar{\sigma}''$ (Lemma \ref{bnu12}),
$\eta^2\eta^*=4\nu^*$ \cite[Theorem 14.1 i)]{T},
$\nu^*\sigma=0$ \cite[I-Proposition 3.5 (3)]{Od},
and $\eta\bar{\mu}\sigma \neq 0$ \cite[Theorem 1 (a)]{Od},
we have
\[
\begin{split}
0&=\nu^2\bar{\sigma}=\langle\eta,\nu,\eta\rangle\circ\bar{\sigma}=\eta\circ\langle\nu,\eta,\bar{\sigma}\rangle=\eta\bar{\sigma}''=\eta(\eta\eta^*\sigma+a\bar{\mu}\sigma)=4\nu^*\sigma+a\eta\bar{\mu}\sigma=a\eta\bar{\mu}\sigma
\end{split}
\]
and $a=0$.  Hence we obtain $\bar{\sigma}''_{20}= \delta'_{20}$.
So, by the fact 
$\pi^{39}_{45}=\{\nu^2_{39}\}$ \cite[Proposition 5.11]{T}, 
we have
$\bar{\sigma}''_{19}+ \delta'_{19}\in P\pi^{39}_{45}=\{[\iota_{19},\nu^2_{19}]\}$.
Since $\bar{\sigma}''_{19}+ \delta'_{19}\equiv \bar{\sigma}'_{19}+\delta_{19} \bmod \bar{\mu}_{19}\sigma_{36}$
from Lemma \ref{dl'dl} (1), (2) and 
$\pi^{19}_{43}=\{\delta_{19},\bar{\mu}_{19}\sigma_{36}, 
\bar{\sigma}'_{19}\}=(\Z_2)^3$ \cite[Theorem 1.1 (b)]{MMO},
we obtain $\bar{\sigma}''_{19}+ \delta'_{19}\neq 0$. This implies 
$\bar{\sigma}''_{19}+\delta'_{19}=[\iota_{19},\nu^2_{19}]$
 and completes the proof.
%The third one follows from \cite[(5.38-40)]{MMO} and Lemma \ref{dl'dl}.  
\end{proof}

For $n\ge 20$, by relations 
 $\bar{\sigma}''_{n}=\delta'_n=\eta_n\sigma_{n+1}\eta^*_{n+8}$,
 (Theorem \ref{bsg'dt}),
$\bar{\sigma}''_{n}\in\{\bar{\nu}_{n},\sigma_{n+8},\bar{\nu}_{n+15}\}$
and
$\bar{\sigma}''_{n}\in\{\nu_{n},\eta_{n+3},\bar{\sigma}_{n+4}\}$
(Lemma \ref{bnu12}), we have
\[
\{\bar{\nu}_{n},\sigma_{n+8},\bar{\nu}_{n+15}\}
\ni \eta_n\sigma_{n+1}\eta^*_{n+8}
\mand
\{\nu_{n},\eta_{n+3},\bar{\sigma}_{n+4}\}
\ni\eta_n\sigma_{n+1}\eta^*_{n+8}.
\]
By \cite[Theorem 12.16]{T}, \cite[Theorem B]{Mi}
and \cite[Proposition 5.9]{T},
we have
\begin{align*}
&\mathrm{Ind}\{\bar{\nu}_{n},\sigma_{n+8},\bar{\nu}_{n+15}\}
=\bar{\nu}_{n}\circ\pi^{n+8}_{n+24}+\pi^{n}_{n+16}\circ\bar{\nu}_{n+16}
=\{\bar{\nu}_{n}\omega_{n+8}, \bar{\nu}_{n}\sigma_{n+8}\mu_{n+15}\}\\
\shortintertext{and}
&\mathrm{Ind}\{\nu_{n},\eta_{n+3},\bar{\sigma}_{n+4}\}
=\nu_{n}\circ\pi^{n+3}_{n+24}+\pi^{n}_{n+5}\circ\bar{\sigma}_{n+5}
=\{\nu_{n}\eta_{n+3}\bar{\kappa}_{n+4},\nu_{n}\sigma^3_{n+3}\}.
\end{align*}
So, by relations
$\bar{\nu}_{n}\omega_{n+8}=0$ \eqref{bnu9omg},
$\bar{\nu}_{n}\sigma_{n+8}=0$ \eqref{bnu6sg}
and $\nu_{n}\eta_{n+3}=0$ \eqref{n3},
we obtain
\[
\mathrm{Ind}\{\bar{\nu}_{n},\sigma_{n+8},\bar{\nu}_{n+15}\}
=\mathrm{Ind}\{\nu_{n},\eta_{n+3},\bar{\sigma}_{n+4}\}=0
\]
and 
\[
\{\bar{\nu}_{n},\sigma_{n+8},\bar{\nu}_{n+15}\}
=\{\nu_{n},\eta_{n+3},\bar{\sigma}_{n+4}\}
=\eta_n\sigma_{n+1}\eta^*_{n+8}
\text{\ \ for\ \ } n\ge 20.
\]
Moreover, by the fact $\pi^{20}_{44}\cong\pi^S_{24}$
\cite[Theorem 1.1 (b)]{MMO},
we have 
$\eta_{20}\sigma_{21}\eta^*_{28}=\eta_{20}\eta^*_{21}\sigma_{37}$.
This completes the proof of Theorem \ref{main}.

\section{Proof of Theorem \ref{main2}}

%We recall the element \cite[p. 187]{Mi}
%\begin{equation}\label{sg*}
%\sigma^*_{16}\in\{\sigma_{16},2\sigma_{23},\sigma_{30}\}_1. 
%\end{equation}

%%%%%%%%%%%%%%%%%%%%%%%%%%%%%% 22 不要？
%We recall \cite[II-Proposition 2.1 (4)]{Od}:
%\begin{equation}\label{sg14ro}
%4\sigma_{14}\rho_{21}=2\sigma_{15}\rho_{22}=\sigma_{17}\rho_{24}=0.
%\end{equation}

We show 
\begin{lem}\label{a1}
$\bar{\nu}_7\eta^{*\prime}=0$. 
\end{lem}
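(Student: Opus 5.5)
The plan is to use the definition of $\eta^{*\prime}$ as a Toda bracket and push $\bar{\nu}_7$ inside it. In Toda's book, $\eta^{*\prime}\in\pi^{15}_{31}$ is defined by
\[
\eta^{*\prime}\in\{\sigma_{15},4\sigma_{22},\eta_{29}\}_1 ,
\]
or, in the form matching Lemma \ref{a0} (1), as an element of $\{\sigma_{15},2\sigma_{22},\eta_{29}\}_1$. We may use whichever description is available from \cite[p.~153]{T}, as for $\eta^*_{16}$ in \eqref{eta*16}.

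Since we work in $\pi^7$, first desuspend: write $\bar{\nu}_7\eta^{*\prime}=\bar{\nu}_7\circ\Sigma^{?}$. Strictly, $\eta^{*\prime}$ lives on $S^{15}$, and $\bar{\nu}_7\in\pi^7_{15}$, so the composition is defined directly.

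By \cite[Proposition 1.2 iv)]{T},
\[
\bar{\nu}_7\circ\{\sigma_{15},2\sigma_{22},\eta_{29}\}_1\subset\{\bar{\nu}_7\sigma_{15},2\sigma_{22},\eta_{29}\}_1 .
\]
Moreover $\bar{\nu}_7\sigma_{15}=0$ by \eqref{bnu6sg}. Hence
\[
\bar{\nu}_7\eta^{*\prime}\in\{0,2\sigma_{22},\eta_{29}\}_1=\pi^7_{23}\circ\eta_{23}\circ\ldots ,
\]
that is, the bracket equals its indeterminacy
\[
\bar{\nu}_7\circ\Sigma\pi^{14}_{30}+\pi^7_{23}\circ\eta_{23}\ \text{(suitably indexed)},
\]
more precisely $\pi^7_{30}\circ\eta_{30}+\bar{\nu}_7\circ\Sigma\pi^{14}_{30}$. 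The coset containing $\bar{\nu}_7\eta^{*\prime}$ is therefore contained in
\[
\pi^7_{30}\circ\eta_{30}+\bar{\nu}_7\circ\Sigma\pi^{14}_{30}.
\]

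Next, evaluate the two pieces.

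\emph{The term $\bar{\nu}_7\circ\Sigma\pi^{14}_{30}$.} We have $\Sigma\pi^{14}_{30}=\{\Sigma\omega_{14},\sigma_{15}\mu_{22}\}$. The composition $\bar{\nu}_7\sigma_{15}\mu_{22}$ vanishes by \eqref{bnu6sg}. The remaining term is $\bar{\nu}_7\omega_{15}$, which is not yet known: Theorem \ref{main2} is what determines it. This is a circularity risk, so the plan must avoid producing this term.

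The remedy is to use the bracket the other way. By \cite[Proposition 1.4]{T},
\[
\bar{\nu}_7\circ\{\sigma_{15},2\sigma_{22},\eta_{29}\}_1=-\{\bar{\nu}_7,\sigma_{15},2\sigma_{22}\}\circ\eta_{31}.
\]
Here $\{\bar{\nu}_7,\sigma_{15},2\sigma_{22}\}\subset\pi^7_{30}$, and the indeterminacy of the right-hand side is only $\bar{\nu}_7\circ\pi^{15}_{30}\circ\eta_{30}+\pi^7_{23}\circ 2\sigma_{23}\circ\eta_{30}$. The second summand vanishes since $2\eta=0$. The first is $\bar{\nu}_7\circ\{\rho_{15},\ldots\}\eta$ from \cite[Theorem 10.10]{T}; it can be handled with \eqref{bnu6sg}, \eqref{mepbn2} and $\rho\eta=\sigma\mu$.

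So the main point is to show that $\{\bar{\nu}_7,\sigma_{15},2\sigma_{22}\}\circ\eta_{31}=0$. Since $\{\bar{\nu}_7,\sigma_{15},2\sigma_{22}\}\supset\{\bar{\nu}_7,\sigma_{15},\sigma_{22}\}\circ 2\iota_{30}$, every element has the form $2\alpha+\beta$, with $\beta$ in the indeterminacy $\bar{\nu}_7\circ\pi^{15}_{30}+\pi^7_{23}\circ 2\sigma_{23}$. Composing with $\eta_{31}$ kills $2\alpha$ and $\pi^7_{23}\circ2\sigma_{23}$. What remains is $\bar{\nu}_7\circ\pi^{15}_{30}\circ\eta_{30}$, which reduces to $\bar{\nu}_7\rho_{15}\eta_{30}=\bar{\nu}_7\sigma_{15}\mu_{22}=0$, to $\bar{\nu}_7\bar{\varepsilon}_{15}\eta=0$ by \eqref{et9be}, and to a Whitehead product term, which is zero after composing with $\bar{\nu}$ by the argument used in Lemma \ref{a0}.

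The main obstacle is pinning down the correct bracket definition and index of $\eta^{*\prime}$ together with the associated indeterminacies, in particular the $\pi^{15}_{30}$ terms, and checking the sign and suspension bookkeeping in the juggling formula.
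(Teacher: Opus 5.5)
There is a genuine gap at the decisive step. You write $\{\bar\nu_7,\sigma_{15},2\sigma_{22}\}\supset\{\bar\nu_7,\sigma_{15},\sigma_{22}\}\circ 2\iota_{30}$ and conclude that every element has the form $2\alpha+\beta$. But $\{\bar\nu_7,\sigma_{15},\sigma_{22}\}$ is not defined, because $\sigma_{15}\circ\sigma_{22}=\sigma^2_{15}\neq 0$: it suspends to the generator $\sigma^2$ of $\pi^S_{14}$.

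Even $\sigma_{15}\circ 2\sigma_{22}=2\sigma^2_{15}$ is nonzero; it is the Whitehead square $[\iota_{15},\iota_{15}]$. That is why Toda's definition is $\eta^{*\prime}\in\{\sigma_{15},4\sigma_{22},\eta_{29}\}_1$. Consequently the bracket $\{\sigma_{15},2\sigma_{22},\eta_{29}\}_1$ you actually work with is undefined. Lemma \ref{a0}~(1) concerns the different bracket $\{\eta_{15},2\sigma_{16},\sigma_{23}\}$ and gives no definition of $\eta^{*\prime}$. With the correct definition, your halving trick would need $\{\bar\nu_7,\sigma_{15},2\sigma_{22}\}$, which is undefined for the same reason.

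Without that trick, \cite[Proposition 1.4]{T} only tells you that $\bar\nu_7\eta^{*\prime}$ lies in the coset $-\{\bar\nu_7,\sigma_{15},4\sigma_{22}\}\circ\eta_{30}$ of $\bar\nu_7\circ\pi^{15}_{30}\circ\eta_{30}$ (the composite is with $\eta_{30}$, not $\eta_{31}$). Your handling of that indeterminacy is fine. However, nothing in the proposal identifies the coset itself: you have no element of $\{\bar\nu_7,\sigma_{15},4\sigma_{22}\}\subset\pi^7_{30}$ in hand. Pushing $\eta_{30}$ back inside by \cite[Proposition 1.2]{T} only gives the tautology $\bar\nu_7\eta^{*\prime}\in\bar\nu_7\circ\pi^{15}_{31}$, which contains $\bar\nu_7\eta^{*\prime}$ and $\bar\nu_7\omega_{15}$ themselves. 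So the core of the argument is missing.

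The missing idea is the Whitehead-product description of $\Sigma\eta^{*\prime}$, which absorbs $\bar\nu$ with no bracket computation. This is the paper's argument:

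\begin{enumerate}
\item Since $\Sigma:\pi^7_{31}\to\pi^8_{32}$ is a monomorphism \cite[Theorem 1.1(a)]{MMO}, it suffices to show $\bar\nu_8\circ\Sigma\eta^{*\prime}=0$.
\item By {\^O}guchi \cite[Lemma 2.10]{Og}, $\Sigma\eta^{*\prime}\equiv[\iota_{16},\eta_{16}]\bmod\sigma_{16}\mu_{23}$. The ambiguity is killed by $\bar\nu_8\sigma_{16}=0$ \eqref{bnu6sg}.
\item Naturality of Whitehead products gives $\bar\nu_8\circ[\iota_{16},\eta_{16}]=[\bar\nu_8,\bar\nu_8]\circ\eta_{31}=[\iota_8,\iota_8]\circ\bar\nu^2_{15}\eta_{31}=[\iota_8,\iota_8]\circ\bar\nu_{15}\nu^3_{23}$, using $\bar\nu_{23}\eta_{31}=\nu^3_{23}$ \eqref{etbn}.
\item This vanishes because $\bar\nu_{15}\nu_{23}=0$.
\end{enumerate}
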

\begin{proof}
Since $\Sigma:\pi_{31}^{7}\to\pi_{32}^{8}$ is a monomorphism \cite[Theorem 1.1(a)]{MMO},
it suffices to show $\bar{\nu}_8\Sigma\eta^{*\prime}=0$.
By relations
$\Sigma\eta^{*\prime}\equiv [\iota_{16},\eta_{16}]
\bmod\ \sigma_{16}\mu_{23}$
\cite[Lemma 2.10]{Og},
$\bar{\nu}_{23}\eta_{31}=\nu^3_{23}$ \eqref{etbn},
$\bar{\nu}_{15}\nu_{23}=0$ \cite[(7.22)]{T}
and $\bar{\nu}_8\sigma_{16}=0$ \eqref{bnu6sg},
we have
\[
\begin{split}
\bar{\nu}_8\Sigma\eta^{*\prime}
&\equiv \bar{\nu}_8\circ [\iota_{16},\eta_{16}]
= [\bar{\nu}_8,\bar{\nu}_8]\circ \eta_{31}
= [\iota_{8},\iota_{8}]\circ \Sigma(\bar{\nu}_7\wedge \bar{\nu}_7)\circ \eta_{31}\\
&= [\iota_{8},\iota_{8}]\circ \bar{\nu}_{15}^2\eta_{31}
= [\iota_{8},\iota_{8}]\circ \bar{\nu}_{15}\nu^3_{23} = 0
\ \bmod\ \bar{\nu}_8\sigma_{16}\mu_{23} = 0.
\end{split}
\]
This completes the proof. 
\end{proof}

Next, we show
\begin{lem}\label{bnuom7}
$\bar{\nu}_6\omega_{14}\equiv P(\lambda+\xi_{13})\eta_{29}\ \bmod\ \nu_6\sigma_9\kappa_{16},4\bar{\zeta}'_6$. 
\end{lem}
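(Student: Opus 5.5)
We plan to run the EHP-sequence argument in $\pi^6_{30}$. First compute $H(\bar{\nu}_6\omega_{14})$ and compare it with $H(P(\lambda+\xi_{13})\eta_{29})$. This determines $\bar{\nu}_6\omega_{14}$ modulo $\Sigma\pi^5_{29}$. Stable information and the Toda-bracket relations of Section 4 should then cut the ambiguity down to $\nu_6\sigma_9\kappa_{16}$ and $4\bar{\zeta}'_6$.

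\emph{Step 1: Hopf invariant.} By $H(\Sigma\gamma\circ\alpha)=\Sigma(\gamma\wedge\gamma)\circ H(\alpha)$ we have
\[
H(\bar{\nu}_6\omega_{14})=H(\bar{\nu}_6)\circ\omega_{14}=(2k+1)\nu_{11}\circ\omega_{14}
\]
(with $H(\bar{\nu}_6)=(2k+1)\nu_{11}$ from \cite[Lemma 6.2]{T}), provided we rewrite this as $(2k+1)\iota_{11}\circ\nu_{11}\omega_{14}$. Since $\nu_{11}\omega_{14}=(\lambda'+\xi')\eta_{29}$ has order $2$ by \eqref{n11om}, this equals $\nu_{11}\omega_{14}$. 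Here we need care: we use $(2\nu_{11})\omega_{14}=0$ from \eqref{2nu11ome}, so that $(2k+1)\nu_{11}\circ\omega_{14}=\nu_{11}\omega_{14}$. Lemma \ref{HPx13} (3) gives $H(P(\lambda+\xi_{13})\eta_{29})=\nu_{11}\omega_{14}$. Hence
\[
\bar{\nu}_6\omega_{14}\equiv P(\lambda+\xi_{13})\eta_{29}\bmod \Sigma\pi^5_{29}=\{\nu_6\sigma_9\kappa_{16},\bar{\mu}_6\sigma_{23},\delta_6\}+(\text{possibly } \bar{\zeta}'_6\text{-multiples}),
\]
where the group $\Sigma\pi^5_{29}$ is taken from \cite[Theorem 1.1]{MMO}. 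We must check whether $\bar{\zeta}'_6$ and its multiples lie in $\Sigma\pi^5_{29}$ and which of them survive. The statement's indeterminacy $4\bar{\zeta}'_6$ suggests that only $4\bar{\zeta}'_6$ is relevant.

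\emph{Step 2: killing $\delta_6$ and $\bar{\mu}_6\sigma_{23}$.} We apply $\Sigma^\infty$. The left side gives $\bar{\nu}\omega$. Stably, $\bar{\nu}=\eta\sigma+\varepsilon$ \eqref{et9sg}, together with $\eta\sigma\omega=\varepsilon\omega$, gives $\bar{\nu}\omega=0$ (consistent with \eqref{bnu9omg}). On the right, $\Sigma^\infty P(\cdot)=0$ and $\nu\sigma=0$. So the remainder $a\delta_6+b\bar{\mu}_6\sigma_{23}$ has trivial stable image. Since $\pi^S_{24}=\{\delta,\bar{\mu}\sigma\}\cong(\Z_2)^2$ with $\delta\equiv\eta\eta^*\sigma$ by Lemma \ref{dl''}, both coefficients vanish, as long as $\delta_6$ and $\bar{\mu}_6\sigma_{23}$ stabilize independently. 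The remaining terms, $\nu_6\sigma_9\kappa_{16}$ and multiples of $\bar{\zeta}'_6$ that die stably, form exactly the indeterminacy in the statement.

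\emph{Main obstacle.} The delicate point is the precise description of $\Sigma\pi^5_{29}$ and of its kernel under $\Sigma^\infty$, in particular how $\bar{\zeta}'_6$ enters. If the odd multiples of $\bar{\zeta}'_6$ cannot be excluded by stabilization alone, we would compose with $\sigma$ or $\eta$. Alternatively, we would suspend to $\pi^7_{31}$, where Oda's relation $\bar{\nu}_7\omega_{15}\equiv0\bmod \nu_7\sigma_{10}\kappa_{17},\bar{\zeta}'_7$ and Lemma \ref{a1} are available, and use injectivity of $\Sigma:\pi^7_{31}\to\pi^8_{32}$ to pin the coefficient down modulo $4$.
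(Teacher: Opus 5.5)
Your Step 1 has a genuine gap. The formula you cite, $H(\Sigma\gamma\circ\alpha)=\Sigma(\gamma\wedge\gamma)\circ H(\alpha)$, needs the left factor to be a suspension. The identity you actually use, $H(\alpha\circ\beta)=H(\alpha)\circ\beta$, needs the right factor to be one. Neither holds here, since $H(\bar{\nu}_6)=(2k+1)\nu_{11}\neq 0$ and $H(\omega_{14})=\nu_{27}\neq 0$, so the equality $H(\bar{\nu}_6\omega_{14})=H(\bar{\nu}_6)\circ\omega_{14}$ is unjustified. The paper instead uses a Hopf-invariant formula for compositions of non-suspensions, \cite[Corollary 5.10]{Mi}:
\[
H(\bar{\nu}_6\omega_{14})\equiv H(\bar{\nu}_6)\omega_{14}+\bar{\nu}^2_{11}\nu_{27}\ \bmod\ \sum^6_{k=3}{f_k}_*\pi^{5k+1}_{30}+\Ker\{\Sigma:\pi^{10}_{29}\to\pi^{11}_{30}\}.
\]
The cross term dies by $\bar{\nu}_{19}\nu_{27}=0$. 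The indeterminacy, however, can only be cut down to $\{4\bar{\zeta}_{11}\}$: one uses $\pi^{11}_{16}=0$, $\pi^{26}_{30}=\pi^{31}_{30}=0$ and $\Ker\Sigma=0$, while $\pi^{11}_{21}\circ\pi^{21}_{30}$ contains $\eta_{11}\mu^2_{12}=4\bar{\zeta}_{11}$ \eqref{etmumu}. So one gets only $H(\bar{\nu}_6\omega_{14})\equiv\nu_{11}\omega_{14}\bmod 4\bar{\zeta}_{11}$. Since $H(4\bar{\zeta}'_6)=4\bar{\zeta}_{11}$, the EHP sequence gives $\bar{\nu}_6\omega_{14}\equiv P(\lambda+\xi_{13})\eta_{29}+4a\bar{\zeta}'_6\bmod\Sigma\pi^5_{29}$. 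That is exactly where the $4\bar{\zeta}'_6$ in the statement comes from, and your argument has no way to produce it.

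A warning sign you should have caught: if Step 1 were valid, you would prove the sharper congruence modulo $\nu_6\sigma_9\kappa_{16}$ alone. Your attempt to find room for $\bar{\zeta}'_6$ inside $\Sigma\pi^5_{29}$ fails. By EHP exactness $\Sigma\pi^5_{29}=\Ker H=\{\delta_6,\bar{\mu}_6\sigma_{23},\nu_6\sigma_9\kappa_{16}\}\cong(\Z_2)^3$, and $4\bar{\zeta}'_6\notin\Sigma\pi^5_{29}$ because $H(4\bar{\zeta}'_6)=4\bar{\zeta}_{11}\neq 0$. Your fallback of passing to $\pi^7_{31}$ cannot control this coefficient either, since $\Sigma(4\bar{\zeta}'_6)=4\bar{\zeta}'_7=0$; indeed the ambiguity survives even in Theorem \ref{main2}.

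A smaller point: because $\omega_{14}$ is not a suspension, composition on its left is not additive. Thus $(\nu_{11}+2a\nu_{11})\circ\omega_{14}$ also carries the term $[\nu_{11},2a\nu_{11}]\circ H(\omega_{14})=[\nu_{11},2a\nu^2_{11}]$, which must be shown to vanish (it does, as $2\nu^2_{11}=0$). Likewise $(2k+1)\iota_{11}\circ x\neq(2k+1)x$ in general for a non-suspension $x$.

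Once the $4a\bar{\zeta}'_6$ term is in place, your Step 2 is fine. Stabilizing into $\pi^S_{24}=\{\delta,\bar{\mu}\sigma\}$, using $\Sigma(4\bar{\zeta}'_6)=0$, $\bar{\nu}\omega=0$ and $\nu\sigma=0$, kills the $\delta_6$ and $\bar{\mu}_6\sigma_{23}$ coefficients just as the paper's $\Sigma^5$ into $\pi^{11}_{35}$ does.
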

\begin{proof}
Apply \cite[Corollary 5.10]{Mi} to the case $\alpha=\bar{\nu}_6,\ \beta=\omega_{14},\ n=5,\ p=13,\ i=29$. Then, by the relation
$H(\omega_{14})=\nu_{27}$ \eqref{Homega}
and 
we have 
\[
H(\bar{\nu}_6\omega_{14})\equiv H(\bar{\nu}_6)\omega_{14}+\bar{\nu}^2_{11}\nu_{27}
\ \bmod\ %G,
\sum^6_{k=3}{f_k}_*\pi^{5k+1}_{30}+\Ker\{ \Sigma: \pi^{10}_{29}\to\pi^{11}_{30}\}
\]
where $f_k:S^{5k+1}\to S^{11}$.
By the relation $H(\bar{\nu}_6)\equiv \nu_{11}\bmod 2\nu_{11}$ \cite[Lemma 6.2]{T},
we put $H(\bar{\nu}_6)=\nu_{11}+2a\nu_{11}$ for an integer $a$.
Then, by using \cite[Theorem 5.15]{WG} and relations 
$H(\omega_{14})=\nu_{27}$,
$(2\nu_{11})\omega_{14}=0$ \eqref{2nu11ome}
and $2\nu^2_{11}=0$ \eqref{2nu5nu}, we have
\[
\begin{split}
H(\bar{\nu}_6)\omega_{14}
&=(\nu_{11}+2a\nu_{11})\omega_{14}
=\nu_{11}\omega_{14}+(2a\nu_{11})\omega_{14}+[\nu_{11},2a\nu_{11}]\circ H(\omega_{14})\\
&=\nu_{11}\omega_{14}+a\iota_{11}\circ(2\nu_{11})\omega_{14}
+[\nu_{11},2a\nu_{11}]\circ\nu_{27}
=\nu_{11}\omega_{14}+[\nu_{11},2a\nu^2_{11}]=\nu_{11}\omega_{14}.
\end{split}
\]
We have $\bar{\nu}_{19}\nu_{27}=0$ \cite[(7.22)]{T} 
and 
$\Ker\{ \Sigma: \pi^{10}_{29}\to\pi^{11}_{30}\}=0$ \cite[Theorem 12.23]{T}. 
We also have $f_3\in\pi^{11}_{16}=0$ \cite[Proposition  5.9]{T}
and 
$\pi^{5k+1}_{30}=0$ for $k=5,6$ \cite[Proposition 5.8]{T}. 
Hence, we obtain
\[
H(\bar{\nu}_6\omega_{14})\equiv\nu_{11}\omega_{14}\ \bmod\ {f_4}_*\pi^{21}_{30}.
\]
By \cite[Theorems 7.2, 7.3, 7.6]{T}, we have
\[
\begin{split}
{f_4}_*\pi^{21}_{30}
&\subset\pi^{11}_{21}\circ\pi^{21}_{30}
=\{\sigma_{11}\nu_{18},\eta_{11}\mu_{12}\}
\circ\{\nu^3_{21},\mu_{21},\eta_{21}\varepsilon_{22}\}\\
&\subset\sigma_{11}\circ\pi^{18}_{30}
+\eta_{11}\mu_{12}\circ\{\nu^3_{21},\mu_{21},\eta_{21}\varepsilon_{22}\}
=\{\eta_{11}\nu^3_{12}\mu_{21},
\eta_{11}\mu^2_{12},\eta^2_{11}\mu_{13}\varepsilon_{22}\}.
\end{split}
\]
By relations
$\eta_{11}\nu_{12}=0$ \eqref{n2},
$\eta_{11}\mu^2_{12}=4\bar{\zeta}_{11}$ \eqref{etmumu}
and
$\eta^2_{11}\mu_{13}\varepsilon_{22}=0$ \eqref{n5sn^3},
we have ${f_4}_*\pi^{21}_{30}\subset\{4\bar{\zeta}_{11}\}$ and
\[
H(\bar{\nu}_6\omega_{14})\equiv\nu_{11}\omega_{14}
\ \bmod\ 4\bar{\zeta}_{11}.
\]

By relations 
$H(P(\lambda+\xi_{13})\eta_{29})=\nu_{11}\omega_{14}$
(Lemma \ref{HPx13} (3)), 
$H(\bar{\zeta}'_6)\equiv \bar{\zeta}_{11}\ \bmod\ 2\bar{\zeta}_{11}$ \cite[(3.8)]{MMO}
and $8\bar{\zeta}_{11}=0$ \cite[Theorem 12.23]{T},
we have
\[
H(\bar{\nu}_6\omega_{14})\equiv
H(P(\lambda+\xi_{13})\eta_{29}) \ \bmod H(4\bar{\zeta}'_6).
\]
Therefore, by the EHP-sequence and 
$8\bar{\zeta}'_6=0$ \cite[Theorem 1.1 (b)]{MMO}, we have
\[
\bar{\nu}_6\omega_{14}\equiv 
P(\lambda+\xi_{13})\eta_{29}+4a\bar{\zeta}'_6 \ \bmod\ \Sigma\pi^5_{29}
\text{\ \ for some $a\in\{0,1\}$.}
\]
So, by the fact 
$\Sigma\pi^5_{29}=\{\delta_6,\bar{\mu}_6\sigma_{23},\nu_6\sigma_9\kappa_{16}\}\cong(\Z_2)^3$
\cite[p.~31]{MMO}, we can put
\[
\bar{\nu}_6\omega_{14}=
P(\lambda+\xi_{13})\eta_{29}+4a\bar{\zeta}'_6
+b\delta_6+c\bar{\mu}_6\sigma_{23}+d\nu_6\sigma_9\kappa_{16}
\text{\ \ for some $b$, $c$, $d\in\{0,1\}$.}
\]
Applying $\Sigma^5$, by relations
$\bar{\nu}_{11}\omega_{19}=0$ \eqref{bnu9omg},
$\Sigma(P(\lambda+\xi_{13})\eta_{29})=0$,
$2\bar{\zeta}'_7=0$, 
$\Sigma^6\pi^5_{29}=\{\delta_{11},\bar{\mu}_{11}\sigma_{28}\}
\cong(\Z_2)^2$ \cite[Theorem 1.1 (b)]{MMO}
and 
$\nu_{11}\sigma_{14}\kappa_{21}=0$ \eqref{nu9sgka},
we have $0=b\delta_{11}+c\bar{\mu}_{11}\sigma_{28}$ and $b=c=0$.
Hence, we obtain
\[
\bar{\nu}_6\omega_{14}=
P(\lambda+\xi_{13})\eta_{29}+4a\bar{\zeta}'_6+d\nu_6\sigma_9\kappa_{16}.
\]
This completes the proof. 
\end{proof}

By using \cite[Proposition 1.2 iv)]{T} and
relations 
$\{\eta_{15},2\sigma_{16},\sigma_{23}\}=\omega_{15}+\eta^{*\prime}+\{\sigma_{15}\mu_{22}\}$
(Lemma \ref{a0} (1)),
$\bar{\nu}_7\eta^{*\prime}=0$ (Lemma \ref{a1}),
$\bar{\nu}_7\sigma_{15}=0$ \eqref{bnu6sg}, 
$\bar{\nu}_7\eta_{15}=\nu^3_7$ \eqref{etbn}
and 
$\{\nu^2_{10},2\sigma_{16},\sigma_{23}\}
\ni\sigma_{10}\kappa_{17}$
(Lemma \ref{a0} (4)), we have
\[
\bar{\nu}_7\omega_{15}
\in\bar{\nu}_7\circ\{\eta_{15},2\sigma_{16},\sigma_{23}\}
\subset\{\nu^3_7,2\sigma_{16},\sigma_{23}\}
\supset\nu_7\circ\{\nu^2_{10},2\sigma_{16},\sigma_{23}\}
\ni \nu_7\sigma_{10}\kappa_{17}.
\]
By $\nu_{13}\circ\pi^{16}_{31}=0$ (Lemma \ref{a0} (2))
and \cite[Theorem 12.7]{T}, we have
\[
\mathrm{Ind}\{\nu^3_7,2\sigma_{16},\sigma_{23}\}
=\nu^3_7\circ\pi^{16}_{31}+\pi^7_{24}\circ\sigma_{24}
=\{\sigma'\eta_{14}\mu_{15},\nu_7\kappa_{10},
\bar{\mu}_7,\eta_7\mu_8\sigma_{17}\}\circ\sigma_{24}.
\]
So, by relations
$\sigma'\eta_{14}\mu_{15}\sigma_{24}=\bar{\zeta}'_7$ \eqref{bz'7},
$\kappa_{10}\sigma_{24}=0$ \eqref{k7s}
and $\mu_8\sigma^2_{17}=0$ \eqref{mu3s^2}, we have
$\mathrm{Ind}\{\nu^3_7,2\sigma_{16},\sigma_{23}\}
=\{\bar{\zeta}'_7,\bar{\mu}_7\sigma_{24}\}$ and 
\[
\bar{\nu}_7\omega_{15}\equiv \nu_7\sigma_{10}\kappa_{17}
\ \bmod\ \bar{\zeta}'_7,\bar{\mu}_7\sigma_{24}.
\]
The first $3$ elements become trivial and the last survives in the stable range,
by facts $\bar{\nu}_9\omega_{17}=0$ \eqref{bnu9omg},
$\nu_{11}\sigma_{14}=0$ \eqref{nu10sg},
$\bar{\zeta}'_9=\Sigma P(\sigma_{17}\eta_{24}\mu_{25})=0$ \cite[(5.11)]{MMO}
and $\bar{\mu}\sigma\neq 0$ \cite[Theorem 1.1 (b)]{MMO}. This induces the relation
\[
\bar{\nu}_7\omega_{15}\equiv\nu_7\sigma_{10}\kappa_{17}\ \bmod\ \bar{\zeta}'_7.
\]
On the other hand, by Lemma \ref{bnuom7} and 
$2\bar{\zeta}'_7=0$ \cite[Theorem 1.1 (b)]{T}, we have
\[
\bar{\nu}_7\omega_{15}\equiv 0 \ \bmod\ \nu_7\sigma_{10}\kappa_{17}.
\]
Hence we obtain the equation $\bar{\nu}_7\omega_{15}=\nu_7\sigma_{10}\kappa_{17}$.
This completes the proof of Theorem \ref{main2}.
%By Lemmas \ref{bnuom7}, \ref{b1}, we get Theorem \ref{main2}(2). 


\begin{thebibliography}{99}
%\bibitem{Ba}
%{M.~G.~Barratt},
%{Note on a formula due to Toda},
%J. London Math. Soc. \textbf{36}(1961), 95--96.
%\bibitem{BH}
%{M. G. Barratt} and {J. P. Hilton}: On join operations in homotopy groups,
% Proc. London Math. Soc. (3) \textbf{3}(1953), 430--445. 
%\bibitem{Bo}
%{R.~Bott},
%The stable homotopy of the classical groups,
%Ann.\ of Math.\ \textbf{70} (1959), 313--337.
\bibitem{GM}
{M.~Golasi\'nski} and {J.~Mukai},
Gottlieb groups of spheres, 
Topology \textbf{47} (2008), 399--430.
%\bibitem{GM2014} 
%M.~Golasi\'nski and J.~Mukai, 
%\textit{Gottlieb and Whitehead Center Groups of Spheres, Projective  and Moore Spaces},
%Springer Cham Heidelberg New York Dordrecht London, 2014.
%\bibitem{H}
%{P.~J.~Hilton},
%A note on the $P$-homomorphism in homotopy groups of spheres,
%Proc.\ Camb.\ Phil.\ Soc.\ \textbf{59} (1955) 230--233.
\bibitem{HKM}
{Y.~Hirato}, {H.~Kachi} and {J.~Mukai},
$21$-st and $22$-nd  homotopy groups of the $n$-th rotation group,
\ J. Fac. Sci. Shinshu Univ.\ \textbf{41} (2006), 1--28.
\bibitem{IMM}
{T.~Inoue}, {T.~Miyauchi} and {J.~Mukai},
The $2$-components of the $31$-stem homotopy groups of the $9$ and $10$-spheres,
J. Fac. Sci. Shinshu Univ. \textbf{46} (2015), 1--19.
\bibitem{IM}
{T.~Inoue} and {J.~Mukai},
A note on the Hopf homomorphism of a Toda bracket and its application,
Hiroshima Math. J. \textbf{33} (2003), 379--389.
\bibitem{Ja1}
I.~M.~James,
On the homotopy groups of certain pairs and triads,
Quart. J. Math. Oxford \textbf{3} (1954), 260--270.
%\bibitem{J}
%I. M.~James: On the suspension triad, Annals of Math., 
%{\bf 63}-2 (1956), 191--247.
\bibitem{K}
{H.~Kachi},
On the homotopy groups of rotation groups $R_n$,
J. Fac. Sci. Shinshu Univ.\ \textbf{3} (1968), 13--33.
%\bibitem{HM}
%{Y. Hirato} and {J. Mukai}: Some Toda bracket in $\pi^S_{26}(S^0)$, \ Math. J. Okayama Univ.\ \textbf{42} (2000), 83--88.
%\bibitem{Ka} {H. Kachi}: On the homotopy groups of rotation groups $R_{n}$, J. Fac. Sci., Shinshu Univ. {\bf 3} (1968), 13--33.
%\bibitem{KM}
%{H. Kachi} and {J. Mukai}: Some homotopy groups of the rotation group, \ J. Math. Hiroshima Univ.\ \textbf{41} (1999), 327--345.
%\bibitem{KM2}
%H.~Kachi and J.~Mukai,
%The $19$ and $20$-th homotopy groups of the rotation groups $R_n$,
%Math. J. Okayama Univ. \textbf{42} (2000), 89--113.
\bibitem{Ke}
{M.~A.~Kervaire},
Some nonstable homotopy groups of Lie groups,
Illinois J. Math.\ \textbf{4} (1960), 161--169.
\bibitem{Mi}
{M.~Mimura},
On the generalized Hopf homomorphism and the higher composition. Part I; II,
 $\pi_{n+i}(S^n)$ for $i = 21$ and $22$,
J. Math. Kyoto Univ. \textbf{4} (1964), 171--190; \textbf{4} (1965), 301--326.
%\bibitem{Mi1} {M. Mimura}: The homotopy groups of Lie groups of low rank, J.\ Math.\ Kyoto Univ.\ {\bf 6}-2 (1967), 131--176.
\bibitem{MMO}
{M.~Mimura}, {M.~Mori} and {N.~Oda},
Determinations of $2$-components of the $23$- and $24$-stems in homotopy groups of spheres,
Mem. Fac. Sci. Kyushu Univ. \textbf{29} (1975), 1--42.
%\bibitem{MO}
%{M.~Mimura} and {N.~Oda},
%Periodic families in homotopy groups,
%J. Math. Kyoto Univ. \textbf{21} (1981), 171--187.
\bibitem{MT}
{M.~Mimura} and {H.~Toda},
The $(n+20)$-th homotopy groups of $n$-spheres,
J. Math. Kyoto Univ. \textbf{3}-1 (1963), 37--58.
\bibitem{MT2}
M.~Mimura and H.~Toda:
Homotopy groups of $SU(3), SU(4)$ and $Sp(2)$,
J. Math. Kyoto Univ. \textbf{3} (1964), 217--250.
\bibitem{MiM2} 
{T. Miyauchi} and {J. Mukai}, 
Determination of the $2$-primary components of the $32$-th homotopy groups of\ $S^n$,
Bol. Soc. Mat. Mex. (3) \textbf{23} (2017), 319--387.
%\bibitem{MM}
%{K.~Morisugi} and {J.~Mukai},
%Whitehead square of a lift of the Hopf map to a mod $2$ Moore space,
%J. Math. Kyoto Univ. \textbf{42}-2 (2002), 331--336.
\bibitem{MoM}
K.~Morisugi and J.~Mukai,
 Lifting to a mod $2$ Moore space, 
J. Math. Soc. Japan. \textbf{52}-3 (2000), 515--533.
\bibitem{Mu66}
J.~Mukai,
Stable homotopy of some elementary complexes,
Mem. Fac. Sci. Kyusyu Univ. Ser. A \textbf{3} (1966), 266--282.
%\bibitem{Mu1} {J. Mukai}: On the stable homotopy of a $Z_2$-Moore space, Osaka J. Math.
%\textbf{6}(1969), 63--91.
%\bibitem{Mu93} 
%J. Mukai, On stable homotopy of the complex projective space, 
%Japan. J. Math. (N.S.) {\bf 19} (1993), no.~1, 191--216.
%\bibitem{Mu00}
%{J.~Mukai}, 
%Homotopy from the real $(n-1)$-projective space to the $n$th rotation group,
%Kyushu J. Math. {\bf 54} (2000), no.~2, 423--428.
\bibitem{Mu2}
{J.~Mukai},
Determination of the $P$-image by Toda brackets, Geometry and Topology Monographs 
\textbf{13} (2008), 355--383. 
\bibitem{Od}
{N.~Oda},
Unstable homotopy groups of spheres, Bull. Inst. Adv. Res. Fukuoka Univ. \textbf{44} (1979), 49--152.
\bibitem{Od1}
{N.~Oda},
On the orders of the generators in the $18$-stem of the homotopy groups of spheres, Adv. Stud. Pure Math. \textbf{9} (1986), 231--236.
\bibitem{Og}
{K.~{\^O}guchi},
Generators of $2$-primary components of homotopy groups and symplectic groups,
J. Fac. Sci. Univ. Tokyo  \textbf{11} (1964), 65--111. 
\bibitem{Os82}
H.~{\^O}shima,
Some James numbers of Stiefel manifolds,
Math. Proc. Cambridge Phil. Soc., \textbf{92} (1982), 139--161.
\bibitem{Sp62}
{E.~Spanier},
Secondary operations on mappings and cohomology,
Ann. of Math. \textbf{75} (1962), 260--282.
%\bibitem{Th}
%{S.~Thomeier},
%Whitehead products and homotopy groups of spheres,
%Proc. 13-th Biennial Seminar of Can. Math. Congress, {\bf 2} (1972), 144--155. 
\bibitem{T}
{H.~Toda},
\textit{Composition methods in homotopy groups of spheres}, 
Ann. of Math. Studies, \textbf{49}, Princeton, 1962.
\bibitem{T1}
{H.~Toda}:
A survey of homotopy theory. Advances in Math. \textbf{10} (1973), 417--455. 
\bibitem{WG}
{G.~W.~Whitehead}:
A generalization of the Hopf invariant, Ann. of Math. \textbf{51} (1950), 192--237.
\bibitem{WG78}
{G.~W.~Whitehead}:
\textit{Elements of Homotopy Theory}, 
Graduate Texts in Mathematics \textbf{61}, 
Springer Verlag, Berlin, 1978.
\bibitem{WJ}
J.~H.~C.~Whitehead:
On certain theorems of G.~W.~Whitehead,
Ann. of Math. \textbf{58} (1953), 418--428.
\bibitem{YMW}
J. ~Yang, J.~Mukai and J.~Wu:
On the homotopy groups of the suspended quaternionic projective plane and applications,
Algebr. Geom. Topol. \textbf{25} (2025), 2981--3033.
\end{thebibliography}
\end{document}